\def\Spec{{Spec}\,}
\def\Im{\mathrm{Im}\,}
\def\Re{\mathrm{Re}\,}
\def\Tr{\mathrm{tr}}
\def\Hess{{Hess}}
\def\Det{\mathrm{det}}
\newtheorem{theorem}{Theorem}[section]
\newtheorem{lemma}[theorem]{Lemma}
\newtheorem{corollary}[theorem]{Corollary}
\newtheorem{proposition}[theorem]{Proposition}
\newtheorem{conjecture}[theorem]{Conjecture}
\newtheorem{remark}[theorem]{Remark}
\theoremstyle{definition}
\newtheorem{definition}[theorem]{Definition}
\newtheorem{question}[theorem]{Question}
\newtheorem{example}[theorem]{Example}
\newtheorem*{definitionproposition}{Definition-Proposition}
\newtheorem*{assumption}{Assumption}
\newtheorem*{convention}{Notations}
\newtheorem*{acknowledgement}{Acknowledgement}
\newcommand{\arrowIn}{
\tikz \draw[-stealth] (-1pt,0) -- (1pt,0);
}
\newcommand{\arrowIIn}{
\tikz \draw[-{stealth}{stealth}] (-1pt,0) -- (1pt,0);
}
\newcommand{\arrowBox}{
\tikz \draw[-{square}] (-1pt,0) -- (1pt,0);
}
\title{On 4-dimensional Ricci-flat ALE manifolds}
\author{Mingyang Li }
\date{}
\newcommand{\Addresses}{{% additional braces for segregating \footnotesize
  \bigskip
  \footnotesize

  \textsc{{Department of Mathematics, University of California, Berkeley,
    CA 94720, USA.}}\par\nopagebreak
  \textit{E-mail address}: \href{mailto:mingyang_li@berkeley.edu}{\texttt{mingyang\_li@berkeley.edu}}

}}
\begin{document}

\begin{abstract}
In this paper, we prove:
\begin{itemize}
\item There is a one-to-one correspondence between: 
\begin{itemize}
\item Hermitian non-K\"ahler ALE gravitational instantons $(M,h)$;
\item Bach-flat K\"ahler orbifolds $(\widehat{M},\widehat{g})$ of complex dimension 2 with exactly one orbifold point $q$, such that the scalar curvature $s_{\widehat{g}}$ satisfies $s_{\widehat{g}}(q)=0$ while being positive elsewhere.
\end{itemize}
\item There is no Hermitian non-K\"ahler ALE gravitational instanton with structure group contained in $SU(2)$, except for the Eguchi-Hanson metric with reversed orientation.
\end{itemize}

A 4-dimensional Ricci-flat metric being \textit{Hermitian non-K\"ahler} is equivalent to being \textit{non-trivially conformally K\"ahler}.
\end{abstract}

\maketitle

\setcounter{tocdepth}{1}
\tableofcontents

\section{Introduction}

The following question is a long-standing problem:
\begin{question}
Is there an ALE Ricci-flat 4-manifold that is neither self-dual nor anti-self-dual?
\end{question}

In this paper, we refer to complete non-compact Ricci-flat 4-manifolds $(M,h)$ with $\int_{M}|Rm_h|_h^2<\infty$ as \textit{gravitational instantons}. It is a famous conjecture by Bando-Kasue-Nakajima \cite{bkn,nakajima} that any ALE gravitational instanton is either self-dual or anti-self-dual.
In this paper, we give a negative answer to the problem introduced at the beginning in a special case. By saying that a $4$-manifold $(M,h)$ is \emph{asymptotically locally Euclidean} (ALE), we mean
\begin{definition}\label{def:ALE}
A Riemannian 4-manifold $(M,h)$ is ALE with order $\tau$, if there is a smooth diffeomorphism $\Phi:M\setminus K\to (\mathbb{R}^4\setminus\overline{B_R(0)})/\Upsilon$,
where $K$ is a large compact subset of $M$ and $\Upsilon\subset SO(4)$ is a finite group acting freely on $S^3$, such that
$$\left|\nabla_h^k(h-\Phi^*h_E)\right|_h=O(\rho^{-\tau-k})$$
as $\rho\to\infty$ for any $k\geq0$. Here $\rho$ is the distance function under $h$ to some base point $p$, $h_E$ is the standard Euclidean metric on $\mathbb{R}^4/\Upsilon$, and $\nabla_h$ is the Levi-Civita connection of $h$. 
\end{definition}

There is the following curvature decomposition for oriented 4-manifolds by identifying the curvature tensor $Rm$ as an automorphism of the bundle of two-forms $\Lambda^2=\Lambda^+\oplus\Lambda^-$
\begin{align*}
    Rm=\left(\begin{matrix}
        \frac{s}{12}+W^+ & Ric^0\\
        Ric^0 & \frac{s}{12}+W^{-}
    \end{matrix}\right).
\end{align*}
Here $s$ is the scalar curvature, $W^{\pm}$ are the self-dual and anti-self-dual Weyl curvatures, and $Ric^0$ is the trace-less Ricci curvature. In particular, for oriented Einstein 4-manifolds, we only have the scalar curvature part $s$ and the Weyl curvature part $W^{\pm}$. Locally the self-dual Weyl curvature $W^+:\Lambda^+\to\Lambda^+$ is a $3\times3$ matrix.
By the work of Derdzi\'nski \cite{derdzinski}, oriented Einstein 4-manifolds can be broadly classified into the following three cases.

\begin{definitionproposition}
An oriented Einstein 4-manifold $(M,h)$ can be classified into one of the following types:
\begin{itemize}
\item Type I: If $W^+ \equiv 0$, then the metric is anti-self-dual.
\item Type II: If $W^+$ has exactly two distinct eigenvalues, treated as an automorphism $W^+:\Lambda^+\to\Lambda^+$ everywhere, then up to passing to a double cover\footnote{It was pointed out by Claude LeBrun to the author that one should include passing to a double cover here. The author is very grateful for that.}, there exists a compatible complex structure $J$ such that $(M,h,J)$ is Hermitian and the conformal metric $g=\lambda^{2/3}h$ is K\"ahler, where $\lambda=2\sqrt{6}|W^+|_h$. The scalar curvature $s_g$ of $g$ satisfies $|s_g|=\lambda^{1/3}$.
\item Type III: If $W^+$ generically has three distinct eigenvalues, then $(M,h)$ is not locally conformally K\"ahler under this orientation.
\end{itemize}
\end{definitionproposition}

Type I ALE gravitational instantons are precisely the \textit{anti-self-dual} ALE gravitational instantons. A very important class of such gravitational instantons are those \textit{hyperk\"ahler} ones, which are completely classified by Kronheimer \cite{kronheimer1,kronheimer2}. Finite quotients of hyperk\"ahler gravitational instantons, which are still Type I, are also classified by \cite{s,wright}.

For Type II ALE gravitational instantons, after passing to a double cover if necessarily, they are precisely the \textit{Hermitian non-K\"ahler} ALE gravitational instantons.  It turns out that the conformal metric $g=\lambda^{2/3}h$ is not only K\"ahler, but also \textit{Bach-flat}, and \textit{extremal K\"ahler} in the sense of Calabi. Note that in the complex 2-dimensional case, a Bach-flat K\"ahler metric is automatically extremal K\"ahler \cite{lebrun2}.

This paper focuses on Type II (equivalently, after passing to the cover being Hermitian non-K\"ahler) ALE gravitational instantons, specifically proving the following two main theorems. 
\begin{theorem}\label{main:correspondence}
There is a one-to-one correspondence between: 
\begin{itemize}
\item Hermitian non-K\"ahler ALE gravitational instantons $(M,h)$;
\item Bach-flat K\"ahler orbifolds $(\widehat{M},\widehat{g})$ of complex dimension 2 with exactly one orbifold point $q$, whose scalar curvature $s_{\widehat{g}}$ satisfies $s_{\widehat{g}}>0$ except at $q$ while $s_{\widehat{g}}(q)=0$.
\end{itemize}
The structure group of a Hermitian non-K\"ahler ALE gravitational instanton must be contained in $U(2)$, in the sense that outside of a suitable compact set, the end is biholomorphic to $\mathbb{B}^*/\Gamma$, where $\mathbb{B}^*\subset\mathbb{C}^2$ is the standard punctured unit ball and $\Gamma\subset U(2)$.
\end{theorem}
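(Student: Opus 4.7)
The natural map is the Derdzi\'nski conformal change $h \mapsto g = s_g^2 h$ (equivalently $g \mapsto h = s_g^{-2} g$), followed by a one-point metric completion. I plan to build each direction of the bijection explicitly via this recipe, then verify that the two constructions are mutually inverse and that both preserve the K\"ahler/Bach-flat/ALE/orbifold structures, including the $U(2)$ structure-group condition.

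\textbf{Forward direction.} Given $(M,h)$ Hermitian non-K\"ahler ALE Ricci-flat, take $g = s_g^2 h$ on $M$; the Derdzi\'nski result already cited makes $g$ K\"ahler and (via LeBrun) Bach-flat. Standard curvature decay for ALE Ricci-flat metrics forces $|W^+|_h$ and hence $|s_g|$ to decay polynomially at infinity, so $g$ has finite diameter toward infinity. I would then add a single point $q$ to $M$ via the $g$-Cauchy completion; in the coordinate $w = \bar z/|z|^2$ inverse to the ALE coordinate $z$ at infinity, $g$ pulled back to the universal cover of the end should extend smoothly across $w=0$, and the free ALE $\Upsilon$-action extends to $w=0$ with $w=0$ fixed. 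This realizes $\widehat M := M\cup\{q\}$ as a smooth K\"ahler orbifold with $q$ its only orbifold point and local orbifold group $\Upsilon$. Since the extended metric is K\"ahler and the extended complex structure is $\Upsilon$-invariant, $\Upsilon$ embeds in $U(T_q\widehat M) \cong U(2)$, proving the structure-group claim. The scalar curvature $s_{\widehat g}$ is (up to sign) the continuous extension of $|s_g| = \lambda^{1/3}$, so $s_{\widehat g}(q)=0$ and $s_{\widehat g}>0$ elsewhere. Bach-flatness, a conformal invariant, transfers from $h$ on $M$ and extends to $q$ by continuity of a smooth tensor on a smooth orbifold.

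\textbf{Reverse direction.} Given $(\widehat M,\widehat g)$ Bach-flat K\"ahler with the stated scalar curvature hypothesis, LeBrun's theorem gives that $\widehat g$ is extremal K\"ahler, whereupon Derdzi\'nski's theorem makes $h := s_{\widehat g}^{-2}\widehat g$ Einstein on $M := \widehat M\setminus\{q\}$. The key analytic step is to show that the conformal blowup at $q$ produces an ALE end in the sense of Definition~\ref{def:ALE}. I would analyze $s_{\widehat g}$ near $q$ using: the extremal K\"ahler condition ($\nabla^{1,0} s_{\widehat g}$ holomorphic), which forces tight algebraic structure at the isolated zero $q$; the fourth-order Bach equation, which pins down the next terms in the Taylor expansion; and the orbifold K\"ahler smoothness of $\widehat g$ at $q$. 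From the resulting asymptotic profile of $s_{\widehat g}$ I would read off that $s_{\widehat g}^{-2}\widehat g$ transforms, under inversion $z = \bar w/|w|^2$, into the ALE normal form with asymptotic group equal to the orbifold group at $q$, which lies in $U(2)$ because $\widehat g$ is K\"ahler. An Einstein ALE metric automatically has zero Einstein constant, so $h$ is Ricci-flat; the inherited complex structure makes $h$ Hermitian, and non-constancy of $s_{\widehat g}$ rules out K\"ahler.

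\textbf{Main obstacle.} The principal technical difficulty, dual on the two sides, is the quantitative asymptotic matching: in the reverse direction, showing $s_{\widehat g}$ vanishes at $q$ with exactly the rate that yields a smooth ALE end rather than a conical, cuspidal, or asymptotically hyperbolic degeneration; equivalently in the forward direction, showing that $|W^+|_h$ has the matching leading-order behavior at infinity so that the Cauchy completion of $g$ is a genuine smooth orbifold point with group $\Upsilon$. The analysis must combine the extremal (second-order) and Bach-flat (fourth-order) equations with the smoothness of $\widehat g$ as an orbifold K\"ahler metric at $q$, and on the instanton side with sharp curvature decay for Ricci-flat ALE metrics. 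This asymptotic matching is what makes the conformal compactification genuinely reversible, and is what must be done to elevate the Derdzi\'nski correspondence from a local pointwise statement to the global one-to-one correspondence asserted in the theorem.
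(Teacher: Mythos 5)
Your overall strategy is the paper's: the bijection is the Derdzi\'nski conformal change together with a one-point completion in one direction and the conformal blow-up $h=s_{\widehat g}^{-2}\widehat g$ in the other. But the two steps you flag as the ``main obstacle'' are exactly the ones you have not supplied, and they do not follow from what you cite. In the forward direction, the upper bound $|W^+|_h=O(\rho^{-6})$ from Bando--Kasue--Nakajima is not enough: to know that the completion of $(M,g)$ is a single \emph{smooth orbifold} point you need the matching lower bound $\lambda=2\sqrt6|W^+|_h\geq C\rho^{-6}$, so that the conformal factor $\lambda^{2/3}$ is comparable to $\rho^{-4}$ and the tangent cone at $q$ is a flat cone $\mathbb{R}^4/\Gamma$. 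The paper obtains this from a separate argument: $\mathcal K=-J\nabla_h\lambda^{-1/3}$ is a Killing field asymptotic to a Euclidean Killing field $\mathcal K_\infty$, and $\mathcal K_\infty$ must be nowhere vanishing (else integrating $\nabla_h\lambda^{-1/3}=O(\rho^{-3})$ along a ray where it vanishes would force $\lambda$ to stay bounded below, contradicting decay); the linear growth $|\mathcal K|\leq C\rho$ then integrates to $\lambda^{-1/3}\leq C\rho^2$. Without this input your completion could a priori be conical, cuspidal, or worse, which is precisely the degeneration you acknowledge but do not exclude. Separately, ``$g$ should extend smoothly across $w=0$'' is not a consequence of writing the metric in an inverted coordinate: the paper needs a full removable-singularity analysis (curvature bounds on $Rm_g$ via a Tanno-type identity for $\nabla Ric$, uniqueness and smooth convergence to the tangent cone, a $C^{1,\alpha}$ orbifold extension, Newlander--Nirenberg to normalize $J$, Hartogs extension of the holomorphic extremal field to control $s_g$, and a Monge--Amp\`ere bootstrap) to upgrade the completion to a smooth orbifold K\"ahler metric.

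In the reverse direction your plan is harder than necessary and contains a small circularity. You propose to derive the ALE normal form directly from a Taylor expansion of $s_{\widehat g}$ at $q$, and you invoke ``an Einstein ALE metric automatically has zero Einstein constant'' before ALE-ness has been established. The paper instead gets Ricci-flatness immediately: the conformal formula $s_h=s_{\widehat g}^3+6s_{\widehat g}\Delta s_{\widehat g}-12|\nabla s_{\widehat g}|^2$ is constant, and evaluating at $q$ (where $s_{\widehat g}=0$ and $\nabla s_{\widehat g}=0$, since $q$ is the minimum) gives $s_h=0$. Then the expansion $s_{\widehat g}=\tfrac12 a r_q^2+O(r_q^3)$ -- which follows from the Morse--Bott property of $s_{\widehat g}$ and the Bach-flat K\"ahler identity $0=s_gRic_g^0+2\Hess_0(s_g)$ forcing $\Hess(s_{\widehat g})(q)=a\widehat g$ -- yields Euclidean volume growth and finite $\int|Rm_h|^2$, and Bando--Kasue--Nakajima delivers the ALE structure with no asymptotic matching required. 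If you keep your route you must produce an all-orders expansion at $q$ good enough to verify Definition~\ref{def:ALE} directly, which is a substantially heavier burden than the argument the theorem actually needs.
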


\begin{theorem}\label{main:classificationsu2}
With the exception of the Eguchi-Hanson metric with reversed orientation, there are no Hermitian non-K\"ahler ALE gravitational instantons with structure group contained in $SU(2)$.
\end{theorem}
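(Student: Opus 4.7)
By Theorem~\ref{main:correspondence}, a Hermitian non-K\"ahler ALE gravitational instanton $(M,h)$ with structure group $\Gamma\subset SU(2)$ corresponds to a Bach-flat K\"ahler orbifold $(\widehat{M},\widehat{g})$ of complex dimension two with a single orbifold point $q$ whose local uniformizing group is $\Gamma\subset SU(2)$, and with $s_{\widehat{g}}>0$ away from $q$ while $s_{\widehat{g}}(q)=0$. The hypothesis $\Gamma\subset SU(2)$ forces $q$ to be a rational double point (ADE singularity), hence Gorenstein: $K_{\widehat{M}}$ is a genuine line bundle locally trivialized near $q$ by the push-forward of $dz_{1}\wedge dz_{2}$, and the crepant resolution $\pi:\widetilde{M}\to\widehat{M}$ is a smooth compact K\"ahler surface with $K_{\widetilde{M}}=\pi^{*}K_{\widehat{M}}$.

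Three structural tools would drive the proof. First, in complex dimension two Bach-flat K\"ahler is equivalent to extremal K\"ahler, so $X=\nabla^{1,0}s_{\widehat{g}}$ is a nontrivial holomorphic vector field vanishing at $q$; the weights of the resulting $S^{1}$-action at $q$ must commute with $\Gamma$. Second, positivity of $s_{\widehat{g}}$ off $q$ combined with $\int_{\widehat{M}}s_{\widehat{g}}\,dV_{\widehat{g}}=4\pi c_{1}(\widehat{M})\cdot[\omega_{\widehat{g}}]$ gives $c_{1}(\widehat{M})\cdot[\omega_{\widehat{g}}]>0$, so $\widetilde{M}$ has Kodaira dimension $-\infty$ and is therefore rational or ruled. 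Third, the $SU(2)$ condition yields a $\Gamma$-invariant holomorphic $(2,0)$-form on the ALE end of $M$, and tracking it through the conformal change $\widehat{g}=\lambda^{2/3}h$ with $\lambda=2\sqrt{6}|W^{+}|_{h}$, using the Ricci-flat ALE decay of $|W^{+}|_{h}$, controls its asymptotics near $q$ and pins down the numerical class of $K_{\widehat{M}}$ (equivalently, the configuration of the exceptional divisor on $\widetilde{M}$).

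Finally, I would combine these with the ALE Gauss-Bonnet and signature identities for $(M,h)$ (with their $\Gamma\subset SU(2)$ boundary terms) and an equivariant Futaki/Calabi-energy evaluation of $\int_{\widehat{M}}s_{\widehat{g}}^{2}\,dV_{\widehat{g}}$ in terms of the linearization of $X$ at $q$, to run a case analysis over the candidate minimal models of $\widetilde{M}$, the admissible ADE types at $q$, and the compatible extremal vector fields. I expect only one configuration to survive---$\widehat{M}=\mathbb{CP}(1,1,2)$ with its unique Bach-flat extremal K\"ahler metric vanishing at the cone point---and pulling back through Theorem~\ref{main:correspondence} recovers the Eguchi-Hanson metric with reversed orientation. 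The principal obstacle is this final classification step: to cleanly close the enumeration one likely needs either the Calderbank-Singer classification of toric extremal K\"ahler orbifolds, or LeBrun-Simanca uniqueness of extremal K\"ahler metrics in a given K\"ahler class combined with rigidity of log Fano orbifolds of Picard rank one with a single Du~Val singularity.
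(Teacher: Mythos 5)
Your overall architecture---reduce via Theorem \ref{main:correspondence} to special Bach-flat K\"ahler orbifolds, enumerate the complex-geometric candidates, then exclude them by a Futaki-type evaluation of the scalar curvature---is the paper's strategy, and your identification of a LeBrun--Simanca style computation as the decisive analytic tool is correct. But several intermediate steps have genuine gaps. First, $c_1\cdot[\omega]>0$ only gives Kodaira dimension $-\infty$, i.e.\ rational \emph{or ruled}; the paper needs the much stronger statement that $\widehat{M}$ is log del Pezzo (Proposition \ref{logdel}, proved via LeBrun's K\"ahler current $\rho_g+2\sqrt{-1}\partial\bar{\partial}\log s_g$ together with the quadratic vanishing $s_{\widehat{g}}=\frac12 a r_q^2+O(r_q^3)$ of Corollary \ref{behaviorofs}). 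Ampleness of $-K_{\widehat{M}}$ is what forces rationality and, crucially, forbids curves of self-intersection $\leq -3$ on the minimal resolution; without that the blow-up enumeration is not finite. Second, you never extract the constraint that the extremal field has \emph{equal weights} at $q$: this comes from the Bach-flat identity $0=s_g\Ric^0_g+2\Hess_0(s_g)$ forcing $\Hess(s_{\widehat{g}})(q)=a\widehat{g}$, and it is the single most restrictive hypothesis in the classification. Your proposed substitutes (the $\Gamma$-invariant $(2,0)$-form and the Gauss--Bonnet/signature identities) do not supply it; the paper uses Hitchin--Thorpe only once, to identify the $H_2$ candidate with Eguchi--Hanson.

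Third, your expectation that ``only one configuration survives'' the enumeration is not what happens: the case analysis of Sections \ref{sec:case12}--\ref{sec:case3} leaves \emph{fourteen} candidate pairs $(\widehat{M},\mathfrak{E})$ besides the reversed Eguchi--Hanson (Table \ref{cases}), with orbifold groups ranging over $A_1$, $A_3$, $D_4$, $D_5$, $E_6$, $E_7$, $E_8$ and Picard numbers up to $4$. None of these is killed by rigidity of the underlying log Fano orbifold or by topology; each is killed only by computing $\min_{\mathfrak{E}}s_g$ as an explicit rational function on the full K\"ahler cone (up to three parameters) and checking it never vanishes. So the Futaki/Calabi evaluation is not an auxiliary input to the case analysis but the entire final act, carried out candidate by candidate; neither the Calderbank--Singer classification nor uniqueness of extremal metrics in a fixed class is used or needed. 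As written, your sketch would stall both at the finiteness of the enumeration and at the exclusion of the fourteen surviving candidates.
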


Note that the group $\Gamma$ in Theorem \ref{main:correspondence} is different from the group $\Upsilon$ in Definition \ref{def:ALE}--they are conjugate as subgroups of $SO(4)$.  We shall refer to the group $\Gamma$ as the \textit{structure group}.
Here the Eguchi-Hanson space is usually understood as a hyperk\"ahler space. However, it turns out that it also carries a complex structure whose orientation is opposite to the hyperk\"ahler orientation. The metric is Hermitian non-K\"ahler under this complex structure with opposite orientation.
Metrics corresponding to Hermtian non-K\"ahler ALE gravitational instantons in Theorem \ref{main:correspondence} will be refered to as \emph{special Bach-flat K\"ahler metrics} for simplicity.
\begin{definition}
A Bach-flat K\"ahler metric $\widehat{g}$ on a compact complex 2-dimensional orbifold with only one orbifold point $q$ is said to be \textit{special Bach-flat K\"ahler}, if its scalar curvature $s_{\widehat{g}}$ is positive at all points except at the orbifold point $q$, where $s_{\widehat{g}}(q)=0$.
\end{definition}

Now we explain some main ideas in the proof: 
\begin{itemize}
\item For a Hermitian non-K\"ahler ALE gravitational instanton $(M,h)$, with $\lambda:= 2\sqrt{6}|W^+|_h$, the  extremal K\"ahler metric $g:=\lambda^{2/3}h$ turns out to be incomplete, whose metric completion is just adding one point. Using a singularity removal argument, we can show that $g$ extends to a smooth orbifold extremal K\"ahler metric $\widehat{g}$ (Section \ref{sec:removal}). The metric $\widehat{g}$ is the special Bach-flat K\"ahler metric in the correspondence of Theorem \ref{main:correspondence}.

\item Denote the compactified special Bach-flat K\"ahler orbifold as $(\widehat{M},\widehat{g})$. In Section \ref{subsec:complexstructure}, it is shown that $\widehat{M}$ is a log del Pezzo surface, which is a Fano surface with only one quotient singularity. This is proved based on an important observation by LeBrun \cite{lebrun}. We therefore conclude that the compactified surface $\widehat{M}$ must be rational.

\item Each special Bach-flat K\"ahler orbifold  gives us a pair $(\widehat{M},\mathfrak{E})$, where $\widehat{M}$ is the underlying orbifold and $\mathfrak{E}$ is the holomorphic extremal vector field associated to the extremal K\"ahler metric $\widehat{g}$. 
Such pairs have the following algebraic properties:
\begin{itemize}
\item The orbifold $\widehat{M}$ has only one orbifold point $q$.
\item The orbifold $\widehat{M}$ is log del Pezzo.
\item The orbifold point $q$ is an isolated fixed point of the action by $\mathfrak{E}$. Moreover, the weights of the $\mathfrak{E}$ action on the tangent space of $q$ are the same. 
\end{itemize}
It will be an algebraic geometry problem to classify all such pairs with the above algebraic properties, and we classify them under the additional technical assumption that the orbifold group $\Gamma\subset SU(2)$. Importantly, we find that there exist only a finite number of such pairs when the orbifold group is in $SU(2)$.
This is done in Section \ref{sec:case12}-\ref{sec:case3}.

\item These classified pairs $(\widehat{M},\mathfrak{E})$ are candidates for special Bach-flat K\"ahler orbifolds with structure group in $SU(2)$.
If an extremal K\"ahler metric exists with $\mathfrak{E}$ as the holomorphic extremal vector field on these candidates, we will show in Section \ref{sec:nonexistence} that the minimum of its scalar curvature can never be zero, except for the pair corresponding to the Eguchi-Hanson metric with reversed orientation. Together with Theorem \ref{main:correspondence}, this proves Theorem \ref{main:classificationsu2}.
\end{itemize}

Note that in the smooth setting, the holomorphic extremal vector field of an extremal K\"ahler metric always generates a holomorphic $\mathbb{C}^*$-action, as a consequence of \cite{futaki}. 
Theorem \ref{main:correspondence}, \ref{main:classificationsu2} have several interesting corollaries.
During the course of our proof of the main theorems, we also prove
\begin{corollary}[Finiteness of topological types]\label{cor:topofinite}
For each fixed finite subgroup $\Gamma\subset U(2)$, there exist at most finitely many diffeomorphism types of Hermitian non-K\"ahler ALE gravitational instantons with structure group $\Gamma$.
\end{corollary}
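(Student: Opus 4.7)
The plan is to use Theorem \ref{main:correspondence} to translate the question into one about the K\"ahler orbifold side, and then invoke algebro-geometric boundedness for log del Pezzo surfaces with fixed singularity type.

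By Theorem \ref{main:correspondence}, each Hermitian non-K\"ahler ALE gravitational instanton $(M,h)$ with structure group $\Gamma$ corresponds bijectively to a special Bach-flat K\"ahler orbifold $(\widehat{M},\widehat{g})$ whose only orbifold point $q$ is modeled on $\mathbb{C}^2/\Gamma$. As a smooth 4-manifold, $M$ is diffeomorphic to the complement in $\widehat{M}$ of a small open neighborhood of $q$, because the conformal change and completion add only the single point $q$ whose local structure is prescribed by $\Gamma$. Thus, with $\Gamma$ fixed, the diffeomorphism type of $M$ is determined by the diffeomorphism type of the underlying topological space $\widehat{M}$ (together with the prescribed local model at $q$).

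Next, by the main ideas outlined in the introduction (to be made rigorous in Section \ref{subsec:complexstructure}), $\widehat{M}$ is a log del Pezzo surface with exactly one singular point of type $\mathbb{C}^2/\Gamma$. Because the singularity type is fixed by $\Gamma$, the minimal log discrepancy of $\widehat{M}$ is bounded below by some $\epsilon=\epsilon(\Gamma)>0$. By the boundedness theorem of Alexeev for $\epsilon$-klt log Fano surfaces, such $\widehat{M}$ lie in a bounded algebraic family; in particular, the Picard number $\rho(\widehat{M})$ is uniformly bounded in terms of $\Gamma$ alone.

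Passing to the minimal resolution $\pi:\widetilde{M}\to\widehat{M}$, the exceptional configuration is the fixed dual graph of the resolution of the quotient singularity $\mathbb{C}^2/\Gamma$, so $\widetilde{M}$ is a smooth rational surface with $\rho(\widetilde{M})=\rho(\widehat{M})+n_\Gamma$ also uniformly bounded, where $n_\Gamma$ denotes the number of exceptional components. Since any smooth rational surface is diffeomorphic to either $\mathbb{CP}^2\#k\,\overline{\mathbb{CP}^2}$ or $S^2\times S^2\# k\,\overline{\mathbb{CP}^2}$ for some non-negative integer $k$ bounded by $\rho(\widetilde{M})$, there are finitely many possible diffeomorphism types for $\widetilde{M}$, hence for $\widehat{M}$, and hence for $M$.

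The main technical obstacle is the appeal to boundedness for log del Pezzo surfaces with a fixed quotient singularity. In our setting, however, there is extra structure available---the holomorphic extremal vector field $\mathfrak{E}$ with equal weights at $q$ and the Bach-flat K\"ahler condition---and it is likely that $\rho(\widehat{M})$ can be bounded by a much more elementary intersection-theoretic argument using $\mathfrak{E}$, bypassing the heavy machinery of Alexeev--Birkar entirely.
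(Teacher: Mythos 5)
Your argument is correct, but it reaches the conclusion by a genuinely different (and much heavier) route than the paper. You reduce, as the paper does, to bounding the topology of the log del Pezzo orbifold $\widehat{M}$ with its single $\mathbb{C}^2/\Gamma$ singularity, but you then invoke Alexeev's boundedness theorem for $\epsilon$-klt log del Pezzo surfaces; this is a legitimate application (the log discrepancies at a fixed quotient singularity are determined by $\Gamma$, so $\widehat{M}$ is $\epsilon(\Gamma)$-klt), and it does yield a uniform bound on $\rho(\widehat{M})$ and hence finitely many diffeomorphism types of $\widetilde{M}$ and of $M$. The paper instead proves the bound by a one-line intersection computation: writing $-K_{\widetilde{M}}=-r^*K_{\widehat{M}}-\sum a_iC_i$ with the discrepancies $a_i$ and the exceptional configuration $\{C_i\}$ determined by $\Gamma$, one gets $K_{\widetilde{M}}^2=K_{\widehat{M}}^2+\left(\sum a_iC_i\right)^2\geq\left(\sum a_iC_i\right)^2=:k_\Gamma$, since $K_{\widehat{M}}^2>0$ by ampleness of $-K_{\widehat{M}}$ and the cross term vanishes. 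As $\widetilde{M}$ is a rational surface, this caps the number of blow-ups from its minimal model at $9-k_\Gamma$ (or $8-k_\Gamma$ for a Hirzebruch minimal model), which is exactly the uniform bound on $b_2(\widetilde{M})$ you were after. So your closing speculation that the Alexeev--Birkar machinery can be bypassed by an elementary intersection-theoretic argument is exactly right --- except that the elementary argument needs only the ampleness of $-K_{\widehat{M}}$ and the fixed resolution data of $\Gamma$, not the extremal vector field $\mathfrak{E}$. The trade-off: your approach generalizes verbatim to situations where one only controls the minimal log discrepancy rather than the exact singularity type, while the paper's computation gives an explicit, effective bound $9-k_\Gamma$ with no black boxes.
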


We will discuss later that we actually expect that there is no Type II ALE gravitational instanton at all, except for the Eguchi-Hanson metric with reversed orientation.
We also have the following corollary which will be established during the proof of Theorem \ref{main:correspondence}
\begin{corollary}\label{cor:hermitiandecay}
For a Hermitian ALE gravitational instanton, if its self-dual Weyl curvature $W^+$ decays faster than $\rho^{-6}$, then it must be K\"ahler.
\end{corollary}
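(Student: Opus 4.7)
The plan is to argue by contradiction using Theorem~\ref{main:correspondence}. Suppose $(M,h,J)$ is a Hermitian ALE gravitational instanton with $|W^+|_h$ decaying faster than $\rho^{-6}$, and assume toward a contradiction that $h$ is not K\"ahler. Then $\lambda:=2\sqrt{6}|W^+|_h>0$ on $M$, so Theorem~\ref{main:correspondence} produces the associated special Bach-flat K\"ahler orbifold $(\widehat{M},\widehat{g})$, with $\widehat{g}=\lambda^{2/3}h$ on $M$ and scalar curvature $s_{\widehat{g}}=\lambda^{1/3}$; the unique orbifold point $q\in\widehat{M}$ corresponds to infinity of the ALE end of $h$, and $s_{\widehat{g}}(q)=0$.

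The core of the argument is to match the order of vanishing of $s_{\widehat{g}}$ at $q$ against the decay rate of $\lambda$ at infinity. Since $s_{\widehat{g}}$ is smooth, non-negative, and attains a minimum at $q$, its Taylor expansion in a uniformizing chart around $q$ begins at some order $k\in\{2,3,\ldots\}\cup\{\infty\}$. When $k$ is finite, along a direction where the leading Taylor polynomial is positive one has $s_{\widehat{g}}\sim\tilde{r}^k$, where $\tilde{r}$ denotes the $\widehat{g}$-distance to $q$. On the ALE side, integrating the conformal identity $d\tilde{r}=\lambda^{1/3}\,d\rho$ together with a leading polynomial decay $\lambda\sim\rho^{-\alpha}$ gives $\tilde{r}\sim\rho^{-(\alpha-3)/3}$, and matching the two expressions $\lambda^{1/3}=s_{\widehat{g}}\sim\tilde{r}^k$ and $\lambda^{1/3}\sim\rho^{-\alpha/3}$ yields
\[
\alpha=\frac{3k}{k-1}\le 6,
\]
with equality only at $k=2$. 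Hence on a Hermitian non-K\"ahler ALE gravitational instanton with finite order of vanishing of $s_{\widehat{g}}$ at $q$, the leading polynomial decay of $|W^+|_h$ can never be faster than $\rho^{-6}$. The hypothesis therefore excludes every finite $k$, leaving only $k=\infty$.

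It remains to rule out the infinite-order case. The holomorphic extremal vector field $\mathfrak{E}=\nabla^{1,0}s_{\widehat{g}}$ of the extremal K\"ahler metric $\widehat{g}$ then also vanishes to infinite order at $q$; by the identity theorem for holomorphic vector fields on the connected compact complex orbifold $\widehat{M}$, this forces $\mathfrak{E}\equiv 0$. Consequently $s_{\widehat{g}}$ is constant, and $s_{\widehat{g}}(q)=0$ forces $s_{\widehat{g}}\equiv 0$, whence $\lambda\equiv 0$ and $W^+\equiv 0$ on $M$, contradicting $\lambda>0$. The main delicate point is the asymptotic matching in the finite-$k$ case; it requires using the polynomial ALE expansion of $h$ at infinity together with the smooth orbifold expansion of $\widehat{g}$ at $q$, both of which are supplied by the construction underlying Theorem~\ref{main:correspondence}.
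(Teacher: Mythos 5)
Your proof is essentially correct but takes a far heavier route than the paper's, and the comparison is instructive. The paper disposes of this corollary in Section \ref{sec:compactification} \emph{before} any compactification: $\mathcal{K}=-J\nabla_h\lambda^{-1/3}$ is a Killing field of the ALE metric $h$ and hence grows at most linearly, so integrating $|\nabla_h\lambda^{-1/3}|_h\le C\rho$ from a base point gives $\lambda^{-1/3}\le C\rho^{2}$, i.e. $\lambda\ge C_1\rho^{-6}$ outright; a Hermitian non-K\"ahler (Type II) instanton therefore can never have $W^+$ decaying faster than $\rho^{-6}$. You instead invoke Theorem \ref{main:correspondence} and match the vanishing order $k$ of $s_{\widehat g}$ at $q$ against the decay of $\lambda$ at infinity. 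This is not circular, since Theorem \ref{main:correspondence} is proved unconditionally, but note that the two-sided bound $C_1\rho^{-6}\le\lambda\le C_2\rho^{-6}$ is literally the first step of its proof, so the contradiction is already in hand the moment the theorem is invoked. Within your scheme, two points need tightening. First, the hypothesis supplies only an upper bound on $\lambda$, not a ``leading polynomial decay $\lambda\sim\rho^{-\alpha}$''; to run the finite-$k$ matching you must restrict to a ray emanating from $q$ along which the leading Taylor term of $s_{\widehat g}$ is strictly positive and integrate $-dr_q/d\rho=\lambda^{1/3}$ there. Second, and better, Corollary \ref{behaviorofs} shows via the Morse--Bott property and the Bach-flat identity $0=s_gRic^0_g+2\Hess_0(s_g)$ that $\Hess(s_{\widehat g})(q)=a\widehat g$ with $a>0$, so $k=2$ always and $c\,r_q^2\le s_{\widehat g}\le C\,r_q^2$ near $q$; your ODE then returns $\lambda\sim\rho^{-6}$ on the nose, making the $k\ge 3$ analysis and the infinite-order case (which you correctly eliminate with the identity theorem) moot. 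Your formula $\alpha=3k/(k-1)\le 6$ is correct as far as it goes; it simply reproves, with the full compactification machinery, what the Killing-field integration gives in three lines.
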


Relating to previous works, compact 4-dimensional Hermitian Einstein metrics are classified by LeBrun \cite{lebrun3}. LeBrun's work \cite{lebrun,lebrun3} has shown that compact Hermitian Einstein 4-manifolds are limited to being either K\"ahler-Einstein, the Page metric in \cite{page}, or the Chen-LeBrun-Weber metric in \cite{clw}. Our Theorem \ref{main:correspondence} and \ref{main:classificationsu2} can be treated as a non-compact analogue of  \cite{clw,lebrun3}.

It is important to note that proving the non-existence of Hermitian non-K\"ahler ALE gravitational instantons with structure group not in $SU(2)$ requires a classification of log del Pezzo surfaces with only one $U(2)$ singularity and a holomorphic vector field $\mathfrak{E}$ having the same weights at the singularity point. While the case-by-case classification we will use for the $SU(2)$ case can be extended to the $U(2)$ case, it will be more complicated. Unlike the $SU(2)$ case, the $U(2)$ case may yield an infinite number of candidates $(\widehat{M},\mathfrak{E})$.
Given the difficulty in achieving the condition that the scalar curvature of the Bach-flat K\"ahler metric vanishes precisely at the orbifold point, we make the following conjecture:
\begin{conjecture}
There exist no Hermitian non-K\"ahler ALE gravitational instantons, without any restrictions on the structure group, except for the Eguchi-Hanson metric with reversed orientation.
\end{conjecture}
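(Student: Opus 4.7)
The plan is to extend the strategy of Theorem \ref{main:classificationsu2} to arbitrary U(2) structure groups via the correspondence of Theorem \ref{main:correspondence}. After this reduction, the problem becomes: classify all pairs $(\widehat{M},\mathfrak{E})$ where $\widehat{M}$ is a log del Pezzo surface with a single U(2) quotient singularity $q$, and $\mathfrak{E}$ is a holomorphic vector field on $\widehat{M}$ having $q$ as an isolated fixed point with equal weights on $T_q\widehat{M}$, and then show that no such pair admits a special Bach-flat K\"ahler metric (equivalently, an extremal K\"ahler metric with extremal vector field $\mathfrak{E}$ whose scalar curvature vanishes precisely at $q$), except for the pair corresponding to the Eguchi-Hanson metric with reversed orientation.

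The first step is the algebraic classification. I would stratify by the type of the U(2) quotient singularity: cyclic singularities of type $\tfrac{1}{n}(1,a)$ with $\gcd(n,a)=1$, and non-cyclic singularities arising from binary polyhedral groups extended by scalar matrices. Unlike the SU(2) list, this produces infinite families, so purely combinatorial case-by-case work is not available; instead I would parametrize each family by the Picard rank and discrete invariants of the minimal resolution, and use equivariant/toric arguments in the cyclic case together with the classification of low-Picard-rank log del Pezzo surfaces admitting $\mathbb{C}^*$-actions in the non-cyclic case. A key input is the condition that $\mathfrak{E}$ has equal weights at $q$: at the resolution level this forces the exceptional configuration to sit symmetrically with respect to the induced $\mathbb{C}^*$-action, which is a very restrictive constraint and should collapse each infinite family to a discrete or at worst one-parameter sub-family of candidates.

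The second step is an obstruction to the existence of the prescribed extremal K\"ahler metric. Since $s_{\widehat{g}}$ is a Killing potential for $\mathrm{Im}\,\mathfrak{E}$, the condition $s_{\widehat{g}}(q)=0$ pins the value of the associated Hamiltonian at $q$. Combining Atiyah-Bott localization for $\mathfrak{E}$ with the equal-weight condition at $q$ yields a linear relation between the average scalar curvature, the weights of $\mathfrak{E}$ at the remaining fixed components, and characteristic numbers of $\widehat{M}$. I would pair this with the Bach-flat integral identity in complex dimension two (which equates $\int s_{\widehat{g}}^2$ with a topological quantity determined by $c_1^2$, $c_2$ and the orbifold contribution at $q$) to force, away from the Eguchi-Hanson pair, either a sign contradiction with $s_{\widehat{g}}\geq 0$ or a violation of the extremal equation $\bar{\partial}\nabla^{1,0}s_{\widehat{g}}=0$.

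The hard part is uniformity. For a fixed candidate pair, the obstruction above is in principle computable, but one must make it work simultaneously over infinitely many candidates within each family. Ruling out existence of a prescribed extremal K\"ahler metric is also delicate because the Futaki invariant need not vanish identically on all test configurations, so one likely needs a finer stability notion adapted to the prescribed zero locus of $s_{\widehat{g}}$, or a direct geometric argument showing that the moment map image cannot attain $0$ only at $q$ once the weights there are equal. It is precisely the difficulty of running such an obstruction argument uniformly across the infinite U(2) list that keeps the full statement at the level of a conjecture in the paper; making it unconditional would require either a new stability-theoretic criterion tailored to special Bach-flat K\"ahler metrics or a geometric comparison argument bypassing the case division entirely.
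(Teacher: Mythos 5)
The statement you are addressing is stated in the paper as a \emph{conjecture}, not a theorem: the paper proves the non-existence result only under the additional hypothesis that the structure group lies in $SU(2)$ (Theorem \ref{main:classificationsu2}), and explicitly leaves the general $U(2)$ case open. Your proposal is a strategy outline for the general case, and it correctly identifies the two-step architecture the paper uses for the $SU(2)$ case (classify the pairs $(\widehat{M},\mathfrak{E})$, then compute the minimum of the scalar curvature over the K\"ahler cone to rule out $\min s_{\widehat{g}}=0$). But it is not a proof, and you essentially concede this yourself in your final paragraph.

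The concrete gaps are these. First, in the classification step you assert that the equal-weights condition at $q$ ``should collapse each infinite family to a discrete or at worst one-parameter sub-family of candidates,'' but you give no argument, and the paper's own discussion points the other way: it states that the $U(2)$ case ``may yield an infinite number of candidates $(\widehat{M},\mathfrak{E})$.'' Indeed, for cyclic $\Gamma=L(q,p)\subset U(2)$ the equal-weights condition is a single linear constraint on the weights $w_0,w_{k+1}$ (via Theorem \ref{thm:cyclicweights}), which does not bound $p$; the finiteness in the $SU(2)$ case comes from the very rigid structure $e_i=2$ (Corollary \ref{cor:su2weights}), not from the equal-weights condition alone. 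Second, your obstruction step replaces the paper's actual mechanism --- the LeBrun--Simanca computation of $\min_{\mathfrak{E}} s_g([\omega])$ as an explicit rational function on the K\"ahler cone, verified to be nonzero case by case in Section \ref{sec:nonexistence} --- with an appeal to ``Atiyah--Bott localization'' plus a ``Bach-flat integral identity'' that you claim forces a sign contradiction. No such identity is derived, and the sign analysis in the paper is genuinely delicate: for each candidate the positivity of $\min_{\mathfrak{E}} s_g$ is established by bespoke elementary inequalities in the K\"ahler-cone parameters, and in one case (Case 3J) existence fails for a different reason ($-T_s+s_0T_1=0$). There is no indication that these verifications admit a uniform closed-form argument over an infinite family, and asserting that one exists is precisely the missing content. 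As it stands, your proposal restates the conjecture together with the paper's known strategy, rather than proving it.
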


We finally would like to remark that
very little is currently known about general ALE gravitational instantons. Lock and Viaclovsky \cite{lv2} showed that if the minimal resolution of $\mathbb{C}^2/\Gamma$ with $\Gamma\subset U(2)$ or its further blow-up supports a Ricci-flat ALE metric, then $\Gamma\subset SU(2)$ and the metric is hyperk\"ahler, through some topological calculation based on the Hitchin-Thorpe inequality. In Biquard-Hein \cite{biquardhein}, they constructed an optimal ALE coordinate and an associated renormalized volume, and showed that the renormalized volume has to be nonpositive. This volume is zero if and only if the Ricci-flat manifold is a flat cone $\mathbb{R}^4/\Gamma$. It is notable that our result in this paper is based on complex-analytic method. To solve the problem we mentioned at the beginning, the difficulty enssentially lies in
\begin{question}
Is there a Type III ALE gravitational instanton?
\end{question}

%The case-by-case study we use in section \ref{sec:case12} and \ref{sec:case3} still can be applied, but will be much more complicated. It is highly possible that Ricci-flat ALE manifolds with structure group contained in $U(2)$ and $\#\mathrm{spec}(W^+)=2$ everywhere do not exist at all. This will be discussed in section \ref{sec:dis}.

%Our main theorems can also be interpreted in another way. 
%It is known that the self-dual curvature $W^+$ of a 4 dimensional Hermitian Einstein manifold is invariant under the complex structure, which shows $W^+$ at most has two distinct eigenvalues everywhere.
%For a ALE gravitational instanton, it was shown in \cite{wright} that $W^+\equiv0$ is equivalent to that the metric is K\"ahler. 
%Combining this with Derdzinski's fundamental work \cite{derdzinski}, 
%As will be explained in section \ref{sec:preliminary}, a ALE gravitational instanton being  Hermitian non-K\"ahler is equivalent to the number of distinct eigenvalues $\#\mathrm{spec}(W^+)$ of $W^+$ being equal to 2 everywhere. %Derdzinski also proved that for an Einstein  4-manifold, either $W^+\equiv0$ or $W^+\neq0$ at every point, so $\#\{\Spec W^+\}\leq2$ is equivalent to $W^+\equiv0$ or $\#\{\Spec W^+\}=2$ everywhere. In conclusion, our second main theorem also can be written as

\begin{convention}
\ 

\begin{itemize}
\item On a Riemannian manifold $(M,h)$ with a chosen base point $p$, $\rho$ denotes the distance function to $p$.
\item If $T$ is a tensor on $(M,h)$, then we say that $T=O'_h(\rho^{-\tau})$ if, for any integer $k\geq0$, 
$$|\nabla^k_hT|_h= O(\rho^{-\tau-k}),$$ 
as $\rho$ approaches infinity. We will omit $h$ when it is clear from the context.
\item Given a holomorphic vector field $\mathfrak{E}$, by flows of this vector field, we mean the flows of the real vector field $\Re \mathfrak{E}$.
\item We will use $H_r$ to denote the Hirzebruch surface $\mathbb{P}(\mathcal{O}\oplus\mathcal{O}(r))$.
\item For an extremal K\"ahler metric $g$, by \textit{extremal vector field} we mean $J\nabla_gs_g$. By \textit{holomorphic extremal vector field} we mean $\nabla_{g}^{1,0}s_g$.
\end{itemize}
\end{convention}

\begin{acknowledgement}
I would like to express my sincere gratitude to my advisor, Song Sun, for introducing me to this problem and for providing constant support throughout the course of my research. I am grateful for his willingness to engage in fruitful discussions and for generously sharing his ideas with me. I would like to extend my thanks to Junsheng Zhang for providing me with a helpful example. I am grateful to Bing Wang for his warm hospitality during my visit to the Institute of Geometry and Physics at USTC, where this work was completed. I am thankful to Gon\c{c}alo Oliveira for pointing out a mistake in an earlier version of this paper.

This work is supported by NSF grant DMS-2004261 and Simons Collaboration Grant on Special Holonomy
in Geometry, Analysis, and Physics (488633, S.S.).
\end{acknowledgement}

\section{Preliminary results}
\label{sec:preliminary}

In this section we introduce the Type I-III classification by Derdzi\'nski \cite{derdzinski} metioned in the introduction.

\begin{proposition}[Derdzi\'nski]\label{prop:d1}
For an oriented Einstein 4-manifold $(M,h)$, it must be of Type I, Type II, or Type III.
\end{proposition}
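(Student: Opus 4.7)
The plan is to combine real-analyticity of the Einstein metric with the algebraic fact that $W^+$ is a trace-free symmetric endomorphism of a rank-$3$ bundle.

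First, I would invoke the classical result that any Einstein metric is real-analytic in harmonic coordinates, so that $W^+$, as a section of $\mathrm{End}(\Lambda^+)$, is real-analytic. At each point $p\in M$ its three real eigenvalues $\mu_1\leq\mu_2\leq\mu_3$ sum to zero; in particular, the only way for all three eigenvalues to coincide is $W^+(p)=0$. So pointwise there are exactly three possibilities: $W^+(p)=0$, or $W^+(p)$ has two distinct eigenvalues of the form $(a,a,-2a)$, or $W^+(p)$ has three distinct eigenvalues.

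Next, I would consider the discriminant
\[
\Delta_{W^+}:=\prod_{i<j}(\mu_i-\mu_j)^2,
\]
which is a polynomial in the invariants $|W^+|_h^2$ and $\det W^+$, hence real-analytic on the connected manifold $M$. By the standard zero-set dichotomy for real-analytic functions, either $\Delta_{W^+}$ is nonzero on a dense open set, giving three distinct eigenvalues generically (Type III), or $\Delta_{W^+}\equiv 0$, so that $W^+$ has a repeated eigenvalue at every point. Applying the same dichotomy to the real-analytic function $|W^+|_h^2$ in the latter case yields either $W^+\equiv 0$ on $M$ (Type I) or $W^+\neq 0$ on a dense open subset, on which the spectrum is exactly $(a,a,-2a)$ (Type II).

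The one delicate step is reconciling this with the stated definition of Type II, which demands two distinct eigenvalues \emph{everywhere}: real-analyticity alone does not exclude a nowhere-dense set on which $W^+$ vanishes. The intended reading is that Type II is characterised by the property that $\Delta_{W^+}\equiv 0$ while $W^+\not\equiv 0$, with the associated Hermitian structure and conformal K\"ahler rescaling $g=\lambda^{2/3}h$ defined on the dense open complement of $\{W^+=0\}$; possible degeneracies along $\{W^+=0\}$ are precisely the singularities treated by the removal argument carried out later in the paper. This last point is the only real obstacle, but it is conceptual rather than technical, and once it is clarified the trichotomy follows immediately from the real-analytic dichotomies above.
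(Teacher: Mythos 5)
Your reduction via real-analyticity is the same first step the paper takes: the discriminant of $W^+$ (you use $\prod_{i<j}(\mu_i-\mu_j)^2$, which is in fact cleaner than the paper's signed product, being manifestly a polynomial in $|W^+|_h^2$ and $\det W^+$) either vanishes identically or is nonzero on a dense open set, and the second alternative is Type III. The gap is in how you dispose of the degenerate case. Having correctly observed that real-analyticity of $|W^+|_h^2$ only yields ``$W^+\equiv 0$ or $W^+\neq 0$ on a dense open set,'' you resolve the mismatch with the definition of Type II (two distinct eigenvalues \emph{everywhere}, equivalently $W^+$ nowhere vanishing) by declaring that the definition should be read weakly and that zeros of $W^+$ are ``precisely the singularities treated by the removal argument later in the paper.'' Neither claim holds. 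The removal argument of Section 3 concerns the single point added at infinity, where $\lambda=2\sqrt{6}|W^+|_h\to 0$ because of curvature decay; it does not address a possible interior zero set of $W^+$. And the paper genuinely needs $W^+\neq 0$ everywhere: Proposition \ref{prop:d2} asserts $\lambda>0$ everywhere, the conformal metric $g=\lambda^{2/3}h$ and the identity $s_g=\lambda^{1/3}>0$ are used globally on $M$, and the positivity of $s_{\widehat{g}}$ away from the orbifold point in Theorem \ref{main:correspondence} would fail if $W^+$ had an interior zero.

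The missing ingredient is exactly what the paper cites at this step: Proposition 5 of Derdzi\'nski, which states that for an oriented Einstein 4-manifold whose $W^+$ has at most two distinct eigenvalues at every point, either $W^+\equiv 0$ or $W^+$ vanishes nowhere. This is not a formal consequence of analyticity; it comes from the geometry of the degenerate case (on the dense open set where $W^+\neq 0$ the simple eigenform is parallel for $g=\lambda^{2/3}h$, so $g$ is K\"ahler there, and a differential identity satisfied by $\lambda^{1/3}$ then excludes zeros of $\lambda$). Without this input your argument only proves the weaker trichotomy ``$W^+\equiv 0$, or degenerate spectrum with $W^+\not\equiv 0$, or three distinct eigenvalues generically,'' which is not the statement of the Definition-Proposition. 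Either quote Derdzi\'nski's Proposition 5, as the paper does, or supply a proof of the nowhere-vanishing; reinterpreting the definition of Type II is not an available move.
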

\begin{proof}
Let $\mu$, $\nu$, and $\eta$ be the three eigenvalues of $W^+$ (some of which possibly could coincide). The function $\Delta:=(\mu-\nu)(\nu-\eta)(\eta-\mu)$ is well-defined and real analytic because Einstein metrics are real analytic. Therefore, we must have either $\Delta\neq0$ and $W^+$ has three distinct eigenvalues generically, or $\Delta\equiv0$ and $W^+$ has at most two distinct eigenvalues everywhere. If the second case happens, Proposition 5 in \cite{derdzinski} implies that we either have $W^+\equiv0$, or $W^+$ has exactly two distinct eigenvalues everywhere which are both nowhere vanishing.
\end{proof}

\begin{proposition}[Derdzi\'nski]\label{prop:d2}
Let $(M,h)$ be an oriented Einstein 4-manifold that is of Type II. Then up to passing to a double cover, there exists a complex structure $J$ on $M$ such that $(M,h,J)$ is Hermitian and conformally K\"ahler. The conformal metric $g=\lambda^{2/3}h$ is K\"ahler under $J$, where
$$\lambda=2\sqrt{6}|W^+|_h.$$
It turns out that the scalar curvature $s_g$ of $g$ satisfies $|s_g|=\lambda^{1/3}$. The function $\lambda$ is positive everywhere because $W^+\neq0$ everywhere.
\end{proposition}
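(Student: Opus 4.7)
The plan is to follow Derdziński's original argument in \cite{derdzinski}, invoking his Proposition~5 for the key computational step. The proof has a clean three-stage structure.

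First, I would extract the pointwise structure of $W^+$. Since $W^+$ is trace-free on $\Lambda^+$ and has exactly two distinct eigenvalues everywhere by the Type II assumption, write them as $\mu$ (simple) and $\nu$ (double). Tracelessness gives $\mu+2\nu=0$, whence $\mu=-2\nu$, and
\begin{equation*}
|W^+|_h^2 \;=\; \mu^2 + 2\nu^2 \;=\; 6\nu^2,
\end{equation*}
so $|\nu| = |W^+|_h/\sqrt{6}$ and $\lambda = 2\sqrt{6}|W^+|_h = 12|\nu|$. Next, the simple $\mu$-eigenspace determines a smooth real line subbundle $L\subset\Lambda^+$, well-defined because the eigenvalues are globally distinct. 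If $L$ is non-orientable, pass to its orientation double cover; on the resulting double cover of $M$ we obtain a global smooth section $\omega$ of $L$ with $|\omega|_h=\sqrt{2}$. Setting $\omega(X,Y) = h(JX,Y)$ then defines a compatible almost complex structure $J$ on the (possibly double-covered) $M$, for which $\omega$ is the fundamental $2$-form.

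Second, I would use the second Bianchi identity in the form $\delta W^+ = 0$ (valid because $h$ is Einstein). Writing $W^+$ as a symmetric endomorphism of $\Lambda^+$ via the decomposition
\begin{equation*}
W^+ \;=\; \nu\,\mathrm{Id}_{\Lambda^+} \;-\; \tfrac{3\nu}{2}\,\omega\otimes\omega
\end{equation*}
and plugging into $\delta W^+ = 0$, one extracts a first-order differential equation relating $\nabla\omega$ and $d\nu$. This is exactly Derdziński's key identity, whose two principal consequences are: (i) $J$ is integrable, so $(M,h,J)$ is Hermitian with fundamental form $\omega$; and (ii) the rescaled form $\lambda^{2/3}\omega$ is closed. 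Since $\lambda^{2/3}\omega$ is, by construction, the fundamental form of $(g,J)$ with $g:=\lambda^{2/3}h$, it follows that $(g,J)$ is Kähler.

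Finally, to verify $|s_g|=\lambda^{1/3}$, apply the conformal change formula for scalar curvature in dimension four. Writing $f=\lambda^{1/3}$ so that $g=f^2h$,
\begin{equation*}
s_g \;=\; f^{-3}\bigl(s_h\, f \;-\; 6\,\Delta_h f\bigr),
\end{equation*}
and using $s_h$ constant together with the equation for $\Delta_h f$ obtained from $\delta W^+=0$ and the eigenvalue identity $\lambda=12|\nu|$, one reads off $|s_g|=\lambda^{1/3}$. Positivity of $\lambda$ on all of $M$ is immediate since $W^+\neq 0$ pointwise by the Type II hypothesis. The main obstacle is entirely concentrated in the second stage: deriving from $\delta W^+ = 0$ both integrability of $J$ and closedness of $\lambda^{2/3}\omega$. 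This computation is the heart of Derdziński's work, and in the writeup it is cleanest to quote it directly from \cite{derdzinski} rather than reproduce the tensor calculation.
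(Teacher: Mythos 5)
Your proposal is correct and follows essentially the same route as the paper: both reduce the key computational step to Derdzi\'nski's Proposition 5, construct $J$ from the simple eigen-two-form of $W^+$ (passing to a double cover to choose it globally), and obtain the K\"ahler property and $|s_g|=\lambda^{1/3}$ from his identity together with the conformal rescaling by $\lambda^{2/3}$. The extra detail you supply (the eigenvalue bookkeeping $\lambda=12|\nu|$, the use of $\delta W^+=0$, and the four-dimensional conformal change formula) is consistent with what the paper delegates to \cite{derdzinski}.
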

\begin{proof}
This proposition follows from Proposition 5 in \cite{derdzinski}. We briefly outline the construction of $J$ for the reader's convenience. Since $(M,h)$ is of Type II, $W^+$ has exactly two distinct eigenvalues at every point, and due to its tracelessness, two of its three eigenvalues coincide. Derdzi\'nski proved that the eigen-2-forms $\pm\omega$ of $W^+$ corresponding to the eigenvalue with multiplicity one, normalized to have unit length under $g$, are parallel with respect to the metric $g$. Passing to a double cover if necessary, one can pick a global eigen-two-form $\omega$.
Now the eigen-2-form $\omega$ serves as the K\"ahler form, and together with $g$, determine the complex structure $J$.
\end{proof}

The next lemma, which was first discovered by Goldberg and Sachs, shows that if an Einstein 4-dimensional metric is Hermitian but non-ASD, then it must be Type II. 
\begin{lemma}[Goldberg-Sachs]\label{lem:gs}
Let $(M,h,J)$ be a Hermitian Einstein 4-manifold, with the orientation given by the complex structure $J$. Then the self-dual Weyl curvature $W^+$ of $h$ must be $J$-invariant, hence $W^+:\Lambda^+\to\Lambda^+$ has at most two distinct eigenvalues at every point of $M$.
\end{lemma}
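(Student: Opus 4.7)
My plan has three stages, reducing the claim to an algebraic assertion about $W^+(\omega)$ and then exploiting the Einstein condition.

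First, I would clarify the meaning of $J$-invariance. With the orientation induced by $J$, there is an orthogonal splitting $\Lambda^+ = \mathbb{R}\omega \oplus E$, where $E$ is the rank-$2$ real subbundle of $\Lambda^+$ underlying the complex line bundle $\Lambda^{2,0}$; the bundle $E$ inherits a complex structure $I$ from multiplication by $i$ on $\Lambda^{2,0}$. To say that $W^+$ is $J$-invariant means $W^+$ preserves this splitting and commutes with $I$ on $E$. Granting $J$-invariance, the bound on eigenvalues is immediate: $W^+|_E$ is self-adjoint and $\mathbb{C}$-linear on a real two-dimensional (equivalently, complex one-dimensional) space, so it acts as a real scalar $\beta\cdot\mathrm{Id}$; combined with the trace-free condition, $W^+$ has eigenvalues $\alpha$ on $\mathbb{R}\omega$ and $\beta=-\alpha/2$ on $E$, giving at most two distinct values.

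Next, I would reduce to showing (a) $W^+(\omega)\parallel\omega$ pointwise, and (b) $W^+|_E$ commutes with $I$. The natural tool is the Weitzenb\"ock identity
$$\Delta_H \omega = \nabla^*\nabla\,\omega - 2\,W^+(\omega) + \tfrac{s}{3}\omega$$
applied to the K\"ahler form $\omega$ viewed as a self-dual $2$-form. Combining this with the Hermitian structure equations $d\omega=\theta\wedge\omega$ (which defines the Lee form $\theta$) and the companion expression for $\delta\omega$, one can write the $E$-component of $W^+(\omega)$ as an explicit quadratic expression in $\theta$ and $\nabla\theta$. A parallel computation with $\nabla W^+$ expresses the failure of $W^+|_E$ to commute with $I$ in terms of $\nabla J$ and its contractions with $\theta$.

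Finally, I would invoke the Einstein hypothesis through the second Bianchi identity $\delta W = \tfrac{1}{2}\,d\,\mathrm{Ric} - \tfrac{1}{12}\,ds\otimes g$, which collapses to $\delta W^+ = 0$ under the Einstein condition. Pairing $\delta W^+ = 0$ with $\omega$, and with a local frame of $E$, produces identities that force the expressions derived in the previous paragraph to vanish pointwise, establishing both (a) and (b). The main obstacle is exactly this last step: tracking the Hermitian structure equations through the differential Bianchi identity and showing that the Einstein condition is what precisely kills the non-$J$-invariant components of $W^+$. Every piece of the Einstein equation must enter the computation essentially, since without it the conclusion fails, as illustrated by general non-Einstein Hermitian $4$-manifolds whose $W^+$ is not $J$-invariant.
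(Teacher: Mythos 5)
The paper itself states this lemma without proof, invoking it as the classical (Riemannian) Goldberg--Sachs theorem, so there is no in-paper argument to compare against; your proposal therefore has to stand on its own, and as written it is a program rather than a proof. Your first stage is fine, and you correctly adopt the strong reading of $J$-invariance: with $\Lambda^+=\mathbb{R}\omega\oplus E$, what must be shown is both that the mixed block of $W^+$ vanishes (often called $W^+_2$) and that $W^+|_E$ commutes with the rotation $I$ (vanishing of $W^+_3$); note that the weaker tensorial condition $W^+(J\cdot,J\cdot,J\cdot,J\cdot)=W^+$ alone only kills the mixed block and would not by itself give ``at most two eigenvalues.'' Granting both, your algebraic deduction of the spectrum $(\alpha,-\alpha/2,-\alpha/2)$ is correct. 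The Weitzenb\"ock identity on $\Lambda^+$ is also the right tool for the mixed block: in real dimension four the intrinsic torsion of an integrable Hermitian structure reduces to the Lee form, so $\nabla\omega$ is linear in $\theta$ and the $E$-component of $W^+(\omega)$ is indeed a universal expression in $(\theta,\nabla\theta)$.

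The gap is in the second half. First, the sentence ``a parallel computation with $\nabla W^+$ expresses the failure of $W^+|_E$ to commute with $I$ in terms of $\nabla J$ and its contractions with $\theta$'' is not an argument: that failure is a zeroth-order curvature component, you neither derive such a formula nor explain why one should exist, and producing it is a genuinely nontrivial identity special to integrable $J$ in dimension four (it comes, e.g., from comparing the Levi-Civita and Chern connections, not from differentiating $W^+$). Second, and decisively, the final step --- that pairing $\delta W^+=0$ with $\omega$ and a frame of $E$ ``forces the expressions to vanish pointwise'' --- is precisely the content of the theorem and is asserted, not proved: $\delta W^+=0$ is a first-order differential condition and does not directly yield pointwise algebraic vanishing of $W^+_2$ and $W^+_3$. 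In the known proofs (Przanowski--Bia{\l}ecki, Nurowski, Apostolov--Gauduchon, or the spinorial Goldberg--Sachs argument) the real work is to combine $\delta W^+=0$ with the differential consequences of integrability into an overdetermined system and then argue, including on the degeneracy set of the relevant eigenvalue or spinor, that the algebraically general components vanish. Since this is exactly the step you flag as ``the main obstacle'' and leave open, the proposal does not yet prove the lemma; given that the paper simply cites the classical theorem, you should either do the same or carry out the Apostolov--Gauduchon computation in full.
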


Now we move to Hermitian non-K\"ahler ALE gravitational instantons. Note that as a general fact the decay order $\tau$ is always at least 4.

\begin{theorem}[Bando-Kasue-Nakajima]\label{bkn}
Let $(M,h)$ be a 4-dimensional Ricci-flat manifold with Euclidean volume growth and finite $\int_M|Rm_h|_h^2$, then $(M,h)$ is ALE with order 4.
\end{theorem}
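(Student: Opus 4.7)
My plan would proceed in three stages: first, establish that there is a flat asymptotic cone; second, produce harmonic coordinates at infinity and initial polynomial decay; third, sharpen the decay rate to exactly $4$, which is the main obstacle.

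For the first stage, I would combine the two hypotheses as follows. The Euclidean volume growth gives, via Bishop–Gromov and the Ricci-flat condition, a uniform noncollapsing at all scales; meanwhile, the finiteness of $\int_M |Rm|^2$ forces $\int_{M \setminus B_R(p)} |Rm|^2 \to 0$ as $R \to \infty$. Combining these with Anderson's $\varepsilon$-regularity for Einstein metrics, one concludes that on annular regions $A_R = B_{2R}(p) \setminus B_{R/2}(p)$ one has $\sup_{A_R} |Rm| = o(R^{-2})$. Rescaling the annulus by $R^{-1}$ gives a sequence of pointed Ricci-flat (hence real-analytic) manifolds with curvature going to zero and bounded volume ratios, so by Cheeger–Gromov compactness the limit is a flat cone $\mathbb{R}^4 / \Upsilon$ with $\Upsilon \subset O(4)$ acting freely on $S^3$ (freeness coming from the manifold hypothesis and local smoothness). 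A standard connectedness-of-scales argument, using that the local fundamental groups stabilize once the curvature is small, shows that the tangent cone at infinity is unique.

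For the second stage, once one knows $(M \setminus K, h)$ is a small $C^0$ perturbation of $(\mathbb{R}^4 / \Upsilon) \setminus \overline{B_R(0)}$, one solves the Dirichlet problem for the coordinate functions to produce harmonic coordinates on each annulus $A_R$ (or, equivalently, constructs a diffeomorphism that puts $h$ in harmonic gauge relative to $h_E$). In these coordinates the Ricci tensor becomes
\[
\text{Ric}(h)_{ij} = -\tfrac{1}{2} h^{k\ell} \partial_k \partial_\ell h_{ij} + Q_{ij}(h, \partial h),
\]
where $Q_{ij}$ is quadratic in $\partial h$ and the principal part is a scalar Laplacian acting componentwise. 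The Ricci-flat equation is then a quasilinear elliptic system; standard Schauder estimates on annuli, together with the decay $|Rm| = o(r^{-2})$, initiate a bootstrap giving $|h_{ij} - \delta_{ij}| + r|\partial h_{ij}| + \cdots = O(r^{-\tau})$ for some small $\tau > 0$, and then all higher derivatives pick up the expected powers of $r$.

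The third and hardest stage is upgrading $\tau$ to $4$. The idea is to separate variables on the cone: any solution of $\Delta u = 0$ on the end of $\mathbb{R}^4/\Upsilon$ that decays at infinity is a sum $\sum_k r^{-2-k} Y_k(\theta)$, where $Y_k$ runs over $\Upsilon$-invariant spherical harmonics on $S^3$ of degree $k$. Applying this mode-by-mode to the components of $h - h_E$, the potential leading corrections decay like $r^{-2}, r^{-3}, \ldots$. The key obstacle, and the heart of the theorem, is to show that the $r^{-2}$ and $r^{-3}$ modes must vanish. One way: use the harmonic gauge $h^{k\ell} \Gamma^i_{k\ell} = 0$ (equivalently $\partial^j h_{ij} = \tfrac{1}{2} \partial_i \text{tr}(h)$ at leading order) to eliminate the pure-gauge directions; then feed the remaining modes into the nonlinear term $Q_{ij}(h, \partial h)$, which is $O(r^{-2\tau-2})$ and hence negligible at the lowest orders, showing that the surviving modes satisfy the \emph{linear} Ricci-flat equation on the flat cone. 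A direct calculation (using that $\Upsilon$ acts freely on $S^3$, so there are no $\Upsilon$-invariant Killing fields on $S^3$ besides those descending from $\mathbb{R}^4/\Upsilon$, and that traceless transverse modes on $S^3$ have strictly positive eigenvalues under the Lichnerowicz Laplacian) shows that every linear mode with decay slower than $r^{-4}$ either vanishes or is pure gauge, and thus removable by a further coordinate change. I expect this last step to be the subtle one: it requires a careful treatment of the residual gauge freedom within harmonic coordinates (the so-called harmonic diffeomorphisms of the cone that decay at infinity) and a matching of the $\Upsilon$-equivariant decomposition of symmetric $2$-tensors on $S^3$ with the linearized Einstein operator. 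Once the leading decay is brought to $r^{-4}$, the nonlinearity $Q$ becomes $O(r^{-10})$ and the elliptic bootstrap closes, yielding $|\nabla^k(h - \Phi^* h_E)| = O(\rho^{-4-k})$ for all $k$.
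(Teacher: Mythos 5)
The paper does not prove this statement: it is imported as a black box from Bando--Kasue--Nakajima \cite{bkn,nakajima}, so there is no in-paper argument to compare against. Your three-stage outline is the standard route to the theorem and its architecture is sound: $\varepsilon$-regularity plus volume non-collapsing gives $|Rm|=o(\rho^{-2})$ on annuli, blow-downs give a flat cone $\mathbb{R}^4/\Upsilon$, harmonic coordinates turn Ricci-flatness into a quasilinear elliptic system, and the sharp decay rate is then dictated by the indicial roots of the linearized operator on the cone modulo gauge.

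The genuine gap is the one you flag yourself: stage three is a plan, not a proof, and it is precisely where the exponent $4$ is earned. The claim that every decaying solution of the linearized, gauge-fixed equation with rate slower than $\rho^{-4}$ vanishes or is pure gauge must be verified, not expected. Concretely: (i) at rate $-2$ the only homogeneous harmonic tensors are $c_{ij}\rho^{-2}$ with $c_{ij}$ constant, and the linearized gauge condition $\partial^jh_{ij}=\tfrac12\partial_i\operatorname{tr}h$ forces $c_{ij}=\tfrac12(\operatorname{tr}c)\,\delta_{ij}$, whence $\operatorname{tr}c=0$ and $c=0$ --- this computation should appear explicitly; (ii) at rate $-3$ the surviving $\Upsilon$-invariant modes must be matched one-for-one against $\mathcal{L}_V\delta$ for harmonic vector fields $V$ of rate $-2$ that preserve the gauge, and you give no argument that the residual gauge group is large enough; (iii) the bootstrap passes through the indicial root $-2$, where solving $\Delta u=f$ with $f$ homogeneous of the critical degree can generate $\rho^{-2}\log\rho$ terms that your clean expansion $\sum_k\rho^{-2-k}Y_k$ does not accommodate; and (iv) the nonlinear error in the gauge condition has size $O(\rho^{-2\tau-1})$, which for small initial $\tau$ dominates the derivative of the rate $-2$ mode you are trying to kill, so the iteration must first push $\tau$ up before the gauge argument at rate $-2$ becomes legitimate. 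A second, smaller gap sits in stages one and two: you produce almost-isometries and harmonic coordinates annulus by annulus, but the conclusion requires a single diffeomorphism $\Phi$ of the entire end onto $(\mathbb{R}^4\setminus\overline{B_R})/\Upsilon$; showing that $\Upsilon$ is independent of scale and gluing the annular coordinate systems is where most of the actual labor in Bando--Kasue--Nakajima's construction lies, and "a standard connectedness-of-scales argument" is not a substitute for it.
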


Let us next briefly discuss the associated K\"ahler metric $g$. Recall that for a Hermitian non-K\"ahler ALE gravitational instanton $(M,h)$, we define $\lambda:=2\sqrt{6}|W^+|_h$, and the K\"ahler metric is given by $g=\lambda^{2/3}h$. Straightforward calculation shows $|s_g|=\lambda^{1/3}>0$. By looking at the conformal change of scalar curvature equation and examining it at the maximum/minimum of $s_g$, it follows $s_g>0$, therefore $s_g=\lambda^{1/3}$. As the K\"ahler metric $g$ is conformal to a Ricci-flat metric, it is \textit{Bach-flat} K\"ahler, which therefore is \textit{extremal K\"ahler} in the sense of Calabi \cite{lebrun2}.
In this case, $(M,g)$ automatically possesses a Killing field $J\nabla_gs_g$. Since it preserves the conformal factor $\lambda^{1/3}$, $J\nabla_gs_g$ is also Killing with respect to $(M,h)$.

\begin{proposition}\label{killingfieldconformal}
The vector field $\mathcal{K}=J\nabla_g\lambda^{{1}/{3}}$ is a Killing field with respect to $g$, hence also a Killing field with respect to $h$.
\end{proposition}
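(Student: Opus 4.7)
The plan is to leverage the structure that has already been established just above the proposition: the conformal metric $g=\lambda^{2/3}h$ is Bach-flat Kähler, and by LeBrun's result in complex dimension 2 this forces $g$ to be extremal Kähler in the sense of Calabi, with $s_g=\lambda^{1/3}$. By the very definition of an extremal Kähler metric, the $(1,0)$-gradient $\nabla_g^{1,0}s_g$ is a holomorphic vector field, and equivalently $J\nabla_g s_g$ is a Killing field for $g$. I would begin by recording this step: since $\mathcal{K}=J\nabla_g\lambda^{1/3}=J\nabla_g s_g$, the vector field $\mathcal{K}$ is automatically Killing with respect to $g$.

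Next I would pass from $g$ to $h=\lambda^{-2/3}g$ by a short conformal computation. The point is that $\mathcal{K}$ preserves the conformal factor: since $J$ is skew-adjoint with respect to $g$,
\[
\mathcal{K}(s_g)=g(J\nabla_g s_g,\nabla_g s_g)=0,
\]
so $\mathcal{K}(\lambda^{1/3})=0$, and therefore $\mathcal{K}(\lambda^{-2/3})=0$ as well. Combining this with $\mathcal{L}_{\mathcal{K}}g=0$ yields
\[
\mathcal{L}_{\mathcal{K}}h=\mathcal{L}_{\mathcal{K}}(\lambda^{-2/3}g)=(\mathcal{K}(\lambda^{-2/3}))\,g+\lambda^{-2/3}\mathcal{L}_{\mathcal{K}}g=0,
\]
so $\mathcal{K}$ is Killing for $h$ as well.

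There is no real obstacle here, since once the extremal Kähler property of $g$ and the identity $s_g=\lambda^{1/3}$ are in place, the argument reduces to the standard two-line conformal computation above. The only subtlety worth flagging explicitly in the write-up is that the identification $\mathcal{K}=J\nabla_g s_g$ requires one to record the sign convention $s_g=+\lambda^{1/3}$ (rather than $-\lambda^{1/3}$), which was already settled via the maximum principle applied to the conformal change of scalar curvature equation in the paragraph immediately preceding the proposition.
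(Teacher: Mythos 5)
Your proposal is correct and follows essentially the same route as the paper: the Bach-flat Kähler metric $g$ is extremal, so $J\nabla_g s_g$ is Killing for $g$, and since $\mathcal{K}$ annihilates the conformal factor $\lambda^{-2/3}$ the Lie derivative of $h=\lambda^{-2/3}g$ also vanishes. You have merely written out the short conformal computation that the paper leaves implicit, and your remark about fixing the sign $s_g=+\lambda^{1/3}$ matches the maximum-principle argument given in the preceding paragraph.
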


From basic calculations, we find that $\mathcal{K}=-J\nabla_h\lambda^{-1/3}$. Due to the decay of the Riemannian curvature, the conformal factor $\lambda=2\sqrt{6}|W^+|_h$ decays as $\lambda=O_h'(\rho^{-6})$ as $\rho\to\infty$. Note $J\nabla_gs_g$ is the imaginary part of the holomorphic extremal vector field
$$J\nabla_gs_g=-2\Im\nabla^{1,0}_gs_g.$$ %This equation holds when $g$ is conformal to an Einstein metric.
%A Riemannian metric $g$ on a 4-manifold $M$ is called Bach-flat if the Bach tensor $B_{ij}=\left(\nabla^k\nabla^l+\frac12Ric^{kl}\right)W_{ijkl}$ vanishes identically. Bach-flat K\"ahler metrics must be extremal.
%It turns out the following stronger result holds in our case.
%\begin{proposition}\label{bachflat}
%Let $(M,h,J)$ be a  Hermitian non-K\"ahler Ricci-flat 4-manifold. Then the conformal metric $g=\lambda^{2/3}h$ is a Bach-flat K\"ahler metric, which in particular is extremal.
%\end{proposition}
%\begin{proof}
%See (11) and (12) in \cite{lebrun2}. As an extremal K\"ahler metric, $g$ will be Bach-flat if and only if 
%\begin{equation}\label{bachflatcondition}
%s_gRic_{0,g}+2\Hess_0(s_g)=0.
%\end{equation} 
%However, $g=\lambda^{2/3}h$ with $h$ Ricci-flat. Therefore $Ric_{0,g}+2s_g^{-1}\Hess_0(s_g)=Ric_{0,h}=0$, from which (\ref{bachflatcondition}) follows.
%\end{proof}

Next we introduce the Eguchi-Hanson metric but with reversed orientation, which is an example of Type II ALE gravitational instanton.
\begin{example}[Eguchi-Hanson]\label{example:eguchihanson1}
The Eguchi-Hanson metric was constructed in \cite{eguchihanson}. Let $\sigma_x,\sigma_y,\sigma_z$ be left-invariant orthogonal coframes for the sphere $S^3$. The Eguchi-Hanson metric can be explicitly written down as
\begin{equation}
h=\frac{1}{1-\left(\frac{a}{r}\right)^4}dr^2+r^2\left(\sigma_x^2+\sigma_y^2+\left(1-\left(\frac{a}{r}\right)^4\right)\sigma_z^2\right)
\end{equation}
with $r>a$, and the entire metric is obtained by considering the metric completion.
With the following orthonormal basis 
$$e^0=\sqrt{\frac{1}{1-\left(\frac{a}{r}\right)^4}}dr,\ e^1=r\sigma_x,\ e^2=r\sigma_y,\ e^3=r\sqrt{\left(1-\left(\frac{a}{r}\right)^4\right)}\sigma_z$$
and orientation given by $-e^0\wedge e^1\wedge e^2\wedge e^3$, the self-dual Weyl curvature was calculated in \cite{eguchihanson} as
\begin{equation}
W^+=\left(\begin{matrix}-\frac{2a^4}{r^6}& &\\&-\frac{2a^4}{r^6}&\\& & \frac{4a^4}{r^6}\end{matrix}\right).
\end{equation} 
Here we are using $e^1\wedge e^0+e^2\wedge e^3,\ e^2\wedge e^0+e^3\wedge e^1,e^3\wedge e^0+e^1\wedge e^2$ as a basis for $\Lambda^+$. So it is clear that the metric with this orientation is of Type II. The complex structure is given by $J:e^0\to e^3,e^2\to e^1$. The function $\lambda$ is $\lambda=2\sqrt{6}|W^+|=24\frac{a^4}{r^6}$. The conformal metric $g=\lambda^{2/3}h$ is K\"ahler and is incomplete. The scalar curvature of $g$ is $s_g=\lambda^{1/3}=2\sqrt[3]{3a^4}\frac{1}{r^2}$, which is decaying to 0.
Under the complex structure $J$, the Eguchi-Hanson space as a complex surface is $\mathcal{O}(2)$. It can be compactified to the Hirzebruch surface $H_2=\mathbb{P}(\mathcal{O}\oplus\mathcal{O}(2))$ by adding a curve $C_\infty=\mathbb{P}^1$ with self-intersection $-2$ at infinity, and this divisor can be contracted to an orbifold point with orbifold group $\mathbb{Z}_2$. The orbifold $\widehat{M}$ we get after the contraction is log del Pezzo, and the K\"ahler metric $g$ extends to $\widehat{M}$ as an orbifold K\"ahler metric $\widehat{g}$. The pair $(\widehat{M},\widehat{g})$ is the corresponding special Bach-flat K\"ahler orbifold for the Eguchi-Hanson space with reversed orientation, appearing in Theorem \ref{main:correspondence}.
\end{example}

\section{Compactification}
\label{sec:compactification}

In this section, for a Hermitian non-K\"ahler gravitational instanton $(M,h)$, we prove that the conformal K\"ahler space $(M,g)$ can be naturally compactified to a K\"ahler orbifold by adding one point. This establishs the correspondence stated in Theorem \ref{main:correspondence}.

\subsection{Metric completion and lower bound on the Killing field $\mathcal{K}$}

By fixing an ALE coordinate for the Hermitian non-K\"ahler gravitational instanton $(M,h)$, it is easy to conclude that there is an Euclidean Killing field $\mathcal{K}_\infty$ on $\mathbb{R}^4/\Upsilon$, such that $\mathcal{K}-\mathcal{K}_\infty=O'(\rho^{-3})$. The rate can be chosen as $3$ since the rate of $h$ asymptotic to the Euclidean metric is at least 4 by Bando-Kasue-Nakajima. The norm of the Killing field $\mathcal{K}=-J\nabla_h\lambda^{-1/3}$ at most grows like $\rho$ by basic comparison geometry over the ALE end. Integrating this from the base point $p$, one gets
$$\lambda^{-1/3}(x)\leq\lambda^{-1/3}(p)+\int_{px}|\nabla_h\lambda^{-1/3}|_h\leq\lambda^{-1/3}(p)+C\int_{px}\rho\leq C\rho^2(x)$$
for some constant $C$ when $\rho$ is large enough. This implies $\lambda\geq C\rho^{-6}$. With the curvature decay ensured by Bando-Kasue-Nakajima, it follows that
\begin{equation}\label{eq:lambdadecay}
C_1\rho^{-6}\leq\lambda\leq C_2\rho^{-6}
\end{equation}
holds on the end of $M$.

\begin{proposition}\label{prop:nonvanishing}
The Euclidean Killing field $\mathcal{K}_\infty$ is nowhere vanishig.
\end{proposition}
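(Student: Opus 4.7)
The plan is to derive a contradiction from any hypothetical zero of $\mathcal{K}_\infty$ outside the origin, via a ray argument combined with the lower bound $f := \lambda^{-1/3} \geq c\rho^2$ (which is equivalent to the already established $\lambda \leq C_2 \rho^{-6}$).

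First, I would pin down the structural form of $\mathcal{K}_\infty$. Every Killing field on Euclidean $\mathbb{R}^4$ has the form $\mathcal{K}_\infty(x) = Ax + b$ with $A \in \mathfrak{so}(4)$ and $b \in \mathbb{R}^4$. Since $\mathcal{K}_\infty$ is pulled back from $\mathbb{R}^4/\Upsilon$, it must be $\Upsilon$-equivariant, forcing $\gamma A = A\gamma$ and $(I-\gamma)b = 0$ for every $\gamma \in \Upsilon$. Because $\Upsilon$ acts freely on $S^3$, the only vector fixed by a nontrivial $\gamma$ is zero, and hence $b = 0$ whenever $\Upsilon \neq \{1\}$. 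The case $\Upsilon = \{1\}$ can be excluded up front: a Ricci-flat $4$-manifold asymptotic to $\mathbb{R}^4$ is forced to be flat by Bishop--Gromov rigidity, and hence is certainly not Hermitian non-K\"ahler. Thus $\mathcal{K}_\infty(x) = Ax$ is a pure infinitesimal rotation, and proving it is nowhere vanishing on $(\mathbb{R}^4 \setminus \{0\})/\Upsilon$ amounts to showing $\ker A = \{0\}$.

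Now suppose for contradiction that $A p_0 = 0$ for some unit vector $p_0 \in \mathbb{R}^4$. Then $\mathcal{K}_\infty$ vanishes identically along the ray $R_{p_0} = \{t p_0 : t \geq 1\}$. Via the ALE diffeomorphism $\Phi$, this ray corresponds, for $t$ large, to a smooth curve in the end of $M$ along which the distance $\rho$ is comparable to $t$. The asymptotic expansion $\mathcal{K} = \mathcal{K}_\infty + O'(\rho^{-3})$ combined with $\mathcal{K} = -J\nabla_h f$ then yields $|\nabla_h f| = |\mathcal{K}| = O(t^{-3})$ along this curve.

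Integrating along $R_{p_0}$ produces
$$f\bigl(\Phi^{-1}(tp_0)\bigr) - f\bigl(\Phi^{-1}(p_0)\bigr) = \int_1^t \partial_s f\bigl(\Phi^{-1}(sp_0)\bigr)\, ds,$$
whose integrand is bounded by $|\nabla_h f| \cdot |\partial_s \Phi^{-1}(sp_0)|_h = O(s^{-3})$. The right-hand side is therefore uniformly bounded as $t \to \infty$, contradicting the lower bound $f \geq c\rho^2 \to \infty$ along the ray. The only real subtlety is the preliminary reduction to $b = 0$; once $\mathcal{K}_\infty$ is known to be a pure rotation, the rest is essentially a one-dimensional ODE estimate, and I expect no substantial obstacle beyond keeping the ALE coordinate identifications honest.
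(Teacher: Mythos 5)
Your proof is correct and follows essentially the same route as the paper: assume $\mathcal{K}_\infty$ vanishes somewhere, note it then vanishes along the whole ray through that point, and integrate $|\nabla_h\lambda^{-1/3}|=O(\rho^{-3})$ along the ray to contradict $\lambda^{-1/3}\geq c\rho^2$. Your preliminary reduction showing the translation part $b$ vanishes (via $\Upsilon$-equivariance for $\Upsilon\neq\{1\}$, and Bishop--Gromov rigidity for $\Upsilon=\{1\}$) is a point the paper leaves implicit by writing $\mathcal{K}_\infty=\sum\alpha_{ij}x_i\partial_{x_j}$ outright, and is a worthwhile addition since the ray-propagation step genuinely requires it.
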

\begin{proof}
We prove the proposition by contradiction.
Suppose that the Euclidean Killing field $\mathcal{K}_\infty=\sum_{i,j}\alpha_{ij}x_i\frac{\partial}{\partial x_j}$ vanishes at some point. Then it must vanish along the entire ray, starting from origin passing through the point in $\mathbb{R}^4/\Upsilon$.
Along this ray we have
\begin{equation}\label{directionv}
\nabla_{h}\lambda^{-1/3}=J\mathcal{K}_\infty+O(\rho^{-3})=O(\rho^{-3}).
\end{equation}
Integrating (\ref{directionv}), one gets that along this ray
$\lambda^{-1/3}\leq C$, which contradicts the fact that $\lambda$ is decaying to 0.
\end{proof}
\begin{remark}
Equation (\ref{eq:lambdadecay}) implies the existence of constants $C_1$ and $C_2$ such that
\begin{align}
C_1\rho^{-6}\leq |W^+|_h\leq C_2\rho^{-6}.
\end{align}
This result has an interesting consequence: if the self-dual Weyl curvature of a Hermitian ALE gravitational instanton decays faster than $\rho^{-6}$, then the manifold must be K\"ahler. This shows Corollary \ref{cor:hermitiandecay}.
\end{remark}

The bound (\ref{eq:lambdadecay}) tells us that the conformal factor satisfies $C_1\rho^{-4}\leq \lambda^{2/3}\leq C_2\rho^{-4}$ at infinity, therefore the following proposition is a direct computation.

\begin{proposition}\label{removal1}
The metric completion of $(M,g)$ is  adding one point $q$ at infinity. 
\end{proposition}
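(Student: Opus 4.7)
The plan is to use the pointwise two-sided bound $C_1\rho^{-6}\le\lambda\le C_2\rho^{-6}$ from \eqref{eq:lambdadecay} together with the ALE decay $h=\Phi^*h_E+O'(\rho^{-4})$ to show that on the end, the K\"ahler metric $g=\lambda^{2/3}h$ is uniformly comparable to $\rho^{-4}h_E$. Concretely, I would fix the ALE coordinate $\Phi:M\setminus K\to(\mathbb{R}^4\setminus\overline{B_R(0)})/\Upsilon$ and argue that
\[
C_1\,\rho^{-4}\,h_E\ \le\ g\ \le\ C_2\,\rho^{-4}\,h_E
\]
on the end (possibly after enlarging $R$), since $\lambda^{2/3}\asymp\rho^{-4}$ and the residual term $h-h_E$ is a lower-order perturbation of $h_E$ in the comparison of quadratic forms.

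From this comparison everything follows by a routine integration. First, the $g$-distance from any fixed base point out to infinity along a radial ray in the ALE coordinate is bounded above by $\int_R^\infty C\,\rho^{-2}\,d\rho<\infty$, so the end has finite $g$-diameter and every divergent sequence in $(M,h)$ is $g$-Cauchy. Second, on a coordinate sphere $\{\rho=r\}$ the $h_E$-diameter is at most $2\pi r$ (the quotient by $\Upsilon$ only makes it smaller), so its $g$-diameter is at most $C r^{-2}\cdot r=C r^{-1}\to 0$ as $r\to\infty$. Hence any two divergent sequences are eventually $g$-close, which forces them to represent the same class in the metric completion.

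Equivalently, and perhaps more conceptually, I would perform an inversion $y=x/|x|^2$ on the asymptotic model. A direct calculation shows that inversion pulls back $|x|^{-4}h_E$ to $h_E$ on a punctured neighborhood of the origin in $\mathbb{R}^4/\Upsilon$, so the end of $(M,g)$ is approximately a punctured flat ball. The metric completion of a punctured flat ball (on a Euclidean quotient by a finite group acting freely on $S^3$) adds exactly the puncture as a single point; the error terms coming from $h-h_E$ and from $\lambda^{2/3}-c\rho^{-4}$ are too small to alter this picture, because they are controlled by inverse powers of $\rho$ that become small powers of $|y|$ under the inversion.

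There is no real obstacle here: the argument is essentially the observation that $g$-length decays like $\rho^{-2}\,dh_{E}$ on the ALE end, which is exactly the borderline behavior that produces a single added point under completion (as opposed to a cone at infinity or an unbounded end). The only place one must be a little careful is in combining the two-sided bound on $\lambda$ with the $\Upsilon$-quotient structure to conclude that the completion point is unique rather than a single point per component at infinity; but $\mathbb{R}^4/\Upsilon$ has a connected end, so this is immediate.
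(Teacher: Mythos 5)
Your proof is correct and follows exactly the route the paper intends: the paper derives $C_1\rho^{-4}\le\lambda^{2/3}\le C_2\rho^{-4}$ and then declares the proposition a ``direct computation,'' and your argument (finite $g$-length of radial rays, $g$-diameter of coordinate spheres decaying like $r^{-1}$, connectedness of the end of $\mathbb{R}^4/\Upsilon$) is precisely that computation carried out; the inversion picture you mention is also the one the paper itself uses a few lines later to identify the tangent cone at $q$. No gaps.
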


The metric completion will be denoted by $\widehat{M}=M\cup\{q\}$.

\subsection{Removing singularity}
\label{sec:removal}

In this subsection we derive estimates on the curvature tensor $Rm_g$ of the K\"ahler metric $g$ near the added point $q$, in order to show that $g$ extends as a smooth orbifold metric across $q$.
We define $r_q$ to be the distance function to $q$ in $\widehat{M}$ equipped with the length space structure induced by the metric $g$. A tensor $T$ on $\widehat{M}\setminus\{q\}$ is said to be $O'_{g}(r_q^{\delta})$ if for any integer $k\geq0$, there exists a constant $C$ such that 
$|\nabla_g^kT|_g\leq Cr_q^{\delta-k}$ 
near the added point $q$.

\begin{theorem}\label{thm:removal}
The extremal K\"ahler metric $g$ on $M$ extends to a smooth extremal K\"ahler orbifold metric $\widehat{g}$ on $\widehat{M}$. The metric $\widehat{g}$ is Bach-flat.
\end{theorem}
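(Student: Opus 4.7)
My strategy is to exploit the ALE structure of $(M,h)$ at infinity via a Euclidean-type inversion to describe $g$ near $q$, and then use the Bach-flat K\"ahler equation to upgrade the resulting continuous extension to a smooth orbifold extension.

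First, I would combine Theorem \ref{bkn} (which yields $h = h_E + O'(\rho^{-4})$ and $|Rm_h|_h = O(\rho^{-6})$) with the sharp bound (\ref{eq:lambdadecay}) and the Killing condition for $\mathcal{K} = -J\nabla_h\lambda^{-1/3}$ to establish an asymptotic expansion of the form $\lambda = c_0\rho^{-6} + (\text{lower order})$ with $c_0 > 0$. Performing the Euclidean inversion $y = x/|x|^2$ on $(\mathbb{R}^4\setminus\overline{B_R})/\Upsilon$, the Euclidean metric pulls back to $|y|^{-4}(h_E)_y$ and hence
\[
g = \lambda^{2/3}\, h \;\longrightarrow\; c_0^{2/3}\,(h_E)_y \text{ as } y\to 0.
\]
This identifies the metric completion $q$ with the origin in $\mathbb{B}/\Upsilon$ and produces a $C^0$-orbifold chart near $q$ in which $g$ is close to a flat model.

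Next I would upgrade this $C^0$-picture to smoothness. The crucial cancellation is that for the exact model $h = h_E$, $\lambda = c_0\rho^{-6}$, the conformal change formula gives $Rm_g \equiv 0$ (the inverted Euclidean metric is again flat), so subleading terms in the asymptotic expansion produce controlled $|Rm_g|_g$ near $q$, and the complex structure $J$, being $g$-parallel, extends across $q$ in the orbifold chart. On the uniformizing cover $\mathbb{B}^*$ I would fix a local K\"ahler potential $\varphi$ for $g$. In complex dimension $2$, the Bach-flat K\"ahler condition is equivalent to the extremal K\"ahler equation and becomes a fourth-order quasilinear elliptic system for $\varphi$, uniformly elliptic by the $C^0$ control from step one. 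Bootstrapping through $\varepsilon$-regularity for critical K\"ahler metrics in the spirit of Tian and Cheeger-Tian then yields $\varphi \in C^\infty$ at the origin, hence a smooth orbifold extension $\widehat g$ on $\widehat M$. Bach-flatness of $\widehat g$ follows by continuity of the Bach tensor from the dense smooth locus $M$.

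The main obstacle I anticipate is the first step: extracting a sharp enough asymptotic expansion of $\lambda$ and its covariant derivatives to deduce genuine $C^0$-closeness of $g$ to a flat model in the inverted coordinates, and producing decay in the higher-order corrections sufficient to drive the elliptic bootstrap. This requires coupling the Ricci-flat equation for $h$, the precise ALE expansion beyond leading order, and the Killing equation for $\mathcal{K}$ to promote the scale-invariant bound $\lambda \asymp \rho^{-6}$ into genuine decay of $\lambda - c_0\rho^{-6}$, and then transferring this control into the inverted gauge.
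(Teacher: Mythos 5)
Your outline has a genuine gap at its foundation, in two linked places. First, the expansion $\lambda=c_0\rho^{-6}+(\text{lower order})$ with \emph{constant} $c_0$ is not something you can extract from the ALE expansion of $h$ plus the Killing equation for $\mathcal{K}$. What those data give is $\nabla_h\lambda^{-1/3}=J\mathcal{K}$ with $\mathcal{K}\to\mathcal{K}_\infty$, so the leading term of $\lambda^{-1/3}$ is a quadratic potential of the asymptotic Killing field, which in general is an \emph{anisotropic} quadratic form (think $\tfrac12(\theta_1|z_1|^2+\theta_2|z_2|^2)$ with $\theta_1\neq\theta_2$), not a multiple of $|x|^2$. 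With an anisotropic leading term, the inverted metric is asymptotic to $q(\omega)^{-2}(h_E)_y$ with $q$ a nonconstant function of the angle, which is not $C^0$-close to a flat orbifold model and in fact has unbounded curvature at the origin; so your step one assumes exactly the isotropy (``equal weights at $q$'') that is one of the hard conclusions. In the paper this isotropy is only obtained \emph{after} the smooth extension, from the Bach-flat K\"ahler equation $0=s_gRic^0_g+2\mathrm{Hess}_0(s_g)$ evaluated at $q$ (Corollary \ref{behaviorofs}); the two-sided bound (\ref{eq:lambdadecay}) by itself does not force it.

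Second, the ``crucial cancellation'' you invoke to claim controlled $|Rm_g|_g$ near $q$ does not address the real difficulty: the scalar and Weyl parts of $Rm_g$ are indeed easy (conformal invariance of $W$, decay of $s_g$), but the trace-free Ricci of $g$ is governed by $s_g^{-1}\mathrm{Hess}_0(s_g)$ and does \emph{not} follow from the conformal change or from an unproved expansion. This is precisely where the paper's Proposition \ref{curvaturebound} inserts a nontrivial idea: the dimension-4 Bianchi identity expressing $\delta W$ in terms of $\nabla Ric$ and $\nabla s_g$, combined with the $J$-invariance of $Ric_g$ and the parallelism of $J$, yields $\nabla_gRic_g=O_g'(r_q)$ and hence bounded Ricci. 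Without that (or an equivalent mechanism), the claimed uniform ellipticity and the $\varepsilon$-regularity bootstrap for the fourth-order extremal/Bach-flat system have nothing to stand on; note also that a $C^0$ chart alone is not enough to run that bootstrap -- the paper first produces a $C^{1,\alpha}$ orbifold chart via tangent-cone analysis (Proposition \ref{c1a}), then uses Hartogs' extension of the holomorphic extremal field (Proposition \ref{prop:scal}) to rewrite $s_g$ in terms of $g$ before bootstrapping through the Monge--Amp\`ere and scalar curvature equations. If you want to salvage your route, you must either prove the Ricci bound by an argument of the above type or replace it with a bona fide removable-singularity theorem for Bach-flat (non-CSC) K\"ahler metrics under finite energy and non-collapsing, and you must not presuppose the constant-$c_0$ (equal-weights) asymptotics.
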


The rest of this subsection is devoted to proving this theorem.
Only need to show that $g$ can be extended smoothly. This theorem already establishes one direction of the correspondence in Theorem \ref{main:correspondence}.

\begin{proposition}\label{curvaturebound}
$|Rm_g|_g$ is bounded near $q$ and $\nabla_g Rm_g=O_g'(r_q)$.
\end{proposition}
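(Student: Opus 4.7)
The natural approach is to bound $Rm_g$ via the conformal change $g = e^{2u}h$ with $u = \tfrac{1}{3}\log\lambda$, combining Ricci-flatness of $h$ with Bando-Kasue-Nakajima (Theorem \ref{bkn}). As a preliminary, integrating $\lambda^{1/3}\,ds_h$ to infinity and using $\lambda \asymp \rho^{-6}$ (cf.\ (\ref{eq:lambdadecay})) gives the distance comparison $r_q(x) \asymp \rho(x)^{-1}$ on the end. The Weyl bounds are then immediate: since $g$ is K\"ahler, $|W_g^+|_g = s_g/(2\sqrt{6}) = \lambda^{1/3}/(2\sqrt{6}) = O(r_q^2)$, and by the $(0,4)$-conformal rescaling $|W_g^-|_g = \lambda^{-2/3}|W_h^-|_h$ together with $|W_h^-|_h \le |Rm_h|_h = O(\rho^{-6})$, we also get $|W_g^-|_g = O(r_q^2)$.

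The delicate point is the Ricci tensor. From the conformal formula with $Ric_h = 0$ and $n=4$,
\begin{equation*}
Ric_g = -2\bigl(\nabla_h^2 u - du\otimes du\bigr) - \bigl(\Delta_h u + 2|\nabla_h u|_h^2\bigr)\,h,
\end{equation*}
a term-by-term bound only yields $|Ric_g|_h = O(\rho^{-2})$, i.e.\ $|Ric_g|_g = O(r_q^{-2})$, which is not enough. To improve, I would exploit the asymptotic form of $u$. Using Proposition \ref{prop:nonvanishing} together with $\mathcal{K} - \mathcal{K}_\infty = O'(\rho^{-3})$, one can expand $\lambda^{-1/3} = q + O'(\rho^{-2})$, where $q$ is the Hamiltonian of $\mathcal{K}_\infty$ with respect to the asymptotic K\"ahler form---a positive quadratic function on the Euclidean end. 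Writing $u = -\log q + \tilde{u}$ with $\tilde{u} = O'(\rho^{-4})$, the leading piece of $Ric_g$ coincides with the Ricci tensor of the model metric $q^{-2}h_E$ on $\mathbb{R}^4/\Upsilon$; under the coordinate inversion $\tilde{x} = x/|x|^2$ this model extends across the point at infinity as a smooth orbifold K\"ahler metric (it is literally flat when $q = c|x|^2$, as in Eguchi-Hanson), so its Ricci is bounded in the $g$-norm. The remaining corrections from $h - h_E = O'(\rho^{-4})$ and $\tilde{u} = O'(\rho^{-4})$ contribute $O'(\rho^{-6})$ terms in $h$-norm, hence $O(r_q^2)$ in $g$-norm. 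Combining with the Weyl pieces via the K\"ahler curvature decomposition yields $|Rm_g|_g$ bounded (indeed $O(r_q^2)$) near $q$.

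For the derivative bound $\nabla_g Rm_g = O'_g(r_q)$, I would iterate by applying additional covariant derivatives to the conformal formula and using $\nabla_h^k(h - h_E) = O(\rho^{-4-k})$ along with the corresponding decay of $\nabla_h^k\lambda$; each additional $h$-derivative costs a factor $\rho^{-1} \asymp r_q$ in $g$-norm. Higher $g$-derivatives of $Rm_g$ then follow by bootstrapping through the $4$th-order elliptic system given by the Bach-flat (equivalently, extremal K\"ahler) equation satisfied by $g$.

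\textbf{Main obstacle.} The decisive step is the Ricci cancellation: the two $O(\rho^{-2})$ terms in the conformal Ricci formula must combine, modulo the bounded model contribution from $q^{-2}h_E$, to leave only an $O(\rho^{-6})$ remainder in $h$-norm. This requires carefully tracking $\nabla_h^2 u - \nabla_{h_E}^2 u$ via the Christoffel discrepancy $\Gamma_h - \Gamma_{h_E} = O(\rho^{-5})$ coming from BKN, the perturbation of $|\nabla u|^2$ from $h_E$ to $h$, and the contribution of $\tilde{u} = O'(\rho^{-4})$; and it relies on the geometric fact that $q^{-2}h_E$ compactifies smoothly as an orbifold K\"ahler metric at the point at infinity of $\mathbb{R}^4/\Upsilon$ even when $q$ is not spherically symmetric (which can occur for general $\Gamma \subset U(2)$, e.g.\ cyclic subgroups)---a direct coordinate check, but the most bookkeeping-intensive part of the argument.
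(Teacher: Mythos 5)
Your treatment of the scalar and Weyl pieces is fine and matches the paper's, and the distance comparison $r_q\asymp\rho^{-1}$ is correct. The gap is in the decisive step for the Ricci tensor. Writing $\mathcal{K}_\infty=\sum\alpha_{ij}x_i\partial_j$ with $\alpha\in\mathfrak{u}(2)$, the Hamiltonian is $\frac12x^TAx$ with $A=J\alpha$ Hermitian positive definite, i.e.\ in suitable complex coordinates $\frac12(a|z_1|^2+b|z_2|^2)$ with $a,b>0$ but \emph{a priori} $a\neq b$. Your claim that the model metric $(\tfrac12 x^TAx)^{-2}h_E$ extends across the point at infinity as a smooth orbifold metric with bounded curvature is false unless $a=b$: for a conformal factor $e^{2f}$ with $f=-\log(\tfrac12x^TAx)$, the quantity $-f_{ii}-f_{jj}+f_i^2+f_j^2-|\nabla f|^2$ is homogeneous of degree $-2$ and, evaluated at $(t,0,0,0)$ in the $(z_2)$-plane, equals $\frac{4}{t^2}(\frac{b}{a}-1)$; after rescaling by $e^{-2f}$ the sectional curvatures of the model behave like $(\frac{b}{a}-1)\,r_q^{-2}$ near the puncture. (When $a=b$ the model is $c|x|^{-4}h_E$, which is literally flat, consistent with your Eguchi--Hanson remark.) So the boundedness of $Ric_g$ in your scheme hinges on $a=b$, i.e.\ on the equality of the two weights of the Killing field at the added point. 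But in the paper that equality is precisely Corollary \ref{behaviorofs}: it is deduced from the Bach-flat identity $0=s_gRic_g^0+2\Hess_0(s_g)$ evaluated at $q$, which requires first knowing that $Ric_g^0$ is bounded there --- exactly the statement of Proposition \ref{curvaturebound}. As written, your argument assumes its own conclusion at the key point, and no independent proof of $a=b$ is offered (holomorphy of $\nabla^{1,0}s_g$ does not force it: $-\frac{a}{2}z_1\partial_1-\frac{b}{2}z_2\partial_2$ is holomorphic for all $a,b$). A secondary inaccuracy: the integration constant in $\lambda^{-1/3}=\tfrac12x^TAx+c+O'(\rho^{-2})$ makes $\tilde u=O'(\rho^{-2})$ rather than $O'(\rho^{-4})$, which degrades several of your error terms from $O(r_q^2)$ to $O(1)$; this still gives boundedness but not the cleaner decay you state.

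For contrast, the paper's proof avoids any asymptotic expansion. It controls $s_g$ and $W_g$ as you do, and then bounds $\nabla_gRic_g$ purely algebraically: the contracted second Bianchi identity in dimension $4$ expresses the antisymmetrization $\nabla_\eta R_{\beta\gamma}-\nabla_\gamma R_{\beta\eta}$ in terms of $\nabla W$ and $\nabla s_g$, both already $O_g'(r_q)$; combining this with the $J$-invariance of the Ricci tensor of a K\"ahler metric and $\nabla J=0$, one commutes indices and inserts $J$ repeatedly to conclude $\nabla_\alpha R_{\beta\gamma}=-\nabla_\alpha R_{\beta\gamma}+O_g'(r_q)$, hence $\nabla Ric_g=O_g'(r_q)$ and $Ric_g$ is bounded. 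If you want to salvage your route, you would need to first prove $a=b$ by some independent argument; otherwise the Bianchi--K\"ahler symmetry argument is the way the circularity is broken.
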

\begin{proof}
Since the Riemannian curvature decomposes to the scalar curvature part, the traceless Ricci curvature part, and the Weyl curvature part, we only need to treat them separately. In the proof $\nabla$ refers to the Levi-Civita connection of $g$.
For the scalar curvature, $s_g=\lambda^{1/3}=O'_h(\rho^{-2})$. It is a direct calculation to check that $s_g=O'_g(r_q^2)$ near $q$, and particularly it is bounded. 
For the Weyl curvature, $W=O_h'(\rho^{-2})$. Based on the conformal invariance of the Weyl curvature, it is again straightforward to compute that $W=O_g'(r_q^2)$.

The estimate on the Ricci curvature however does not follow directly from the conformal change.
The following computation was used by Tanno \cite{tanno}.
In dimension 4 we have
\begin{equation}
2\nabla_{\alpha}W^{\alpha}_{\beta\gamma\eta}=(\nabla_{\eta}R_{\beta\gamma}-\nabla_{\gamma}R_{\beta\eta})-\frac16(g_{\beta\gamma}\nabla_{\eta}s_g-g_{\beta\eta}\nabla_{\gamma}s_g).
\end{equation}
Here $R_{\alpha\beta}$ is the Ricci curvature.
This shows
\begin{align}\label{eq:commutericci}
\nabla_{\eta}R_{\beta\gamma}-\nabla_{\gamma}R_{\beta\eta}&=2\nabla_{\alpha}W^{\alpha}_{\beta\gamma\eta}+\frac16(g_{\beta\gamma}\nabla_{\eta}s_g-g_{\beta\eta}\nabla_{\gamma}s_g)\notag\\
&=O_g'(r_q),
\end{align}
as a $(3,0)$ tensor.
The K\"ahler condition gives that the Ricci curvature is $J$-invariant and the complex structure $J$ is parallel, so we have
$$R_{\beta\gamma}=R_{\mu\nu}J^\mu_{\beta}J^{\nu}_{\gamma}.$$
\begin{equation}\label{eq:bg}
\nabla_{\alpha}R_{\beta\gamma}=\nabla_{\alpha}R_{\mu\nu}J^\mu_{\beta}J^{\nu}_{\gamma}.
\end{equation}
Therefore by applying (\ref{eq:commutericci}),
\begin{align}\label{eq:ag}
\nabla_{\alpha}R_{\beta\gamma}&=
\nabla_\beta R_{\alpha\gamma}+O_g'(r_q)\notag\\
&=\nabla_\beta R_{\mu\nu}J^\mu_\alpha J^\nu_\gamma+O_g'(r_q)\notag\\
&=\nabla_{\mu}R_{\beta\nu}J_{\alpha}^{\mu}J_{\gamma}^{\nu}+O_g'(r_q),
\end{align}
Similarly,
\begin{equation}\label{eq:ab}
\nabla_{\alpha}R_{\beta\gamma}=\nabla_{\mu}R_{\nu\gamma}J_{\alpha}^\mu J_{\beta}^\nu+O_g'(r_q).
\end{equation}
Combining (\ref{eq:bg}), (\ref{eq:ag}), and (\ref{eq:ab}), we have
\begin{align}
\nabla_{\alpha}R_{\beta\gamma}&=\nabla_{\mu}R_{\nu\gamma}J_{\alpha}^\mu J_{\beta}^\nu+O_g'(r_q)\notag\\
&=\nabla_{\mu}R_{\phi\psi}J_{\nu}^\phi J_{\gamma}^{\psi}J_{\alpha}^\mu J_{\beta}^\nu+O_g'(r_q)\notag\\
&=\nabla_{\theta}R_{\phi\tau}J_{\mu}^\theta J_{\psi}^{\tau}J_{\nu}^\phi J_{\gamma}^{\psi}J_{\alpha}^\mu J_{\beta}^\nu+O_g'(r_q)\notag\\
&=-\nabla_{\alpha}R_{\beta\gamma}+O_g'(r_q).
\end{align}
Thus $\nabla_\alpha R_{\beta\gamma}=O_g'(r_q)$ as a $(3,0)$ tensor, which shows the Ricci curvature of $g$ is also bounded near $q$.
\end{proof}

\begin{proposition}\label{volumebound}
There are positive constants $V_1,V_2$ such that
\begin{equation}
V_1r^4\leq \mathrm{Vol}(B_{g}(q,r))\leq V_2r^4.
\end{equation}
\end{proposition}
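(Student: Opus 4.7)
The plan is to reduce both inequalities to an explicit integral on the ALE end, combining the two-sided decay estimate $C_1 \rho^{-6} \leq \lambda \leq C_2 \rho^{-6}$ from (\ref{eq:lambdadecay}) with the fact that $h$ is asymptotic to the Euclidean metric on $\mathbb{R}^4/\Upsilon$.

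First I would establish the comparison $r_q(x) \asymp \rho(x)^{-1}$ at infinity. For the upper bound, integrating $\lambda^{1/3} \leq C_2^{1/3} \rho^{-2}$ along the radial ray in the ALE chart from $x$ (with $\rho(x) = R$ large) out to infinity yields a path of $g$-length at most $C_2^{1/3} \int_R^\infty \rho^{-2}\,d\rho = C_2^{1/3} R^{-1}$, so $r_q(x) \leq C_2^{1/3} R^{-1}$. For the lower bound, any curve $\gamma$ in $M$ starting at $x$ and escaping every compact set has image covering the entire range $\rho \in [R,\infty)$ by the intermediate value theorem; combined with $|d\rho|_h \leq 1$ and $\lambda^{1/3} \geq C_1^{1/3} \rho^{-2}$, this forces
$$\mathrm{length}_g(\gamma) \geq C_1^{1/3}\int_\gamma \rho^{-2}\, ds_h \geq C_1^{1/3}\int_R^{\infty} \rho^{-2}\, d\rho = C_1^{1/3} R^{-1}.$$

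Next I would compute the volume as a conformal integral. Since $g = \lambda^{2/3} h$ in real dimension four, $dV_g = \lambda^{4/3}\, dV_h$, and hence
$$\mathrm{Vol}(B_g(q,r)) = \int_{\{r_q < r\} \cap M} \lambda^{4/3}\, dV_h.$$
By the previous step, for small $r$ the sublevel set $\{r_q < r\}$ is sandwiched between $\{\rho > a/r\}$ and $\{\rho > b/r\}$ for constants $0 < b < a$ depending only on $C_1$ and $C_2$. On the ALE end, $dV_h$ is comparable to $\rho^3\, d\rho\, d\sigma$, where $d\sigma$ is the volume form on $S^3/\Upsilon$, while $\lambda^{4/3}$ lies between $C_1^{4/3}\rho^{-8}$ and $C_2^{4/3}\rho^{-8}$. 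The resulting integrand, of the form $\rho^{-5}\, d\rho$, integrates from $c/r$ to $\infty$ to a constant multiple of $r^4$, bounding the volume both above and below. The contribution of any fixed compact interior remains bounded and is absorbed as a lower-order term as $r \to 0$.

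No serious obstacle is expected, as the argument is at its core a change-of-variables calculation. The only step requiring real care is the lower bound $r_q \geq C_1^{1/3} \rho^{-1}$, where one must rule out exotic non-radial escape paths to $q$; the 1-Lipschitz property of $\rho$ combined with the intermediate value theorem handles this cleanly.
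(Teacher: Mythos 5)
Your argument is correct and is exactly the computation the paper leaves implicit: the paper's proof is the one-line remark that the claim follows from $g=\lambda^{2/3}h$ with $h$ ALE and $C_1\rho^{-6}\leq\lambda\leq C_2\rho^{-6}$, and your write-up supplies precisely that calculation (the comparison $r_q\asymp\rho^{-1}$, the volume form identity $dV_g=\lambda^{4/3}dV_h$, and the resulting $\int_{c/r}^{\infty}\rho^{-5}\,d\rho\asymp r^4$). No issues.
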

\begin{proof}
This is because $g=\lambda^{2/3}h$ with $h$ ALE and $C_1\rho^{-6}<\lambda<C_2\rho^{-6}$.
\end{proof}
\begin{proposition}
The tangent cone of $(\widehat{M},g)$ at $q$ is unique, isometric to a Euclidean quotient $\mathbb{R}^4/\Gamma$. Moreover, the convergence to the tangent cone is in $C^\infty$ topology.
\end{proposition}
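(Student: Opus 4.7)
The plan is to first apply Cheeger-Gromov compactness to extract subsequential tangent cones at $q$ using the bounded curvature of Proposition \ref{curvaturebound} and the volume bound of Proposition \ref{volumebound}, identify any limit as a flat Euclidean quotient, and then upgrade to unique $C^\infty$ convergence by exploiting the explicit conformal inversion relating a neighborhood of $q$ to the ALE end of $(M,h)$.

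For existence and flatness, the rescaled pointed spaces $(\widehat{M}, r_i^{-2}g, q)$ have uniform $|Rm|$ bounds and Euclidean volume growth, so a standard Cheeger-Gromov compactness argument yields subsequential pointed $C^{1,\alpha}$ limits. Since $|Rm_{r_i^{-2}g}|_{r_i^{-2}g} = r_i^{2} |Rm_g|_g \to 0$ on any fixed bounded region, every such limit is flat, and the preserved Euclidean volume bound forces every limit to be isometric to $\mathbb{R}^4/\Gamma$ for some finite $\Gamma \subset O(4)$ acting freely on $S^3$.

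To identify the limit and prove uniqueness, I would use the Euclidean inversion $y = x/|x|^2$ on the ALE end, which identifies a punctured neighborhood of $q$ in $\widehat{M}$ with a punctured neighborhood of $0$ in $\mathbb{R}^4/\Gamma$, where $\Gamma$ is conjugate to $\Upsilon$ in $O(4)$. Write $g = (\lambda |x|^6)^{2/3} \bigl(|x|^{-4} h\bigr)$. Since $|x|^{-4} h_E$ is precisely the Euclidean metric in $y$ coordinates, and $h - h_E = O'_h(|x|^{-4})$ smoothly, the factor $|x|^{-4} h$ extends smoothly across $y = 0$ to the flat metric on $\mathbb{R}^4/\Gamma$. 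The radial limit $f(\theta) := \lim_{|x|\to\infty} \lambda |x|^6$ along direction $\theta$ must be a positive constant $c_0$ independent of $\theta$: otherwise the tangent cone would be the conical metric $f(\theta)^{2/3} h_E$ on $\mathbb{R}^4/\Gamma$, which has nontrivial angular warping and cannot be flat, contradicting the previous paragraph. Thus the tangent cone is unique, isometric to $\mathbb{R}^4/\Gamma$ up to the overall scaling $c_0^{2/3}$. Finally, $C^\infty$ convergence follows either from the smooth polynomial expansion of $g$ in $y$ coordinates, which inherits smooth decay from $h - h_E$ and from $\lambda = 2\sqrt{6}|W^+|_h$ (which decays as $O'_h(|x|^{-6})$ via the ALE control on $Rm_h$), or equivalently by bootstrapping $C^{1,\alpha}$ closeness to $C^\infty$ through the fourth-order elliptic system satisfied by the extremal K\"ahler metric $g$.

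The main obstacle I expect is showing that $f(\theta)$ is angular-constant; the flatness-of-tangent-cone argument establishes this indirectly. A cleaner direct alternative is to integrate the identity $\mathcal{K} = -J\nabla_h \lambda^{-1/3}$ along radial rays, using that $\mathcal{K} \to \mathcal{K}_\infty$ with $\mathcal{K}_\infty$ the nowhere-vanishing Euclidean Killing field from Proposition \ref{prop:nonvanishing}, to extract the leading expansion $\lambda^{-1/3} = c|x|^2 + \text{lower order}$ for a single positive constant $c$, and hence $c_0 = c^{-3}$.
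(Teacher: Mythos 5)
Your first two paragraphs are essentially the paper's own argument: the boundedness of $|Rm_g|$ makes the rescaled curvature $s_i^{2}|Rm_g|$ tend to zero on annuli, so every tangent cone is a flat cone and hence some $\mathbb{R}^4/\Gamma$, and the bounds $|\nabla_g^{k}Rm_g|_g=O(r_q^{2-k})$ give uniform $C^k$ control of the rescaled metrics and hence $C^\infty$ convergence. That part is fine.

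The gap is in your third paragraph, where uniqueness is supposed to be proved. You define $f(\theta):=\lim_{|x|\to\infty}\lambda|x|^6$ and argue that flatness forces $f\equiv c_0$; but nothing available at this stage gives the \emph{existence} of that limit. All that has been established is the two-sided bound (\ref{eq:lambdadecay}), and a function squeezed between two multiples of $\rho^{-6}$ need not have $\lambda\rho^6$ converge along rays, let alone uniformly on the sphere at infinity, which is what you would need for the rescaled limits to be the warped metric $f^{2/3}h_E$. Producing such a leading term for $\lambda=2\sqrt{6}|W^+|_h$ would require an asymptotic expansion of $h-h_E$ with a homogeneous degree $-4$ leading term, which goes beyond the $O_h'(\rho^{-4})$ decay of Bando--Kasue--Nakajima and is nowhere supplied. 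Your ``cleaner alternative'' has the same problem one level down: integrating $\mathcal{K}=-J\nabla_h\lambda^{-1/3}$ against $\mathcal{K}_\infty$ requires $J$ to converge to a constant $J_\infty$ with a rate and $J_\infty\mathcal{K}_\infty$ to be the gradient of the specific quadratic $c|x|^2$; the latter is essentially $\Hess_0(s_{\widehat{g}})(q)=0$, which the paper only obtains in Corollary \ref{behaviorofs} \emph{after} the smooth orbifold extension is in place. Two smaller points: order-$4$ ALE decay yields only finitely many derivatives of $|x|^{-4}h$ across $y=0$ after inversion (roughly $C^{3,\alpha}$), not a smooth extension, and invoking the fourth-order elliptic bootstrap here is circular, since in the paper that bootstrap is built on top of the present proposition. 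None of this machinery is needed for the statement: uniqueness follows from flatness together with the standard fact that the set of tangent cones at a point is connected in the pointed Gromov--Hausdorff topology while the isometry classes of flat cones $\mathbb{R}^4/\Gamma$ with volume ratio bounded below form a discrete family, and the $C^\infty$ convergence follows from the derivative bounds on $Rm_g$ alone.
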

\begin{proof}
Propositions \ref{curvaturebound} and \ref{volumebound} imply that each tangent cone must be a flat cone, thus is isomorphic to some $\mathbb{R}^4/\Gamma$. The estimate $|\nabla_g^{k}Rm_g|_g=O_g'(r_q^{2-k})$ shows that when we consider the rescaled spaces $(\widehat{M},s_i^{-2}g,q)$ with $s_i\to0$, the quantity $|\nabla^k_{s_i^{-2}g}Rm_{s_i^{-2}g}|_{s_i^{-2}g}$ is uniformly bounded, from which we obtain the $C^\infty$ convergence to each tangent cone.
\end{proof}

It is easy to see the specific relation between $\Upsilon$ and $\Gamma$. Let $\mathbb{R}^4/\Upsilon$ be equipped with the Euclidean metric $g_\Upsilon$. Then the conformal metric $\frac{1}{r^4}g_{\Upsilon}$ is again flat, with $r$ being the distance function to the origin. With the conformal flat metric $\frac{1}{r^4}g_{\Upsilon}$, it is isometric to another Euclidean quotient (after completion), which is exactly $\mathbb{R}^4/\Gamma$.
Denote by $B_\Gamma$ the standard metric ball in $\mathbb{R}^4/\Gamma$ with radius 1 and $B_{\Gamma}^*$ the corresponding punctured ball.
\begin{proposition}\label{c1a}
There is a $C^3$ diffeomorphism $E:B_\Gamma^*\to B_1(q)\setminus\{q\}$, such that $E^*g$ extends to a $C^{1,\alpha}$ orbifold metric on $B_\Gamma$.
\end{proposition}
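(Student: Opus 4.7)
The plan is to construct $E$ by exploiting the uniqueness and smoothness of the tangent cone convergence, then upgrade regularity at $q$ by passing to harmonic coordinates on the orbifold cover.

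First I would work dyadically. Write $A(\rho,2\rho)\subset \widehat{M}$ for the annulus $\{\rho<r_q<2\rho\}$ and $A_\Gamma(\rho,2\rho)\subset \mathbb{R}^4/\Gamma$ for the flat model. By the smooth convergence to the tangent cone proved above, for the scales $s_i=2^{-i}$ one gets diffeomorphisms
$$\psi_i:A_\Gamma(1/2,2)\longrightarrow A(s_i/2,2s_i)$$
such that $s_i^{-2}\psi_i^*g$ tends to the flat metric in $C^k$ for every $k$, with error $\varepsilon_i\to 0$. The composition $\psi_i^{-1}\circ\psi_{i+1}$ (after the appropriate rescaling that identifies $A_\Gamma(1/2,2)$ with itself via $x\mapsto x/2$) is then a diffeomorphism of a fixed annulus in $\mathbb{R}^4/\Gamma$ that converges to an isometry of the flat cone. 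Because the isometry group of $\mathbb{R}^4/\Gamma$ fixing $0$ is compact, one may modify each $\psi_i$ by a small element of $O(4)/\Gamma$ and a cut-off perturbation supported on the overlap $A_\Gamma(3/4,3/2)$ (the usual Cheeger–Fukaya–Gromov / Bando–Kasue–Nakajima straightening) so that, after modification, $\psi_i=\psi_{i+1}$ on the overlap. Patching all the $\psi_i$ produces the desired diffeomorphism $E:B_\Gamma^*\to B_1(q)\setminus\{q\}$, which is smooth away from $0$.

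Second I would upgrade the regularity of $E^*g$ at the origin. Lift to the local uniformization $B\subset \mathbb{R}^4$ with $B/\Gamma=B_\Gamma$; Proposition \ref{curvaturebound} gives $|Rm_g|\le C$ and $|\nabla_g Rm_g|=O(r_q)$, and Proposition \ref{volumebound} gives Euclidean volume growth. Combining these with the smooth tangent cone convergence, the injectivity radius on the lift at a point $x$ with $r_q(x)\sim s$ is comparable to $s$. Thus in harmonic coordinates on the lift, on balls of radius $\sim s$, the rescaled metric $s^{-2}g$ has $C^{1,\alpha}$ bounds uniform in $s$. Translating back, the components of $E^*g$ on the lift satisfy scale-invariant $C^{1,\alpha}$ estimates as $|x|\to 0$. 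A standard Arzel\`a–Ascoli argument on each dyadic annulus, combined with the fact that the rescaled metrics converge to the flat metric, shows that $E^*g$ extends continuously across $0$ to the flat metric and that this extension is $C^{1,\alpha}$. The $C^3$ regularity of $E$ itself follows from its definition via the smooth $\psi_i$ and the control on transition maps.

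The main obstacle is the gluing step: turning the family $\{\psi_i\}$ into a single diffeomorphism. The danger is that small errors in matching across dyadic annuli could accumulate over infinitely many scales. What makes it work is the rigidity of isometries of $\mathbb{R}^4/\Gamma$ combined with the fast convergence $\varepsilon_i\to 0$ coming from the decay $|\nabla_g^k Rm_g|=O(r_q^{2-k})$; this ensures that the cumulative modification is a genuine diffeomorphism of $B_\Gamma^*$ with the claimed regularity. Everything after the gluing is standard elliptic regularity for metrics of bounded curvature.
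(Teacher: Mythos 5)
Your argument is essentially the paper's: the proof in the text simply invokes Propositions 5.10 and 5.14 of \cite{donaldsonsun}, whose content is exactly your two steps — dyadic-annulus patching of the diffeomorphisms coming from smooth convergence to the unique flat tangent cone (with the isometry-group adjustment and geometrically decaying errors), followed by the harmonic-coordinate upgrade on the local uniformizing cover using the curvature bound of Proposition \ref{curvaturebound}. The only point worth tightening is the last step: a merely scale-invariant $C^{1,\alpha}$ bound for $s^{-2}g$ does not by itself extend $g$ in $C^{1,\alpha}$ across the origin — what one uses is that $|Rm_{s^{-2}g}|=O(s^2)$, so the harmonic-coordinate elliptic estimates unrescale to unit-scale $C^{1,\alpha}$ control of $g$ itself — but this is exactly supplied by the bounds you quote, so the argument is sound.
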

\begin{proof}
The information about the tangent cone already tells us we can find a smooth diffeomorphism $E':B_{\Gamma}^*\to B_1(q)\setminus\{q\}$, such that $E'^*g$ extends to a $C^0$ orbifold metric. See Proposition 5.10 in \cite{donaldsonsun}. Then with the bound on the Riemannian curvature, Proposition 5.14 in \cite{donaldsonsun} gives us the desired $C^3$ diffeomorphism $E$.
\end{proof}

From now on, by passing to the orbifold cover, we can always assume that we are working in the case where $\Gamma=\{e\}$.
Since $g$ is $C^{1,\alpha}$, the complex structure, which is compatible with $g$, is also $C^{1,\alpha}$. The Newlander-Nirenberg integrability theorem says, modifying by a $C^{2,\alpha}$ diffeomorphism, the complex structure $J$ is standard near the point $q$, meaning that there is a $C^{2,\alpha}$ coordinates system near $q$ in which the complex structure is standard. In the following we will work in this standard complex coordinate.

\begin{proposition}\label{prop:scal}
In the above complex coordinate, there are functions $f_i$ that are smooth in terms of the complex coordinate, such that 
$\frac{\partial s_g}{\partial \overline{z}_{j}}=f_ig_{i\overline{j}}$.
In particular, when $g_{i\overline{j}}$ is $C^{p,\alpha}$ in the fixed complex coordinates system, $s_g$ is $C^{p+1,\alpha}$. 
\end{proposition}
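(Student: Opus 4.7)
The plan is to use the defining feature of extremal Kähler metrics, namely that $\nabla^{1,0}_g s_g$ is a holomorphic vector field, to identify $\bar\partial s_g$ with the product of holomorphic functions and the metric components, after which the claimed regularity bootstrap is immediate.

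First I would write the extremal vector field in the fixed standard complex coordinate $(z_1,z_2)$ near $q$. In components, $\nabla^{1,0}_g s_g = X^i \partial_{z_i}$ with
$$X^i = g^{i\bar{j}}\,\frac{\partial s_g}{\partial \bar{z}_j},$$
and the extremal condition means each $X^i$ is a holomorphic function on $\widehat M\setminus\{q\}$ expressed in these coordinates. The first goal is to extend each $X^i$ holomorphically across $q$. From $s_g=\lambda^{1/3}$ together with $\lambda=O'_h(\rho^{-6})$ at infinity in $h$ (equivalently $s_g=O'_g(r_q^2)$ near $q$), we have $|X|_g=|\nabla_g s_g|_g=O'_g(r_q)$. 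Because Proposition \ref{c1a} gives a fixed $C^{2,\alpha}$ coordinate in which $g$ is comparable to the Euclidean metric, the coefficients $X^i$ are bounded (in fact vanishing at $q$) in the chosen coordinates. Riemann's removable singularity theorem for holomorphic functions of two variables then extends each $X^i$ across $q$ to a holomorphic, hence $C^\infty$, function of the complex coordinate. Setting $f_i:=X^i$ and inverting the relation above yields the claimed identity
$$\frac{\partial s_g}{\partial \bar{z}_j}=f_i\,g_{i\bar{j}}.$$

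For the regularity improvement, assume $g_{i\bar j}\in C^{p,\alpha}$ in the fixed complex coordinates. Each product $f_i g_{i\bar j}$ is a product of a $C^\infty$ function and a $C^{p,\alpha}$ function, hence is $C^{p,\alpha}$. Therefore $\partial_{\bar j}s_g\in C^{p,\alpha}$ for every $j$, and since $s_g$ is real, $\partial_{z_j}s_g=\overline{\partial_{\bar j}s_g}\in C^{p,\alpha}$ as well. All first-order partial derivatives of $s_g$ being $C^{p,\alpha}$ gives $s_g\in C^{p+1,\alpha}$.

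The main subtlety is the extension of the holomorphic vector field across the added point. That step is not automatic from holomorphicity on the punctured neighborhood; it requires the a priori decay $|X|_g=O'_g(r_q)$, which in turn uses $s_g=O'_g(r_q^2)$ and the fact established in Proposition \ref{c1a} that the chosen complex coordinate makes $g$ genuinely comparable to Euclidean so that boundedness of $|X|_g$ translates to boundedness of the coordinate components $X^i$. Everything else is a short calculation.
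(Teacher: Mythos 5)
Your proposal is correct and follows essentially the same route as the paper: both identify the coefficients of the holomorphic extremal vector field $g^{i\bar j}\partial_{\bar j}s_g$, extend them across the puncture via a removable-singularity/Hartogs argument using the boundedness of $\nabla s_g$ (which the paper draws from Proposition \ref{curvaturebound} and you derive from $s_g=O'_g(r_q^2)$), and then invert with the metric to read off the regularity of $\partial_{\bar j}s_g$. The only cosmetic difference is that you spell out the final bootstrap from $C^{p,\alpha}$ first derivatives to $s_g\in C^{p+1,\alpha}$, which the paper leaves implicit.
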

\begin{proof}
The K\"ahler metric $g$ is $C^{1,\alpha}$ in this complex coordinate.  The holomorphic vector field $\nabla^{1,0}s_g=\frac{\partial s_g}{\partial \overline{z}_j} g^{i\overline{j}}\frac{\partial}{\partial z_i}$ in this coordinate  can be extended across the origin by the Hartogs' theorem, since $\nabla s_g$ is bounded  by Proposition \ref{curvaturebound}. In particular, coefficients of this holomorphic vector field, $\frac{\partial s_g}{\partial \overline{z}_j}g^{i\overline{j}}$, must be $C^{\infty}$ functions in this coordinate system. 
From this, setting $f_i=\frac{\partial s_g}{\partial \overline{z}_j}g^{i\overline{j}}$,
one gets
$\frac{\partial s_g}{\partial \overline{z}_k}=f_ig_{i\overline{k}}$.
The proposition follows.
\end{proof}

Next we improve the regularity of the metric around the point $q$. Proposition \ref{c1a} already shows that we can treat $g$ as a $C^{1,\alpha}$ metric on $B_1(q)$. 
We fix a base K\"ahler metric $h_{i\overline{j}}$ that is smooth in the complex coordinate. Then 
the scalar curvature $s(h_{i\overline{j}}+\varphi_{i\overline{j}})$ of any K\"ahler  metric $h_{i\overline{j}}+\varphi_{i\overline{j}}$ can be computed via
\begin{equation}\label{eq:MA}
\Det\left(h_{i\overline{j}}+\varphi_{i\overline{j}}\right)=e^F\Det(h_{i\overline{j}}),
\end{equation}
\begin{equation}\label{eq:scal}
s\left(h_{i\overline{j}}+\varphi_{i\overline{j}}\right)=-\Delta_\varphi F+\Tr_\varphi Ric(h).
\end{equation}
Here, $\Delta_\varphi$ denotes the the Laplace operator of the K\"ahler metric $h_{i\overline{j}}+\varphi_{i\overline{j}}$.

\begin{proposition}\label{prop:MAbootstrap}
For $p\geq3$, $C^{p,\alpha}$ bound on $F$ and $C^{p,\alpha}$ bound on $\varphi$ give $C^{p+1,\alpha}$ bound on $\varphi$. 
\end{proposition}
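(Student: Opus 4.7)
The plan is to linearize the complex Monge-Amp\`ere equation (\ref{eq:MA}) by taking a spatial derivative, and then apply interior Schauder estimates to the resulting linear elliptic PDE.

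First, I would take the logarithm of (\ref{eq:MA}) and differentiate with respect to any real coordinate $x_\ell$. Writing $g_{i\bar j} := h_{i\bar j} + \varphi_{i\bar j}$ for the K\"ahler metric and $g^{i\bar j}$ for its inverse, a direct calculation gives
$$g^{i\bar j}\,(\partial_\ell \varphi)_{i\bar j} \;=\; \partial_\ell F \;+\; (h^{i\bar j}-g^{i\bar j})\,\partial_\ell h_{i\bar j}.$$
This is a linear second-order elliptic equation for $u := \partial_\ell \varphi$, and it is uniformly elliptic in a neighborhood of the origin because $g_{i\bar j}$ is a positive-definite K\"ahler metric whose coefficients lie in $C^{p-2,\alpha}$ (hence are continuous) and are uniformly bounded away from degeneracy by the assumed bounds on $\varphi$.

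Next, I would check the regularity of the coefficients and the right-hand side. From $\varphi \in C^{p,\alpha}$ we obtain $g_{i\bar j} \in C^{p-2,\alpha}$ and therefore $g^{i\bar j} \in C^{p-2,\alpha}$. The hypothesis $p \geq 3$ is precisely what ensures that these coefficients are at least $C^{1,\alpha}$, which is the minimal regularity required for the standard interior Schauder theory for linear second-order elliptic equations. Since $F \in C^{p,\alpha}$, its derivative $\partial_\ell F$ lies in $C^{p-1,\alpha} \subset C^{p-2,\alpha}$; the remaining terms are products of smooth derivatives of the fixed background $h_{i\bar j}$ with $C^{p-2,\alpha}$ functions, so the right-hand side also belongs to $C^{p-2,\alpha}$.

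Finally, applying the interior Schauder estimate upgrades $u$ to $C^{p,\alpha}$. Running this for every real directional derivative $\partial_\ell$ then improves $\varphi$ to $C^{p+1,\alpha}$, which is the desired conclusion. The step that most deserves care is simply the bookkeeping of regularity indices—in particular, verifying that the threshold $p \geq 3$ is exactly what is needed to place the linearized coefficients in $C^{1,\alpha}$. There is no substantive analytic obstacle: the Monge-Amp\`ere equation is uniformly elliptic at a K\"ahler point, and the linearization by $\partial_\ell$ is the standard one, so the bootstrap is classical once the regularity accounting is in place.
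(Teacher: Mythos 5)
Your proof is correct and is exactly the standard argument the paper invokes: the paper's own proof is a one-line appeal to "standard bootstrap arguments for the Monge--Amp\`ere equation," and your differentiation of $\log\det(h_{i\overline{j}}+\varphi_{i\overline{j}})$ followed by interior Schauder for the resulting linear elliptic equation in $\partial_\ell\varphi$ is precisely that argument, with the regularity bookkeeping done correctly. (The only quibble is your remark that $C^{1,\alpha}$ coefficients are the minimal regularity Schauder requires—$C^{0,\alpha}$ suffices for the basic estimate; the role of $p\geq 3$ is rather to make the differentiated equation hold classically and to land the coefficients and right-hand side in $C^{p-2,\alpha}$ so that the higher-order Schauder estimate returns $\partial_\ell\varphi\in C^{p,\alpha}$.)
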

\begin{proof}
This follows from a standard bootstrap arguments for the Monge-Amp\`ere equation (\ref{eq:MA}).
\end{proof}

\begin{proposition}
The metric $g$ on $B_1(q)\setminus\{q\}$ extends to a metric on $B_1(q)$ that is smooth in the standard complex coordinate.
\end{proposition}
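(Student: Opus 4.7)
The plan is to run a standard bootstrap based on the interplay between the Monge-Amp\`ere equation (\ref{eq:MA}) and the scalar curvature equation (\ref{eq:scal}), where the decisive gain of regularity at each step comes from the extremal K\"ahler condition encoded in Proposition \ref{prop:scal}. Having passed to the orbifold cover so that $\Gamma=\{e\}$, and working in the coordinate system near $q$ in which the complex structure is standard (provided by Proposition \ref{c1a} and the Newlander-Nirenberg theorem), I would view $g$ as a $C^{1,\alpha}$ K\"ahler metric on a small ball $B\subset\mathbb{C}^2$ centered at the origin.

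First I would fix a smooth background K\"ahler metric $h_{i\bar j}$ on $B$. Since $\omega_g-\omega_h$ is a closed $C^{1,\alpha}$ $(1,1)$-form on $B$, the local $\partial\bar\partial$-lemma combined with Schauder estimates for $\Delta_h$ produces a real K\"ahler potential $\varphi\in C^{3,\alpha}(B)$ satisfying $\omega_g=\omega_h+i\partial\bar\partial\varphi$, and then the associated function $F$ defined by (\ref{eq:MA}) is at least $C^{1,\alpha}$. To initialize the bootstrap, I would apply Proposition \ref{prop:scal} with $g\in C^{1,\alpha}$ to get $s_g\in C^{2,\alpha}$, and then view (\ref{eq:scal}) as a linear elliptic equation for $F$ with $C^{1,\alpha}$ coefficients and $C^{1,\alpha}$ right-hand side; Schauder regularity yields $F\in C^{3,\alpha}$.

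The inductive step is then routine. Suppose $\varphi,F\in C^{p,\alpha}$ for some $p\geq 3$. Proposition \ref{prop:MAbootstrap} upgrades $\varphi$ to $C^{p+1,\alpha}$, whence $g_{i\bar j}\in C^{p-1,\alpha}$ and, by Proposition \ref{prop:scal}, $s_g\in C^{p,\alpha}$. Treating (\ref{eq:scal}) once more as a linear elliptic equation for $F$, now with $C^{p-1,\alpha}$ coefficients (the entries of $g^{i\bar j}$) and $C^{p-1,\alpha}$ right-hand side, Schauder estimates give $F\in C^{p+1,\alpha}$. Both $\varphi$ and $F$ are now $C^{p+1,\alpha}$, and iterating produces $\varphi\in C^\infty$, so $g=h+i\partial\bar\partial\varphi$ is smooth in the standard complex coordinate on $B$.

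The hard part is really the very first gain of regularity for $s_g$ over $g$: for a generic K\"ahler metric the scalar curvature is only as regular as the second derivatives of the metric, and the bootstrap loop would not close. It is precisely the extremal K\"ahler condition, which forces $\nabla^{1,0}s_g$ to be holomorphic and is exploited in Proposition \ref{prop:scal}, that produces the one-derivative advantage of $s_g$ over $g_{i\bar j}$ needed to make the argument work. Once that gain is available, the rest is standard Schauder theory.
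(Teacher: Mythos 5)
Your proposal is correct and follows essentially the same route as the paper: choose a $C^{3,\alpha}$ potential $\varphi$, use Proposition \ref{prop:scal} (the extremal condition) to gain one derivative for $s_{g}$ over $g_{i\overline{j}}$, apply Schauder to the linear equation (\ref{eq:scal}) for $F$, and feed the result into Proposition \ref{prop:MAbootstrap} to upgrade $\varphi$, iterating to $C^{\infty}$. Your identification of Proposition \ref{prop:scal} as the step that closes the bootstrap loop is exactly the point the paper relies on.
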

\begin{proof}
In the complex coordinate, we have $C^{1,\alpha}$ bound on $g_{i\overline{j}}$. By choosing the potential function $\varphi$ suitably, it can be assumed that $\varphi$ is  $C^{3,\alpha}$ bounded, where $g_{i\overline{j}}=h_{i\overline{j}}+\varphi_{i\overline{j}}$. %Note that this also gives $C^{3,\alpha}$ bound on $F$.

Starting with $C^{k+2,\alpha}$ bound on $\varphi$, we have $C^{k,\alpha}$ bound on $\varphi_{i\overline{j}}$. Together with Proposition \ref{prop:scal}, the elliptic equation for $F$ (\ref{eq:scal}) at least has $C^{k,\alpha}$ bounded coefficients. Schauder estimate gives $C^{k+2,\alpha}$ bound on $F$. Choosing $p=k+2$ in Proposition \ref{prop:MAbootstrap}, one gets $C^{k+3,\alpha}$ bound on $\varphi$. This shows, beginning from $k=1$, by applying this bootstrap argument repeatly, we can finally get $C^{\infty}$ bound on $\varphi$ in the  standard complex coordinate. The proposition is proved.
\end{proof}

This finishes the proof of Theorem \ref{thm:removal}. 
The extended complex structure on $\widehat{M}$ is denoted by $\widehat{J}$.
Theorem \ref{thm:removal} has the following corollaries.

\begin{corollary}
On $(\widehat{M},\widehat{g})$,
the Killing field $\mathcal{K}=J\nabla_gs_g$ extends to  the Killing field $\widehat{J}\nabla_{\widehat{g}}s_{\widehat{g}}$, which vanishes at the orbifold point $q$. 
Moreover, $s_{\widehat{g}}\geq0$ and $s_{\widehat{g}}=0$ only at $q$. The scalar curvature $s_{\widehat{g}}$ is a Morse-Bott function. 
\end{corollary}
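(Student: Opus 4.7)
The plan is to proceed in three steps: extend $\mathcal{K}$ to $\widehat{M}$, determine the sign of $s_{\widehat{g}}$, and verify the Morse-Bott condition. For the extension, since Theorem \ref{thm:removal} produces a smooth orbifold metric $\widehat{g}$, the scalar curvature $s_{\widehat{g}}$ extends as a smooth function on $\widehat{M}$ and so does $\widehat{\mathcal{K}}:=\widehat{J}\nabla_{\widehat{g}}s_{\widehat{g}}$, which is therefore a smooth (orbifold) Killing field that restricts to $\mathcal{K}$ on $M$. Its flow acts by orbifold isometries and in particular preserves the singular locus; since $q$ is the unique orbifold point of $\widehat{M}$, it is fixed by the flow, forcing $\widehat{\mathcal{K}}(q)=0$. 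For the sign, Proposition \ref{prop:d2} gives $s_g=\lambda^{1/3}>0$ on $M$, so $s_{\widehat{g}}>0$ away from $q$; the decay bound (\ref{eq:lambdadecay}) gives $\lambda\to 0$ as $\rho\to\infty$, i.e.\ as $r_q\to 0$, so continuity forces $s_{\widehat{g}}(q)=0$.

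For the Morse-Bott property I would combine two standard facts. By Kobayashi's theorem, the zero set of any Killing field $K$ on a Riemannian (orbi)manifold is a disjoint union of totally geodesic submanifolds of even codimension, with $T_pF=\ker(\nabla K|_p)$ at any $p\in F$. Moreover, since $\widehat{J}$ is parallel with respect to $\widehat{g}$, we have $\nabla_X\widehat{\mathcal{K}}=\widehat{J}\nabla_X\nabla_{\widehat{g}}s_{\widehat{g}}$, so at any zero of $\widehat{\mathcal{K}}$ the Hessian of $s_{\widehat{g}}$, viewed as a $(1,1)$-tensor, shares its kernel with $\nabla\widehat{\mathcal{K}}$. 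Hence the Hessian is non-degenerate on the normal bundle of every critical component, which is precisely the Morse-Bott condition.

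The only genuinely non-routine point is the behaviour at $q$: we need to know that $q$ is an isolated zero of $\widehat{\mathcal{K}}$ so that the general statement specialises correctly there. For this I would invoke Proposition \ref{prop:nonvanishing}, which says that $\mathcal{K}_\infty$ is nowhere vanishing on $\mathbb{R}^4/\Upsilon$ away from the origin, and hence that its infinitesimal generator is a full-rank skew-symmetric matrix. Transferring this linearisation through the conformal inversion $x\mapsto x/|x|^2$ that identifies the ALE end of $(M,h)$ with a punctured neighbourhood of $q$ in $(\widehat{M},\widehat{g})$, one sees that $(\nabla\widehat{\mathcal{K}})|_q$ has full rank, so $q$ is an isolated critical point with non-degenerate Hessian. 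Combined with $s_{\widehat{g}}(q)=0$ and $s_{\widehat{g}}>0$ elsewhere, this identifies $q$ as the global minimum and completes the proof.
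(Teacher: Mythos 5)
Your proof is correct, but at two of the three claims it takes a genuinely different route from the paper's. For the vanishing of the extended Killing field at $q$, the paper integrates the decay estimates: $\mathcal{K}-\mathcal{K}_\infty=O'(\rho^{-3})$ gives $|\mathcal{K}|_h\leq C\rho$, hence $|\mathcal{K}|_g=\lambda^{1/3}|\mathcal{K}|_h\leq C\rho^{-1}\leq Cr_q$, which forces the continuous extension to vanish at $q$. You instead argue that the flow of an orbifold Killing field preserves the singular locus, so the unique singular point is fixed. Both work, but your version silently assumes $\Gamma\neq\{e\}$ (if $q$ were a smooth point it would not be distinguished by the isometry group); this is in fact automatic here, since a Ricci-flat ALE $4$-manifold with trivial group at infinity is flat by the rigidity case of Bishop-Gromov, contradicting non-K\"ahlerness, but it is an extra input that the decay argument does not need. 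For the Morse-Bott property the paper simply cites Lemma 1 of \cite{lebrun2}; your Kobayashi-plus-parallel-$\widehat{J}$ argument is precisely the standard proof of that lemma and is fine, including at $q$ after passing to the local uniformizing chart. Your final paragraph, however, is unnecessary: isolation of $q$ in the critical set is not required for the Morse-Bott statement, and in any case it follows immediately from $s_{\widehat{g}}>0$ on $\widehat{M}\setminus\{q\}$ together with $s_{\widehat{g}}(q)=0$, since a positive-dimensional critical component through $q$ would have $s_{\widehat{g}}\equiv0$ along it. The detour through $\mathcal{K}_\infty$ and the conformal inversion, while morally sound, is the least rigorous part of your write-up (transferring the linearisation through the inversion for the actual metric, rather than the flat model, needs the $C^\infty$ convergence to the tangent cone) and can simply be deleted.
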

\begin{proof}
The vanishing of the extended Killing field at $q$ follows from the fact that $|\mathcal{K}|_g\leq C\rho^{-1}$. The vanishing of $s_{\widehat{g}}(q)$ follows from the decay of $s_g=\lambda^{1/3}$ as $O(\rho^{-2})$. Outside of $q$, $s_{\widehat{g}}>0$ because $\lambda=2\sqrt{6}|W^+|_h>0$. The last statement follows from Lemma 1 in \cite{lebrun2}.
\end{proof}

\begin{corollary}\label{behaviorofs}
Near the point $q$, we have
$s_{\widehat{g}}=\frac12ar_q^2+O(r_q^3)$
for some positive constant $a$. There exists a constant $t_0$ such that the time $t_0$ flow of the Killing field $\widehat{J}\nabla_{\widehat{g}}s_{\widehat{g}}$ is the identity map.
\end{corollary}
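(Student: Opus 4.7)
The plan is to first establish the general form of the expansion using smoothness at $q$ and the Morse--Bott property, then identify the leading coefficient by relating the local behavior of $s_{\widehat g}$ at $q$ to the asymptotic behavior of $s_g$ at infinity through a semilinear elliptic equation. By Theorem \ref{thm:removal} and the previous corollary, $s_{\widehat g}$ extends smoothly to $\widehat M$ and $q$ is an isolated Morse--Bott minimum of $s_{\widehat g}$ with value zero. Hence in smooth orbifold normal coordinates $x$ at $q$, one has $r_q(x) = |x|$ and
\[
s_{\widehat g}(x) = \tfrac{1}{2}H(x,x) + O(|x|^3),
\]
for a positive-definite Hessian $H$ of $s_{\widehat g}$ at $q$. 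The claimed expansion is thus equivalent to $H = a\,\widehat g|_q$ for some $a > 0$, i.e.\ the leading quadratic is radially isotropic.

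To pin down $H$, I would use the conformal change of scalar curvature for $g = \lambda^{2/3}h$, together with $s_h = 0$ and $s_g = \lambda^{1/3}$, which yields after a direct computation the semilinear elliptic equation
\[
\Delta_h s_g + \tfrac{1}{6}\,s_g^4 = 0 \quad\text{on } M.
\]
By \eqref{eq:lambdadecay}, $s_g = O(\rho^{-2})$ at infinity, so the nonlinear term is $O(\rho^{-8})$. Standard ALE asymptotics produce an expansion $s_g = v(\theta)\rho^{-2} + o(\rho^{-2})$ for some positive function $v$ on $S^3/\Upsilon$. Since $\rho^{-2}$ is Euclidean-harmonic in $\mathbb{R}^4$, we have $\Delta_E(v(\theta)\rho^{-2}) = \rho^{-4}\Delta_{S^3/\Upsilon}v$, while $(\Delta_h - \Delta_E)(v\rho^{-2})$ is $O(\rho^{-8})$ by the rate-$4$ ALE decay of $h - h_E$. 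Matching the $\rho^{-4}$-coefficient in the PDE then forces $\Delta_{S^3/\Upsilon}v = 0$, so $v \equiv v_0$ is a positive constant on the closed manifold $S^3/\Upsilon$.

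With the leading coefficient isotropic, the change of variables $t = v_0/\rho$ turns $g \sim v_0^2\rho^{-4}h_E$ near infinity into the standard conical model $dt^2 + t^2 g_{S^3/\Upsilon}$, identifying $t$ with $r_q$ at leading order. Consequently
\[
s_{\widehat g}(x) = \frac{1}{v_0}\,r_q(x)^2 + O(r_q^3),
\]
giving the expansion with $a = 2/v_0 > 0$. The linearization of $\widehat J\nabla_{\widehat g}s_{\widehat g}$ at $q$ is then $a\widehat J$, an isotropic rotation of $T_q\widehat M$ with minimal period $2\pi/a$. For the global periodicity, $(\widehat M,\widehat g)$ is a compact extremal K\"ahler orbifold, so by Futaki's theorem the holomorphic extremal vector field generates a holomorphic $\mathbb{C}^*$-action on $\widehat M$, whose imaginary part $\widehat J\nabla_{\widehat g}s_{\widehat g}$ therefore generates a global $S^1$-action. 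This yields a $t_0 > 0$ (a positive integer multiple of $2\pi/a$) such that the time-$t_0$ flow is the identity.

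The main obstacle is the isotropy step: the two-sided decay bounds \eqref{eq:lambdadecay} alone do not constrain the angular dependence of the leading $\rho^{-6}$-coefficient of $\lambda$. This is precisely what the semilinear PDE resolves, by reducing the problem to harmonicity of $v$ on the closed manifold $S^3/\Upsilon$.
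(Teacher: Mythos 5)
Your argument is essentially correct, but it reaches the crucial isotropy of $\Hess(s_{\widehat{g}})(q)$ by a genuinely different route. The paper's proof is purely local at $q$: it invokes LeBrun's Bach-flat K\"ahler identity $0=s_g\,Ric_g^0+2\Hess_0(s_g)$, which at the point where $s_{\widehat{g}}=0$ immediately forces $\Hess_0(s_{\widehat{g}})(q)=0$, and then nondegeneracy (Morse--Bott) gives $\Hess(s_{\widehat{g}})(q)=a\widehat{g}$ with $a>0$. You instead work at infinity on $(M,h)$: the conformal relation with $s_h=0$ gives $\Delta_h s_g=-\tfrac16 s_g^4=O(\rho^{-8})$, and since the only $\Upsilon$-invariant decaying harmonic on $\mathbb{R}^4/\Upsilon$ of rate exactly $-2$ is $c\rho^{-2}$ (the indicial root $a=-2$ pairs only with the constant spherical harmonic), the leading coefficient of $s_g$ is forced to be angularly constant, which after the conformal change is exactly the isotropy of the quadratic term at $q$. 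This is sound, though the step you label ``standard ALE asymptotics'' is where the real work sits (weighted elliptic theory to split $s_g$ into a harmonic part plus an $O'(\rho^{-6+\epsilon})$ particular solution); as a bonus your route identifies $a=2/v_0$ in terms of the asymptotic coefficient of $s_g$, which the local argument does not give.

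One genuine (if repairable) weak point: for the periodicity you compute the linearization $a\widehat{J}$ at $q$ and then appeal to Futaki(-Mabuchi) to get a global $\mathbb{C}^*$- (hence $S^1$-) action. That is both heavier than necessary and slightly delicate here --- the cited periodicity theorem is stated for smooth compact K\"ahler manifolds, and in the paper's own logical order the holomorphic $\mathbb{C}^*$-action on $\widehat{M}$ is \emph{deduced from} this corollary, so leaning on it risks circularity. You already have everything needed for the direct argument: the time-$(2\pi/a)$ flow is an isometry of the connected space $(\widehat{M},\widehat{g})$ fixing $q$ whose differential at $q$ is the identity, hence it is the identity map. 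That is the paper's conclusion and it closes the argument without any global input.
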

\begin{proof}
Since the Killing field $\widehat{J}\nabla_{\widehat{g}}s_{\widehat{g}}$ vanishes at $q$, we have $\nabla_{\widehat{g}}s_{\widehat{g}}(q)=0$.
The Morse-Bott property implies that $\Hess(s_{\widehat{g}})$ is nondegenerate at $q$. 
As the K\"ahler metric is Bach-flat, there is the equation 
$$0=s_gRic_g^0+2Hess_0(s_g),$$
which is equation (11) in \cite{lebrun2}. So we have $\Hess_0(s_{\widehat{g}})(q)=0$. Therefore, $\Hess(s_{\widehat{g}})(q)=a\widehat{g}$ for some positive $a$, and we conclude that $s_{\widehat{g}}=\frac12ar_q^2+O(r_q^3)$ near $q$.
Because $\Hess(s_{\widehat{g}})(q)=a\widehat{g}$, the time $2\pi/a$ flow of $\widehat{J}\nabla_{\widehat{g}}s_{\widehat{g}}$ fixes the point $q$ and its tangent space, meaning that the time $2\pi/a$ flow of this Killing field is the identity map.
\end{proof}

\subsection{Complex structure on the compactification}
\label{subsec:complexstructure}

The special property of $s_{\widehat{g}}$ leads to the following proposition, which is based on a key observation by LeBrun in \cite{lebrun}.
\begin{proposition}\label{logdel}
The underlying complex orbifold
$(\widehat{M},\widehat{J})$ is a log del Pezzo surface. That is, $(\widehat{M},\widehat{J})$ is a normal projective surfaces with at worst quotient singularities and ample anticanonical bundle.
\end{proposition}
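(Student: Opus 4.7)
My plan is to verify the three defining properties of a log del Pezzo surface in turn: normal with only quotient singularities, projective, and with ample anticanonical bundle. The first is already contained in Theorem~\ref{thm:removal}, which equips $\widehat{M}$ with the structure of a compact complex orbifold whose only singular point is $q$, modelled on $\mathbb{C}^2/\Gamma$ for the finite group $\Gamma\subset U(2)$.

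For projectivity, I would show $h^{2,0}(\widehat{M})=0$ by a Bochner/Weitzenb\"ock computation. On any compact K\"ahler surface (or K\"ahler orbifold) a holomorphic section $\phi$ of the canonical bundle satisfies a pointwise identity of the form
\begin{equation*}
\Delta|\phi|^2 \;=\; 2|\nabla\phi|^2 + s_{\widehat{g}}|\phi|^2,
\end{equation*}
and integrating this against the orbifold volume form, together with $s_{\widehat{g}}\geq 0$ and $s_{\widehat{g}}>0$ away from $q$, forces $\phi\equiv 0$. Combined with the Hodge decomposition for compact K\"ahler orbifolds, the vanishing $h^{2,0}=0$ implies that the K\"ahler cone in $H^{1,1}(\widehat{M},\mathbb{R})$ contains a dense set of rational classes, so perturbing $[\widehat{\omega}]$ produces an ample orbifold line bundle and hence projectivity via the orbifold Kodaira embedding theorem.

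For ampleness of $-K_{\widehat{M}}$ I would apply the orbifold version of the Nakai--Moishezon criterion, reducing the problem to checking $(-K_{\widehat{M}})^2>0$ and $-K_{\widehat{M}}\cdot C>0$ for every irreducible curve $C\subset\widehat{M}$. The K\"ahler identity gives $c_1\cdot[\widehat{\omega}] = \tfrac{1}{4\pi}\int_{\widehat{M}}s_{\widehat{g}}\,\widehat{\omega}^{\,2}>0$. To upgrade this positivity against a single K\"ahler class to $c_1^2>0$, I would combine the Bach-flat identity $s_{\widehat{g}}\mathrm{Ric}^0_{\widehat{g}} + 2\mathrm{Hess}_0(s_{\widehat{g}})=0$ of \cite{lebrun2} with the Kähler curvature decomposition in complex dimension two and the Hodge index theorem on $\widehat{M}$.

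The substantive step, and the main obstacle, is the curve inequality, and it is precisely here that LeBrun's observation from \cite{lebrun} enters. The idea is to exploit that $s_{\widehat{g}}$ vanishes only at the isolated fixed point $q$ of the holomorphic extremal vector field: for any irreducible curve $C$, the scalar curvature is strictly positive along $C$ except possibly at the single point $q$, and one converts this pointwise sign into strict positivity of $-K_{\widehat{M}}\cdot C = \tfrac{1}{2\pi}\int_C\rho_{\widehat{g}}$ via an integration-by-parts argument using the extremal K\"ahler condition to put the Ricci form in a suitable divergence form on $\{s_{\widehat{g}}>0\}$. This refined positivity does not follow from $s_{\widehat{g}}\geq 0$ alone; it requires the combined use of Bach-flatness, the isolated zero of $s_{\widehat{g}}$ at the orbifold point, and the structure of the extremal vector field. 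Once it is established, orbifold Nakai--Moishezon concludes the proof.
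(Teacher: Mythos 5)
Your overall architecture---orbifold Nakai--Moishezon for ampleness plus a Bochner vanishing of $h^{2,0}$ for projectivity---is a legitimate alternative to the paper's argument, and the first two bullet points are fine as far as they go. The problem is that the step you yourself identify as ``the substantive step, and the main obstacle,'' namely $-K_{\widehat M}\cdot C>0$ for every irreducible curve $C$, is not actually proved: you assert that the pointwise sign of $s_{\widehat g}$ can be converted into this intersection-theoretic positivity ``via an integration-by-parts argument using the extremal K\"ahler condition to put the Ricci form in a suitable divergence form,'' but no such argument is supplied, and none of extremality, Bach-flatness in the abstract, or $s_{\widehat g}\geq0$ with an isolated zero suffices on its own (non-Fano rational surfaces carry K\"ahler metrics of positive scalar curvature). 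The input that actually does the work is the conformal relation $\widehat g=s_{\widehat g}^{2}h$ with $h$ Ricci-flat: the conformal change of the Ricci curvature, after eliminating $\Delta s_{\widehat g}$ by taking a trace, gives the pointwise identity
\begin{equation*}
Ric_{\widehat g,ab}+2s_{\widehat g}^{-1}\nabla_a\nabla_b s_{\widehat g}
=\Bigl(\tfrac{s_{\widehat g}}{6}+|d\log s_{\widehat g}|_{\widehat g}^{2}\Bigr)\widehat g_{ab}
\end{equation*}
on $M=\widehat M\setminus\{q\}$ (equation (\ref{classricci})). This is LeBrun's observation, and it is an exact identity rather than an integration by parts: it says that the closed $(1,1)$-current $T=\rho_{\widehat g}+2\sqrt{-1}\partial\bar\partial\log s_{\widehat g}$, which represents $2\pi c_1(-K_{\widehat M})$ because $\log s_{\widehat g}$ is a global function with only a logarithmic singularity at $q$, has associated symmetric form
\begin{equation*}
T_{J,ab}=\tfrac{s_{\widehat g}}{6}\widehat g_{ab}+s_{\widehat g}^{-2}\bigl(|ds_{\widehat g}|^{2}\widehat g_{ab}-(ds_{\widehat g})_a(ds_{\widehat g})_b-(Jds_{\widehat g})_a(Jds_{\widehat g})_b\bigr),
\end{equation*}
which is strictly positive on $M$ and has Lelong number $2$ at $q$ by Corollary \ref{behaviorofs}. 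Without this identity your curve inequality (and likewise your $c_1^2>0$ claim) is unsupported.

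Once the identity is in hand, your route becomes unnecessarily heavy. The current $T$ is a K\"ahler current in the rational class $2\pi c_1(-K_{\widehat M})$, and Demailly's regularization theorem produces a smooth strictly positive form in that class; this is exactly how the paper proceeds, and it delivers ampleness of $-K_{\widehat M}$---hence projectivity and $c_1^2>0$---in one stroke, making the Bochner vanishing, the Hodge-index computation, and the curve-by-curve check unnecessary. If you prefer to keep Nakai--Moishezon, the correct way to obtain $-K_{\widehat M}\cdot C>0$ and $(-K_{\widehat M})^2>0$ is simply to pair $T$, respectively $T\wedge T$, against $C$, respectively $[\widehat M]$, using its pointwise positivity; either way the displayed conformal identity is the indispensable ingredient your proposal omits.
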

\begin{proof}
First, we show the existence of a K\"ahler current in $2\pi c_1(-K_{\widehat{M}})$. Then we prove $-K_{\widehat{M}}$ is positive using regularization. Note that although $-K_{\widehat{M}}$ is only $\mathbb{Q}$-Cartier, for simplicity we will work as if it were Cartier.

The conformal relation $g=\lambda^{2/3}h=s_g^2h$  and the Ricci-flat property of $h$ imply 
\begin{equation}\label{conformalricci}
Ric_{g,ab}=-2s_g^{-1}\nabla_{a}\nabla_{b}s_g+\left(-s_g^{-1}\Delta s_g+3|d\log s_g|_g^2\right)g_{ab}.
\end{equation}
Here we are using the Levi-Civita connection of $g$. 
Taking the trace of (\ref{conformalricci}), we get
$s_g=-6s_g^{-1}\Delta s_g-12|d\log s_g|_g^2$.
Therefore,
\begin{equation}\label{classricci}
Ric_{g,ab}+2s_g^{-1}\nabla_a\nabla_bs_g=\left(\frac{s_g}{6}+|d\log s_g|_g^2\right)g_{ab}.
\end{equation}
This suggests that we should consider the current
$T=\rho_g+2\sqrt{-1}\partial\bar{\partial}\log{s_g}$
in the class $2\pi c_1(-K_{\widehat{M}})$, where $\rho_g$ is the Ricci form of $g$. With (\ref{classricci}), the associated symmetric form $T_{J}:= T(\cdot,J\cdot)$ is given by
\begin{equation}
T_{J,ab}=\frac{s_g}{6}g_{ab}+s_g^{-2}\left(|ds_g|_g^2g_{ab}-(ds_g)_a(ds_g)_b-(Jds_g)_a(Jds_g)_b\right).
\end{equation}
It is clear that $T_J$ is strictly positive, hence the current $T$ is a K\"ahler current.

To show the ampleness, if $\widehat{g}_{-K}$ denotes the hermitian metric on the line bundle $-K_{\widehat{M}}$ induced by $\widehat{g}$, then the form $\rho_g+2\sqrt{-1}\partial\bar{\partial}\log s_g$ is the curvature form of the singular hermitian metric $\widehat{g}_{-K}e^{-2\log{s_g}}$. The current $T$ as the curvature of the singular metric $g_{-K}e^{-2\log s_g}$ on $-K_{\widehat{M}}$, having Lelong number 
\begin{equation}
\nu(T,x)=
\left\{
\begin{aligned}
0,\ \text{if $x\neq q$,}\\
2,\ \text{if $x=q$,}
\end{aligned}
\right.\notag
\end{equation}
because of Corollary \ref{behaviorofs}. Taking a  Riemannian normal coordinate near $q$,  we have
$$\widehat{g}_{ij}=\delta_{ij}+O(r_q^2),\ \partial_k \widehat{g}_{ij}=O(r_q),\ 
s_{\widehat{g}}=\frac12ar_q^2+O(r_q^3).$$
So,
\begin{align*}
\frac{s_g}{6}g_{ab}+s_g^{-2}\left(|ds_g|^2g_{ab}-(ds_g)_a(ds_g)_b-(Jds_g)_a(Jds_g)_b\right)
\geq\frac{1}{12}ar_q^2g_{ab}+\epsilon\frac{1}{r_q^2}g_{ab}
\end{align*}
for some small $\epsilon$,
as a symmetric 2-form near $q$. 
It is a consequence of the standard regularization theorem of Demailly that there is 
a smooth $(1,1)$ form $\theta$ in $2\pi c_1(-K_{\widehat{M}})$
that is strictly positive.
\end{proof}

Classical result now says that $(\widehat{M},\widehat{J})$ is rational, since it is log del Pezzo.

\begin{proposition}\label{lem:theminimalresolution}
The minimal resolution $\widetilde{M}$ of $\widehat{M}$, denoted as $r:\widetilde{M}\to \widehat{M}$, is a smooth rational surface.
\end{proposition}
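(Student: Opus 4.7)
The plan is to reduce the proposition to verifying $h^1(\widetilde{M},\mathcal{O})=h^2(\widetilde{M},\mathcal{O})=0$ and then to invoke a standard rationality result. The minimal resolution of a normal projective surface is smooth by construction, so the only real content of the proposition is the rationality of $\widetilde{M}$.

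For the cohomology vanishing I would first use that quotient singularities are rational singularities, so $R^1 r_* \mathcal{O}_{\widetilde{M}} = 0$ and the Leray spectral sequence yields $H^i(\widetilde{M},\mathcal{O}_{\widetilde{M}}) \cong H^i(\widehat{M},\mathcal{O}_{\widehat{M}})$ for every $i$. By Proposition \ref{logdel}, $\widehat{M}$ is klt with $-K_{\widehat{M}}$ ample, so Kawamata--Viehweg vanishing, applied with trivial boundary to the ample $\mathbb{Q}$-Cartier divisor $-K_{\widehat{M}}$, yields $H^i(\widehat{M},\mathcal{O}_{\widehat{M}}) = H^i(\widehat{M}, K_{\widehat{M}} + (-K_{\widehat{M}})) = 0$ for $i \geq 1$. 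Combining the two steps gives $q(\widetilde{M}) = p_g(\widetilde{M}) = 0$.

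To conclude rationality, I would invoke the classical fact that every klt log Fano variety is rationally connected (Zhang in the surface case, or Hacon--McKernan--Xu in general). Since $\widehat{M}$ is thus rationally connected and rational connectedness is a birational invariant among smooth projective varieties, $\widetilde{M}$ is rationally connected, and any smooth projective rationally connected surface is rational. An alternative route avoiding rational connectedness is to verify directly that $P_m(\widetilde{M}) = 0$ for all $m \geq 1$: writing $K_{\widetilde{M}} = r^* K_{\widehat{M}} + \sum a_i E_i$ with $a_i > -1$ by klt-ness, an effective divisor in $|mK_{\widetilde{M}}|$ pushes forward to an effective divisor in $|mK_{\widehat{M}}|$ (the $\sum m a_i E_i$ are exceptional and drop out), but ampleness of $-K_{\widehat{M}}$ forbids any nonzero section of $|mK_{\widehat{M}}|$ for $m$ with $mK_{\widehat{M}}$ Cartier; together with $q = 0$, the Enriques--Kodaira classification then forces rationality.

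The argument is essentially a packaging of standard facts, so I do not anticipate a substantive obstacle; the only minor care required is to invoke Kawamata--Viehweg in the correct klt form, and (in the alternative) to track the klt discrepancies correctly when comparing pluricanonical sections on $\widehat{M}$ and $\widetilde{M}$.
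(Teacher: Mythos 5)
Your argument is correct and follows the same route as the paper, which at this point simply invokes the classical fact that a log del Pezzo surface is rational and records the smoothness and rationality of its minimal resolution without further proof. What you have written is a complete and accurate expansion of that citation: rational singularities plus Kawamata--Viehweg give $q(\widetilde{M})=p_g(\widetilde{M})=0$, and either the rational-connectedness of klt log Fanos or your discrepancy computation showing $P_m(\widetilde{M})=0$ (hence Castelnuovo's criterion) finishes the rationality.
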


\subsection{The correspondence}\label{sec:correspondence}

The previous discussion has demonstrated that the compactification of a Hermitian non-K\"ahler ALE gravitational instanton is a special Bach-flat K\"ahler orbifold. To complete the proof of Theorem \ref{main:correspondence}, we still need to prove the other direction of our correspondence.
\begin{theorem}\label{correspondence}
Given a special  Bach-flat K\"ahler orbifold $(\widehat{M},\widehat{g})$ with the orbifold point $q$, there exists a Hermitian non-K\"ahler ALE gravitational instanton $(M,h)$, where $M=\widehat{M}\setminus\{q\}$ and $h=s_{\widehat{g}}^{-2}\widehat{g}$. 
\end{theorem}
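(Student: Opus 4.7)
The plan is to reverse the construction of Section \ref{sec:compactification}: I define $h := s_{\widehat{g}}^{-2}\widehat{g}$ on $M := \widehat{M}\setminus\{q\}$ and verify directly that it is a Ricci-flat, ALE, Hermitian non-K\"ahler metric. Since $s_{\widehat{g}}>0$ on $M$ by the special Bach-flat K\"ahler hypothesis, the conformal factor is smooth and positive, so $h$ is a smooth Riemannian metric on $M$. The metric $h$ is Hermitian with respect to $\widehat{J}$ because it is conformal to the Hermitian metric $\widehat{g}$; it is non-K\"ahler because its fundamental $2$-form $\omega_h = s_{\widehat{g}}^{-2}\widehat{\omega}$ satisfies $d\omega_h = -2s_{\widehat{g}}^{-3}\,ds_{\widehat{g}}\wedge\widehat{\omega}\not\equiv 0$, as $s_{\widehat{g}}$ is non-constant.

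For the Einstein (and ultimately Ricci-flat) property, I would invoke Derdzi\'nski's theorem in the form implicit in the forward direction of the paper: a Bach-flat K\"ahler metric in complex dimension $2$ whose scalar curvature is nowhere zero has conformally related metric $s^{-2}g$ Einstein. Applying this to $\widehat{g}|_M$ yields that $h$ is Einstein on $M$. To pin down the Einstein constant as zero, I would use the ALE asymptotics established next: the constant scalar curvature of an Einstein metric that decays to the flat Euclidean model at infinity is forced to vanish.

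The central step is proving that $(M,h)$ is ALE. Near the orbifold point $q$, Theorem \ref{thm:removal} supplies a smooth orbifold chart with orbifold group $\Gamma$, and Corollary \ref{behaviorofs} gives the expansion $s_{\widehat{g}}=\tfrac12 a\,r_q^{2}+O(r_q^{3})$ in $\widehat{g}$-geodesic normal orbifold coordinates $x$, where $r_q=|x|$. Performing the inversion $y=x/|x|^{2}$, which diffeomorphically identifies the punctured unit ball $B^{*}/\Gamma$ with the complement of a ball in $\mathbb{R}^{4}/\Gamma$, and using $\sum dy_{i}^{2}=|x|^{-4}\sum dx_{i}^{2}$, a direct computation shows that the leading term of $h = s_{\widehat{g}}^{-2}\widehat{g}$ in the $y$-coordinates is a positive constant multiple of the Euclidean metric, with remainder decaying polynomially in $|y|^{-1}$. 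This produces ALE coordinates of some positive order, and Theorem \ref{bkn} then upgrades the decay to order $4$, simultaneously giving $\int_{M}|Rm_h|_h^{2}<\infty$. Ricci-flatness of $h$ follows, and the group $\Upsilon\subset SO(4)$ appears as the image of $\Gamma$ under conjugation by the inversion.

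The principal obstacle is this ALE analysis: one must correctly account for the orientation-reversing nature of real inversion in $\mathbb{R}^{4}$ (so that $\Upsilon$ sits in $SO(4)$, possibly after composing with a reflection), verify that the Hermitian structure at infinity matches the complex-analytic model $\mathbb{B}^{*}/\Gamma$ required by Theorem \ref{main:correspondence}, and quantify the remainder in the expansion of $s_{\widehat{g}}^{-2}\widehat{g}$ precisely enough in $y$-coordinates to feed into Bando-Kasue-Nakajima. Once these compatibility checks are completed, the assignment $(\widehat{M},\widehat{g})\mapsto(M,h)$ is manifestly inverse to the compactification of Section \ref{sec:compactification}, closing the correspondence of Theorem \ref{main:correspondence}.
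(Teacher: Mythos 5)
Your proposal is correct and follows essentially the same route as the paper: the conformal change of a Bach-flat K\"ahler surface metric with nowhere-vanishing scalar curvature is Einstein, and the expansion $s_{\widehat{g}}=\tfrac12 a r_q^2+O(r_q^3)$ at $q$ yields Euclidean volume growth and finite $\int_M|Rm_h|_h^2$, so that Bando-Kasue-Nakajima applies. The only real variation is how the Einstein constant is killed: the paper evaluates the conformal identity $s_h=s_{\widehat{g}}^3+6s_{\widehat{g}}\Delta s_{\widehat{g}}-12|\nabla s_{\widehat{g}}|^2$ at $q$, where $s_{\widehat{g}}$ and $\nabla s_{\widehat{g}}$ both vanish, which is more direct than your (still valid, non-circular) plan of first exhibiting approximate ALE coordinates by inversion and then reading off $s_h=0$ from the decay.
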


\begin{proof}
Because $\widehat{g}$ is  Bach-flat K\"ahler, after the conformal change, we have
$Ric_{h}^0=Ric_{\widehat{g}}^0+2s_{\widehat{g}}^{-1}\Hess_0(s_{\widehat{g}})=0$. 
The vanishing of the traceless Ricci curvature implies $h$ is Einstein. Conformal relation also gives
$$s_h=s_{\widehat{g}}^3+6s_{\widehat{g}}\Delta s_{\widehat{g}}-12|\nabla s_{\widehat{g}}|^2.$$
The right side is a continuous function on $\widehat{M}$ which should be a constant because $s_h$ is a constant. Since $s_{\widehat{g}}$ achieves its minimum at $q$, $\nabla s_{\widehat{g}}(q)=0$.  Evaluating this equation at $q$, we get $s_h=0$.  Therefore $h$ is a Ricci-flat metric.
Near $q$, $s_{\widehat{g}}=\frac12ar_q^2+O(r_q^3)$. Hence $h=s_{\widehat{g}}^{-2}\widehat{g}$ has Euclidean volume growth. It is also direct to check that $\int|Rm_h|_h^2<\infty$ over the end.
By \cite{bkn}, it is ALE.
\end{proof}

This finshes the proof of Theorem \ref{main:correspondence}.

\section{Holomorphic vector fields and minimal resolutions}
\label{sec:holfields}

In Section \ref{sec:holfields}, we take a detour to study weights of holomorphic vector fields, which is necessary for us to classify all the log del Pezzo surfaces coming from Hermitian non-K\"ahler ALE gravitational instantons. 
We conclude previous sections before we start.
With the correspondence Theorem \ref{main:correspondence}, to study Hermitian non-K\"ahler ALE gravitational instantons $(M,h)$, it suffices to study special Bach-flat K\"ahler orbifolds $(\widehat{M},\widehat{g})$. Some key features of special Bach-flat K\"ahler orbifolds are:
\begin{itemize}
\item The underlying complex surface $\widehat{M}$ is log del Pezzo, with only one orbifold point $q$.
\item The underlying complex surface $\widehat{M}$ carries a holomorphic $\mathbb{C}^*$-action: the extremal vector field $\mathcal{K}$ induces a holomorphic $S^1$-action by Corollary \ref{behaviorofs}, so there is the associated holomorphic $\mathbb{C}^*$-action.
\item The holomorphic  $\mathbb{C}^*$-action has same weights at the orbifold point $q$, in the sense that it fixes $q$ and the two weights of the induced $\mathbb{C}^*$-action on the tangent space at $q$ coincide, because of Corollary \ref{behaviorofs}.
\end{itemize}
These already are very strong constraints. Denote the infinitesimal generator of the holomorphic $\mathbb{C}^*$-action as $\mathfrak{E}$.
The holomorphic $\mathbb{C}^*$-action always can be taken as primitive and the holomorphic vector field $\mathfrak{E}$ can always be taken as flowing out of the orbifold point $q$. We shall ultimately classify all pairs $(\widehat{M},\mathfrak{E})$ that satisfy the above conditions, with one more technical assumption that the orbifold group is in $SU(2)$.
They are candidates for special Bach-flat K\"ahler orbifolds with structure group $\Gamma\subset SU(2)$.

Before proceeding, we fix some notations. By a $\mathbb{C}^*$-action on $\mathbb{C}^2/\Gamma$ given by $t\curvearrowright(x,y)=(t^\theta x,t^\tau y)$, we mean that after passing to the orbifold cover $\mathbb{C}^2$, the action is given by $t\curvearrowright(x,y)=(t^\theta x,t^\tau y)$. The action must commute with the $\Gamma$-action on $\mathbb{C}^2$. By the infinitesimal generator of a $\mathbb{C}^*$-action, we mean the holomorphic vector field generated by $\partial_z$.

\subsection{Quotient singularities and their minimal resolutions}\label{sec:minimalresolutions}

Minimal resolutions of quotient singularities $\mathbb{C}^2/\Gamma$ were studied by Brieskorn. In this section we briefly recall their structures. Materials in this section are taken from Lock-Viaclovsky \cite{lv2,lv}.

Consider the quotient singularity $\mathbb{C}^2/\Gamma$, where $\Gamma\subset U(2)$ is a finite subgroup and we require that $\mathbb{C}^2/\Gamma$ only has an orbifold singularity at the origin. 
The group $SU(2)$ can be identified with unit quaternions $z_1+z_2j\in\mathbb{H}$, where $z_1,z_2\in\mathbb{C}$. This identification allows us to view $SU(2)$ as the unit sphere $S^3$. Define the map $\phi:S^3\times S^3\to SO(4)$ by 
$$\phi(q_1,q_2)(h)=q_1h\overline{q_2}$$ 
for $h\in\mathbb{H}$, where we are taking quaternions multiplication. This map $\phi$ is a double cover of $SO(4)$, and when we restrict it to $S^1\times S^3$ with $S^1\subset S^3$ understood as the set of unit $z_1\in \mathbb{C}\subset\mathbb{H}$, it provides a double cover of $U(2)$. Now finite subgroups of $U(2)$ acting freely on $S^3$ are described by Table \ref{subgroups}.

\begin{table}[ht]
\centering
\caption{Finite subgroups of $U(2)$ acting freely on $S^3$.}
\label{subgroups}
%\vspace{5pt}
\begin{tabular}{ l l l }
\hline
 $\Gamma\subset U(2)$ & Conditions & Order \\ 
\hline\hline
 $L(m,n)$ & $(m,n)=1$ & $n$ \\  
\specialrule{0em}{2pt}{2pt}
 $\phi(L(1,2m)\times D_{4n}^*)$ & $(m,2n)=1$ & $4mn$ \\
\specialrule{0em}{2pt}{2pt}
$\phi(L(1,2m)\times T^*)$ & $(m,6)=1$ & $24m$\\
\specialrule{0em}{2pt}{2pt}
$\phi(L(1,2m)\times O^*)$ & $(m,6)=1$ & $48m$\\
\specialrule{0em}{2pt}{2pt}
$\phi(L(1,2m)\times I^*)$ & $(m,30)=1$ & $120m$\\
\specialrule{0em}{2pt}{2pt}
$\mathfrak{J}^2_{m,n}=$Index-2 diagonal $\subset\phi(L(1,4m)\times D_{4n}^*)$ & $(m,2)=2,(m,n)=1$ & $4mn$\\
\specialrule{0em}{2pt}{2pt}
$\mathfrak{J}^3_m=$Index-3 diagonal $\subset\phi(L(1,6m)\times T^*)$ & $(m,6)=3$ & $24m$\\  
\hline
\end{tabular}
\end{table}

Here, the group $L(q,p)$ with coprime $p,q$ denotes the cyclic subgroup of $U(2)$ generated by
$$\left(
\begin{matrix}
\exp(\frac{2\pi i}{p})&0\\
0&\exp(\frac{2\pi iq}{p})
\end{matrix}
\right).$$
The finite subgroups of $SU(2)$, denoted by $D_{4n}^*$, $T^*$, $O^*$, $I^*$, correspond to the binary dihedral, tetrahedral, octahedral, and icosahedral groups, respectively. Using the $ADE$ classification for finite subgroups of $SU(2)$, we have  that $A_n$ corresponds to $L(-1,n+1)$, $D_{n+2}$ corresponds to $D_{4n}^*$, and $E_6, E_7,E_8$ corresponds to $T^*, O^*, I^*$, respectively.
Set $[\alpha,\beta]=\phi(\alpha,\beta)$ with $\alpha,\beta\in SU(2)$ for simplicity.
We can write down the generators of the above groups in a more explicit manner as in Table \ref{generators}.

\begin{table}[ht]
\centering
\caption{Generators of finite subgroups of $U(2)$ acting freely on $S^3$.}
\label{generators}

\begin{tabular}{ l l }
\hline
 $\Gamma\subset U(2)$ & Generators \\ 
\hline\hline
\specialrule{0em}{1pt}{1pt}
 $L(q,p)$ & $[e^{2\pi ik/p},e^{2\pi i(1-k)/p}]$ \small{with $2k\equiv(q+1)\mathrm{mod}\ p$}\\  
\specialrule{0em}{2pt}{2pt}
 $\phi(L(1,2m)\times D_{4n}^*)$ & $[e^{\pi i/m},1],[1,e^{\pi i/n}],[1,j]$ \\
\specialrule{0em}{2pt}{2pt}
$\phi(L(1,2m)\times T^*)$ & $[e^{\pi i/m},1],[1,(1+i+j-k)/2],[1,(1+i+j+k)/2]$\\
\specialrule{0em}{2pt}{2pt}
$\phi(L(1,2m)\times O^*)$ & $[e^{\pi i/m},1],[1,e^{\pi i/4}],[1,(1+i+j+k)/2]$\\
\specialrule{0em}{2pt}{2pt}
$\phi(L(1,2m)\times I^*)$ & $[e^{\pi i/m},1],[1,(1+\tau i-\tau^{-1}k)/2],[1,(\tau+i+\tau^{-1}j)/2]$\\
 &\small{with $\tau=(1+\sqrt{5})/2$}\\
\specialrule{0em}{2pt}{2pt}
$\mathfrak{J}^2_{m,n}$ & $[e^{\pi i/m},1],[1,e^{\pi i/n}],[e^{\pi i/(2m)},j]$\\
\specialrule{0em}{2pt}{2pt}
$\mathfrak{J}^3_m$ & $[e^{\pi i/m},1],[1,i],[1,j],[e^{\pi i/(3m)},(-1-i-j+k)/2]$\\  
\hline
\end{tabular}
\end{table}

Now we describe the structure of the minimal resolution of $\mathbb{C}^2/\Gamma$. 

For the cyclic case, when the group $\Gamma$ is $L(q,p)$, the orbifold singularity is a Hirzebruch-Jung singularity. The exceptional divisors of its minimal resolution is a chain of rational curves with self-intersection $-e_i$, as illustrated below
$$
\begin{tikzpicture}
[decoration={markings, 
    mark= at position 0.5 with {\arrow{stealth}}}
]
\draw (0,0)--(1,0);
\draw (1,0)--(2,0);
\draw (3,0)--(4,0);

\node at (2.5,0){$\cdots$};
\node at (0,0){$\bullet$};
\node at (1,0){$\bullet$};
\node at (2,0){$\bullet$};
\node at (3,0){$\bullet$};
\node at (4,0){$\bullet$};

\node[below] at (0,0){$-e_1$};
\node[below] at (1,0){$-e_2$};
\node[below] at (2,0){$-e_3$};
\node[below] at (3,0){$-e_{k-1}$};
\node[below] at (4,0){$-e_k$};
\end{tikzpicture}.
$$
Each vertex denotes a rational curve with self-intersection $-e_i$, and if there is a segment between two vertices, it means the two curves intersect transversely at one point. The numbers $e_i$ are determined by the relatively prime integers $1\leq q<p$, via the Hirzebruch-Jung continued fraction expansion with $e_i\geq2$:
\begin{equation}\label{continuefraction}
\frac{q}{p}=\frac{1}{\displaystyle e_1-\frac{1}{\displaystyle e_2-\cdots\frac{1}{e_k}}}.
\end{equation}

The minimal resolutions of non-cyclic finite subgroups of $U(2)$ that act freely on $S^3$ have exceptional curves consisting of three chains of rational curves, each intersecting a single central rational curve.
$$
\begin{tikzpicture}
[decoration={markings, 
    mark= at position 0.5 with {\arrow{stealth}}}
]
\draw (0,0)--(1,-0.5);
\draw (1,-0.5)--(2,-0.5);
\draw[dashed] (2,-0.5)--(3,-0.5);
\draw (3,-0.5)--(4,-0.5);

\node at (0,0){$\bullet$};
\node at (1,-0.5){$\bullet$};
\node at (2,-0.5){$\bullet$};
\node at (3,-0.5){$\bullet$};
\node at (4,-0.5){$\bullet$};

\node[below] at (0,0){$-b_\Gamma$};
\node[below] at (1,-0.5){$-e_1^2$};
\node[below] at (2,-0.5){$-e_2^2$};
\node[below] at (3,-0.5){$-e_{k_2-1}^2$};
\node[below] at (4,-0.5){$-e_{k_2}^2$};

\draw (0,0)--(-1,0);
\draw (-1,0)--(-2,0);
\draw[dashed] (-2,0)--(-3,0);
\draw (-3,0)--(-4,0);

\node at (0,0){$\bullet$};
\node at (-1,0){$\bullet$};
\node at (-2,0){$\bullet$};
\node at (-3,0){$\bullet$};
\node at (-4,0){$\bullet$};

\node[below] at (-1,0){$-e_1^1$};
\node[below] at (-2,0){$-e_2^1$};
\node[below] at (-3,0){$-e_{k_1-1}^1$};
\node[below] at (-4,0){$-e_{k_1}^1$};

\draw (0,0)--(1,0.5);
\draw (1,0.5)--(2,0.5);
\draw[dashed] (2,0.5)--(3,0.5);
\draw (3,0.5)--(4,0.5);

\node at (1,0.5){$\bullet$};
\node at (2,0.5){$\bullet$};
\node at (3,0.5){$\bullet$};
\node at (4,0.5){$\bullet$};

\node[above] at (1,0.5){$-e_1^3$};
\node[above] at (2,0.5){$-e_2^3$};
\node[above] at (3,0.5){$-e_{k_3-1}^3$};
\node[above] at (4,0.5){$-e_{k_3}^3$};
\end{tikzpicture}.
$$
Each chain $\{e_i^j\}$ with fixed $j=1,2,3$ is the chain of exceptional curves of the minimal resolution for some $L(\alpha_j,\beta_j)$. The number $b_\Gamma$ here is given by
$$b_\Gamma=2+\frac{4m}{|\Gamma|}\left(m-\left(m\ \mathrm{mod}\ \frac{|\Gamma|}{4m}\right)\right),$$
with $m$ as in Table \ref{subgroups}.
The groups $L(\alpha_j,\beta_j)$ for each $\Gamma$ is given by Table \ref{noncyclicL}.

\begin{table}[h]
\centering
\caption{$L(\alpha_j,\beta_j)$ for non-cyclic subgroups.}
\label{noncyclicL}
\begin{tabular}{ l  l }
\hline
 $\Gamma\subset U(2)$   & $L(\alpha_j,\beta_j)$ \\ 
\hline\hline
 $\phi(L(1,2m)\times D_{4n}^*)$  & $L(1,2),L(1,2),L(-m,n)$\\
\specialrule{0em}{2pt}{2pt}
$\phi(L(1,2m)\times T^*)$  & $L(1,2),L(-m,3),L(-m,3)$\\
\specialrule{0em}{2pt}{2pt}
$\phi(L(1,2m)\times O^*)$  &$L(1,2),L(-m,3),L(-m,4)$\\
\specialrule{0em}{2pt}{2pt}
$\phi(L(1,2m)\times I^*)$   & $L(1,2),L(-m,3),L(-m,5)$\\
\specialrule{0em}{2pt}{2pt}
$\mathfrak{J}^2_{m,n}$  & $L(1,2),L(1,2),L(-m,n)$\\
\specialrule{0em}{2pt}{2pt}
$\mathfrak{J}^3_m$ & $L(1,2),L(1,3),L(2,3)$\\  
\hline
\end{tabular}
\end{table}

\begin{comment}

\subsection{The Hitchin-Thorpe inequality}

We also briefly recall the Hitchin-Thorpe inequality \cite{nakajima} for ALE Ricci-flat 4-manifolds. Let $(M,h)$ be an ALE Ricci-flat 4-manifold, with $\chi(M)$ being the Euler number, $\tau(M)$ being the signature, and end asymptotic to $\mathbb{R}^4/\Upsilon$. Suppose that $\eta(S^3/\Upsilon)$ is the eta invariant of $S^3/\Upsilon$. We have
\begin{theorem}[\cite{nakajima}]
    The following Hitchin-Thorpe inequality holds, with an equality if and only if $(M,h)$ is self-dual or anti-self-dual
    $$2(\chi(M)-\frac{1}{|\Upsilon|})\geq3|\tau(M)-\eta(S^3/\Upsilon)|.$$
\end{theorem} 

Recall the eta invariants for $ADE$ groups are
$$-\frac{n(n-1)}{3(n+1)},\ -\frac{2n^2-8n+9}{6(n-2)},\ -\frac{49}{36},\ -\frac{121}{72},\ -\frac{361}{180}$$
for $A_n,D_n,E_6,E_7,E_8$ respectively. 

\end{comment}

\subsection{Weights of holomorphic vector fields at fixed points}\label{sec:weightsofholo}

For a primitive holomorphic $\mathbb{C}^*$-action and its infinitesimal generator $\mathfrak{E}$, we define
\begin{definition}
The weights of the holomorphic vector field $\mathfrak{E}$ at a fixed point $p$ is the pair $[\theta,\tau]$, where $\theta$ and $\tau$ are the weights of the $\mathbb{C}^*$-action on the tangent space at $p$. If $p$ is an orbifold point, weights can be similarly defined by passing to the local orbifold cover.
\end{definition}

Weights could be fractions when $p$ is an orbifold point. For example, the primitive $\mathbb{C}^*$-action on $\mathbb{C}^2/\mathbb{Z}_2$, given by $t\curvearrowright(x,y)=(t^{1/2} x,t^{1/2} y)$ on the orbifold cover $\mathbb{C}^2$, has weights $[1/2,1/2]$ at the origin.
The following simple lemma says near a smooth fixed point $q$, there is a holomorphic coordinate such that the holomorphic $\mathbb{C}^*$-action with weights $[\theta,\tau]$ is standard.
\begin{lemma}\label{localformlemma}
There exists a holomorphic coordinate $(x,y)$ near $p$ such that the $\mathbb{C}^*$-action is given by $t\curvearrowright(x,y)=(t^\theta x,t^\tau y)$, and $\mathfrak{E}=\theta x\frac{\partial}{\partial x}+\tau y\frac{\partial}{\partial y}$.
\end{lemma}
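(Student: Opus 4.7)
The plan is to apply the classical Bochner linearization trick for compact group actions, together with analytic continuation from $S^1$ to $\mathbb{C}^*$. This is a purely local statement at a smooth fixed point, so the global geometry of $\widehat{M}$ plays no role; we may work in a small polydisc around $p$.

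First I would fix an arbitrary holomorphic chart identifying a neighborhood of $p$ with a ball $B\subset\mathbb{C}^2$ sending $p$ to $0$, and write the action as $t\cdot z=\phi_t(z)$ with $\phi_t:B\to\mathbb{C}^2$ holomorphic in both $t\in\mathbb{C}^*$ and $z$. The derivative $L_t:=d(\phi_t)_0$ gives a holomorphic representation $\mathbb{C}^*\to GL(T_pM)\cong GL_2(\mathbb{C})$. By the definition of the weights $[\theta,\tau]$, I can choose linear coordinates on $T_pM$ so that $L_t(v_1,v_2)=(t^\theta v_1,t^\tau v_2)$; pulling these linear coordinates back through the chart, I obtain a new holomorphic chart $(z_1,z_2)$ in which $\phi_t(z)=L_t z+O(|z|^2)$.

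Next, averaging over the compact subgroup $S^1\subset\mathbb{C}^*$, I define
\begin{equation*}
\Psi(z):=\int_{S^1}L_s^{-1}\phi_s(z)\,d\mu(s),
\end{equation*}
where $d\mu$ is the normalized Haar measure. Since $\phi_s(z)=L_sz+O(|z|^2)$ uniformly for $s\in S^1$, a direct computation gives $\Psi(0)=0$ and $d\Psi_0=\mathrm{id}$, so by the inverse function theorem $\Psi$ is a local biholomorphism near $0$. The translation-invariance of Haar measure on $S^1$ together with the cocycle property $\phi_s\circ\phi_t=\phi_{st}$ and $L_s\circ L_t=L_{st}$ gives, for every $t\in S^1$,
\begin{equation*}
\Psi\circ\phi_t=\int_{S^1}L_s^{-1}\phi_{st}(z)\,d\mu(s)=L_t\int_{S^1}L_{st}^{-1}\phi_{st}(z)\,d\mu(s)=L_t\circ\Psi.
\end{equation*}
Thus $\Psi$ intertwines the original action with the linear action $L_t$ for all $t\in S^1$. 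Now both $\Psi\circ\phi_t$ and $L_t\circ\Psi$ are holomorphic in $t\in\mathbb{C}^*$ (and in $z$), and they agree on the totally real, uniqueness-of-continuation set $S^1\subset\mathbb{C}^*$; by the identity principle they agree for all $t\in\mathbb{C}^*$. Setting $(x,y):=\Psi(z_1,z_2)$ therefore yields a holomorphic coordinate in which $t\curvearrowright(x,y)=(t^\theta x,t^\tau y)$, and differentiating this at $t=1$ gives $\mathfrak{E}=\theta x\,\partial_x+\tau y\,\partial_y$, as required.

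There is no real obstacle here; the only point that deserves care is the extension from $S^1$ to $\mathbb{C}^*$, which is handled cleanly by analytic continuation because $S^1$ is totally real in $\mathbb{C}^*$ and both sides of the intertwining identity are holomorphic in $t$. If one wanted the statement at an orbifold fixed point (as in the preceding paragraph of the paper), the same proof applies after first passing to the local orbifold cover $\mathbb{C}^2\to\mathbb{C}^2/\Gamma$: the $\mathbb{C}^*$-action lifts (after passing to a finite cover of $\mathbb{C}^*$ if $\theta,\tau$ are fractional) since the action commutes with $\Gamma$, and the averaging construction is $\Gamma$-equivariant, so the linearizing biholomorphism descends.
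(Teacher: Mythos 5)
Your proof is correct. The paper states this lemma without proof (it is offered as a "simple lemma"), and your argument — diagonalizing the isotropy representation, Bochner averaging over the compact subgroup $S^1$ to produce a local biholomorphism intertwining the action with its linearization, and then extending the intertwining identity from $S^1$ to $\mathbb{C}^*$ by the identity principle in the group variable — is the standard Cartan--Bochner linearization and fills the gap cleanly, including the orbifold-cover remark needed for the paper's later use at singular fixed points.
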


The following proposition explains how blow-up changes weights of a holomorphic $\mathbb{C}^*$-action.
\begin{proposition}\label{prop:blowupweights}
For a non-trivial holomorphic $\mathbb{C}^*$-action with weights $[\theta,\tau]$ at a fixed point $p$, 
which is locally given by $t\curvearrowright(x,y)=(t^\theta x,t^\tau y)$,
the $\mathbb{C}^*$-action lifts to the blow-up at $p$. And,
\begin{itemize}
\item if $\theta=\tau$, the exceptional curve $E$ is a fixed curve of the lifted action, points in which all have weights $[\theta,0]$;
\item if $\theta\neq\tau$, the exceptional curve $E$ admits two fixed points $p_1,p_2$, with weights $[\theta_1,\tau_1],[\theta_2,\tau_2]$. Weights of them are $[\theta,\tau-\theta]$, $[\theta-\tau,\tau]$. %Only one of the two fixed points have positive weights.
\end{itemize}
%Furthermore, if the blow-up is performed at a fixed point with weights $[\theta, \tau]$, where $\tau \leq 0 \leq \theta$ (note that $\theta$ and $\tau$ cannot be both equal to zero, otherwise the action would be trivial), then for the fixed points $p_1$ and $p_2$ in the exceptional curve, we also have $\tau_i \leq 0 \leq \theta_i$. This implies that the exceptional curve of such a blow-up cannot be a fixed curve under the $\mathbb{C}^*$ action.
\end{proposition}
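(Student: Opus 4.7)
The plan is to reduce everything to a direct coordinate computation on the two standard affine charts of the blow-up of $\mathbb{C}^2$ at the origin, after first linearizing the action using Lemma \ref{localformlemma}.

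First I would invoke Lemma \ref{localformlemma} to choose a holomorphic coordinate system $(x,y)$ near $p$ in which the $\mathbb{C}^*$-action takes the linear form $t\cdot(x,y) = (t^{\theta}x, t^{\tau}y)$. The blow-up $\mathrm{Bl}_p$ at $p$ is covered by two affine charts: in chart $U_1$ with coordinates $(u,v)$, the blow-down map is $(u,v)\mapsto(u,uv)$; in chart $U_2$ with coordinates $(u',v')$, it is $(u',v')\mapsto(u'v',v')$. Away from the exceptional set we have $u=x$, $v=y/x$ on $U_1$, and $u'=x/y$, $v'=y$ on $U_2$. Pushing the linear $\mathbb{C}^*$-action forward through these relations, the lifted action is
\begin{equation*}
t\cdot(u,v) = (t^{\theta}u,\, t^{\tau-\theta}v) \quad\text{on } U_1, \qquad t\cdot(u',v') = (t^{\theta-\tau}u',\, t^{\tau}v') \quad\text{on } U_2.
\end{equation*}
Since these expressions are regular (even on the exceptional divisor $E = \{u=0\}\cup\{v'=0\}$), the action extends holomorphically to $\mathrm{Bl}_p$.

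For the case $\theta = \tau$, the action in $U_1$ reduces to $(u,v)\mapsto(t^{\theta}u, v)$, so every point of $E\cap U_1 = \{u=0\}$ is fixed; at such a point, the tangent space decomposes into the normal direction $\partial_u$ of weight $\theta$ and the tangential direction $\partial_v$ of weight $0$, giving weights $[\theta,0]$. The analogous computation on $U_2$ covers the remaining point at infinity of $E$, yielding the same weights $[\theta,0]$.

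For the case $\theta\neq\tau$, both exponents $\theta$ and $\tau-\theta$ are nonzero in $U_1$ (the latter by assumption, the former because the action was assumed non-trivial and primitive), so the only fixed point in $U_1$ is $p_1 = (0,0)$, with weights $[\theta,\tau-\theta]$ read off directly from the linearized action. Similarly, the only fixed point in $U_2$ is $p_2 = (0,0)$, with weights $[\theta-\tau,\tau]$. These are the two fixed points on $E$, completing the proof. There is no real obstacle here; the only thing to be careful about is bookkeeping the sign conventions and confirming that the extended action on $\mathrm{Bl}_p$ is indeed holomorphic across $E$, which is immediate from the regularity of the transition formulas above.
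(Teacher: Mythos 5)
Your proof is correct and is precisely the ``direct calculation'' the paper leaves to the reader: linearize via Lemma \ref{localformlemma}, write the lifted action in the two standard charts of the blow-up, and read off the weights. One small caveat: your parenthetical claim that $\theta\neq0$ follows from the action being non-trivial and primitive is false (the weights $[0,1]$ occur, e.g.\ in Corollary \ref{cor:alpha1}), so when $\theta=0$ the fixed locus in $U_1$ is the whole curve $\{v=0\}$ rather than just the origin; however, since $\tau\neq\theta$ the fixed locus of $E\cap U_1=\{u=0\}$ is still the single point $v=0$ with the stated weights $[\theta,\tau-\theta]$, so the conclusion of the proposition is unaffected.
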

\begin{proof}
%The blow-up at $p$ can be parametized by $(x,y)\times[a:b]$ with $xb=ya$. The group $\mathbb{C}^*$ acts on the blow-up via
%$$t\curvearrowright(x,y)\times[a:b]=(t^\theta x,t^{\tau}y)\times[t^\theta a,t^{\tau}b].$$

%If $\theta=\tau$, it is clear that the exceptional curve, which is parametrized by $(0,0)\times[a:b]$, is fixed by the action. At each point $(0,0)\times[a:b]$, the tangent vector to the exceptional curve is a weight vector of the lifted $\mathbb{C}^*$ action, whose weight is 0. The proper transform of the curve $xb-ya=0$ passes through $(0,0)\times[a:b]$, and its tangent vector at $(0,0)\times[a:b]$ provides another weight vector. The weight of this weight vector is $\theta$. Hence, the weights at $(0,0)\times[a:b]$ are $[\theta,0]$.

%If $\theta\neq\tau$, then there are two fixed points in $(0,0)\times[a:b]$, which are $(0,0)\times[1:0]$ and $(0,0)\times[0:1]$. Near the first point, the coordinate system $(x,xb)\times[1:b]$ can be used, and the action is given by $t\cdot(x,b)=(t^\theta x,t^{\tau-\theta}b)$. Similarly, near the other point, the coordinate system $(ya,y)\times[a:1]$ can be used, and the action is given by $t\cdot(y,a)=(t^{\tau}y,t^{\theta-\tau}a)$. Hence, the weights at $(0,0)\times[1:0]$ are $[\theta,\tau-\theta]$, and the weights at $(0,0)\times[0:1]$ are $[\theta-\tau,\tau]$.

%The rest of the proposition is clear.
Direct calculation.
\end{proof}
\begin{remark}\label{rem:afterblowup}
The above proposition can be explained in the following intuitive way. Take $\theta>\tau>0$ for an example. Arrows in the following picture shows the direction of the flows of the $\mathbb{C}^*$-action. Then a blow-up at the fixed point changes weights by
$$
\begin{tikzpicture}
[decoration={markings, 
    mark= at position 0.5 with {\arrow{stealth}}}
]

\draw[postaction={decorate}] (0.2-5,-0.2)--node[below left]{$x=0$}(-1.2-5,1.2);
\draw[postaction={decorate}] (-0.2-5,-0.2) --node[below right]{$y=0$}(1.2-5,1.2);
\node[below=0.2cm] at (-5,0) {$[\tau,\theta]$};

\draw[-stealth] (-3,0.5)--(-2,0.5);

\draw[postaction={decorate}] (0.2,-0.2)--node[below left]{$x=0$}(-1.2,1.2);
\draw[postaction={decorate}] (-0.2,-0.2) --node[above]{$E$}(1.2,1.2);
\draw[postaction={decorate}]
(0.8,1.2)--node[above right]{y=0}(2.2,-0.2);
\node[below=0.2cm] at (0,0) {$[\tau,\theta-\tau]$};
\node[above=0.2cm] at (1,1){$[\tau-\theta,\theta]$};
\end{tikzpicture}
$$
Here $E$ is the exceptional curve. Other situations are similar.
\end{remark}

\subsection{Toric geometry of $\mathbb{C}^2/L(q,p)$ and weights of actions}\label{sec:Lqp}

In this subsection, we consider cyclic groups $\Gamma=L(q,p)$. The minimal resolution of $\mathbb{C}^2/\Gamma$ has been described in Section \ref{sec:minimalresolutions}. Using the languague of toric geometry (see \cite{fulton} for example), the minimal resolution of $\mathbb{C}^2/\Gamma$ can be described more precisely.
Recall that for $q/p$ we have the Hirzebruch-Jung continued fraction
$$\frac{q}{p}=\frac{1}{e_1-\displaystyle\frac{1}{e_2-\ldots-\frac{1}{e_k}}}.$$
\begin{theorem}\label{toricresolution}
Define the vertices $v_0,\ldots,v_{k+1}$ with $v_0=(0,1)$, $v_1=(1,0)$, $v_{i+1}=e_iv_i-v_{i-1}$. Notice that  $v_{k+1}=(p,-q)$. Then the fan $F_{L(q,p)}$ of the minimal resolution $\widetilde{\mathbb{C}^2/\Gamma}$ is given by Figure \ref{fanminimal}.
The exceptional set is as follows. The curve $E_i$ corresponds to the ray given by $v_i$ in the fan $F_{L(q,p)}$.
$$
\begin{tikzpicture}
\draw(0.2,-0.2)--node[below left]{$E_1$}(-1.2,1.2);
\draw(-0.2,-0.2)--node[above left]{$E_2$}(1.2,1.2);
\draw(0.8,1.2)--node[above right]{$E_3$}(2.2,-0.2);
\node at (2.5,0.5){$\cdots\cdots$};
\draw(2.8,-0.2)--node[below right]{$E_{k-1}$}(4.2,1.2);
\draw(3.8,1.2)--node[above right]{$E_{k}$}(5.2,-0.2);
\end{tikzpicture}
$$
\begin{figure}[h]
\centering
\begin{tikzpicture}
\path[draw](0,0)--(0,1)node[anchor=west]{$v_0$};
\path[draw](0,0)--(4,-3) node[anchor=north west]{$(p,-q)$};
\path[draw](0,0)--(1,0)node[anchor=west]{$v_1$};
\path[draw](0,0)--(2,-1)node[anchor=west]{$v_2$};
\path[draw](0,0)--(3,-2)node[anchor=west]{$v_3$};
\node at (0,-3) {$\circ$};
\node at (1,-3) {$\circ$};
\node at (2,-3) {$\circ$};
\node at (3,-3) {$\circ$};
\node at (4,-3) {$\circ$};
\node at (0,-2) {$\circ$};
\node at (1,-2) {$\circ$};
\node at (2,-2) {$\circ$};
\node at (3,-2) {$\circ$};
\node at (4,-2) {$\circ$};
\node at (0,-1) {$\circ$};
\node at (1,-1) {$\circ$};
\node at (2,-1) {$\circ$};
\node at (3,-1) {$\circ$};
\node at (4,-1) {$\circ$};
\node at (0,0) {$\circ$};
\node at (1,0) {$\circ$};
\node at (2,0) {$\circ$};
\node at (3,0) {$\circ$};
\node at (4,0) {$\circ$};
\node at (0,1) {$\circ$};
\node at (1,1) {$\circ$};
\node at (2,1) {$\circ$};
\node at (3,1) {$\circ$};
\node at (4,1) {$\circ$};
\end{tikzpicture}
\caption{The fan $F_{L(q,p)}$ of the minimal resolution $\widetilde{\mathbb{C}^2/\Gamma}$.}
\label{fanminimal}
\end{figure}
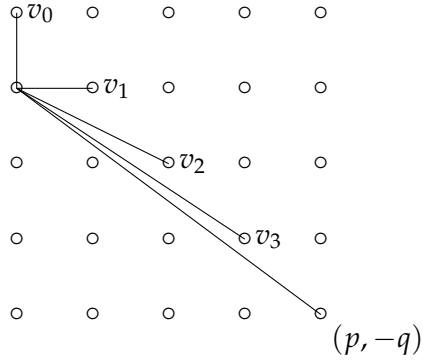
\end{theorem}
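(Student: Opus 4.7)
The plan is to realize $\mathbb{C}^2/L(q,p)$ as an affine toric variety and then build its minimal resolution via a toric refinement of the fan. First, I would identify $\mathbb{C}^2/L(q,p)$ with the affine toric variety associated to the cone $\sigma\subset N_{\mathbb{R}}=\mathbb{R}^2$ generated by $(0,1)$ and $(p,-q)$ in the standard lattice $N=\mathbb{Z}^2$. This identification is standard: the diagonal $L(q,p)$-action commutes with the standard $(\mathbb{C}^*)^2$-action on $\mathbb{C}^2$, and passing to the quotient replaces the lattice $\mathbb{Z}^2$ by the sublattice of characters invariant under $L(q,p)$, which (after dualizing) is precisely the cone above. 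This is exactly the setup in Fulton.

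Second, I would prove by induction on $i$ that the vectors $v_0,v_1,\ldots,v_{k+1}$ defined by the recursion $v_{i+1}=e_iv_i-v_{i-1}$ satisfy $\det(v_i,v_{i+1})=1$. The base case $\det(v_0,v_1)=\det\bigl(\begin{smallmatrix}0&1\\1&0\end{smallmatrix}\bigr)=-1$ (up to sign, after checking orientation conventions) is immediate, and the inductive step follows because
\[
\det(v_i,v_{i+1})=\det(v_i,\,e_iv_i-v_{i-1})=-\det(v_i,v_{i-1})=\det(v_{i-1},v_i).
\]
This ensures each consecutive pair $(v_i,v_{i+1})$ forms a $\mathbb{Z}$-basis of $N$, so every two-dimensional cone in the refinement is smooth, giving a resolution of singularities.

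Third, I would verify $v_{k+1}=(p,-q)$ by a direct continued-fraction computation. Writing $v_i=(a_i,b_i)$, the recursion and the expansion
\[
\frac{q}{p}=\cfrac{1}{e_1-\cfrac{1}{e_2-\cdots-\cfrac{1}{e_k}}}
\]
together produce $a_{k+1}=p$ and $b_{k+1}=-q$ inductively (this is a classical matching of the numerators and denominators of the convergents with the $v_i$). Combining this with step two shows the refined fan in Figure \ref{fanminimal} yields a smooth toric surface with a proper birational map to $\mathbb{C}^2/L(q,p)$.

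Finally, I would identify the exceptional curves and their self-intersections using standard toric correspondences. Each added ray $\mathbb{R}_{\geq 0}v_i$ for $1\leq i\leq k$ corresponds to a torus-invariant rational curve $E_i\cong\mathbb{P}^1$, with adjacent rays determining intersection. The self-intersection formula for a smooth toric surface reads $E_i^2=-e_i$ whenever $v_{i-1}+v_{i+1}=e_iv_i$, which is exactly our recursion. Minimality follows because all $e_i\geq2$, so there are no $(-1)$-curves to contract. The main obstacle is notational bookkeeping in the continued-fraction induction and carefully tracking the orientation/sign conventions in the lattice identification of $\mathbb{C}^2/L(q,p)$; no conceptually hard step arises, since all pieces are classical (Fulton), and the paper invokes them only to fix notation for later sections.
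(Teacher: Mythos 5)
Your proposal is correct and is precisely the classical argument of Section 2.6 of Fulton's book on toric varieties, which is exactly what the paper cites in lieu of a proof (the paper writes only ``See Section 2.6 of \cite{fulton} for a proof of the above theorem''). The one detail worth making explicit in a full write-up is that the rays $v_1,\dots,v_k$ actually lie in the interior of the cone spanned by $(0,1)$ and $(p,-q)$ and occur in the correct cyclic order (which follows from $e_i\geq2$ and the convexity of the recursion), so that the cones $\sigma(v_i,v_{i+1})$ genuinely subdivide $\sigma_\Gamma$; otherwise your sketch matches the standard proof step for step.
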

See Section 2.6 of \cite{fulton} for a proof of the above theorem.
The holomorphic $\mathbb{C}^*$-action $t\curvearrowright(x,y)=(t^\theta x,t^\tau y)$ on $\mathbb{C}^2/\Gamma$ can be lifted to the minimal resolution of $\mathbb{C}^2/\Gamma$, whose weights can be described by the following theorem.

\begin{theorem}\label{thm:cyclicweights}
Consider the $\mathbb{C}^*$-action on $\mathbb{C}^2/L(q,p)$ defined by $t\curvearrowright(x,y)=(t^\theta x,t^\tau y)$ with $\theta\geq\tau\geq0$ and its minimal resolution $\widetilde{\mathbb{C}^2/\Gamma}$ given by Theorem \ref{toricresolution}.
Given vertices $v_0,\ldots,v_k$ as in Theorem \ref{toricresolution}, write $v_i=(v_{i,U},v_{i,V})$ and for $i=0,\ldots,k+1$ define
$$w_i=\theta (pv_{i,V}+qv_{i,U})-\tau v_{i,U}.$$
Then the weights of the lifted action are:
$$
\begin{tikzpicture}
[decoration={markings, 
    mark= at position 0.5 with {\arrow{stealth}}}
]
\draw(-1.6,1.6)--node[below left]{$E_1$}(0.2,-0.2);
\draw(-0.2,-0.2)--node[above left]{$E_2$}(1.6,1.6);
\draw(1.2,1.6)--node[above right]{$E_3$}(3,-0.2);

\node at (4.5,0.7){$\cdots\cdots$};

\draw(6,-0.2)--node[below right]{$E_{k-1}$}(7.8,1.6);
\draw(7.4,1.6)--node[above right]{$E_{k}$}(9.2,-0.2);

\draw[postaction={decorate}](-1.2,1.6)--node[above left]{$y=0$}(-3,-0.2);
\draw[postaction={decorate}](9-0.2,-0.2)--node[right]{$x=0$}(10.6,1.6);

\node at (-1.4,1.4){$\bullet$};
\node at (0,0){$\bullet$};
\node at (1.4,1.4){$\bullet$};
\node at (2.8,0){$\bullet$};
\node at (6.2,0){$\bullet$};
\node at (7.6,1.4){$\bullet$};
\node at (9,0){$\bullet$};

\node[above=0.3cm] at (-1.4,1.4){$[w_0,-w_1]$};
\node[below=0.3cm] at (0,0){$[w_1,-w_2]$};
\node[above=0.3cm] at (1.4,1.4){$[w_2,-w_3]$};
\node[below=0.3cm] at (2.8,0){$[w_3,-w_4]$};
\node[below=0.3cm] at (6.2,0){$[w_{k-2},-w_{k-1}]$};
\node[above=0.3cm] at (7.6,1.4){$[w_{k-1},-w_{k}]$};
\node[below=0.3cm] at (9,0){$[w_k,-w_{k+1}]$};
\end{tikzpicture}
$$
Notice that there is the inductive relation $w_{i+1}=e_iw_i-w_{i-1}$, and we have $w_0=\theta p$, $w_1=\theta q-\tau$, $w_{k}=\theta-\tau v_{k,U}$, and $w_{k+1}=-\tau p$. 
\end{theorem}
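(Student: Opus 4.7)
The plan is a direct toric computation on each affine chart of the minimal resolution $\widetilde{\mathbb{C}^2/L(q,p)}$ provided by Theorem \ref{toricresolution}. Each $T$-fixed point of the resolution is the origin of exactly one chart $U_{\sigma_i}$ associated with the two-dimensional cone $\sigma_i = \langle v_i, v_{i+1}\rangle$, and reading off the weights of the lifted $\mathbb{C}^*$-action at that point amounts to pairing the cocharacter $\lambda$ of the action with a dual $\mathbb{Z}$-basis of $\sigma_i^\vee \cap M$.

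First I would identify the character and cocharacter lattices. Since $\mathbb{C}^2/L(q,p) = \Spec \mathbb{C}[x,y]^{L(q,p)}$, the character lattice of the quotient torus is $M = \{(a,b)\in\mathbb{Z}^2 : a+qb \equiv 0 \pmod{p}\}$, with $\mathbb{Z}$-basis $m_1 = (p,0)$ and $m_2 = (-q,1)$. The lattice $\mathbb{Z}^2$ appearing in the theorem is naturally identified with $N = \Hom(M,\mathbb{Z})$ via the dual basis $\{m_1^*, m_2^*\}$, and under this identification the rays $v_0 = (0,1)$ and $v_{k+1} = (p,-q)$ correspond to the divisors $\{y=0\}$ and $\{x=0\}$ respectively, matching the picture. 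The $\mathbb{C}^*$-action $t\curvearrowright(x,y)=(t^\theta x,t^\tau y)$ sends $\chi^{(a,b)}$ to $t^{a\theta + b\tau}\chi^{(a,b)}$, so its cocharacter $\lambda\in N$ satisfies $\langle\lambda,m_1\rangle = p\theta$ and $\langle\lambda,m_2\rangle = \tau - q\theta$; consequently $\lambda = (p\theta,\ \tau - q\theta)$ in the theorem's $\mathbb{Z}^2$-coordinates.

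Next, for each cone $\sigma_i = \langle v_i, v_{i+1}\rangle$, the smoothness of the minimal resolution gives $\det(v_i, v_{i+1}) = -1$ (by induction from $\det(v_0, v_1) = -1$ using $v_{i+1} = e_i v_i - v_{i-1}$), so Cramer's rule produces the dual basis
\[
\alpha_i = (-v_{i+1,V},\ v_{i+1,U}), \qquad \beta_i = (v_{i,V},\ -v_{i,U})
\]
characterized by $\langle\alpha_i,v_i\rangle = 1$, $\langle\alpha_i,v_{i+1}\rangle = 0$, $\langle\beta_i,v_i\rangle = 0$, $\langle\beta_i,v_{i+1}\rangle = 1$. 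On the chart $U_{\sigma_i}\cong\mathbb{C}^2$ with coordinates $\xi_i = \chi^{\alpha_i}$ and $\eta_i = \chi^{\beta_i}$, the weights of the lifted $\mathbb{C}^*$-action at the origin are $\langle\alpha_i,\lambda\rangle$ and $\langle\beta_i,\lambda\rangle$. Substituting $\lambda = (p\theta,\tau-q\theta)$ directly yields $\langle\beta_i,\lambda\rangle = \theta(pv_{i,V} + qv_{i,U}) - \tau v_{i,U} = w_i$ and $\langle\alpha_i,\lambda\rangle = -[\theta(pv_{i+1,V}+qv_{i+1,U}) - \tau v_{i+1,U}] = -w_{i+1}$, producing the weight pair $[w_i,\ -w_{i+1}]$ at the intersection $E_i\cap E_{i+1}$ (and, by the same formula with $v_0$ or $v_{k+1}$, at each endpoint meeting $\{y=0\}$ or $\{x=0\}$).

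The listed identities then follow: $w_0 = \theta p$, $w_1 = \theta q - \tau$, and $w_{k+1} = -\tau p$ are direct substitutions; the recursion $w_{i+1} = e_i w_i - w_{i-1}$ is the linearity of the map $v\mapsto \theta(pv_V + qv_U) - \tau v_U$ combined with $v_{i+1} = e_i v_i - v_{i-1}$; and $w_k = \theta - \tau v_{k,U}$ is the identity $pv_{k,V} + qv_{k,U} = 1$, which is simply the smoothness relation $\det(v_k, v_{k+1}) = -1$ rewritten using $v_{k+1} = (p,-q)$. The main obstacle is essentially bookkeeping: correctly matching the theorem's abstract $\mathbb{Z}^2$ with the cocharacter lattice $N$, tracking the sign arising from $\det = -1$ instead of $+1$, and settling on the ordering for the two weights displayed at each fixed point.
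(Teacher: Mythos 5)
Your proposal is correct and is essentially the paper's own argument: the paper also computes chart by chart on the cones $\sigma(v_i,v_{i+1})$, and its parametrizing monomials $P_i=U^{v_{i,V}}V^{-v_{i,U}}$ and $Q_{i+1}=U^{-v_{i+1,V}}V^{v_{i+1,U}}$ are exactly your dual-basis characters $\chi^{\beta_i}$ and $\chi^{\alpha_i}$, yielding the same pairings $w_i=\langle\beta_i,\lambda\rangle$ and $-w_{i+1}=\langle\alpha_i,\lambda\rangle$. The only difference is that you identify the exceptional divisors with the rays by citing the standard orbit--cone correspondence, whereas the paper verifies this by hand through generators and relations of the semigroup algebras; the substance of the weight computation is identical.
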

\begin{proof}
This should be a standard computation but the author cannot find suitable references in previous literatures. We will provide a proof in the appendix.
\end{proof}

Next several corollaries follow from Theorem \ref{thm:cyclicweights}.
\begin{corollary}\label{cor:alpha1}
Consider the $\mathbb{C}^*$-action on $\mathbb{C}^2/L(q,p)$ defined by $t\curvearrowright(x,y)=(t^\theta x, y)$ with $\theta>0$. Then the weights of the lifted action and direction of flows on the exceptional set are:
$$
\begin{tikzpicture}
[decoration={markings, 
    mark= at position 0.5 with {\arrow{stealth reversed}}}
]
\draw[postaction={decorate}](-1.6,1.6)--node[below left]{$E_1$}(0.2,-0.2);
\draw[postaction={decorate}](-0.2,-0.2)--node[above left]{$E_2$}(1.6,1.6);
\draw[postaction={decorate}](1.2,1.6)--node[above right]{$E_3$}(3,-0.2);

\node at (4.5,0.7){$\cdots\cdots$};

\draw[postaction={decorate}](6,-0.2)--node[below right]{$E_{k-1}$}(7.8,1.6);
\draw[postaction={decorate}](7.4,1.6)--node[above right]{$E_{k}$}(9.2,-0.2);

\draw[postaction={decorate}](-3,-0.2)--node[above left]{$y=0$}(-1.2,1.6);
\draw(10.6,1.6)--node[right]{$x=0$}(9-0.2,-0.2);

\node at (-1.4,1.4){$\bullet$};
\node at (0,0){$\bullet$};
\node at (1.4,1.4){$\bullet$};
\node at (2.8,0){$\bullet$};
\node at (6.2,0){$\bullet$};
\node at (7.6,1.4){$\bullet$};
\node at (9,0){$\bullet$};

\node[above=0.3cm] at (-1.4,1.4){$[w_0,-w_1]$};
\node[below=0.3cm] at (0,0){$[w_1,-w_2]$};
\node[above=0.3cm] at (1.4,1.4){$[w_2,-w_3]$};
\node[below=0.3cm] at (2.8,0){$[w_3,-w_4]$};
\node[below=0.3cm] at (6.2,0){$[w_{k-2},-w_{k-1}]$};
\node[above=0.3cm] at (7.6,1.4){$[w_{k-1},-w_{k}]$};
\node[below=0.3cm] at (9,0){$[w_k,-w_{k+1}]$};
\end{tikzpicture}
$$
In this case, $x=0$ is a fixed curve.
\end{corollary}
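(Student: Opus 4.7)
The plan is to deduce this as a direct specialization of Theorem~\ref{thm:cyclicweights} to $\tau = 0$, followed by bookkeeping the resulting signs. Substituting $\tau = 0$ into $w_i = \theta(p v_{i,V} + q v_{i,U}) - \tau v_{i,U}$ yields $w_i = \theta(p v_{i,V} + q v_{i,U})$, so $w_0 = \theta p > 0$, $w_1 = \theta q > 0$, and $w_{k+1} = \theta(p(-q) + q p) = 0$. The weight pairs at the seven labeled fixed points then follow verbatim from the theorem.

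The next step is to verify that $w_i > 0$ for every $0 \leq i \leq k$. Consider the linear functional $L(U,V) := p V + q U$. It satisfies $L(v_{k+1}) = 0$ and $L(v_0) = p > 0$, and each intermediate $v_i$ with $1 \leq i \leq k$ lies strictly inside the cone spanned by $v_0$ and $v_{k+1}$ (as is visible from Figure~\ref{fanminimal}). Hence $L(v_i) > 0$, and consequently $w_i > 0$.

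To recover the flow directions, I would use the fact that the real vector field $\Re\mathfrak{E}$ flows outward along a local coordinate whose $\mathbb{C}^*$-weight is positive and inward along a coordinate whose weight is negative. For each $1 \leq i \leq k-1$, the weight pair at $E_i \cap E_{i+1}$ is $[w_i, -w_{i+1}]$ with both $w_i, w_{i+1} > 0$; this is a saddle whose outgoing direction lies along $E_i$ and whose incoming direction lies along $E_{i+1}$. Combined with the boundary analyses at $E_1 \cap \{y=0\}$ (weights $[w_0, -w_1]$, saddle) and $E_k \cap \{x=0\}$ (weights $[w_k, 0]$), this forces a coherent flow on the chain running from the $E_k$ end to the $E_1$ end, and then exiting outward along $\{y=0\}$ because $w_0 > 0$, matching the arrows in the diagram.

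Finally, the vanishing $w_{k+1} = 0$ is precisely what makes the weight along $\{x=0\}$ at the endpoint $E_k \cap \{x=0\}$ equal to zero, and $\{x=0\}$ is a fixed curve. This is also transparent from the action on the orbifold cover, since $t \curvearrowright (0,y) = (0,y)$, and this fixed locus lifts to the proper transform of $\{x=0\}$ in the resolution. There is no substantive obstacle in this argument: once Theorem~\ref{thm:cyclicweights} is available, the corollary is entirely a matter of tracking the signs produced by the specialization $\tau = 0$.
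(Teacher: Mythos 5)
Your proposal is correct and is essentially the paper's argument: the paper's proof is the one-line observation that $\tau=0$ gives $w_i=\theta\,(q,p)\cdot(v_{i,U},v_{i,V})>0$ for $i\leq k$ (and $w_{k+1}=0$), from which the weights, flow directions, and the fixed curve $\{x=0\}$ follow. Your cone-positivity argument and the sign bookkeeping for the flow arrows simply spell out the details the paper leaves implicit.
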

\begin{proof}
$\tau=0$ implies $w_i=\theta (q,p)\cdot (v_{i,U},v_{i,V})>0$, and the corollary follows.
\end{proof}

\begin{corollary}\label{cor:weightsdeterminepq}
Consider the $\mathbb{C}^*$-action on $\mathbb{C}^2/L(q,p)$ defined by $t\curvearrowright(x,y)=(t^\theta x, t^\tau y)$ with $\theta\geq\tau\geq0$. Then,  when $\theta\neq0$ or $\tau\neq0$, the weights $[w_0,-w_1]$ or the weights
$[w_k,-w_{k+1}]$, together with $[\theta,\tau]$, uniquely determine the numbers $p$ and $q$, respectively.
\end{corollary}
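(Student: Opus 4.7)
The plan is to invert directly the four explicit identities stated at the end of Theorem~\ref{thm:cyclicweights}. Since $\theta \ge \tau \ge 0$ with $(\theta,\tau) \ne (0,0)$, we have in particular $\theta > 0$, so divisions by $\theta$ below are legitimate.

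For the pair $[w_0,-w_1]$, the identities $w_0 = \theta p$ and $w_1 = \theta q - \tau$ immediately give $p = w_0/\theta$ and $q = (w_1 + \tau)/\theta$, so the pair together with $[\theta,\tau]$ recovers $(p,q)$ at once.

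For the pair $[w_k,-w_{k+1}]$, now assuming in addition $\tau > 0$, the formula $w_{k+1} = -\tau p$ yields $p = -w_{k+1}/\tau$, and $w_k = \theta - \tau v_{k,U}$ yields $v_{k,U} = (\theta - w_k)/\tau$. It remains to see that $v_{k,U}$, together with $p$, determines $q$. I would show by induction on $i$, using the recursion $v_{i+1} = e_i v_i - v_{i-1}$ and the initial values $v_0 = (0,1)$, $v_1 = (1,0)$, that $\det(v_i, v_{i+1}) = -1$ for all $i$. Evaluated at $i = k$ with $v_{k+1} = (p,-q)$, this reads $v_{k,U}\,q + v_{k,V}\,p = 1$, hence $v_{k,U}\,q \equiv 1 \pmod p$. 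Since the first coordinates $v_{i,U}$ are strictly increasing from $0$ at $i=0$ to $p$ at $i = k+1$, we have $0 < v_{k,U} < p$, so $v_{k,U}$ is the unique inverse of $q$ modulo $p$ in this range, and therefore determines $q \in \{1,\ldots,p-1\}$ with $\gcd(q,p)=1$ uniquely.

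The first case is a one-line computation, so the real content of the corollary is concentrated in the second case. The only nonroutine step there is the determinant identity $\det(v_i,v_{i+1}) = -1$, which is a classical fact about the fan of the Hirzebruch-Jung resolution; once it is in hand the rest of the inversion is mechanical. The degenerate endpoint $\tau = 0$ is the one subtlety worth noting: there $w_{k+1} = 0$ and $[w_k,-w_{k+1}]$ carries no information about $(p,q)$, but the pair $[w_0,-w_1]$ still works, which is consistent with the ``or'' in the statement.
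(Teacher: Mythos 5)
Your proof is correct, and for the only nontrivial part — recovering $q$ from $[w_k,-w_{k+1}]$ — it takes a genuinely different route from the paper. Both arguments reduce to the same key identity, namely that $v_{k,U}$ is the inverse of $q$ modulo $p$ lying in $\{1,\dots,p-1\}$. The paper gets this by a symmetry of the group presentation: swapping the coordinates $x$ and $y$ identifies $L(q,p)$ with $L(q^{-1:p},p)$ and reverses the chain of exceptional curves, so that $w_k$ for $L(q,p)$ plays the role of $w_1$ for $L(q^{-1:p},p)$ with $\theta$ and $\tau$ interchanged, giving $w_k=\theta-\tau q^{-1:p}$ directly. You instead derive $v_{k,U}\,q+v_{k,V}\,p=1$ from the unimodularity $\det(v_i,v_{i+1})=-1$ of consecutive rays in the Hirzebruch--Jung fan, proved by the easy induction $\det(v_i,v_{i+1})=\det(v_i,e_iv_i-v_{i-1})=\det(v_{i-1},v_i)$, and then pin down $v_{k,U}$ in the range $(0,p)$ using the strict monotonicity of the first coordinates $v_{i,U}$ (which itself needs the one-line induction $v_{i+1,U}-v_{i,U}\ge v_{i,U}-v_{i-1,U}\ge 1$, worth recording). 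The paper's argument is shorter once one recognizes the group-theoretic symmetry; yours is more self-contained within the toric combinatorics and makes the congruence $v_{k,U}\,q\equiv 1\pmod p$ explicit rather than implicit. Your remarks on the degenerate endpoint $\tau=0$ and on the meaning of the ``respectively'' are consistent with the statement and with the paper's reading.
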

\begin{proof}
Recall
$w_0=\theta p,\ w_1=\theta q-\tau,\ w_k=\theta-\tau v_{k,U},\ w_{k+1}=-\tau p$.
It is clear that once we know $[w_0,-w_1]$ when $\theta\neq0$, we can find $q$ and $p$. As for $[w_k,-w_{k+1}]$, notice that
$$\left\langle\left(\begin{matrix}
\exp(\frac{2\pi i}{p})&0\\
0&\exp(\frac{2\pi iq}{p})
\end{matrix}\right)\right\rangle
=
\left\langle\left(\begin{matrix}
\exp(\frac{2\pi iq^{-1:p}}{p})&0\\
0&\exp(\frac{2\pi i}{p})
\end{matrix}\right)\right\rangle.
$$
Replacing $x$ by $y$ and $y$ by $x$ and treating $L(q,p)$ as $L(q^{-1:p},p)$, we learn that $w_k=-\tau q^{-1:p}+\theta$. Thus, when $\tau\neq0$, $[w_k,w_{k+1}]$ determines $q^{-1:p}$ and $p$, which determines $q$ and $p$.
\end{proof}

\begin{corollary}\label{cor:su2weights}
In the case that $\Gamma\subset SU(2)$,
$w_{i+1}-w_i=w_i-w_{i-1}$.
In particular, $w_i$ is decreasing, and
we have
$-\tau p=w_{k+1}<\ldots<w_0=\theta p$.
\end{corollary}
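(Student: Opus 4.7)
The plan is to reduce the identity to the fact that in the $SU(2)$ case every $e_i$ in the Hirzebruch-Jung expansion equals $2$, whereupon the recursion of Theorem \ref{thm:cyclicweights} becomes the defining relation of an arithmetic progression. First I note that a cyclic subgroup $L(q,p)\subset U(2)$ lies in $SU(2)$ exactly when the generator has determinant $1$, i.e. when $1+q\equiv 0\pmod p$; hence without loss of generality $q=p-1$. For the fraction $(p-1)/p$, since $p/(p-1)=1+1/(p-1)\in(1,2]$, the first step of the expansion (\ref{continuefraction}) forces $e_1=2$ and leaves the residual fraction $(p-2)/(p-1)$; iterating, one gets $e_i=2$ for every $i$, with chain length $k=p-1$.

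With all $e_i=2$, the recursion $w_{i+1}=e_iw_i-w_{i-1}$ from Theorem \ref{thm:cyclicweights} simplifies to $w_{i+1}=2w_i-w_{i-1}$, which is precisely
$$w_{i+1}-w_i=w_i-w_{i-1},$$
so $\{w_i\}_{i=0}^{k+1}$ is an arithmetic progression. To identify the common difference I plug in the initial data from Theorem \ref{thm:cyclicweights}: $w_0=\theta p$ and $w_1=\theta q-\tau=\theta(p-1)-\tau$, giving $w_1-w_0=-(\theta+\tau)$. Since the $\mathbb{C}^*$-action is assumed non-trivial and $\theta\geq\tau\geq 0$, we have $\theta>0$ and the common difference is strictly negative, so $\{w_i\}$ is strictly decreasing. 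Finally, because $k+1=p$, the closed form $w_i=\theta p-i(\theta+\tau)$ recovers the boundary values $w_0=\theta p$ and $w_{k+1}=w_p=\theta p-p(\theta+\tau)=-\tau p$, producing the chain $-\tau p=w_{k+1}<\cdots<w_0=\theta p$.

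There is essentially no hard step here; the only thing to check is the two pieces of input data (the identification $q=p-1$ for $L(q,p)\subset SU(2)$, and the initial values $w_0,w_1$ from Theorem \ref{thm:cyclicweights}), together with the elementary continued-fraction computation. Once these are in place the conclusion is immediate from the recursion.
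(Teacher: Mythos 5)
Your proof is correct and takes essentially the same route as the paper, whose entire argument is the observation that $\Gamma\subset SU(2)$ forces every $e_i=2$ so that the recursion $w_{i+1}=e_iw_i-w_{i-1}$ of Theorem \ref{thm:cyclicweights} becomes an arithmetic progression; you have simply filled in the supporting details (the identification $q=p-1$, the continued-fraction computation showing $e_i=2$ with $k=p-1$, and the evaluation of the common difference $-(\theta+\tau)$ and endpoints), all of which check out.
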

\begin{proof}
Follows from $e_i=2$ trivially.
\end{proof}

%\begin{corollary}\label{cor:}
%Consider the $\mathbb{C}^*$ action on $\mathbb{C}^2/L(q,p)$ defined by $t\curvearrowright(x,y)=(t^\alpha x, t^\beta y)$ with $\alpha\geq\beta\geq0$. The exceptional set $E$ contains only one repulsive fixed point or one repulsive fixed curve. The flow on $E$ is always flowing out of the repulsive set $c_-$.
%\end{corollary}
%\begin{proof}
%The weights at each fixed point $E_i\cap E_{i+1}$ is given by $[w_i,-w_{i+1}]$, with
%$$w_i=\alpha (q,p)\cdot (v_{i,U},v_{i,V})-\beta v_{i,U}.$$
%Because $v_{i+1}=e_iv_i-v_{i-1}$, the above expression shows $w_{i+1}=e_iw_i-w_{i-1}$ with $w_0=\alpha p$, $w_1=\alpha q-\beta$. From this, 
%$$w_{i+1}=e_iw_i-w_{i-1}=e_i(e_{i-1}w_{i-1}-w_{i-2})-w_{i-1}=(e_ie_{i-1}-1)w_{i-1}-e_iw_{i-2}.$$

%Easy to see that $E_i\cap E_{i+1}$ is a repulsive point if only if $w_i$

%\end{proof}

\subsection{$\mathbb{C}^2/\Gamma$ for non-cyclic $\Gamma$ and weights of actions}\label{sec:generalgamma}

When $\Gamma$ is non-cyclic, there are elements in $\Gamma$ swapping the two standard complex planes in $\mathbb{C}^2$.
Therefore, any holomorphic $\mathbb{C}^*$-action on $\mathbb{C}^2/\Gamma$ must have same weights at the orbifold point, and is given by $t\curvearrowright(x,y)=(t^\theta x,t^\theta y)$. Taking $\theta=\frac{1}{2m}$, we can ensure the holomorphic $\mathbb{C}^*$-action on $\mathbb{C}^2/\Gamma$ is primitive.
From the proof of Theorem 4.1 in \cite{lv}, we have the following lemma:

\begin{lemma}\label{lemma:firstresolutionnoncyclic}
For a non-cyclic group $\Gamma$ in Table \ref{noncyclicL}, set $\Gamma'\subset\Gamma$ to be the subgroup of $\Gamma$ generated by the same generators listed in Table \ref{generators} but without $[e^{\pi i/m},1]$. Then $\mathcal{O}(-2m)/\Gamma'$ has three orbifold points lying on the quotient of the zero-section $\mathbb{P}^1\subset\mathcal{O}(-2m)$. The structure groups of these three orbifold points are precisely the $L(\alpha_j,\beta_j)$ listed in Table \ref{noncyclicL}.
\end{lemma}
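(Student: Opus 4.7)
The plan is to view $\mathcal{O}(-2m)$ as the minimal resolution of $\mathbb{C}^2/L(1,2m)$ and then analyze how the remaining part of the $\Gamma$-action lifts. First I would check that in every row of Table \ref{subgroups} we have $\Gamma'\cdot L(1,2m)=\Gamma$. For the four product families $\phi(L(1,2m)\times G)$ with $G\in\{D_{4n}^*,T^*,O^*,I^*\}$ this is immediate, since the deleted generator $[e^{\pi i/m},1]$ is itself a generator of $L(1,2m)$. For the two families $\mathfrak{J}^2_{m,n}$ and $\mathfrak{J}^3_m$, a short quaternionic calculation shows that the deleted generator is in fact already produced by the remaining ones: $[e^{\pi i/(2m)},j]^2=[e^{\pi i/m},-1]$ combined with $[1,e^{\pi i/n}]^n=[1,-1]$ recovers $[e^{\pi i/m},1]$, and the identity $((-1-i-j+k)/2)^3=1$ in the quaternions gives $[e^{\pi i/(3m)},(-1-i-j+k)/2]^3=[e^{\pi i/m},1]$; so in these two rows one in fact has $\Gamma'=\Gamma$.

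Because $L(1,2m)$ is central in $U(2)$, the minimal resolution $\mathcal{O}(-2m)\to\mathbb{C}^2/L(1,2m)$ is $\Gamma'$-equivariant, and the induced image of $\Gamma'$ in $\mathrm{Aut}(\mathcal{O}(-2m))$ coincides with that of $\Gamma$, factoring through $\bar\Gamma:=\Gamma/L(1,2m)\subset PU(2)\cong SO(3)$. Away from the zero section $\mathbb{P}^1$ we have $\mathcal{O}(-2m)\setminus\mathbb{P}^1\cong(\mathbb{C}^2\setminus\{0\})/L(1,2m)$, and taking the further quotient by $\Gamma'$ gives $(\mathbb{C}^2\setminus\{0\})/\Gamma$, which is smooth because $\Gamma$ acts freely on $S^3$. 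Hence every orbifold point of $\mathcal{O}(-2m)/\Gamma'$ lies on $\mathbb{P}^1/\bar\Gamma$. For each non-cyclic $\Gamma$ in Table \ref{subgroups}, the group $\bar\Gamma$ is one of the classical spherical subgroups $D_{2n}$, $A_4$, $S_4$, or $A_5$ of $SO(3)$, and the classical picture of their actions on $\mathbb{P}^1$ shows that $\mathbb{P}^1/\bar\Gamma$ is $\mathbb{P}^1$ branched over exactly three points, with stabilizer orders $(2,2,n)$, $(2,3,3)$, $(2,3,4)$, $(2,3,5)$ respectively. This produces the three orbifold points in the statement.

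To identify the local structure group at each of the three branch points, I would pick a preimage $\tilde p\in\mathbb{P}^1\subset\mathcal{O}(-2m)$ of the branch point and diagonalize the stabilizer of $\tilde p$ in $\Gamma'$ in a basis of $\mathbb{C}^2$ adapted to $\tilde p$, so that the generator takes the form $\mathrm{diag}(\alpha,\beta)\in U(2)$ with $\tilde p=[1:0]$. Using the tautological description $\mathcal{O}(-2m)=\mathcal{O}(-1)^{\otimes 2m}$, the generator then acts on local coordinates $(u,v)$ near $\tilde p$ as $(u,v)\mapsto(\beta\alpha^{-1}u,\alpha^{2m}v)$, where $u$ is a base coordinate on $\mathbb{P}^1$ and $v$ is a fiber coordinate. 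Normalizing this pair of characters into the standard form $\mathrm{diag}(e^{2\pi i/p},e^{2\pi iq/p})$ of $L(q,p)$ gives the structure group. A direct check in each row of Table \ref{noncyclicL} then matches the outputs with the claimed $L(\alpha_j,\beta_j)$: at each $\mathbb{Z}_2$-stabilized orbit the combined scalar twist and factor of $2m$ force $\alpha^{2m}=-1$ and $\beta\alpha^{-1}=-1$, producing $L(1,2)$; at the $\mathbb{Z}_n$-, $\mathbb{Z}_3$-, $\mathbb{Z}_4$-, or $\mathbb{Z}_5$-stabilized orbits the same computation gives exactly the remaining entries of the table.

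The main obstacle is the last case-by-case diagonalization: for the binary tetrahedral, octahedral, and icosahedral stabilizers appearing in the last four rows, and for the non-$SU(2)$ generators $[e^{\pi i/(2m)},j]$ and $[e^{\pi i/(3m)},(-1-i-j+k)/2]$ in the $\mathfrak{J}$ rows, one must first change basis to diagonalize the rotational part and then carefully keep track of how the scalar factor interacts with the fiber character $\alpha^{2m}$. In particular, for $\mathfrak{J}^3_m$ the scalar factor $e^{\pi i/(3m)}$ combines with $\alpha^{2m}$ differently at the two eigen-directions of the order-$3$ rotation, which is precisely why the two order-$3$ stabilized orbits produce the distinct singularities $L(1,3)$ and $L(2,3)$ rather than two copies of the same group. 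Once these weights are assembled, an elementary analogue of Corollary \ref{cor:weightsdeterminepq} closes the identification.
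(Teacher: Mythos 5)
Your argument is correct, and it supplies an actual proof where the paper simply defers to the proof of Theorem 4.1 in Lock--Viaclovsky; the route you take (quotient by the central $L(1,2m)$, reduce to the triangle-group action of $\bar\Gamma=\Gamma/L(1,2m)\subset SO(3)$ on the zero section, then read off the local group from the pair of characters $(\beta\alpha^{-1},\alpha^{2m})$) is the standard one and is consistent with the paper's own description of the map $[x:y]\times s\mapsto(xs^{1/2m},ys^{1/2m})$, which gives exactly your tangent weights at a fixed point. The preliminary reductions all check out: $[e^{\pi i/(2m)},j]^2[1,e^{\pi i/n}]^n=[e^{\pi i/m},-1][1,-1]=[e^{\pi i/m},1]$ and $[e^{\pi i/(3m)},(-1-i-j+k)/2]^3=[e^{\pi i/m},1]$, so indeed $\Gamma'=\Gamma$ in the two $\mathfrak{J}$ rows, while $\Gamma'L(1,2m)=\Gamma$ and $L(1,2m)$ acts trivially on $\mathcal{O}(-2m)$ in all rows, so the effective group is always $\bar\Gamma$. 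I spot-checked the weight computations: at an order-$2$ orbit the lift has $SU(2)$-part with eigenvalues $\pm i$ and scalar part an odd power of $e^{\pi i/(2m)}$ in the $\mathfrak{J}^2$ case (resp.\ trivial scalar with $m$ odd in the product cases), giving $\alpha^{2m}=-1=\beta\alpha^{-1}$ and hence $L(1,2)$; at the poles of the dihedral case $[1,e^{\pi i/n}]$ acts as $\mathrm{diag}(e^{-\pi i/n},e^{\pi i/n})$, giving $\langle\mathrm{diag}(e^{2\pi i/n},e^{-2\pi im/n})\rangle=L(-m,n)$; and for $\mathfrak{J}^3_m$ the scalar $e^{\pi i/(3m)}$ combined with $3\mid m$ yields $\mathrm{diag}(e^{2\pi i/3},e^{2\pi i/3})$ at one order-$3$ orbit and $\mathrm{diag}(e^{-2\pi i/3},e^{2\pi i/3})$ at the other, i.e.\ $L(1,3)$ and $L(2,3)$ as in the table. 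The only point worth making explicit, which your computation already implies, is that none of the three points is a pseudo-reflection quotient: the fiber character $\alpha^{2m}$ is nontrivial in every case (it equals $-1$ or a primitive root of order $n,3,4,5$ prime to $m$), so all three points are genuine orbifold points.
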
 
Contracting the $\mathbb{P}^1\subset\mathcal{O}(-2m)$ to the origin gives the minimal resolution $\mathcal{O}(-2m)\to\mathbb{C}^2/L(1,2m)$. Therefore, the map that contracts $\mathbb{P}^1/\Gamma'$ in $\mathcal{O}(-2m)/\Gamma'$ to the origin provides the map
$\kappa_{\Gamma}:\mathcal{O}(-2m)/\Gamma'\to\mathbb{C}^2/\Gamma$,
with $\mathbb{P}^1/\Gamma'$ serving as the exceptional set.
The minimal resolution $\widetilde{\mathbb{C}^2/\Gamma}$ is then just the minimal resolution of $\mathcal{O}(-2m)/\Gamma'$.
Therefore, above lemma implies that we only need to determine the weights of the lifted $\mathbb{C}^*$-action on $\mathcal{O}(-2m)/\Gamma'$ at the three orbifold points and then apply Theorem \ref{thm:cyclicweights}, to study the weights on the minimal resolution $\widetilde{\mathbb{C}^2/\Gamma}$.
\begin{proposition}\label{prop:orbifoldweights}
With the $\mathbb{C}^*$-action on $\mathbb{C}^2/\Gamma$ given by $t\curvearrowright(x,y)=(t^{1/2m}x,t^{1/2m}y)$, where $m$ is the number appearing in Table \ref{generators},
weights at the three orbifold points in $\mathcal{O}(-2m)/\Gamma'$ are all $[0,1]$.
\end{proposition}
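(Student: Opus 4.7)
The plan is to work out the lift of the $\mathbb{C}^*$-action explicitly on the resolution $\mathcal{O}(-2m) \to \mathbb{C}^2/L(1,2m)$, and then use the fact that $\Gamma'$ acts on this resolution compatibly so that the three orbifold points of $\mathcal{O}(-2m)/\Gamma'$ are just the images of three points on the zero section $\mathbb{P}^1 \subset \mathcal{O}(-2m)$ with nontrivial $\Gamma'$-stabilizer. Since weights at an orbifold point are defined by passing to the local orbifold cover, which near each such point is just (a neighborhood of the preimage in) $\mathcal{O}(-2m)$ itself, it suffices to compute the weights of the lifted $\mathbb{C}^*$-action at an arbitrary point of the zero section of $\mathcal{O}(-2m)$.

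First I would unwind primitivity: the given primitive $\mathbb{C}^*$-action on $\mathbb{C}^2/\Gamma$ pulls back to $\mathbb{C}^2$ as $s \curvearrowright (x,y) = (sx, sy)$, with the relation $t = s^{2m}$ because the generator $[e^{\pi i/m},1]\in \Gamma$ identifies scalars of order $2m$. This diagonal scaling commutes with all of $\Gamma$ (in particular with $\Gamma'$, since $\Gamma'$ acts by unitary matrices), so it descends to $\mathcal{O}(-2m)/\Gamma'$.

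Next I would use the standard coordinates on $\mathcal{O}(-2m)$. On the chart $U_0$ with $(u_0,v_0) = (x/y, y^{2m})$ we have $u_0 \mapsto u_0$ and $v_0 \mapsto s^{2m} v_0 = t v_0$; symmetrically on $U_1$ with $(u_1,v_1) = (y/x, x^{2m})$. Hence the action on $\mathcal{O}(-2m)$ is given by $t\cdot(u_i,v_i) = (u_i, t v_i)$, so the entire zero section $\{v_0 = 0\} = \{v_1 = 0\} \cong \mathbb{P}^1$ is fixed pointwise, and at every point of this $\mathbb{P}^1$ the tangent space decomposes into a weight-$0$ direction along $\mathbb{P}^1$ and a weight-$1$ direction in the fiber of $\mathcal{O}(-2m)$. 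Thus the weights are $[0,1]$ at every point of the zero section.

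Finally I would invoke Lemma \ref{lemma:firstresolutionnoncyclic}: the three orbifold points of $\mathcal{O}(-2m)/\Gamma'$ lie on the image of this zero section, and the local orbifold cover at each is an open neighborhood in $\mathcal{O}(-2m)$ with a cyclic stabilizer. Because the $\mathbb{C}^*$-action on the ambient $\mathcal{O}(-2m)$ commutes with $\Gamma'$, the weights at each orbifold point are by definition read off from this cover, and the computation above yields $[0,1]$ at each of the three points. There is no real obstacle here: the only subtle point is to make sure the primitivity and the commutation with $\Gamma'$ are set up correctly so that the descent to $\mathcal{O}(-2m)/\Gamma'$ is well-defined and the weights on the local cover are genuinely the orbifold weights, but both follow from the centrality of $L(1,2m)$ inside $\Gamma$.
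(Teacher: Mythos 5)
Your proof is correct and follows essentially the same route as the paper: reduce to the case $\Gamma'=\{1\}$ (equivalently, pass to the local orbifold cover $\mathcal{O}(-2m)$ at each of the three points), lift the diagonal weight-$[1/2m,1/2m]$ action to the resolution $\mathcal{O}(-2m)\to\mathbb{C}^2/L(1,2m)$, and observe that the zero section is fixed pointwise with fiber weight $1$. The only cosmetic difference is that you carry out the lift in the two standard affine charts $(u_i,v_i)$ rather than via the paper's multi-valued parametrization $[x:y]\times s\mapsto(xs^{1/2m},ys^{1/2m})$; the computation is the same.
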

\begin{proof}
By passing to the covering space, we can reduce to the case where $\Gamma'={1}$. In other words, we only need to compute the weights of points in the exceptional curve of the map
$\mathcal{O}(-2m)\to\mathbb{C}^2/L(1,2m)$.
The multi-valued map from $\mathcal{O}(-2m)$ to the orbifold cover $\mathbb{C}^2$ of $\mathbb{C}^2/L(1,2m)$ can be expressed as
$$[x:y]\times s\mapsto(xs^{1/2m},ys^{1/2m}).$$
The action with weights $[1/2m,1/2m]$ lifted to the orbifold cover $\mathbb{C}^2$ can be expressed as
$$t\curvearrowright(xs^{1/2m},ys^{1/2m})=(xs^{1/2m}t^{1/2m},ys^{1/2m}t^{1/2m}).$$
After lifting to $\mathcal{O}(-2m)$, the action is given by
$t\curvearrowright[x:y]\times s=[x:y]\times st$.
It is clear that the exceptional curve is a fixed curve and points in the exceptional curve all have weights $[0,1]$.
\end{proof}

An application of Theorem \ref{thm:cyclicweights} gives:
\begin{theorem}\label{thm:noncyclicweights}
For $\mathbb{C}^2/\Gamma$ with $\Gamma$ being one of the non-cyclic groups in Table \ref{noncyclicL}, consider the $\mathbb{C}^*$-action defined by $t\curvearrowright(x,y)=(t^{1/2m}x,t^{1/2m}y)$.
Its minimal resolution consists of three chains of rational curves, which are chains of exceptional curves
of the minimal resolutions of $L(\alpha_k,\beta_k)$,
connected by one central rational curve. The central rational curve is a fixed curve under the lifted action. Weights of the lifted action on these three chains are given by Corollary \ref{cor:alpha1} with $\theta=1$.
\end{theorem}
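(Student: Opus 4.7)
The plan is to assemble three ingredients already in place: the structural Lemma \ref{lemma:firstresolutionnoncyclic}, the local weight computation Proposition \ref{prop:orbifoldweights}, and the cyclic case Corollary \ref{cor:alpha1}. Because these pieces essentially match the three features claimed in the theorem (chain structure, fixed central curve, weights on the chains), the proof should be largely a matter of stitching them together and tracking which variable plays the role of $x$ in Corollary \ref{cor:alpha1}.

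First I would invoke Lemma \ref{lemma:firstresolutionnoncyclic} to exhibit the partial resolution $\kappa_\Gamma : \mathcal{O}(-2m)/\Gamma' \to \mathbb{C}^2/\Gamma$, whose exceptional set is the central rational curve $\mathbb{P}^1/\Gamma'$, and which carries exactly three orbifold points of types $L(\alpha_1,\beta_1), L(\alpha_2,\beta_2), L(\alpha_3,\beta_3)$ lying on this central curve. Since the minimal resolution $\widetilde{\mathbb{C}^2/\Gamma}$ is obtained from $\mathcal{O}(-2m)/\Gamma'$ by further performing the Hirzebruch--Jung minimal resolutions at these three cyclic quotient singularities, the exceptional set of $\widetilde{\mathbb{C}^2/\Gamma} \to \mathbb{C}^2/\Gamma$ is naturally the central $\mathbb{P}^1$ (the proper transform of $\mathbb{P}^1/\Gamma'$) together with three chains of rational curves emanating from it; this gives the claimed shape.

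Second, I would show that the $\mathbb{C}^*$-action $t \curvearrowright (x,y) = (t^{1/2m}x, t^{1/2m}y)$ lifts both to $\mathcal{O}(-2m)/\Gamma'$ and further to $\widetilde{\mathbb{C}^2/\Gamma}$, and that the central $\mathbb{P}^1$ is pointwise fixed. Proposition \ref{prop:orbifoldweights} gives exactly this: at each of the three orbifold points the weights of the lifted action are $[0,1]$, with the zero-weight direction tangent to the central curve. Since the action fixes this curve on an open subset (by the proposition) and the curve is irreducible, it must be a fixed curve.

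Third, at each orbifold point of type $L(\alpha_j,\beta_j)$ one chooses a local orbifold chart in which the central curve is the coordinate axis $y = 0$ and the weights are $[0,1]$; equivalently, after relabeling so that the nonzero-weight direction plays the role of the first variable in Corollary \ref{cor:alpha1}, the action is exactly the cyclic model $t \curvearrowright (x,y) = (t^{1}x, y)$ with $\theta = 1$ on $\mathbb{C}^2/L(\alpha_j,\beta_j)$. Corollary \ref{cor:alpha1} then prescribes the weights on the corresponding chain of exceptional curves and the directions of flow, and this is precisely what the theorem claims for that chain. The only mild obstacle is the bookkeeping of which end of each chain attaches to the central curve and verifying that the fixed-curve direction $y = 0$ of Corollary \ref{cor:alpha1} aligns with the central $\mathbb{P}^1$ rather than a new exceptional curve; this follows from the construction of $\kappa_\Gamma$, since the central curve is the image of the zero section of $\mathcal{O}(-2m)$, hence is the unique component meeting each local resolution chain at its attaching end. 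Combining the three inputs finishes the proof.
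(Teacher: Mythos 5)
Your proposal is correct and follows essentially the same route as the paper: it combines Lemma \ref{lemma:firstresolutionnoncyclic} for the shape of the exceptional set, Proposition \ref{prop:orbifoldweights} to identify the local model $t\curvearrowright(x,y)=(tx,y)$ at the three orbifold points (hence the fixed central curve), and Corollary \ref{cor:alpha1} with $\theta=1$ for the weights on the three chains. The only difference is that you spell out the bookkeeping (which coordinate plays the role of $x$, which end of each chain attaches to the central curve) that the paper's terse proof leaves implicit.
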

\begin{proof}
Proposition \ref{prop:orbifoldweights} shows that the action at the orbifold points is locally given by $t\curvearrowright(x,y)=(tx,y)$ with a suitable coordinate. At the same time, the central curve in $\widetilde{\mathbb{C}^2/\Gamma}$ is the proper transform of $\mathbb{P}^1/\Gamma'$ in $\mathcal{O}(-2m)/\Gamma'$, which is a fixed curve under the lifted action. 
The theorem follows from Corollary \ref{cor:alpha1} now.
\end{proof}

\section{Log del Pezzo surfaces with holomorphic vector fields}
\label{sec:case12}

In Section \ref{sec:case12} and \ref{sec:case3}, we classify the pairs $(\widehat{M},\mathfrak{E})$ where
\begin{itemize}
\item The complex surface $\widehat{M}$ is a log del Pezzo surface with only one $SU(2)$ orbifold point $q$.
\item The vector field $\mathfrak{E}$ is a holomorphic vector field on $\widehat{M}$, which generates a primitive holomorphic $\mathbb{C}^*$-action that flows out of $q$.
\item The holomorphic $\mathbb{C}^*$-action has same weights at $q$.
\end{itemize}
As mentioned at the beginning of Section \ref{sec:holfields}, these pairs $(\widehat{M},\mathfrak{E})$ are candidates for special Bach-flat K\"ahler orbifolds $(\widehat{M},\widehat{g})$ with structure group in $SU(2)$.

Denote the minimal resolution of $\widehat{M}$ by $r:\widetilde{M}\to\widehat{M}$.
To classify such pairs $(\widehat{M},\mathfrak{E})$, one only needs to classify their minimal resolution $\widetilde{M}$ with the lifted $\mathbb{C}^*$-action. So it suffices to classify $(\widetilde{M},\mathfrak{E})$. Because $\widetilde{M}$ is smooth rational, it must have $\mathbb{P}^2$ or $H_k$ with $k\geq2$ as its minimal model. The following is standard because there is no curve in $\widetilde{M}$ with self-intersection less than $-2$.

\begin{lemma}
The minimal resolution $\widetilde{M}$ either is an iterative blow-up of $\mathbb{P}^2$, or is exactly $H_2=\mathbb{P}(\mathcal{O}\oplus\mathcal{O}(2))$.
\end{lemma}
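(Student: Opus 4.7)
The plan is to invoke the Enriques-style classification of minimal smooth rational surfaces and then use the hypothesis that $\widetilde{M}$ contains no curve of self-intersection strictly less than $-2$ (since the exceptional divisors of the minimal resolution of $SU(2)$ singularities are chains of $(-2)$-curves, and any extra negative curve inside $\widetilde{M}$ would contract to something, but more importantly the minimal resolution of ADE singularities only introduces $(-2)$-curves) to eliminate most minimal models. Concretely, a smooth rational surface has a minimal model equal to $\mathbb{P}^2$ or to some Hirzebruch surface $H_n$ with $n \neq 1$, and $\widetilde{M}$ is obtained from this minimal model by a finite sequence of blow-ups. I would split the argument by the minimal model.

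First, if the minimal model is $\mathbb{P}^2$, there is nothing to prove. Next, if the minimal model is $H_n$ with $n \geq 3$, I would track the negative section $\sigma_\infty \subset H_n$ of self-intersection $-n$: any blow-up either avoids $\sigma_\infty$, in which case its proper transform retains self-intersection $-n \leq -3$, or meets $\sigma_\infty$, in which case the self-intersection of the proper transform only decreases. Either way $\widetilde{M}$ contains a curve of self-intersection $\leq -3$, contradicting the hypothesis. So $n \geq 3$ is impossible.

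The main content is the intermediate cases $n = 0$ and $n = 2$. For $n = 0$: the only obstruction to replacing the minimal model by $\mathbb{P}^2$ is $\widetilde{M} = H_0$ itself, and this is ruled out because $\widehat{M}$ has a genuine orbifold point, so $\widetilde{M}$ must contain at least one $(-2)$-curve, whereas $\mathbb{P}^1 \times \mathbb{P}^1$ has no negative curves at all; otherwise any non-trivial blow-up of $H_0$ is biholomorphic to a blow-up of $H_1$, which is itself a blow-up of $\mathbb{P}^2$. For $n = 2$: if $\widetilde{M} = H_2$ we are in the exceptional case of the lemma. Otherwise $\widetilde{M}$ is a non-trivial iterated blow-up of $H_2$, and again every blow-up must avoid (the proper transform of) the negative section $\sigma_\infty$ of $H_2$, for otherwise the proper transform would drop from $-2$ to $\leq -3$. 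But then the first blow-up, at a point $p \in H_2 \setminus \sigma_\infty$, followed by contraction of the $(-1)$-curve which is the proper transform of the fiber through $p$, is an elementary transformation producing $H_1$, which in turn is a blow-up of $\mathbb{P}^2$; threading this elementary transformation through the sequence exhibits $\widetilde{M}$ as an iterated blow-up of $\mathbb{P}^2$.

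The step I expect to be slightly delicate is the bookkeeping in the elementary-transformation argument for $H_2$: one must check that performing the transformation commutes with the remaining blow-ups in the sense that no blow-up center lies on the fiber being contracted, which is automatic since the blow-up centers by hypothesis avoid the negative section and we can always arrange to perform the elementary transformation at the first blow-up. Everything else reduces to well-known facts about the geometry of Hirzebruch surfaces.
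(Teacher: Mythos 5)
Your argument is correct and is exactly the standard case analysis the paper invokes without proof (the paper simply asserts the lemma ``is standard because there is no curve in $\widetilde{M}$ with self-intersection less than $-2$''), handling the minimal models $\mathbb{P}^2$, $H_0$, $H_2$, and $H_n$ with $n\geq 3$ just as one would expect. The only point worth tightening is your parenthetical justification of that self-intersection hypothesis: it explains why the exceptional curves are $(-2)$-curves but not why a non-exceptional curve $C$ cannot have $C^2\leq -3$ --- for that one uses that the discrepancies of an $ADE$ singularity vanish, so $-K_{\widetilde{M}}\cdot C=-K_{\widehat{M}}\cdot r_*C>0$ by ampleness of $-K_{\widehat{M}}$, whence $C^2=2g-2-K_{\widetilde{M}}\cdot C\geq -1$ by adjunction.
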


\begin{proposition}\label{prop:added}
If $\widetilde{M}=H_2=\mathbb{P}(\mathcal{O}\oplus\mathcal{O}(2))$, then the correpsonding $\widehat{M}$ is $H_2$ contracting $C_\infty$. The complex surface $M=\widehat{M}\setminus\{q\}$ is $\mathcal{O}(2)$.
\end{proposition}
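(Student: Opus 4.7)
The plan is to identify the unique configuration of curves in $H_2$ that can serve as the exceptional divisor of the resolution $r : \widetilde{M} \to \widehat{M}$, using the fact that $\widehat{M}$ has only one orbifold point and that point has orbifold group in $SU(2)$.

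First, I would recall the structure of the Hirzebruch surface $H_2 = \mathbb{P}(\mathcal{O} \oplus \mathcal{O}(2))$. Its Picard group is generated by the infinity section $C_\infty$ (with $C_\infty^2 = -2$) and a fiber $F$ (with $F^2 = 0$ and $C_\infty \cdot F = 1$). For any irreducible curve $D$ numerically equivalent to $aC_\infty + bF$, the self-intersection is
\[
D^2 = -2a^2 + 2ab.
\]
The equation $D^2 = -2$ reduces to $a(a-b) = 1$, whose only effective solution is $a = 1$, $b = 0$; hence $C_\infty$ is the unique irreducible $(-2)$-curve on $H_2$. Fibers are $(0)$-curves, and all other irreducible curves have strictly positive self-intersection, so no other curve on $H_2$ can be contracted in a minimal resolution.

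Next I would invoke the hypotheses on $\widehat{M}$. Since its unique orbifold point $q$ has orbifold group $\Gamma \subset SU(2)$, the singularity is of ADE type, and the exceptional divisor of its minimal resolution consists of a configuration of $(-2)$-curves arranged in an ADE Dynkin diagram. Combined with the previous step, the exceptional set in $\widetilde{M} = H_2$ must be exactly $C_\infty$, so $q$ is an $A_1$ singularity with orbifold group $\mathbb{Z}_2 \subset SU(2)$, and $\widehat{M}$ is obtained from $H_2$ by contracting $C_\infty$ to a point.

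Finally, I would identify the complement: $M = \widehat{M} \setminus \{q\}$ is naturally isomorphic to $H_2 \setminus C_\infty$, and standard projective-bundle geometry identifies this open set with the total space of the line bundle $\mathcal{O}(2) \to \mathbb{P}^1$ (the complement of the section at infinity in $\mathbb{P}(\mathcal{O} \oplus \mathcal{O}(2))$ is the affine bundle $\mathcal{O}(2)$). There is no substantive obstacle; the main work is just the enumeration of $(-2)$-curves on $H_2$, which is a direct intersection-theoretic computation.
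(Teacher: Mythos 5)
Your proof is correct and follows the same route as the paper: the paper's one-line argument is precisely that $C_\infty$ is the only curve of negative self-intersection on $H_2$, so the exceptional set of $r$ must be $C_\infty$ and $M = H_2\setminus C_\infty = \mathcal{O}(2)$. You have simply written out the intersection-theoretic enumeration and the identification of the complement in full detail.
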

\begin{proof}
This is clear since the $(-2)$-curve $C_\infty$ is the only curve with negative self-intersection in $\widetilde{M}$, therefore $M=\widehat{M}\setminus\{q\}=H_2\setminus C_\infty$.  
\end{proof}
It is direct to check that the Hitchin-Thorpe inequality \cite{nakajima} holds with an equality on $\mathcal{O}(2)$. Consequently, any ALE gravitational instanton with $\mathcal{O}(2)$ as the underlying space, under the correct orientation, must be anti-self-dual, hence hyperk\"ahler, under this orientation. This exactly correpsonds to the Eguchi-Hanson space.

So in the following, we only need to consider iterative blow-ups of $\mathbb{P}^2$. We only need to consider pairs $(\widetilde{M},\mathfrak{E})$ that are iterative blow-ups of some pairs $(\mathbb{P}^2,\mathfrak{F})$, where $\mathfrak{F}$ is a holomorphic vector field on $\mathbb{P}^2$ that generates a primitive holomorphic $\mathbb{C}^*$-action. When we blow up $\mathbb{P}^2$ to get $\widetilde{M}$, blow-ups can only be performed at the fixed points of $\mathfrak{F}$.
As a result, classifying these pairs $(\widehat{M},\mathfrak{E})$  becomes a case-by-case study. We introduce some notations first.

\begin{convention}
\ 

\begin{itemize}
\item By curves, we particularly mean rational curves that are the closure of $\mathbb{C}^*$-orbits.
\item We will describe the structure of each surface by picture. In the following pictures, each black line refers to a curve. Arrow on each line represents the direction of the flow on the curve. Thin black lines with two arrows refer to $(-1)$-curves. Thick black lines refer to $(-2)$-curves. When there is a black box $\blacksquare$ on a line, it means the curve is a fixed curve.
\item The blow-up map from $\widetilde{M}$ to $\mathbb{P}^2$ is denoted by $\pi$. The map $\pi$ is realized by  successive blow-ups of $\mathbb{P}^2$, and the intermediate surfaces are denoted by $M_i$:
$$\widetilde{M}=M_n\xrightarrow{\pi_n}M_{n-1}\xrightarrow{\pi_{n-1}}\ldots\xrightarrow{\pi_2}M_1\xrightarrow{\pi_1}M_0=\mathbb{P}^2.$$
Each $\pi_i$ is a blow-up at a point $p_i\in M_{i-1}$. We use $Bl_{p_1,\ldots,p_k}\mathbb{P}^2$ to denote the blow-up of $\mathbb{P}^2$, where the $i$-th blow-up $\pi_i$ is taken at $p_i\in Bl_{p_1,\ldots,p_{i-1}}\mathbb{P}^2$. The exceptional curve of the $i$-th blow-up $\pi_i$ is denoted by $E_i$. 
\item We use $[x:y:z]$ as coordinate on $\mathbb{P}^2$.
The curves $\{x=0\},\{y=0\},\{z=0\}$ in $\mathbb{P}^2$ and their proper transforms in each $M_i$ are denoted by $X,Y,Z$.
\item Denote the exceptional set of the minimal resolution $r:\widetilde{M}\to\widehat{M}$ by $E$, which is the cycle of all $(-2)$-curves in $\widetilde{M}$ and is of type $A_n,D_n$, or $E_n$. 
\item We will use $E_i$ to denote the proper transform of the curve $E_i$ as well for simplicity.
\end{itemize}
\end{convention}

To find all possible $(\widetilde{M},\mathfrak{E})$, our strategy is to proceed the following inductive steps.
\begin{enumerate}[label=\textbf{Step   \arabic*.}]
\item Choose a pair $(M_0,\mathfrak{F}_0)=(\mathbb{P}^2,\mathfrak{F})$. Equivalently, we choose a primitive holomorphic $\mathbb{C}^*$-action on $\mathbb{P}^2$, with the infinitesimal generator $\mathfrak{F}$.
\item Assuming we have determined $(M_{i-1},\mathfrak{F}_{i-1})$, we then identify all the fixed points of $\mathfrak{F}_{i-1}$ in $M_{i-1}$ that are not contained in any $(-2)$-curve.
\item Possible $(M_i,\mathfrak{F}_i)$ then are the blow-up $\pi_i$ of $(M_{i-1},\mathfrak{F}_{i-1})$ at one of the fixed points $p_i$ identified in \textbf{Step 2}. 
 It must be ensured that finally in $\widetilde{M}=M_n$ the exceptional set $E$ is of $ADE$ type, and $\mathfrak{E}=\mathfrak{F}_n$ has correct weights so that the $\mathfrak{E}$ action on the contraction $\widehat{M}$ has same weights at $q$. In particular, Corollary \ref{cor:blowup} and Lemma \ref{lem:blowup} in the following Section \ref{subsec:flows} hold.
\end{enumerate}

The $\mathbb{C}^*$-action on $\widehat{M}$ given by $\mathfrak{E}$ has same weights at the orbifold point $q$ in the cyclic case, if and only if the action on $E\subset \widetilde{M}$
has the weights given by Corollary \ref{cor:su2weights} with $\theta=\tau$. In the non-cyclic case, the $\mathbb{C}^*$-action on $\widehat{M}$ always has same weights at the orbifold point.  We make the following definition before we proceed.

\begin{definition}
The \textit{attractive set} $c_+$ is defined as the set where generic $\mathbb{C}^*$-orbits flow into. The \textit{repulsive set} $c_-$ is defined as the set where generic $\mathbb{C}^*$-orbits flow out of. 
\end{definition}
It is clear that $c_{\pm}$ of each $(M_i,\mathfrak{F}_i)$ can only be a fixed point or a fixed curve, since they arise from $(\mathbb{P}^2,\mathfrak{F})$.

\subsection{$\mathbb{C}^*$-actions on $\mathbb{P}^2$}\label{sec:actiononp2}

For a pair $(\mathbb{P}^2,\mathfrak{F})$, the primitive holomorphic $\mathbb{C}^*$-action generated by $\mathfrak{F}$ and its weights must take one of the following three forms.

\begin{enumerate}
\item The $\mathbb{C}^*$-action is given by $t\curvearrowright[x:y:z]=[t x:t y:z]$. In this case, the fixed points set of $\mathfrak{F}$ is the curve $Z$ and the point $[0:0:1]$.
$$
\label{tikz:case1}
\begin{tikzpicture}
[decoration={markings, 
    mark= at position 0.5 with {\arrow{stealth }}}
]
\draw[postaction={decorate}](1.2,1.732+1.732*0.2)--node[above left]{$X$}(-0.2,-0.2*1.732);
\draw[postaction={decorate}](0.8,1.732+1.732*0.2)--node[above right]{$Y$}(2.2,-1.732*0.2);

%%%%%%%%%%%%%%%%%%%%%%%%%%%%%%%%%%%%%
\draw(-0.2*1.732,0)--node[below]{$Z$}node[
    sloped,
    pos=0.5,
    allow upside down]{\arrowBox}(2+0.2*1.732,0);
%%%%%%%%%%%%%%%%%%%%%%%%%%%%%%%%%%%%%

\node[above=0.4cm] at (1,1.732){$[1,1]$};
\node[below left=0.3cm] at (0,0){$[-1,0]$};
\node[below right=0.3cm] at (2,0){$[0,-1]$};
\end{tikzpicture}.
$$
The attractive set $c_+$ is $Z$ and the repulsive set $c_-$ is the point $[0:0:1]$.

\item The $\mathbb{C}^*$-action is given by $t\curvearrowright[x:y:z]=[t^{-1} x:t^{-1} y:z]$. In this case, the fixed points set of $\mathfrak{F}$ is the curve $Z$ and the point $[0:0:1]$.
$$
\begin{tikzpicture}
[decoration={markings, 
    mark= at position 0.5 with {\arrow{stealth }}}
]
\draw[postaction={decorate}](-0.2,-0.2*1.732)--node[above left]{$X$}(1.2,1.732+1.732*0.2);
\draw[postaction={decorate}](2.2,-1.732*0.2)--node[above right]{$Y$}(0.8,1.732+1.732*0.2);

%%%%%%%%%%%%%%%%%%%%%%%%%%%%%%%%%%%%%%
\draw(-0.2*1.732,0)--node[below]{$Z$}node[
    sloped,
    pos=0.5,
    allow upside down]{\arrowBox}(2+0.2*1.732,0);
%%%%%%%%%%%%%%%%%%%%%%%%%%%%%%%%%%%%%%

\node[above=0.4cm] at (1,1.732){$[-1,-1]$};
\node[below left=0.3cm] at (0,0){$[1,0]$};
\node[below right=0.3cm] at (2,0){$[0,1]$};
\end{tikzpicture}.
$$
The attractive set $c_+$ is the point $[0:0:1]$ and the repulsive set $c_-$ is $Z$.

\item The $\mathbb{C}^*$-action is given by $t\curvearrowright[x:y:z]=[t^\alpha x:t^\beta y:z]$, with coprime $\alpha>\beta>0$. In this case, the fixed points set of $\mathfrak{F}$ consists of the points $[1:0:0],[0:1:0]$, and $[0:0:1]$.
$$
\begin{tikzpicture}
[decoration={markings, 
    mark= at position 0.5 with {\arrow{stealth }}}
]
\draw[postaction={decorate}](1.2,1.732+1.732*0.2)--node[above left]{$X$}(-0.2,-0.2*1.732);
\draw[postaction={decorate}](0.8,1.732+1.732*0.2)--node[above right]{$Y$}(2.2,-1.732*0.2);
\draw[postaction={decorate}](-0.2*1.732,0)--node[below]{$Z$}(2+0.2*1.732,0);

\node[above=0.4cm] at (1,1.732){$[\beta,\alpha]$};
\node[below left=0.3cm] at (0,0){$[-\beta,\alpha-\beta]$};
\node[below right=0.3cm] at (2,0){$[\beta-\alpha,-\alpha]$};
\end{tikzpicture}.
$$
The attractive set $c_+=[1:0:0]$ and the repulsive set $c_-=[0:0:1]$. 
\end{enumerate}

We therefore divide into three possibilities:
\begin{enumerate}[label=(\roman*)]
\item The $\mathbb{C}^*$-action on $\widetilde{M}$ is lifted from $t\curvearrowright[x:y:z]=[t x:t y:z]$ on $\mathbb{P}^2$.\label{case1}
\item The $\mathbb{C}^*$-action on $\widetilde{M}$ is lifted from $t\curvearrowright[x:y:z]=[t^{-1} x:t^{-1} y:z]$ on $\mathbb{P}^2$;\label{case2}
\item The $\mathbb{C}^*$-action on $\widetilde{M}$ is lifted from
$t\curvearrowright[x:y:z]=[t^\alpha x:t^\beta y:z]$ on $\mathbb{P}^2$ with coprime $\alpha>\beta>0$.\label{case3}
\end{enumerate} 
We will refer to these three cases as case (i), case (ii), and case (iii) in the following discussion.
Before we proceed, we prove Corollary \ref{cor:topofinite}, the topological finiteness of Hermitian non-K\"ahler ALE gravitational instantons.  For this proposition we do not require that $\Gamma\subset SU(2)$.

\begin{proposition}
For a log del Pezzo surface $\widehat{M}$ with only one $U(2)$ orbifold singularity, the degree of its minimal resolution $K_{\widetilde{M}}^2$ has a lower bound $k_\Gamma$ that only depends on $\Gamma$. 
\end{proposition}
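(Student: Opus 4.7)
The plan is to exploit the canonical divisor formula for the minimal resolution $r:\widetilde{M}\to\widehat{M}$ and the ampleness of $-K_{\widehat{M}}$. Since $\widehat{M}$ has only one quotient singularity $q$ and is $\mathbb{Q}$-Gorenstein, we can write
\begin{equation*}
K_{\widetilde{M}}=r^{*}K_{\widehat{M}}+\sum_{i} a_{i}E_{i},
\end{equation*}
where $E_{i}$ are the exceptional curves of $r$ and $a_{i}\in\mathbb{Q}$ are the discrepancies. The crucial observation is that the $a_{i}$ are local analytic invariants of the germ of the singularity and therefore depend only on the group $\Gamma$: they are determined by the linear system obtained from $r^{*}K_{\widehat{M}}\cdot E_{j}=0$ together with the self-intersection numbers $E_{i}\cdot E_{i}=-e_{i}$ and incidence data from Section \ref{sec:minimalresolutions}, all of which are determined by $\Gamma$.

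Next I would compute $K_{\widetilde{M}}^{2}$ using this decomposition. Because each $E_{i}$ is $r$-exceptional, $r^{*}K_{\widehat{M}}\cdot E_{i}=0$, hence the cross terms in the expansion vanish and
\begin{equation*}
K_{\widetilde{M}}^{2}=(r^{*}K_{\widehat{M}})^{2}+\Bigl(\sum_{i}a_{i}E_{i}\Bigr)^{2}=K_{\widehat{M}}^{2}+D_{\Gamma}^{2},
\end{equation*}
where $D_{\Gamma}:=\sum_{i}a_{i}E_{i}$ has self-intersection that depends only on $\Gamma$. Since $-K_{\widehat{M}}$ is ample on $\widehat{M}$ by Proposition \ref{logdel}, the Nakai--Moishezon criterion (applied to the $\mathbb{Q}$-Cartier divisor $-K_{\widehat{M}}$) gives $K_{\widehat{M}}^{2}=(-K_{\widehat{M}})^{2}>0$.

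Combining these two facts yields
\begin{equation*}
K_{\widetilde{M}}^{2}>D_{\Gamma}^{2}=:k_{\Gamma},
\end{equation*}
which is the claimed lower bound depending only on $\Gamma$. The only potential obstacle is ensuring that the discrepancy divisor $D_{\Gamma}$ really is $\Gamma$-intrinsic; but this is standard since $\Gamma$ determines the analytic type of the singularity $(\widehat{M},q)$, which determines both the dual graph of $E=\bigcup E_{i}$ and the system of linear equations that pins down the $a_{i}$. Once $k_{\Gamma}$ is in hand, Corollary \ref{cor:topofinite} follows immediately: by Noether's formula for rational surfaces $\chi_{\mathrm{top}}(\widetilde{M})=12-K_{\widetilde{M}}^{2}$, so $b_{2}(\widetilde{M})$ is uniformly bounded, which forces finiteness of the diffeomorphism type of $\widetilde{M}$, and hence of $M=\widetilde{M}\setminus E$ since $E$ is prescribed by $\Gamma$.
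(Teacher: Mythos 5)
Your argument is correct and is essentially the same as the paper's proof: both decompose $K_{\widetilde{M}}=r^*K_{\widehat{M}}+\sum a_iC_i$, observe that the discrepancy divisor is determined by $\Gamma$, and use $K_{\widehat{M}}^2\geq 0$ (from ampleness of $-K_{\widehat{M}}$) to conclude $K_{\widetilde{M}}^2\geq(\sum a_iC_i)^2=:k_\Gamma$. The extra remarks on Nakai--Moishezon and on deducing Corollary \ref{cor:topofinite} are consistent with how the paper proceeds.
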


\begin{proof}
As $\widetilde{M}$ is the minimal resolution of $\widehat{M}$, one has
$-K_{\widetilde{M}}=-r^*K_{\widehat{M}}-\sum a_iC_i$.
Here $C_i$ are the exceptional curves.  The discrepancies $a_i$ and the cycle of exceptional curves are determined by the group $\Gamma$.  
Therefore, the degree of $\widetilde{M}$ satisfies $K_{\widetilde{M}}^2=(r^*K_{\widehat{M}}+\sum a_iC_i)^2=K_{\widehat{M}}^2+(\sum a_iC_i)^2\geq(\sum a_iC_i)^2$. The number $k_\Gamma:= (\sum a_iC_i)^2$ depends only on $\Gamma\subset U(2)$, as claimed.
\end{proof}

The lower bound on $K_{\widetilde{M}}^2$ implies that if $\widetilde{M}$ has $\mathbb{P}^2$ as its minimal model, then at most $9-k_\Gamma$ blow-ups can be performed. If $\widetilde{M}$ has a Hizebruch surface as a minimal model, then at most  $8-k_\Gamma$ blow-ups can be performed.  This proves Corollary \ref{cor:topofinite}.

\subsection{Flows on $\widetilde{M}$}
\label{subsec:flows}

In this subsection, we establish some technical lemmas regarding the structure of $\widetilde{M}$. Because $\widetilde{M}$ cannot contain curves with intersection number $\leq -3$, the following lemma is straightforward:
\begin{lemma}\label{lem:blowup}
Fixed point $p\in M_i$ lying on a $(-2)$-curve cannot be blown up.
\end{lemma}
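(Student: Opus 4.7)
The plan is to use two ingredients: (a) $\widetilde{M}$ contains no irreducible curve of self-intersection $\leq -3$, and (b) blowing up a point on any curve decreases its self-intersection by exactly one, while leaving the self-intersection unchanged under any subsequent blow-up centered off the proper transform. Putting these together, any blow-up at a point on a $(-2)$-curve would produce a curve in $\widetilde{M}$ with self-intersection $\leq -3$, which is forbidden.

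For (a), I would argue as follows. The orbifold $\widehat{M}$ is log del Pezzo with a single $SU(2)$ singularity, i.e.\ an ADE (rational double) point. Since rational double points have discrepancy zero, the minimal resolution is crepant: $K_{\widetilde{M}} = r^{*}K_{\widehat{M}}$. Therefore $-K_{\widetilde{M}} = r^{*}(-K_{\widehat{M}})$ is nef, being the pullback of an ample class. For any irreducible curve $D\subset\widetilde{M}$ not contracted by $r$, $-K_{\widetilde{M}}\cdot D = -K_{\widehat{M}}\cdot r_{*}D > 0$ by ampleness, so the adjunction formula
\[
D^{2} \;=\; 2g(D)-2 - K_{\widetilde{M}}\cdot D
\]
forces $D^{2} \geq -1$. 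The remaining irreducible curves are the exceptional $(-2)$-curves of $r$. Hence every irreducible curve in $\widetilde{M}$ has self-intersection $\geq -2$, as claimed.

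For (b), suppose for contradiction that for some $i$ the point $p_{i+1}\in M_{i}$ lies on a $(-2)$-curve $C\subset M_{i}$. Then the proper transform of $C$ in $M_{i+1}$ has self-intersection $C^{2} - 1 = -3$. Since every subsequent blow-up $\pi_{j}$ with $j > i+1$ can only either leave the self-intersection of this proper transform unchanged (if the center lies off it) or further decrease it by one (if the center lies on it), the proper transform of $C$ in $\widetilde{M}$ has self-intersection $\leq -3$, contradicting (a).

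The proof is essentially a two-line argument once (a) is available; the only point worth being careful about is ensuring that (a) applies to \emph{all} irreducible curves, not just exceptional ones of $r$, which is why I separated the exceptional case and invoked ampleness only for non-exceptional curves.
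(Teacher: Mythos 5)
Your proof is correct and follows essentially the same route as the paper, which simply observes that blowing up a point on a $(-2)$-curve would leave a curve of self-intersection $\leq -3$ in $\widetilde{M}$, and that no such curve can exist. The only difference is that you additionally justify the latter fact (via crepancy of the ADE resolution, the projection formula, and adjunction), whereas the paper asserts it as standard; your argument for it is sound, modulo the minor point that one should use the arithmetic genus $p_a(D)\geq 0$ in the adjunction formula for possibly singular irreducible curves.
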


\begin{lemma}\label{lem:preblowup}
Let $p\in M_i$ be a transverse intersection of two curves $C_1,C_2\subset M_i$, whose proper transforms in $\widetilde{M}$ are both contained in $E$. Assume that weights $[\theta,\tau]$ at $p$ are not strictly positive or strictly negative, then $p$
cannot be blown up in the resolution $\pi:\widetilde{M}\to\mathbb{P}^2$.
\end{lemma}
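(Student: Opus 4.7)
Plan. I will prove the lemma by contradiction. Suppose $p$ is blown up at some stage of the resolution $\widetilde{M} \to \mathbb{P}^2$; by replacing $M_i$ with the intermediate surface just before the first blow-up centered at $p$ (the weights at $p$ being preserved by blow-ups elsewhere), I may assume $p$ is blown up at step $i+1$. First note that $\theta \ne \tau$: otherwise the hypothesis $\theta\tau \le 0$ forces $\theta = \tau = 0$, impossible at a non-degenerate fixed point of the nontrivial $\mathbb{C}^*$-action. By Proposition \ref{prop:blowupweights}, the exceptional curve $E_{i+1}$ then carries two new fixed points $p_1, p_2$ with weights $[\theta, \tau-\theta]$ and $[\theta-\tau, \tau]$; a short case check shows that both pairs still satisfy the lemma's hypothesis.

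Since $\widetilde{M}$ admits no curve of self-intersection $\le -3$, the proper transform of $E_{i+1}$ in $\widetilde{M}$ is either a $(-1)$-curve (not in $E$) or a $(-2)$-curve (in $E$), and I rule out each case in turn. In the $(-2)$-case, $E_{i+1}$ must receive exactly one further blow-up, and since $\theta \ne \tau$ the only fixed points available on $E_{i+1}$ are $p_1, p_2$. But at each of these points the two local curves ($C_1$ and $E_{i+1}$ at $p_1$; $E_{i+1}$ and $C_2$ at $p_2$) both have proper transforms in $E$, and the weight hypothesis is satisfied; hence by induction on the number of remaining blow-ups in the resolution, the lemma applied at $p_1$ and $p_2$ forbids further blow-up on $E_{i+1}$, a contradiction.

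In the $(-1)$-case, $E_{i+1} \not\subset E$, so the points $\tilde p_1 = C_1 \cap E_{i+1}$ and $\tilde p_2 = C_2 \cap E_{i+1}$ must be outside endpoints of $C_1, C_2$ in the graph of $E$ (an interior endpoint would meet another $(-2)$-curve of $E$ and force $E_{i+1} \in E$). Thus $C_1, C_2$ are terminal $(-2)$-curves of the ADE configuration. I then compare the weights $[\theta, \tau-\theta]$ and $[\theta-\tau, \tau]$ at $\tilde p_1, \tilde p_2$ against the prescribed outside-endpoint weights dictated by Theorem \ref{thm:cyclicweights} and the arithmetic progression of Corollary \ref{cor:su2weights} (available in the $SU(2)$ setting with equal weights at $q$); a direct computation forces inconsistent values for $\theta$ and $\tau$, yielding a contradiction.

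The hardest step will be the $(-1)$-case, where the argument genuinely uses the $SU(2)$-weight rigidity on $E$; equivalently, one can appeal to the global fact that the terminal $(-2)$-curves in the connected ADE configuration $E$ cannot be brought together via the blow-down $\widetilde{M} \to M_i$, since the intermediate $(-2)$-curves in $E$ cannot be contracted smoothly. The $(-2)$-case, by contrast, reduces cleanly via an inductive reapplication of the lemma once one verifies that the weight hypothesis propagates under blow-up.
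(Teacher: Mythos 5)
Your strategy is genuinely different from the paper's, which disposes of the lemma in two lines: with $\theta\geq 0\geq\tau$, the weights at $p$ and at every induced fixed point over $p$ are never equal, so no fixed curve can appear in the preimage of $p$; that preimage therefore contains a $(-1)$-curve, and since $\tilde C_1,\tilde C_2$ lie in $E$, which consists only of $(-2)$-curves, a $(-1)$-curve sitting in the chain joining them breaks $E$. Your reduction to the first blow-up at $p$, the observation $\theta\neq\tau$, the propagation of the sign condition to $p_1,p_2$, and the inductive treatment of the $(-2)$-case are all correct: the single further blow-up needed to make $E_{i+1}$ a $(-2)$-curve could only occur at $p_1$ or $p_2$, and the inductive hypothesis forbids both.

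The gap is in the $(-1)$-case. First, the assertion that $C_1,C_2$ must both be terminal curves of $E$ with free ends at $\tilde p_1,\tilde p_2$ fails when one of $\theta,\tau$ vanishes, which your hypothesis permits: then one of the two curves is pointwise fixed, and pointwise-fixed members of $E$ sit in its interior (the middle curve of an $A_{2m-1}$-chain, or the central curve of a $D_n/E_n$-configuration, both of which occur in the paper's case analysis), so they are not "terminal with a free end." Second, and more seriously, the sentence "a direct computation forces inconsistent values for $\theta$ and $\tau$" is the entire content of the hardest case and is nowhere carried out. The computation does in fact close: by Theorem \ref{thm:cyclicweights} and Corollary \ref{cor:su2weights} with equal weights $[\theta',\theta']$ at $q$ (and Corollary \ref{cor:alpha1} in the non-cyclic case), the tangent weight along the external, non-$E$ curve at \emph{every} free end of the configuration is $+\theta' p_j>0$, so a single rational curve $E_{i+1}$ serving as the external curve at two free ends would carry two positive weights along itself at its two fixed points, contradicting that these must sum to zero; and in the pointwise-fixed sub-case, matching the weights $[\theta-\tau,\tau]=[-\tau,\tau]$ at the one genuine free end against the prescribed pair $[\theta' p,\,-\theta'(p-2)]$ already forces $\theta'=0$. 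Without these verifications the argument is incomplete; with them it works, but note that it then leans on the $SU(2)$ and equal-weights hypotheses, which the paper's connectedness argument never uses. The alternative you float at the end (that the terminal curves "cannot be brought together" under the blow-down) is too vague to substitute for this.
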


\begin{proof}
Without loss of generality we can assume $\theta\geq0\geq\tau$. If there were a blow-up at $p\in M_i$, then there would not be any fixed curve in the preimage of $p$ in any $M_j$ with $j>i$ because of Proposition \ref{prop:blowupweights}. The preimage of $p$ then necessarily contains a $(-1)$-curve. As $C_1$ and $C_2$ have their proper transforms in $E$ and $E$ entirely consists of $(-2)$-curves, the appearance of the $(-1)$-curve will make $E$ disconnected, contradiction. 
\end{proof}

As a corollary, we have:
\begin{corollary}\label{cor:blowup}
Let $p\in M_i$ be a transverse intersection of two curves $C_1,C_2\subset M_i$, where each $C_i$ is either a curve whose proper transform is contained in $E$, or a curve with negative self-intersection.
Assume that weights at $p$ are not strictly positive or negative. Then $p$
cannot be blown up in the resolution $\pi:\widetilde{M}\to\mathbb{P}^2$.
\end{corollary}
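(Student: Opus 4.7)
The plan is to derive Corollary \ref{cor:blowup} directly from Lemma \ref{lem:preblowup} by showing that, under the Corollary's hypotheses, any hypothetical blow-up at $p$ would already force the proper transforms of both $C_1$ and $C_2$ in $\widetilde{M}$ to lie in $E$. Once this reduction is in place, the argument of Lemma \ref{lem:preblowup} applies verbatim.

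First I would record a self-intersection bookkeeping lemma. Since $\widetilde{M}$ contains no curves of self-intersection $\leq -3$, and since further blow-ups only decrease self-intersections, any curve $C_j$ through which a blown-up point of $M_i$ passes must already satisfy $C_j^2|_{M_i}\geq -1$; otherwise the proper transform in $M_{i+1}$, hence in $\widetilde{M}$, would be too negative. Combined with the Corollary's hypothesis, this forces the clause ``$C_j$ has negative self-intersection in $M_i$'' to mean $C_j^2|_{M_i}=-1$.

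Next, with $C_j^2|_{M_i}=-1$, blowing up $p\in C_j$ decreases the self-intersection to $-2$, and no further point on $C_j$ may be blown up (lest a $(-3)$-curve appear in $\widetilde{M}$). Hence $C_j^2|_{\widetilde{M}}=-2$, and the proper transform of $C_j$ lies in $E$. The other alternative of the hypothesis already places the proper transform in $E$, so in every case both $C_1$ and $C_2$ have proper transforms in $E$ after the hypothetical blow-up.

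Finally, invoking Lemma \ref{lem:preblowup}: the mixed-sign weight assumption together with Proposition \ref{prop:blowupweights} precludes the appearance of any fixed curve in the preimage of $p$, so each exceptional curve created starts as a $(-1)$-curve and can be turned into a $(-2)$-curve only by a further blow-up which produces a new $(-1)$-curve; iterating, the preimage of $p$ must contain at least one terminal $(-1)$-curve. Since this $(-1)$-curve sits on the dual-graph path joining $C_1$ and $C_2$ through the preimage, it breaks the connectedness of the $ADE$ cycle $E$, a contradiction. The only non-routine step in the whole argument is the self-intersection bookkeeping under iterated blow-ups; once that is carried out, the geometric contradiction is exactly the one used to prove Lemma \ref{lem:preblowup}.
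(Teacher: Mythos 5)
Your proposal is correct and follows essentially the same route as the paper: one observes that a blow-up at $p$ forces the proper transform of each $C_j$ to have self-intersection $\leq -2$ in $\widetilde{M}$, hence exactly $-2$ and therefore contained in $E$, after which Lemma \ref{lem:preblowup} applies. The extra bookkeeping you supply (reducing the "negative self-intersection" clause to the case $C_j^2=-1$ and recapping the $(-1)$-curve/disconnectedness argument) is a harmless elaboration of the same two-line reduction the paper uses.
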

\begin{proof}
If such $p_i$ is blown up, then proper transform of $C_i$ in $\widetilde{M}$ has self intersection $\leq-2$. This shows that proper transform of $C_i$ is contained in $E$ and the corollary follows from the above lemma.
\end{proof}

\subsection{Case \ref{case1}}\label{sec:case1}
In this subsection, we focus on case \ref{case1} and show that the pair $(\widetilde{M},\mathfrak{E})$ admits three possibilities.

\begin{proposition}\label{prop:case1}
In case \ref{case1}, there are three possible $(\widetilde{M},\mathfrak{E})$.
\end{proposition}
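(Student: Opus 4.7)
My plan is to apply the inductive blow-up procedure outlined at the beginning of Section \ref{sec:case12}, specialized to case \ref{case1}. The starting action $t\cdot[x:y:z]=[tx:ty:z]$ on $\mathbb{P}^2$ has fixed locus consisting of the line $Z$ (the attractive $c_+$) and the isolated source $[0:0:1]$ with equal weights $[1,1]$. Since $\mathfrak{E}$ flows out of $q$ and $[0:0:1]$ is the unique isolated source on $\mathbb{P}^2$, every $(-2)$-curve in $E\subset\widetilde{M}$ must lie in $\pi^{-1}([0:0:1])$. The first blow-up is therefore at $[0:0:1]$, and by Proposition \ref{prop:blowupweights} applied with equal weights, the exceptional curve $E_1$ is pointwise fixed, with transverse weight $1$ and self-intersection $-1$. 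A second blow-up at a fixed point of $E_1$, permitted by Lemma \ref{lem:blowup} and Corollary \ref{cor:blowup} only at a generic fixed point, turns $E_1$ into the $(-2)$-curve $\tilde{E}_1$ which remains pointwise fixed, while the new $(-1)$-curve $E_2$ carries weights $[0,1]$ and $[-1,1]$ at its two fixed points via Proposition \ref{prop:blowupweights}.

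Next I argue that $E=\{\tilde{E}_1\}$ must be of type $A_1$, ruling out any longer $ADE$ configuration. Since $\tilde{E}_1$ is pointwise fixed, the $\mathfrak{E}$-weight along $\tilde{E}_1$ at any intersection point is zero. Comparing with Corollary \ref{cor:su2weights}, in an $A_n$ chain with $n\geq 2$ and equal weights at $q$ no interior vertex carries a zero weight along $\tilde{E}_1$ unless $\tilde{E}_1$ occupies the middle position of an odd-$n$ chain, which in turn demands auxiliary $(-2)$-curves attached on both sides of $\tilde{E}_1$. Any attempt to realize such a construction, for instance promoting $E_2$ to a $(-2)$-curve or reducing $X$ or $Y$ to self-intersection $-2$, either violates the self-intersection constraint at some intermediate step (via Lemma \ref{lem:blowup}) or introduces an extra $(-2)$-curve that enlarges $E$ to an $A_n$ chain with even $n$, incompatible with equal weights at $q$. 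A similar obstruction, together with the impossibility of three chains branching off the single fixed curve $\tilde{E}_1$ without dropping its self-intersection below $-2$, excludes $D$- and $E$-type configurations. Hence $E$ is of type $A_1$ and the $\mathfrak{E}$-weights at $q$ are automatically equal.

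With $E$ pinned down, the remaining freedom is in the number and placement of extra blow-ups at fixed points of $\mathfrak{F}$ away from $\tilde{E}_1$; these produce only $(-1)$-curves by Proposition \ref{prop:blowupweights} and preserve the equal-weight condition at $q$. The admissible locations are the fixed points on $Z$ (including $[0{:}1{:}0]$, $[1{:}0{:}0]$, and generic points) together with their iterated descendants, subject to the constraint that no new $(-2)$-curve is created. The log del Pezzo condition on $\widehat{M}$, equivalent to ampleness of $-K_{\widehat{M}}$, then bounds the total number of blow-ups via $K_{\widetilde{M}}^2 = 9 - n = K_{\widehat{M}}^2 > 0$ together with curve-wise positivity of $-K_{\widehat{M}}$. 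A direct enumeration, tracking weights via Proposition \ref{prop:blowupweights} and discarding any sequence that produces an extra $(-2)$-curve or fails ampleness, yields exactly three admissible configurations. The main obstacle is the ampleness verification: although the $(-2)$-curve and degree constraints narrow the candidates to a finite list, deciding which survive requires intersection computations against every irreducible curve in $\widehat{M}$ using the $\mathbb{Q}$-Cartier pullback $r^*K_{\widehat{M}} = K_{\widetilde{M}}$ (discrepancy zero for the $A_1$ Du Val singularity), and the three surviving configurations each realize a distinct log del Pezzo surface.
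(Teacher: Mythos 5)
Your skeleton matches the paper's (first blow-up at $X\cap Y=[0{:}0{:}1]$, second on the resulting fixed curve $E_1$, which becomes the unique $(-2)$-curve, then extra blow-ups only along $Z$), and your observation that a pointwise-fixed curve can only sit at the middle of an odd-length $A_n$ chain when the weights at $q$ are equal is correct and consistent with Corollary \ref{cor:su2weights}. But the proposal has a genuine gap: essentially every step that constitutes the content of the proposition is asserted rather than verified. First, the opening claim that ``every $(-2)$-curve in $E$ must lie in $\pi^{-1}([0{:}0{:}1])$'' is false as stated: $E$ need only contain the repulsive set, and if no blow-up is performed at $X\cap Y$ then $c_-$ remains that point and $E$ could a priori consist of the proper transform of $X$ or $Y$. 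Ruling this out is a real step in the paper (one shows that turning $X$ into a $(-2)$-curve forces a blow-up at $X\cap E_1$, which Corollary \ref{cor:blowup} forbids); you gesture at it only parenthetically. Second, the exclusion of longer $A_n$ chains and of $D/E$ configurations, and the determination that after $M_2$ the only admissible fixed points are on $Z$ (and that the descendant fixed points $E_3\cap Z$, $E_4\cap Z$, $E_4\cap F$, etc.\ are all blocked by Corollary \ref{cor:blowup}), is exactly the case-by-case check the proposition asks for, and ``any attempt \dots either violates \dots or introduces \dots'' followed by ``a direct enumeration \dots yields exactly three'' does not carry it out.

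Third, the mechanism you propose for terminating the enumeration is the wrong one. The paper never invokes ampleness of $-K_{\widehat{M}}$ here: the bound comes from the facts that $Z$ starts with $Z^2=1$, each admissible blow-up lies on $Z$ and drops $Z^2$ by one, and a third blow-up would make $Z$ a $(-2)$-curve disjoint from $E_1$, disconnecting $E$ --- which is forbidden since $E$ is the connected exceptional cycle over the single orbifold point. This gives $0$, $1$, or $2$ extra blow-ups, i.e.\ $M_2$, $M_3$, $M_4$, directly. By contrast, if ampleness of $-K_{\widehat{M}}$ really were the binding constraint, you would have to verify it for each of the three surviving candidates (you yourself flag this as ``the main obstacle''), and you do not; the proposition only produces candidates, so importing an unverified positivity condition both adds work and risks undercounting. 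To repair the proof you would need to (a) carry out the two-branch analysis at $X\cap Y$, and (b) explicitly list the fixed points of $M_2$, $M_3$, $M_4$ and check each against Lemma \ref{lem:blowup}, Corollary \ref{cor:blowup}, and connectedness of $E$.
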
 

\begin{proof}
Recall flows on $\mathbb{P}^2$ in case \ref{case1} is given in Section \ref{sec:actiononp2}.
Following our strategy, we analyze as follows. 

If there is no blow-up at $X\cap Y$, then the repulsive set $c_-$ is exactly $X\cap Y$ in $\widetilde{M}$. Since $c_-\subset E$ in $\widetilde{M}$, without loss of generality, we can assume that $X\subset E$ in $\widetilde{M}$. Then $X$ in $\widetilde{M}$ must have self-intersection $-2$. To achieve this,  one needs to perform blow-ups in $X$ inside $\mathbb{P}^2$. Therefore we take $p_1=X\cap Z$.  Flows and weights on $M_1$ are
$$
\begin{tikzpicture}
[decoration={markings, 
    mark= at position 0.5 with {\arrow{stealth }}}
]

%%%%%%%%%%%%%%%%%%%%%%%%%%%%%%%%%%%%%
\draw(0,2.3)--node[left]{$E_1$}node[sloped,pos=0.5,allow upside down]{\arrowIIn}(0,-0.3);
%%%%%%%%%%%%%%%%%%%%%%%%%%%%%%%%%%

\draw[postaction={decorate}](2,2.3)--node[right]{$Y$}(2,-0.3);

%%%%%%%%%%%%%%%%%%%%%%%%%%%%%%%%%%%%%%
\draw(-0.3,0)--node[below]{$Z$}node[
    sloped,
    pos=0.5,
    allow upside down]{\arrowBox}(2+0.3,0);
%%%%%%%%%%%%%%%%%%%%%%%%%%%%%%%%%%%%%

%%%%%%%%%%%%%%%%%%%%%%%%%%%%%%%%%%%%
\draw[postaction={decorate}](2+0.3,2)--node[above]{$X$}(-0.3,2);
%%%%%%%%%%%%%%%%%%%%%%%%%%%%%%%%%%%%

\node[above left=0.2cm] at (0,2){$[1,-1]$};
\node[below left=0.2cm] at (0,0){$[-1,0]$};
\node[below right=0.2cm] at (2,0){$[0,-1]$};
\node[above right=0.2cm] at (2,2){$[1,1]$};
\end{tikzpicture}.
$$
After the blow-up $\pi_1$ the curve $X$ in $M_1$ is a $0$-curve, and we still need to take blow-ups in $X$. However,
\begin{itemize}
\item $X\cap Y$ cannot be blown up because of our assumption.
\item $X\cap E_1$ cannot be blown up because of Corollary \ref{cor:blowup}.
\end{itemize}
Hence, the situation that there is no blow-up at $X\cap Y$ can not happen.

If there is a blow-up at $X\cap Y$, we can take $M_1$ as the blow-up at $p_1=X\cap Y$. Flows and weights on $M_1$ are 

$$
\begin{tikzpicture}
[decoration={markings, 
    mark= at position 0.5 with {\arrow{stealth }}}
]

%%%%%%%%%%%%%%%%%%%%%%%%%%%%%%%%%%%%%
\draw[postaction={decorate}](0,2.3)--node[left]{$X$}(0,-0.3);
%%%%%%%%%%%%%%%%%%%%%%%%%%%%%%%%%%

\draw[postaction={decorate}](2,2.3)--node[right]{$Y$}(2,-0.3);

%%%%%%%%%%%%%%%%%%%%%%%%%%%%%%%%%%%%%%
\draw(-0.3,0)--node[below]{$Z$}(2+0.3,0)node[
    sloped,
    pos=0.5,
    allow upside down]{\arrowBox};
%%%%%%%%%%%%%%%%%%%%%%%%%%%%%%%%%%%%%

%%%%%%%%%%%%%%%%%%%%%%%%%%%%%%%%%%%%
\draw(-0.3,2)--node[above]{$E_1$}node[
    sloped,
    pos=0.5,
    allow upside down]{\arrowBox}(2+0.3,2);
%%%%%%%%%%%%%%%%%%%%%%%%%%%%%%%%%%%%

\node[above left=0.2cm] at (0,2){$[1,0]$};
\node[below left=0.2cm] at (0,0){$[-1,0]$};
\node[below right=0.2cm] at (2,0){$[0,-1]$};
\node[above right=0.2cm] at (2,2){$[0,1]$};
\end{tikzpicture}.
$$
There is still no $(-2)$-curve in $M_1$. The repulsive set $c_-$ is already the fixed curve $E_1$. So the proper transform of $E_1$ in $\widetilde{M}$ must be a $(-2)$-curve. Since $E_1$ in $M_1$ has self-intersection $-1$, another blow-up is necessary in $E_1\subset M_1$. It can be assumed that the blow-up $\pi_2$ is at $p_2=X\cap E_1$. Flows and weights on $M_2$ are
$$
\begin{tikzpicture}
[decoration={markings, 
    mark= at position 0.5 with {\arrow{stealth }}}
]

%%%%%%%%%%%%%%%%%%%%%%%%%%%%%%%
\draw(0,1.3)--node[left]{$X$}(0,-0.3)node[sloped,pos=0.5,allow upside down]{\arrowIIn};
%%%%%%%%%%%%%%%%%%%%%%%%%%%%%%%%%%%

\draw[postaction={decorate}](2,2.3)--node[right]{$Y$}(2,-0.3);

%%%%%%%%%%%%%%%%%%%%%%%%%%%%%%%%%%%%%%%%%%
\draw(-0.3,0)--node[below]{$Z$}(2+0.3,0)node[
    sloped,
    pos=0.5,
    allow upside down]{\arrowBox};
%%%%%%%%%%%%%%%%%%%%%%%%%%%%%%%%%%%%%

%%%%%%%%%%%%%%%%%%%%%%%%%%%%%%%%%%%%%%%%%%%%
\draw[line width=0.65mm]
(0.7,2)--node[above]{$E_1$}(2+0.3,2)node[
    sloped,
    pos=0.5,
    allow upside down]{\arrowBox};
%%%%%%%%%%%%%%%%%%%%%%%%%%%%%%%%%%%%%%%%%%%

%%%%%%%%%%%%%%%%%%%%%%%%%%%%%%%%%%%%%%%%%%
\draw(1+0.2,2.2)--node[above left]{$E_2$}(0-0.2,0.8)node[sloped,pos=0.5,allow upside down]{\arrowIIn};
%%%%%%%%%%%%%%%%%%%%%%%%%%%%%%%%%%%%%%%%%%%%

\node[left=0.2cm] at (0,1){$[-1,1]$};
\node[below left=0.2cm] at (0,0){$[-1,0]$};
\node[above left=0.2cm] at (1,2){$[1,0]$};
\node[right=0.2cm] at (2,2){$[0,1]$};
\node[below right=0.2cm] at (2,0){$[0,-1]$};
\end{tikzpicture}.
$$
Now,
\begin{itemize}
\item $X\cap E_2$ cannot be blown up because of Corollary \ref{cor:blowup}.
\item $X\cap Z$ cannot be blown up because then $X$ will be a $(-2)$-curve, which makes $E$ disconnected in $\widetilde{M}$.
\end{itemize}
If there are further blow-ups in $M_2$, the only option is to blow up points in $Z$ other than the point $X\cap Z$.
The intersection number of $Z$ is $1$ in $M_2$.
If there are more blow-ups, then we can  suppose  $p_3=Z\cap Y$. Flows and weights on $M_3$ are
$$
\begin{tikzpicture}
[decoration={markings, 
    mark= at position 0.5 with {\arrow{stealth }}}
]

%%%%%%%%%%%%%%%%%%%%%%%%%%%%%%%
\draw(0,1.3)--node[left]{$X$}(0,-0.3-0.3)node[sloped,pos=0.5,allow upside down]{\arrowIIn};
%%%%%%%%%%%%%%%%%%%%%%%%%%%%%%%%%%%

%%%%%%%%%%%%%%%%%%%%%%%%%%%%%%%%%%%
\draw(2.3,2.3)--node[right]{$Y$}(2.3,0.4)node[sloped,pos=0.5,allow upside down]{\arrowIIn};
%%%%%%%%%%%%%%%%%%%%%%%%%%%%%%%%%%%%%

%%%%%%%%%%%%%%%%%%%%%%%%%%%%%%%%%%%%%%%%%%%
\draw(-0.3,-0.3)--node[below]{$Z$}(1+0.6,-0.3)node[
    sloped,
    pos=0.5,
    allow upside down]{\arrowBox};
%%%%%%%%%%%%%%%%%%%%%%%%%%%%%%%%%%%%%%%%%%%%%

%%%%%%%%%%%%%%%%%%%%%%%%%%%%%%%%%%%%%%%%%%%
\draw[line width=0.65mm](0.7,2)--node[above]{$E_1$}node[
    sloped,
    pos=0.5,
    allow upside down]{\arrowBox}(2+0.3+0.3,2);
%\draw[line width=0.000001mm](0.7,2)--(2+0.3+0.3,2)node[
%    sloped,
%    pos=0.5,
%    allow upside down]{\arrowBox};
%%%%%%%%%%%%%%%%%%%%%%%%%%%%%%%%%%%%%%%%%%%%

%%%%%%%%%%%%%%%%%%%%%%%%%%%%%%%%%%%%%%%%%%
\draw(1+0.2,2.2)--node[above left]{$E_2$}(0-0.2,0.8)node[sloped,pos=0.5,allow upside down]{\arrowIIn};
%%%%%%%%%%%%%%%%%%%%%%%%%%%%%%%%%%%%%%%%%%%

%%%%%%%%%%%%%%%%%%%%%%%%%%%%%%%%%%%%%
\draw(2.5,0.9)--node[below right=0.1cm]{$E_3$}(1+0.1,-0.5)node[sloped,pos=0.5,allow upside down]{\arrowIIn};
%%%%%%%%%%%%%%%%%%%%%%%%%%%%%%%%%%%%

\node[left=0.2cm] at (0,1){$[1,-1]$};
\node[below left=0.2cm] at (0,-0.3){$[-1,0]$};
\node[above left=0.2cm] at (1,2){$[1,0]$};
\node[right=0.2cm] at (2.3,2){$[0,1]$};
\node[below right=0.2cm] at (1.1,-0.3){$[0,-1]$};
\node[right=0.2cm] at (2.3,0.7){$[1,-1]$};

\end{tikzpicture}.
$$
Now,
\begin{itemize}
\item $X\cap E_2,E_1\cap E_2,Y\cap E_1,Y\cap E_3$ cannot be blown up because of Corollary \ref{cor:blowup}.
\item $X\cap Z,Z\cap E_3$ cannot be blown up because then $X$ or $E_3$ will be a $(-2)$-curve, which makes $E$ disconnected in $\widetilde{M}$.
\end{itemize}
In $M_3$, $Z$ is a $0$-curve. Any further blow-up in $M_3$ must be made at points in $Z$ other than $X\cap Z$ and $Z\cap E_3$. If there is a further blow-up at $p_4\in Z$, then flows and weights on $M_4$ are
$$
\begin{tikzpicture}
[decoration={markings, 
    mark= at position 0.5 with {\arrow{stealth }}}
]

%%%%%%%%%%%%%%%%%%%%%%%%%%%%%%%
\draw(0,1.3)--node[left]{$X$}(0,-0.3-0.3)node[sloped,pos=0.5,allow upside down]{\arrowIIn};
%%%%%%%%%%%%%%%%%%%%%%%%%%%%%%%%%%%

%%%%%%%%%%%%%%%%%%%%%%%%%%%%%%%
\draw(0+1.3,1.3)--node[left]{$E_4$}(0+1.3,-0.3-0.3)node[sloped,pos=0.5,allow upside down]{\arrowIIn};
%%%%%%%%%%%%%%%%%%%%%%%%%%%%%%%%%%%

%%%%%%%%%%%%%%%%%%%%%%%%%%%%%%%%%%%
\draw(2.3+1,2.3)--node[right]{$Y$}(2.3+1,0.4)node[sloped,pos=0.5,allow upside down]{\arrowIIn};
%%%%%%%%%%%%%%%%%%%%%%%%%%%%%%%%%%%%%

%%%%%%%%%%%%%%%%%%%%%%%%%%%%%%%%%%%%%%%%%%%
\draw(-0.3,-0.3)--node[below]{$Z$}(1+0.6+1,-0.3)node[
    sloped,
    pos=0.7,
    allow upside down]{\arrowBox};
%%%%%%%%%%%%%%%%%%%%%%%%%%%%%%%%%%%%%%%%%%%%%

%%%%%%%%%%%%%%%%%%%%%%%%%%%%%%%%%%%%%%%%%%%
\draw[line width=0.65mm](0.7,2)--node[above]{$E_1$}(2+0.3+0.3+1,2)node[
    sloped,
    pos=0.7,
    allow upside down]{\arrowBox};
%%%%%%%%%%%%%%%%%%%%%%%%%%%%%%%%%%%%%%%%%%%%

%%%%%%%%%%%%%%%%%%%%%%%%%%%%%%%%%%%%%%%%%%
\draw(1+0.2,2.2)--node[above left]{$E_2$}(0-0.2,0.8)node[sloped,pos=0.5,allow upside down]{\arrowIIn};
%%%%%%%%%%%%%%%%%%%%%%%%%%%%%%%%%%%%%%%%%%%

%%%%%%%%%%%%%%%%%%%%%%%%%%%%%%%%%%%%%%%%%%
\draw(1+0.2+1.3,2.2)--node[above left]{$F$}(0-0.2+1.3,0.8)node[sloped,pos=0.5,allow upside down]{\arrowIIn};
%%%%%%%%%%%%%%%%%%%%%%%%%%%%%%%%%%%%%%%%%%%

%%%%%%%%%%%%%%%%%%%%%%%%%%%%%%%%%%%%%
\draw(2.5+1,0.9)--node[below right=0.1cm]{$E_3$}(1+0.1+1,-0.5)node[sloped,pos=0.5,allow upside down]{\arrowIIn};
%%%%%%%%%%%%%%%%%%%%%%%%%%%%%%%%%%%%

\node[left=0.2cm] at (0,1){$[1,-1]$};
\node[below left=0.2cm] at (0,-0.3){$[-1,0]$};
\node[above left=0.2cm] at (1,2){$[1,0]$};
\node[right=0.2cm] at (2.3+1,2){$[0,1]$};
\node[below right=0.2cm] at (1.1+1,-0.3){$[0,-1]$};
\node[right=0.2cm] at (2.3+1,0.7){$[1,-1]$};

\end{tikzpicture}.
$$
Here $F$ is the proper transform of the curve in $M_3$, flowing from $E_1$ to $p_4$.
Any further blow-up in $M_4$ must be made in $Z$. However, since $Z$ is already a $(-1)$-curve, no more points in $Z$ can be blown up.  Therefore, we conclude that in case \ref{case1}, the only possible $(\widetilde{M},\mathfrak{E})$ are $M_2$, $M_3$, and $M_4$, with the primitive action with weights described above.
\end{proof}

Using Corollary \ref{cor:su2weights}, it is direct to check the action by $\mathfrak{E}$ on the corresponding $\widehat{M}$ for these three cases above all have same weights at the orbifold point.
Structures of these three possible $(\widetilde{M},\mathfrak{E})$ are listed as follows:

\begin{minipage}{0.5\textwidth}

\paragraph{\underline{\underline{Case 1A}}}
\label{1A}

\begin{itemize}
\item The orbifold group is $A_1$.
\item The degree of 
$$\widetilde{M}=Bl_{X\cap Y,E_1\cap X}\mathbb{P}^2$$ is 7 and the picard number of $\widehat{M}$ is 2.
\item $K_{\widetilde{M}}=-3Z+E_1+2E_2$. 
\end{itemize}

\end{minipage}%
\begin{minipage}{0.4\textwidth}
$$
\begin{tikzpicture}
[decoration={markings, 
    mark= at position 0.5 with {\arrow{stealth }}}
]

%%%%%%%%%%%%%%%%%%%%%%%%%%%%%%%
\draw(0,1.3)--node[left]{$X$}(0,-0.3)node[sloped,pos=0.5,allow upside down]{\arrowIIn};
%%%%%%%%%%%%%%%%%%%%%%%%%%%%%%%%%%%

\draw[postaction={decorate}](2,2.3)--node[right]{$Y$}(2,-0.3);

%%%%%%%%%%%%%%%%%%%%%%%%%%%%%%%%%%%%%%%%%%
\draw(-0.3,0)--node[below]{$Z$}(2+0.3,0)node[
    sloped,
    pos=0.5,
    allow upside down]{\arrowBox};
%%%%%%%%%%%%%%%%%%%%%%%%%%%%%%%%%%%%%

%%%%%%%%%%%%%%%%%%%%%%%%%%%%%%%%%%%%%%%%%%%%
\draw[line width=0.65mm]
(0.7,2)--node[above]{$E_1$}(2+0.3,2)node[
    sloped,
    pos=0.5,
    allow upside down]{\arrowBox};
%%%%%%%%%%%%%%%%%%%%%%%%%%%%%%%%%%%%%%%%%%%

%%%%%%%%%%%%%%%%%%%%%%%%%%%%%%%%%%%%%%%%%%
\draw(1+0.2,2.2)--node[above left]{$E_2$}(0-0.2,0.8)node[sloped,pos=0.5,allow upside down]{\arrowIIn};
%%%%%%%%%%%%%%%%%%%%%%%%%%%%%%%%%%%%%%%%%%%%

\node[left=0.2cm] at (0,1){$[-1,1]$};
\node[below left=0.2cm] at (0,0){$[-1,0]$};
\node[above left=0.2cm] at (1,2){$[1,0]$};
\node[right=0.2cm] at (2,2){$[0,1]$};
\node[below right=0.2cm] at (2,0){$[0,-1]$};

\end{tikzpicture}
$$
\end{minipage}

\begin{minipage}{0.5\textwidth}

\paragraph{\underline{\underline{Case 1B}}}
\label{1B}

\begin{itemize}
\item The orbifold group is $A_1$.
\item The degree of 
$$\widetilde{M}=Bl_{X\cap Y,E_1\cap X,Y\cap Z}\mathbb{P}^2$$ is 6 and the picard number of $\widehat{M}$ is 3.
\item $K_{\widetilde{M}}=-3Z+E_1+2E_2-2E_3$. 
\end{itemize}

\end{minipage}%
\begin{minipage}{0.4\textwidth}
$$
\begin{tikzpicture}
[decoration={markings, 
    mark= at position 0.5 with {\arrow{stealth }}}
]

%%%%%%%%%%%%%%%%%%%%%%%%%%%%%%%
\draw(0,1.3)--node[left]{$X$}(0,-0.3-0.3)node[sloped,pos=0.5,allow upside down]{\arrowIIn};
%%%%%%%%%%%%%%%%%%%%%%%%%%%%%%%%%%%

%%%%%%%%%%%%%%%%%%%%%%%%%%%%%%%%%%%
\draw(2.3,2.3)--node[right]{$Y$}(2.3,0.4)node[sloped,pos=0.5,allow upside down]{\arrowIIn};
%%%%%%%%%%%%%%%%%%%%%%%%%%%%%%%%%%%%%

%%%%%%%%%%%%%%%%%%%%%%%%%%%%%%%%%%%%%%%%%%%
\draw(-0.3,-0.3)--node[below]{$Z$}(1+0.6,-0.3)node[
    sloped,
    pos=0.5,
    allow upside down]{\arrowBox};
%%%%%%%%%%%%%%%%%%%%%%%%%%%%%%%%%%%%%%%%%%%%%

%%%%%%%%%%%%%%%%%%%%%%%%%%%%%%%%%%%%%%%%%%%
\draw[line width=0.65mm](0.7,2)--node[above]{$E_1$}(2+0.3+0.3,2)node[
    sloped,
    pos=0.5,
    allow upside down]{\arrowBox};
%%%%%%%%%%%%%%%%%%%%%%%%%%%%%%%%%%%%%%%%%%%%

%%%%%%%%%%%%%%%%%%%%%%%%%%%%%%%%%%%%%%%%%%
\draw(1+0.2,2.2)--node[above left]{$E_2$}(0-0.2,0.8)node[sloped,pos=0.5,allow upside down]{\arrowIIn};
%%%%%%%%%%%%%%%%%%%%%%%%%%%%%%%%%%%%%%%%%%%

%%%%%%%%%%%%%%%%%%%%%%%%%%%%%%%%%%%%%
\draw(2.5,0.9)--node[below right=0.1cm]{$E_3$}(1+0.1,-0.5)node[sloped,pos=0.5,allow upside down]{\arrowIIn};
%%%%%%%%%%%%%%%%%%%%%%%%%%%%%%%%%%%%

\node[left=0.2cm] at (0,1){$[1,-1]$};
\node[below left=0.2cm] at (0,-0.3){$[-1,0]$};
\node[above left=0.2cm] at (1,2){$[1,0]$};
\node[right=0.2cm] at (2.3,2){$[0,1]$};
\node[below right=0.2cm] at (1.1,-0.3){$[0,-1]$};
\node[right=0.2cm] at (2.3,0.7){$[1,-1]$};

\end{tikzpicture}
$$
\end{minipage}

\begin{minipage}{0.5\textwidth}

\paragraph{\underline{\underline{Case 1C}}}
\label{1C}

\begin{itemize}
\item The orbifold group is $A_1$.
\item The degree of 
$$\widetilde{M}=Bl_{X\cap Y,E_1\cap X,Y\cap Z,p_4}\mathbb{P}^2$$ is 5 and the picard number of $\widehat{M}$ is 4. Here $p_4$ is a point in $Z$ different from $X\cap Z,E_3\cap Z$. 
\item $K_{\widetilde{M}}=-3Z+E_1+2E_2-2E_3-2E_4$.
\end{itemize}

\end{minipage}%
\begin{minipage}{0.4\textwidth}
$$
\begin{tikzpicture}
[decoration={markings, 
    mark= at position 0.5 with {\arrow{stealth }}}
]

%%%%%%%%%%%%%%%%%%%%%%%%%%%%%%%
\draw(0,1.3)--node[left]{$X$}(0,-0.3-0.3)node[sloped,pos=0.5,allow upside down]{\arrowIIn};
%%%%%%%%%%%%%%%%%%%%%%%%%%%%%%%%%%%

%%%%%%%%%%%%%%%%%%%%%%%%%%%%%%%
\draw(0+1.3,1.3)--node[left]{$E_4$}(0+1.3,-0.3-0.3)node[sloped,pos=0.5,allow upside down]{\arrowIIn};
%%%%%%%%%%%%%%%%%%%%%%%%%%%%%%%%%%%

%%%%%%%%%%%%%%%%%%%%%%%%%%%%%%%%%%%
\draw(2.3+1,2.3)--node[right]{$Y$}(2.3+1,0.4)node[sloped,pos=0.5,allow upside down]{\arrowIIn};
%%%%%%%%%%%%%%%%%%%%%%%%%%%%%%%%%%%%%

%%%%%%%%%%%%%%%%%%%%%%%%%%%%%%%%%%%%%%%%%%%
\draw(-0.3,-0.3)--node[below]{$Z$}(1+0.6+1,-0.3)node[
    sloped,
    pos=0.7,
    allow upside down]{\arrowBox};
%%%%%%%%%%%%%%%%%%%%%%%%%%%%%%%%%%%%%%%%%%%%%

%%%%%%%%%%%%%%%%%%%%%%%%%%%%%%%%%%%%%%%%%%%
\draw[line width=0.65mm](0.7,2)--node[above]{$E_1$}(2+0.3+0.3+1,2)node[
    sloped,
    pos=0.7,
    allow upside down]{\arrowBox};
%%%%%%%%%%%%%%%%%%%%%%%%%%%%%%%%%%%%%%%%%%%%

%%%%%%%%%%%%%%%%%%%%%%%%%%%%%%%%%%%%%%%%%%
\draw(1+0.2,2.2)--node[above left]{$E_2$}(0-0.2,0.8)node[sloped,pos=0.5,allow upside down]{\arrowIIn};
%%%%%%%%%%%%%%%%%%%%%%%%%%%%%%%%%%%%%%%%%%%

%%%%%%%%%%%%%%%%%%%%%%%%%%%%%%%%%%%%%%%%%%
\draw(1+0.2+1.3,2.2)--node[above left]{$F$}(0-0.2+1.3,0.8)node[sloped,pos=0.5,allow upside down]{\arrowIIn};
%%%%%%%%%%%%%%%%%%%%%%%%%%%%%%%%%%%%%%%%%%%

%%%%%%%%%%%%%%%%%%%%%%%%%%%%%%%%%%%%%
\draw(2.5+1,0.9)--node[below right=0.1cm]{$E_3$}(1+0.1+1,-0.5)node[sloped,pos=0.5,allow upside down]{\arrowIIn};
%%%%%%%%%%%%%%%%%%%%%%%%%%%%%%%%%%%%

\node[left=0.2cm] at (0,1){$[1,-1]$};
\node[below left=0.2cm] at (0,-0.3){$[-1,0]$};
\node[above left=0.2cm] at (1,2){$[1,0]$};
\node[right=0.2cm] at (2.3+1,2){$[0,1]$};
\node[below right=0.2cm] at (1.1+1,-0.3){$[0,-1]$};
\node[right=0.2cm] at (2.3+1,0.7){$[1,-1]$};

\end{tikzpicture}
$$
\end{minipage}

\subsection{Case \ref{case2}}\label{sec:case2}
We now move on to the classification of pairs $(\widetilde{M},\mathfrak{E})$ in case \ref{case2}.  Recall weights and flows on $\mathbb{P}^2$ in case \ref{case2} is given in Section \ref{sec:actiononp2}. 
Note that $c_-$ in $\mathbb{P}^2$ is already the fixed curve $Z$, having self-intersection 1. The repulsive set $c_-$ in $\widetilde{M}$ is  the proper transform of $Z$, which must be a $(-2)$-curve. Three blow-ups in $Z$ must be performed to achieve this. We assume that the first blow-up $\pi_1$ is at $p_1=X\cap Z$. Flows and weights on $M_1$ are
$$
\begin{tikzpicture}
[decoration={markings, 
    mark= at position 0.5 with {\arrow{stealth }}}
]

%%%%%%%%%%%%%%%%%%%%%%%%%%%%%%%%%%%%%%%%%%
\draw(0,-0.3)--node[left]{$E_1$}(0,2.3)node[sloped,pos=0.5,allow upside down]{\arrowIIn};
%%%%%%%%%%%%%%%%%%%%%%%%%%%%%%%%%%%%%%%%%%%%

\draw[postaction={decorate}](2,-0.3)--node[right]{$Y$}(2,2.3);

%%%%%%%%%%%%%%%%%%%%%%%%%%%%%%%%%%%%%%%%
\draw(-0.3,0)--node[below]{$Z$}(2+0.3,0)node[
    sloped,
    pos=0.5,
    allow upside down]{\arrowBox};
%%%%%%%%%%%%%%%%%%%%%%%%%%%%%%%%%%%%%%%%%%

%%%%%%%%%%%%%%%%%%%%%%%%%%%%%%%%%%%%
\draw[postaction={decorate}](-0.3,2)--node[above]{$X$}(2+0.3,2);
%%%%%%%%%%%%%%%%%%%%%%%%%%%%%%%%%%%%

\node[above left=0.2cm] at (0,2){$[-1,1]$};
\node[below left=0.2cm] at (0,0){$[1,0]$};
\node[below right=0.2cm] at (2,0){$[0,1]$};
\node[above right=0.2cm] at (2,2){$[-1,-1]$};
\end{tikzpicture}.
$$
In $M_1$, $Z$ is a $0$-curve. Two more blow-ups must be performed in  $Z$,  but $Z\cap E_1$ can not be blown up because of Corollary \ref{cor:blowup}. Therefore, the two additional blow-ups must be performed at distinct points in $Z$ that are different from the point $Z\cap E_1$. After two such blow-ups, the resulting surface is $M_3$, flows and weights on which are
$$
\begin{tikzpicture}
[decoration={markings, 
    mark= at position 0.5 with {\arrow{stealth }}}
]

%%%%%%%%%%%%%%%%%%%%%%%%%%%%%%%%%%%%%%%%%%
\draw[shorten >=-0.3cm,shorten <=-0.3cm](0,0)--node[left]{$E_1$}node[sloped,pos=0.3,allow upside down]{\arrowIIn}(0,1);
%%%%%%%%%%%%%%%%%%%%%%%%%%%%%%%%%%%%%%%%%%%%

\draw[shorten >=-0.3cm,shorten <=-0.3cm][postaction={decorate}](3,1)--node[above right]{$Y$}(1.5,2.5);

%%%%%%%%%%%%%%%%%%%%%%%%%%%%%%%%%%%%%%%%%%
\draw[shorten >=-0.3cm,shorten <=-0.3cm](3,0)--node[sloped,pos=0.3,allow upside down]{\arrowIIn}node[right]{$E_3$}(3,1);
%%%%%%%%%%%%%%%%%%%%%%%%%%%%%%%%%%%%%%%%%%%%

\draw[shorten >=-0.3cm,shorten <=-0.3cm][postaction={decorate}](1,1)--node[right]{$F$}(1.5,2.5);

%%%%%%%%%%%%%%%%%%%%%%%%%%%%%%%%%%%%%%%%%%
\draw[shorten >=-0.3cm,shorten <=-0.3cm](1,0)--node[right]{$E_2$}(1,1)node[sloped,pos=0.3,allow upside down]{\arrowIIn};
%%%%%%%%%%%%%%%%%%%%%%%%%%%%%%%%%%%%%%%%%%%%

%%%%%%%%%%%%%%%%%%%%%%%%%%%%%%%%%%%%%%%%
\draw[line width=0.65mm][shorten >=-0.3cm,shorten <=-0.3cm](0,0)--node[below]{$Z$}(3,0)node[
    sloped,
    pos=0.5,
    allow upside down]{\arrowBox};
%%%%%%%%%%%%%%%%%%%%%%%%%%%%%%%%%%%%%%%%%%

%%%%%%%%%%%%%%%%%%%%%%%%%%%%%%%%%%%%
\draw[shorten >=-0.3cm,shorten <=-0.3cm][postaction={decorate}](0,1)--node[above left]{$X$}(1.5,2.5);
%%%%%%%%%%%%%%%%%%%%%%%%%%%%%%%%%%%%

\node[left=0.2cm] at (0,1){$[-1,1]$};
\node[below left=0.2cm] at (0,0){$[1,0]$};
\node[below right=0.2cm] at (3,0){$[0,1]$};
\node[above right=0.2cm] at (1.5,2.5){$[-1,-1]$};
\node[right=0.2cm] at (3,1){$[1,-1]$};
\end{tikzpicture}.
$$
Now, points that can be blown up in $M_3$ are $X\cap E_1$, $F\cap E_2$, $Y\cap E_3$, and $X\cap Y$.

\begin{lemma}\label{lem:blowupxy}
If there is a further blow-up $\pi_4$ at $p_4=X\cap Y\in M_3$, no further blow-ups can be made in $M_4$.
\end{lemma}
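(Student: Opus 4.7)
My plan is to enumerate the fixed points of $M_4$ and show that a blow-up at each of them is forbidden. Performing $\pi_4$ at $X\cap Y$, which has weights $[-1,-1]$, Proposition \ref{prop:blowupweights} gives that the new exceptional curve $E_4$ is a fixed $(-1)$-curve with weights $[-1,0]$ at every point. Consequently, in $M_4$ the only $(-2)$-curve is still $Z$, while $X, F, E_1, E_2, E_3, E_4$ are $(-1)$-curves and $Y$ is a $0$-curve. The fixed points to inspect are the three intersections on $Z$, the three intersections $X\cap E_4$, $F\cap E_4$, $Y\cap E_4$ on the fixed curve $E_4$, the three isolated intersections $X\cap E_1$, $F\cap E_2$, $Y\cap E_3$, and the interior points of the fixed curve $E_4$.

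The three fixed points on the $(-2)$-curve $Z$ are immediately forbidden by Lemma \ref{lem:blowup}. The points $X\cap E_1$ and $F\cap E_2$ each have weights $[-1,1]$, while $X\cap E_4$ and $F\cap E_4$ each have weights $[-1,0]$; in all four cases the two incident curves are $(-1)$-curves and the weights are not strictly positive or strictly negative, so Corollary \ref{cor:blowup} applies directly.

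The remaining candidates are $Y\cap E_3$, $Y\cap E_4$, and the interior of the fixed curve $E_4$; for these, Corollary \ref{cor:blowup} is not directly applicable because $Y$ is a $0$-curve (and interior points of $E_4$ are not transverse intersections of two curves). I would dispatch each by performing the hypothetical blow-up and deriving a contradiction from the constraints on $E$. A blow-up at $Y\cap E_3$ turns $E_3$ into a $(-2)$-curve, producing an $A_2$-chain $Z-E_3$ in $E$; however, the intrinsic weights $[0,1]$ at the middle intersection $Z\cap E_3$ contradict the form $[\theta,\theta]$ forced by Corollary \ref{cor:su2weights} together with the same-weights condition at the orbifold point, and one checks that the configuration cannot be upgraded to a larger $A_n$ because neither $E_1$ nor $E_2$ can be turned into a $(-2)$-curve (both $X\cap E_1$ and $F\cap E_2$ are already locked by Corollary \ref{cor:blowup}). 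A blow-up at $Y\cap E_4$ or at a generic interior point of $E_4$ turns $E_4$ into a $(-2)$-curve disjoint from $Z$, making $E$ disconnected and hence not of $ADE$ type; a short inspection then shows that every admissible continuation is again blocked by Lemma \ref{lem:blowup} or Corollary \ref{cor:blowup}, so no $(-2)$-chain joining $E_4$ to $Z$ can ever be produced.

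The main obstacle is the $Y\cap E_4$ and interior-$E_4$ branches: one must perform the speculative blow-up to see the disconnection of $E$ and then verify that no further blow-up repairs it, whereas every other case collapses in a single application of Lemma \ref{lem:blowup} or Corollary \ref{cor:blowup}.
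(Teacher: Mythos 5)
Your overall strategy --- enumerate the fixed points of $M_4$ and kill each candidate with Lemma \ref{lem:blowup}, Corollary \ref{cor:blowup}, or connectivity of $E$ --- is exactly the paper's, and most of your case analysis is correct. The one genuine problem is your claim that $Y$ is a $0$-curve in $M_4$. It is not: $Y$ has self-intersection $1$ in $\mathbb{P}^2$, drops to $0$ when $Y\cap Z$ is blown up in passing to $M_3$ (which is precisely why $Y$ meets $E_3$ rather than $Z$ in the configuration), and drops to $-1$ when $p_4=X\cap Y$ is blown up. So $Y$ is a $(-1)$-curve in $M_4$; the paper's figure records this with the double-arrow decoration. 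Consequently $Y\cap E_3$ (weights $[1,-1]$) and $Y\cap E_4$ (weights $[-1,0]$) are transverse intersections of two curves of negative self-intersection with weights that are neither strictly positive nor strictly negative, and Corollary \ref{cor:blowup} disposes of them in one line, exactly as it does for $X\cap E_1$, $F\cap E_2$, $X\cap E_4$, and $F\cap E_4$.

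The error is not harmless as written, because the substitute arguments you give for the $Y$-branches depend on the wrong self-intersection. In the $Y\cap E_3$ branch you assume only $E_3$ becomes a $(-2)$-curve and then rule out an $A_2$-chain by a weight count; but since $Y$ is in fact a $(-1)$-curve, that blow-up would simultaneously turn $Y$ into a $(-2)$-curve, so the configuration of $(-2)$-curves you must exclude is larger than the one you analyze, and your step ``one checks the chain cannot be upgraded to a larger $A_n$'' does not address it. The same issue affects the $Y\cap E_4$ branch. By contrast, your final piece --- that a blow-up at an interior point of the fixed curve $E_4$ forces $E_4$ to become a $(-2)$-curve disjoint from $Z$, disconnecting $E$ with no admissible way to reconnect it --- is the right argument for the only case Corollary \ref{cor:blowup} genuinely does not cover, and matches the paper's terser justification that $E_4$ is already a $(-1)$-curve. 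So the proof is repairable, and the repair in fact shortens it: correct the self-intersection of $Y$ and let Corollary \ref{cor:blowup} handle both $Y$-points directly.
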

\begin{proof}
Flows and weights on $M_4$ are
$$
\begin{tikzpicture}
[decoration={markings, 
    mark= at position 0.5 with {\arrow{stealth }}}
]

%%%%%%%%%%%%%%%%%%%%%%%%%%%%%%%%%%%%%%%%%%
\draw[shorten >=-0.3cm,shorten <=-0.3cm](0,0)--node[right]{$E_1$}(0,1)node[sloped,pos=0.3,allow upside down]{\arrowIIn};
%%%%%%%%%%%%%%%%%%%%%%%%%%%%%%%%%%%%%%%%%%%%

%%%%%%%%%%%%%%%%%%%%%%%%%%%%%%%%%%%%%%%%%%
\draw[shorten >=-0.3cm,shorten <=-0.3cm](3,1)--node[right]{$Y$}(4,2.5)node[sloped,pos=0.3,allow upside down]{\arrowIIn};
%%%%%%%%%%%%%%%%%%%%%%%%%%%%%%%%%%%%%%%%%%%

%%%%%%%%%%%%%%%%%%%%%%%%%%%%%%%%%%%%%%%%%%
\draw[shorten >=-0.3cm,shorten <=-0.3cm](3,0)--node[right]{$E_3$}(3,1)node[sloped,pos=0.3,allow upside down]{\arrowIIn};
%%%%%%%%%%%%%%%%%%%%%%%%%%%%%%%%%%%%%%%%%%%%

%%%%%%%%%%%%%%%%%%%%%%%%%%%%%%%%%%%%%%%%%%%%
\draw[shorten >=-0.3cm,shorten <=-0.3cm](1.5,1)--node[right]{$F$}(2.5,2.5)node[sloped,pos=0.3,allow upside down]{\arrowIIn};
%%%%%%%%%%%%%%%%%%%%%%%%%%%%%%%%%%%%%%%%%%%

%%%%%%%%%%%%%%%%%%%%%%%%%%%%%%%%%%%%%%%%%%
\draw[shorten >=-0.3cm,shorten <=-0.3cm](1.5,0)--node[right]{$E_2$}(1.5,1)node[sloped,pos=0.3,allow upside down]{\arrowIIn};
%%%%%%%%%%%%%%%%%%%%%%%%%%%%%%%%%%%%%%%%%%%%

%%%%%%%%%%%%%%%%%%%%%%%%%%%%%%%%%%%%%%%%
\draw[line width=0.65mm][shorten >=-1.3cm,shorten <=-0.3cm](0,0)--(3,0)node[
    sloped,
    pos=0.7,
    allow upside down]{\arrowBox};
%%%%%%%%%%%%%%%%%%%%%%%%%%%%%%%%%%%%%%%%%%

%%%%%%%%%%%%%%%%%%%%%%%%%%%%%%%%%%%%%%%%%%%%
\draw[shorten >=-1.3cm,shorten <=-0.3cm](0,2.5)--(3,2.5)node[
    sloped,
    pos=0.5,
    allow upside down]{\arrowBox};
%%%%%%%%%%%%%%%%%%%%%%%%%%%%%%%%%%%%%%%%%%%%

%%%%%%%%%%%%%%%%%%%%%%%%%%%%%%%%%%%%
\draw[shorten >=-0.3cm,shorten <=-0.3cm](0,1)--node[right]{$X$}(1,2.5)node[sloped,pos=0.3,allow upside down]{\arrowIIn};
%%%%%%%%%%%%%%%%%%%%%%%%%%%%%%%%%%%%

\node[left=0.2cm] at (0,1){$[-1,1]$};
\node[below left=0.2cm] at (0,0){$[1,0]$};
\node[below right=0.2cm] at (3,0){$[0,1]$};
%\node[above right=0.2cm] at (1.5,2.5){$[-\alpha,-\alpha]$};
\node[right=0.2cm] at (3,1){$[1,-1]$};
\node at (4.5,0){$Z$};
\node at (4.5,2.5){$E_4$};
\end{tikzpicture}.
$$
Now,
\begin{itemize}
\item Points in $E_4$ can not be blown up because $E_4$ is already a $(-1)$-curve. 
\item $X\cap E_1,F\cap E_2,Y\cap E_3$ can not be blown up because of Corollary \ref{cor:blowup}. 
\end{itemize}
The lemma is proved.
\end{proof}

If instead there is a further blow-up $\pi_4$ at $p_4=X\cap E_1$ (the cases of $p_4=F\cap E_2$ or $p_4=Y\cap E_4$ are similar), flows and weights on $M_4$ are
$$
\begin{tikzpicture}
[decoration={markings, 
    mark= at position 0.5 with {\arrow{stealth }}}
]

%%%%%%%%%%%%%%%%%%%%%%%%%%%%%%%%%%%%%%%%%%
\draw[postaction={decorate}][line width=0.65mm][shorten >=-0.3cm,shorten <=-0.3cm](0,0)--node[left]{$E_1$}(0,1);
%%%%%%%%%%%%%%%%%%%%%%%%%%%%%%%%%%%%%%%%%%%%

\draw[shorten >=-0.3cm,shorten <=-0.3cm][postaction={decorate}](3,1)--node[above right]{$Y$}(1.5,2.5);

%%%%%%%%%%%%%%%%%%%%%%%%%%%%%%%%%%%%%%%%%%
\draw[shorten >=-0.3cm,shorten <=-0.3cm](3,0)--node[right]{$E_3$}(3,1)node[sloped,pos=0.3,allow upside down]{\arrowIIn};
%%%%%%%%%%%%%%%%%%%%%%%%%%%%%%%%%%%%%%%%%%%%

\draw[shorten >=-0.3cm,shorten <=-0.3cm][postaction={decorate}](1,1)--node[right]{$F$}(1.5,2.5);

%%%%%%%%%%%%%%%%%%%%%%%%%%%%%%%%%%%%%%%%%%
\draw[shorten >=-0.3cm,shorten <=-0.3cm](1,0)--node[right]{$E_2$}(1,1)node[sloped,pos=0.3,allow upside down]{\arrowIIn};
%%%%%%%%%%%%%%%%%%%%%%%%%%%%%%%%%%%%%%%%%%%%

%%%%%%%%%%%%%%%%%%%%%%%%%%%%%%%%%%%%%%%%
\draw[line width=0.65mm][shorten >=-0.3cm,shorten <=-0.3cm](0,0)--node[below]{$Z$}(3,0)node[
    sloped,
    pos=0.5,
    allow upside down]{\arrowBox};
%%%%%%%%%%%%%%%%%%%%%%%%%%%%%%%%%%%%%%%%%%

%%%%%%%%%%%%%%%%%%%%%%%%%%%%%%%%%%%%%%%%%
\draw[shorten >=-0.2cm,shorten <=-0.2cm](0,1)--node[above left]{$E_4$}(0.5,2)node[sloped,pos=0.3,allow upside down]{\arrowIIn};
%%%%%%%%%%%%%%%%%%%%%%%%%%%%%%%%%%%%%%%%%%

%%%%%%%%%%%%%%%%%%%%%%%%%%%%%%%%%%%%
\draw[shorten >=-0.3cm,shorten <=-0.3cm](0.5,2)--node[above left]{$X$}(1.5,2.5)node[sloped,pos=0.3,allow upside down]{\arrowIIn};
%%%%%%%%%%%%%%%%%%%%%%%%%%%%%%%%%%%%

\node[left=0.2cm] at (0,1){$[-1,2]$};
\node[below left=0.2cm] at (0,0){$[1,0]$};
\node[below right=0.2cm] at (3,0){$[0,1]$};
\node[above right=0.2cm] at (1.5,2.5){$[-1,-1]$};
\node[right=0.2cm] at (3,1){$[1,-1]
$};
\node[above left=0.2cm] at (0.5,2){$[-2,1]
$};
\end{tikzpicture}.
$$
\begin{lemma}\label{lem:blowupxe}
If there is a further blow-up $\pi_4$ at $p_4=X\cap E_1\in M_3$, further blow-ups can only be taken at $F\cap E_2,Y\cap E_3$.
\end{lemma}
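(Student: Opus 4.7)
The plan is to enumerate every isolated fixed point of the lifted $\mathbb{C}^*$-action on $M_4$ and eliminate all candidates other than $F\cap E_2$ and $Y\cap E_3$ by applying Lemma~\ref{lem:blowup}, Corollary~\ref{cor:blowup}, together with the requirement that the exceptional set $E$ of the minimal resolution $r:\widetilde{M}\to\widehat{M}$ be a single connected $ADE$ configuration (since $\widehat{M}$ has only one orbifold point).

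First I would read off the $M_4$ diagram to list its isolated fixed points. The four points lying on the $(-2)$-curves $E_1$ and $Z$, namely $E_1\cap E_4$, $E_1\cap Z$, $Z\cap E_2$, $Z\cap E_3$, are ruled out at once by Lemma~\ref{lem:blowup}. This leaves $X\cap E_4$, the top intersection where $X$, $Y$, and $F$ all meet (weights $[-1,-1]$), $F\cap E_2$, and $Y\cap E_3$. At $X\cap E_4$ both curves have self-intersection $-1$ in $M_4$ and the weights $[-2,1]$ are of mixed sign, so Corollary~\ref{cor:blowup} directly forbids a blow-up there.

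The hard part will be the top triple point. Here Corollary~\ref{cor:blowup} does not apply because the weights $[-1,-1]$ are strictly negative, and three curves meet rather than two, so one must argue differently. My plan is to use the connectedness constraint on $E$: in $M_4$ the proper transform of $X$ has self-intersection $-1$, so a blow-up at the top point would drop it to $-2$ and force $X$ into the cycle of $(-2)$-curves. But $X$ is attached to the existing cycle $E_1\cup Z$ only through the $(-1)$-curve $E_4$, so for $E$ to remain connected $E_4$ itself would have to become a $(-2)$-curve via some further blow-up of one of its fixed points; yet $E_1\cap E_4$ lies on the $(-2)$-curve $E_1$ (blocked by Lemma~\ref{lem:blowup}) and $X\cap E_4$ was already excluded by Corollary~\ref{cor:blowup}. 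Hence no admissible continuation of blow-ups can connect $X$ to $E_1\cup Z$ through $(-2)$-curves, and $\widehat{M}$ would acquire a second orbifold point, contradicting our standing hypothesis.

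Finally I would check that the two remaining points are genuinely admissible: at both $F\cap E_2$ and $Y\cap E_3$, one of the two meeting curves has non-negative self-intersection (the curves $F$ and $Y$ have self-intersection $0$ in $M_4$, tracked back through $\pi_1,\pi_2,\pi_3$), so Corollary~\ref{cor:blowup} does not apply; moreover the blow-up converts $E_2$ (resp.\ $E_3$) into a $(-2)$-curve attached to $Z$, enlarging the $A_2$ cycle $E_1\cup Z$ to an $A_3$ chain and keeping $E$ connected. This yields precisely the two admissible blow-up loci stated in the lemma.
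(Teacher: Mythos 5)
Your overall strategy -- enumerate the fixed points of $M_4$ and kill all but $F\cap E_2$ and $Y\cap E_3$ using Lemma \ref{lem:blowup}, Corollary \ref{cor:blowup}, and the connectedness of $E$ -- is exactly the paper's, and your handling of the points on $Z$ and $E_1$ and of $X\cap E_4$ is correct. The gap is in your treatment of the triple point $c_+=X\cap Y\cap F$. You assert that after blowing up $c_+$ the $(-2)$-curve $X$ ``is attached to the existing cycle $E_1\cup Z$ only through the $(-1)$-curve $E_4$.'' That is not true as a statement about the dual graph: blowing up $c_+$ (weights $[-1,-1]$, equal, so by Proposition \ref{prop:blowupweights} the exceptional curve $E_5$ is a \emph{fixed} curve) produces a curve $E_5$ adjacent to $X$, $Y$, and $F$, and hence a second potential route $X-E_5-Y-E_3-Z$ (or $X-E_5-F-E_2-Z$) connecting $X$ to $E_1\cup Z$ inside $E$. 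Your argument never rules these routes out, so the claimed contradiction with connectedness of $E$ is not established.

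The hole is fillable, but it requires an extra step. After the blow-up at $c_+$ the curves $Y$ and $F$ drop to self-intersection $-1$, so at $Y\cap E_3$, $F\cap E_2$, $Y\cap E_5$, $F\cap E_5$ both incident curves have negative self-intersection and the weights are not strictly signed; Corollary \ref{cor:blowup} then forbids all of these blow-ups, so $Y$ and $F$ can never be driven down to $(-2)$-curves and the route through $E_5$ dies at $Y$ or $F$. (Alternatively, and more cheaply: blow-ups at the distinct points $X\cap E_1$ and $c_+$ commute, and the proof of Lemma \ref{lem:blowupxy} already shows that once $c_+$ is blown up the point $X\cap E_1$ cannot be, so the combined surface is inadmissible.) Either supplement closes the argument; without one of them the exclusion of the triple point -- which you correctly identify as the hard case, and which the paper itself dispatches with only the one-line assertion that $E$ would become disconnected -- is incomplete.
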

\begin{proof}
$E_1\cap E_4,X\cap E_4$ can not be blown up because of Corollary \ref{cor:blowup}. $X\cap Y$ cannot be blown up since this will make $E$ disconnected in $\widetilde{M}$. Points in $Z$ cannot be blown up because of Lemma \ref{lem:blowup}.
\end{proof}

To summarize, based on Lemma \ref{lem:blowupxy} and Lemma \ref{lem:blowupxe}, we obtain 5 possible $(\widetilde{M},\mathfrak{E})$ by examining all the possibilities, denoted by Case 2A, 2B, 2C, 2D, and 2E.
However, it turns out that for Case 2C the action by $\mathfrak{E}$ on $\widehat{M}$ does not have same weights at the orbifold point $q$. Therefore, there are actually only 4 possible structures of $(\widetilde{M},\mathfrak{E})$ in case \ref{case2}.

\begin{proposition}\label{prop:case2}
There are 4 possible pairs $(\widetilde{M},\mathfrak{E})$ in case \ref{case2}, namely the following Case 2A, Case 2B, Case 2D, and Case 2E.
\end{proposition}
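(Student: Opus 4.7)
The plan is to consolidate Lemmas \ref{lem:blowupxy} and \ref{lem:blowupxe} into an enumeration of all candidates starting from $M_3$, and then refute one of them by a direct weight computation. First I enumerate: by those two lemmas, any further blow-ups must come from $\{X \cap Y,\, X \cap E_1,\, F \cap E_2,\, Y \cap E_3\}$; blowing up $c_+ = X \cap Y$ is terminal; and blowing up one of the remaining three only allows subsequent blow-ups at the other two. This produces exactly five candidates, which I label 2A (no further blow-up), 2B (blow up only $c_+$), 2C (blow up exactly one of $X \cap E_1, F \cap E_2, Y \cap E_3$), 2D (exactly two), 2E (all three). The three points $X \cap E_1, F \cap E_2, Y \cap E_3$ are interchangeable under automorphisms of $(\mathbb{P}^2, \mathfrak{F})$ fixing $Z$ and $c_+$, so each of 2C, 2D is a single case up to isomorphism. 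Tracking self-intersections, the exceptional cycles $E$ have types $A_1, A_1, A_2, A_3, D_4$ respectively.

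Next, for each of 2A, 2B, 2D, 2E I verify the same-weights condition at $q$ using Proposition \ref{prop:blowupweights}, Theorem \ref{thm:cyclicweights}, and Corollary \ref{cor:su2weights}. In the cyclic cases (2A, 2B, 2D), the condition is that the weight sequence $(w_i)$ along the chain forms an arithmetic progression with endpoints $\pm \theta p$. I compute weights at newly created fixed points via Proposition \ref{prop:blowupweights}, combine with weights already visible in the picture of $M_3$, and exhibit a consistent $\theta$. Case 2E ($D_4$) is automatic: the orbifold group is $D_4^* \subset SU(2)$, non-cyclic, and the discussion at the start of Section \ref{sec:generalgamma} forces equal weights at the orbifold point whenever $\Gamma$ is non-cyclic.

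Finally I would rule out Case 2C. Here $E = Z \cup E_i'$ is an $A_2$ chain corresponding to $L(2,3) \subset SU(2)$. By Corollary \ref{cor:su2weights} the interior weight at the self-intersection of the chain must be $[\theta, \theta]$ with $\theta \neq 0$. But this interior point is $Z \cap E_i'$, whose weight is inherited unchanged from $M_3$ (the blow-up at $X \cap E_i$ happens at a different point) and takes the form $[\pm 1, 0]$ or $[0, \pm 1]$, one coordinate being zero because $Z$ is a fixed curve whose transverse weight is $1$ and tangential weight is $0$. This is incompatible with $[\theta, \theta]$, ruling out Case 2C regardless of which of the three points was blown up. The main technical obstacle is the bookkeeping in Case 2D, where one must match the four chain weights against the arithmetic sequence $(4\theta, 2\theta, 0, -2\theta, -4\theta)$ determined by $L(3,4)$; but this reduces to one or two careful applications of Proposition \ref{prop:blowupweights}.
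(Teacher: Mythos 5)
Your proposal is correct and follows essentially the same route as the paper: enumerate the five candidates 2A--2E from Lemmas \ref{lem:blowupxy} and \ref{lem:blowupxe} (the paper leaves the permutation symmetry of the three chains implicit, but uses it), and then kill Case 2C via the weight condition from Theorem \ref{thm:cyclicweights}. Your exclusion of 2C — noting that the node $Z\cap E_i$ of the $A_2$ chain retains weights with a zero entry from the fixed curve $Z$, incompatible with the arithmetic-progression value $[\theta,\theta]$ — is the same computation the paper performs, just read off at the interior node instead of propagated to the orbifold point (where it yields the weights $[\tfrac23,\tfrac13]$ quoted in the paper).
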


\begin{minipage}{0.5\textwidth}

\paragraph{\underline{\underline{Case 2A}}}
\label{2A}

\begin{itemize}
\item The orbifold group is $A_1$.
\item The degree of $$\widetilde{M}=Bl_{X\cap Z,p_2,Y\cap Z}\mathbb{P}^2$$ is 6 and the picard number of $\widehat{M}$ is 3. Here $p_2$ is a point in $Z$ different from $E_1\cap Z,Y\cap Z$. 
\item $K_{\widetilde{M}}=-3Z-2E_1-2E_2-2E_3$.
\end{itemize}

\end{minipage}%
\begin{minipage}{0.4\textwidth}
$$
\begin{tikzpicture}
[decoration={markings, 
    mark= at position 0.5 with {\arrow{stealth }}}
]

%%%%%%%%%%%%%%%%%%%%%%%%%%%%%%%%%%%%%%%%%%
\draw[shorten >=-0.3cm,shorten <=-0.3cm](0,0)--node[left]{$E_1$}(0,1)node[sloped,pos=0.3,allow upside down]{\arrowIIn};
%%%%%%%%%%%%%%%%%%%%%%%%%%%%%%%%%%%%%%%%%%%%

\draw[shorten >=-0.3cm,shorten <=-0.3cm][postaction={decorate}](3,1)--node[above right]{$Y$}(1.5,2.5);

%%%%%%%%%%%%%%%%%%%%%%%%%%%%%%%%%%%%%%%%%%
\draw[shorten >=-0.3cm,shorten <=-0.3cm](3,0)--node[right]{$E_3$}(3,1)node[sloped,pos=0.3,allow upside down]{\arrowIIn};
%%%%%%%%%%%%%%%%%%%%%%%%%%%%%%%%%%%%%%%%%%%%

\draw[shorten >=-0.3cm,shorten <=-0.3cm][postaction={decorate}](1,1)--node[right]{$F$}(1.5,2.5);

%%%%%%%%%%%%%%%%%%%%%%%%%%%%%%%%%%%%%%%%%%
\draw[shorten >=-0.3cm,shorten <=-0.3cm](1,0)--node[right]{$E_2$}(1,1)node[sloped,pos=0.3,allow upside down]{\arrowIIn};
%%%%%%%%%%%%%%%%%%%%%%%%%%%%%%%%%%%%%%%%%%%%

%%%%%%%%%%%%%%%%%%%%%%%%%%%%%%%%%%%%%%%%
\draw[line width=0.65mm][shorten >=-0.3cm,shorten <=-0.3cm](0,0)--node[below]{$Z$}(3,0)node[
    sloped,
    pos=0.5,
    allow upside down]{\arrowBox};
%%%%%%%%%%%%%%%%%%%%%%%%%%%%%%%%%%%%%%%%%%

%%%%%%%%%%%%%%%%%%%%%%%%%%%%%%%%%%%%
\draw[shorten >=-0.3cm,shorten <=-0.3cm][postaction={decorate}](0,1)--node[above left]{$X$}(1.5,2.5);
%%%%%%%%%%%%%%%%%%%%%%%%%%%%%%%%%%%%

\node[left=0.2cm] at (0,1){$[-1,1]$};
\node[below left=0.2cm] at (0,0){$[1,0]$};
\node[above right=0.2cm] at (1.5,2.5){$[-1,-1]$};

\end{tikzpicture}
$$
\end{minipage}

\begin{minipage}{0.5\textwidth}

\paragraph{\underline{\underline{Case 2B}}}
\label{2B}

\begin{itemize}
    \item The pair $(\widetilde{M},\mathfrak{E})$ is biholomorphic to the pair of Case \hyperref[1C]{1C}.
\end{itemize}

\end{minipage}%
\begin{minipage}{0.4\textwidth}
$$
\begin{tikzpicture}
[decoration={markings, 
    mark= at position 0.5 with {\arrow{stealth }}}
]

%%%%%%%%%%%%%%%%%%%%%%%%%%%%%%%%%%%%%%%%%%
\draw[shorten >=-0.3cm,shorten <=-0.3cm](0,0)--node[right]{$E_1$}(0,1)node[sloped,pos=0.3,allow upside down]{\arrowIIn};
%%%%%%%%%%%%%%%%%%%%%%%%%%%%%%%%%%%%%%%%%%%%

%%%%%%%%%%%%%%%%%%%%%%%%%%%%%%%%%%%%%%%%%%
\draw[shorten >=-0.3cm,shorten <=-0.3cm](3,1)--node[right]{$Y$}(4,2.5)node[sloped,pos=0.3,allow upside down]{\arrowIIn};
%%%%%%%%%%%%%%%%%%%%%%%%%%%%%%%%%%%%%%%%%%%

%%%%%%%%%%%%%%%%%%%%%%%%%%%%%%%%%%%%%%%%%%
\draw[shorten >=-0.3cm,shorten <=-0.3cm](3,0)--node[right]{$E_3$}(3,1)node[sloped,pos=0.3,allow upside down]{\arrowIIn};
%%%%%%%%%%%%%%%%%%%%%%%%%%%%%%%%%%%%%%%%%%%%

%%%%%%%%%%%%%%%%%%%%%%%%%%%%%%%%%%%%%%%%%%%%
\draw[shorten >=-0.3cm,shorten <=-0.3cm](1.5,1)--node[right]{$F$}(2.5,2.5)node[sloped,pos=0.3,allow upside down]{\arrowIIn};
%%%%%%%%%%%%%%%%%%%%%%%%%%%%%%%%%%%%%%%%%%%

%%%%%%%%%%%%%%%%%%%%%%%%%%%%%%%%%%%%%%%%%%
\draw[shorten >=-0.3cm,shorten <=-0.3cm](1.5,0)--node[right]{$E_2$}(1.5,1)node[sloped,pos=0.3,allow upside down]{\arrowIIn};
%%%%%%%%%%%%%%%%%%%%%%%%%%%%%%%%%%%%%%%%%%%%

%%%%%%%%%%%%%%%%%%%%%%%%%%%%%%%%%%%%%%%%
\draw[line width=0.65mm][shorten >=-1.3cm,shorten <=-0.3cm](0,0)--(3,0)node[
    sloped,
    pos=0.7,
    allow upside down]{\arrowBox};
%%%%%%%%%%%%%%%%%%%%%%%%%%%%%%%%%%%%%%%%%%

%%%%%%%%%%%%%%%%%%%%%%%%%%%%%%%%%%%%%%%%%%%%
\draw[shorten >=-1.3cm,shorten <=-0.3cm](0,2.5)--(3,2.5)node[
    sloped,
    pos=0.7,
    allow upside down]{\arrowBox};
%%%%%%%%%%%%%%%%%%%%%%%%%%%%%%%%%%%%%%%%%%%%

%%%%%%%%%%%%%%%%%%%%%%%%%%%%%%%%%%%%
\draw[shorten >=-0.3cm,shorten <=-0.3cm](0,1)--node[right]{$X$}(1,2.5)node[sloped,pos=0.3,allow upside down]{\arrowIIn};
%%%%%%%%%%%%%%%%%%%%%%%%%%%%%%%%%%%%

\node[left=0.2cm] at (0,1){$[-1,1]$};
\node[below left=0.2cm] at (0,0){$[1,0]$};
\node[above left=0.2cm] at (1,2.5){$[-1,0]$};

\end{tikzpicture}
$$
\end{minipage}

\begin{minipage}{0.5\textwidth}

\paragraph{\underline{\underline{Case 2C}}}
\label{2C}

\begin{itemize}
    \item This can be excluded since by Theorem \ref{thm:cyclicweights}, the action by $\mathfrak{E}$ on $\widehat{M}$ at the orbifold point $q$ has weights $[\frac23,\frac13]$.
\end{itemize}

\end{minipage}%
\begin{minipage}{0.4\textwidth}
$$
\begin{tikzpicture}
[decoration={markings, 
    mark= at position 0.5 with {\arrow{stealth }}}
]

%%%%%%%%%%%%%%%%%%%%%%%%%%%%%%%%%%%%%%%%%%
\draw[postaction={decorate}][line width=0.65mm][shorten >=-0.3cm,shorten <=-0.3cm](0,0)--node[left]{$E_1$}(0,1);
%%%%%%%%%%%%%%%%%%%%%%%%%%%%%%%%%%%%%%%%%%%%

\draw[shorten >=-0.3cm,shorten <=-0.3cm][postaction={decorate}](3,1)--node[above right]{$Y$}(1.5,2.5);

%%%%%%%%%%%%%%%%%%%%%%%%%%%%%%%%%%%%%%%%%%
\draw[shorten >=-0.3cm,shorten <=-0.3cm](3,0)--node[right]{$E_3$}(3,1)node[sloped,pos=0.3,allow upside down]{\arrowIIn};
%%%%%%%%%%%%%%%%%%%%%%%%%%%%%%%%%%%%%%%%%%%%

\draw[shorten >=-0.3cm,shorten <=-0.3cm][postaction={decorate}](1,1)--node[right]{$F$}(1.5,2.5);

%%%%%%%%%%%%%%%%%%%%%%%%%%%%%%%%%%%%%%%%%%
\draw[shorten >=-0.3cm,shorten <=-0.3cm](1,0)--node[right]{$E_2$}(1,1)node[sloped,pos=0.3,allow upside down]{\arrowIIn};
%%%%%%%%%%%%%%%%%%%%%%%%%%%%%%%%%%%%%%%%%%%%

%%%%%%%%%%%%%%%%%%%%%%%%%%%%%%%%%%%%%%%%
\draw[line width=0.65mm][shorten >=-0.3cm,shorten <=-0.3cm](0,0)--node[below]{$Z$}(3,0)node[
    sloped,
    pos=0.5,
    allow upside down]{\arrowBox};
%%%%%%%%%%%%%%%%%%%%%%%%%%%%%%%%%%%%%%%%%%

%%%%%%%%%%%%%%%%%%%%%%%%%%%%%%%%%%%%%%%%%
\draw[shorten >=-0.2cm,shorten <=-0.2cm](0,1)--node[above left]{$E_4$}(0.5,2)node[sloped,pos=0.3,allow upside down]{\arrowIIn};
%%%%%%%%%%%%%%%%%%%%%%%%%%%%%%%%%%%%%%%%%%

%%%%%%%%%%%%%%%%%%%%%%%%%%%%%%%%%%%%
\draw[shorten >=-0.3cm,shorten <=-0.3cm](0.5,2)--node[above left]{$X$}(1.5,2.5)node[sloped,pos=0.3,allow upside down]{\arrowIIn};
%%%%%%%%%%%%%%%%%%%%%%%%%%%%%%%%%%%%

\node[left=0.2cm] at (0,1){$[-1,2]$};
\node[below left=0.2cm] at (0,0){$[1,0]$};
\node[below right=0.2cm] at (3,0){$[0,1]$};

\end{tikzpicture}
$$
\end{minipage}

\begin{minipage}{0.5\textwidth}

\paragraph{\underline{\underline{Case 2D}}}
\label{2D}

\begin{itemize}
\item The orbifold group is $A_3$.
\item The degree of 
$$\widetilde{M}=Bl_{X\cap Z,p_2,Y\cap Z,X\cap E_1,Y\cap E_3}\mathbb{P}^2$$ is 4 and the picard number of $\widehat{M}$ is 3. Here $p_2$ is a point in $Z$ different from $E_1\cap Z,Y\cap Z$. 
\item $K_{\widetilde{M}}=-3Z-2E_1-2E_2-2E_3-E_4-E_5$.
\end{itemize}

\end{minipage}%
\begin{minipage}{0.4\textwidth}
$$
\begin{tikzpicture}
[decoration={markings, 
    mark= at position 0.5 with {\arrow{stealth }}}
]

%%%%%%%%%%%%%%%%%%%%%%%%%%%%%%%%%%%%%%%%%%
\draw[postaction={decorate}][line width=0.65mm][shorten >=-0.3cm,shorten <=-0.3cm](0,0)--node[left]{$E_1$}(0,1);
%%%%%%%%%%%%%%%%%%%%%%%%%%%%%%%%%%%%%%%%%%%%

%%%%%%%%%%%%%%%%%%%%%%%%%%%%%%%%%%%%%%%%%%
\draw[shorten >=-0.3cm,shorten <=-0.3cm](3.5,2)--node[above right]{$Y$}(1.5,3)node[sloped,pos=0.3,allow upside down]{\arrowIIn};
%%%%%%%%%%%%%%%%%%%%%%%%%%%%%%%%%%%%%%%%%%%

%%%%%%%%%%%%%%%%%%%%%%%%%%%%%%%%%%%%%%%%%%
\draw[shorten >=-0.2cm,shorten <=-0.2cm](4,1)--node[right]{$E_5$}(3.5,2)node[sloped,pos=0.3,allow upside down]{\arrowIIn};
%%%%%%%%%%%%%%%%%%%%%%%%%%%%%%%%%%%%%%%%%%

%%%%%%%%%%%%%%%%%%%%%%%%%%%%%%%%%%%%%%%%%%
\draw[line width=0.65mm][postaction={decorate}][shorten >=-0.3cm,shorten <=-0.3cm](4,0)--node[right]{$E_3$}(4,1);
%%%%%%%%%%%%%%%%%%%%%%%%%%%%%%%%%%%%%%%%%%%%

\draw[shorten >=-0.3cm,shorten <=-0.3cm][postaction={decorate}](2.5,1)--node[right]{$F$}(1.5,3);

%%%%%%%%%%%%%%%%%%%%%%%%%%%%%%%%%%%%%%%%%%
\draw[shorten >=-0.3cm,shorten <=-0.3cm](2.5,0)--node[right]{$E_2$}(2.5,1)node[sloped,pos=0.3,allow upside down]{\arrowIIn};
%%%%%%%%%%%%%%%%%%%%%%%%%%%%%%%%%%%%%%%%%%%%

%%%%%%%%%%%%%%%%%%%%%%%%%%%%%%%%%%%%%%%%
\draw[line width=0.65mm][shorten >=-0.3cm,shorten <=-0.3cm](0,0)--(4,0)node[
    sloped,
    pos=0.35,
    allow upside down]{\arrowBox};
%%%%%%%%%%%%%%%%%%%%%%%%%%%%%%%%%%%%%%%%%%

%%%%%%%%%%%%%%%%%%%%%%%%%%%%%%%%%%%%%%%%%
\draw[shorten >=-0.2cm,shorten <=-0.2cm](0,1)--node[above left]{$E_4$}(0.5,2)node[sloped,pos=0.3,allow upside down]{\arrowIIn};
%%%%%%%%%%%%%%%%%%%%%%%%%%%%%%%%%%%%%%%%%%

%%%%%%%%%%%%%%%%%%%%%%%%%%%%%%%%%%%%
\draw[shorten >=-0.3cm,shorten <=-0.3cm](0.5,2)--node[above left]{$X$}(1.5,3)node[sloped,pos=0.3,allow upside down]{\arrowIIn};
%%%%%%%%%%%%%%%%%%%%%%%%%%%%%%%%%%%%

\node at (-0.5,0) {$Z$};

\node[left=0.2cm] at (0,1){$[-1,2]$};
\node[below left=0.2cm] at (0,0){$[1,0]$};
\node[below right=0.2cm] at (4,0){$[0,1]$};
\node[above left=0.35cm] at (0.5,1.5){$[-2,1]$};
\node[above left=0.35cm] at (2,3){$[-1,-1]$};
\node[left=0.1cm] at (2.5,1){$[1,-1]$};

\end{tikzpicture}
$$
\end{minipage}

\begin{minipage}{0.5\textwidth}

\paragraph{\underline{\underline{Case 2E}}}
\label{2E}

\begin{itemize}
\item The orbifold group is $D_4$.
\item The degree of 
$$\widetilde{M}=Bl_{X\cap Z,p_2,Y\cap Z,X\cap E_1,Y\cap E_3,F\cap E_2}\mathbb{P}^2$$ is 3 and the picard number of $\widehat{M}$ is 3. Here $p_2$ is a point in $Z$ different from $E_1\cap Z,Y\cap Z$. 
\item $K_{\widetilde{M}}=-3Z-2E_1-2E_2-2E_3-E_4-E_5-E_6$.
\end{itemize}

\end{minipage}%
\begin{minipage}{0.4\textwidth}
$$
\begin{tikzpicture}
[decoration={markings, 
    mark= at position 0.5 with {\arrow{stealth }}}
]

%%%%%%%%%%%%%%%%%%%%%%%%%%%%%%%%%%%%%%%%%%
\draw[postaction={decorate}][line width=0.65mm][shorten >=-0.3cm,shorten <=-0.3cm](0,0)--node[left]{$E_1$}(0,1);
%%%%%%%%%%%%%%%%%%%%%%%%%%%%%%%%%%%%%%%%%%%%

%%%%%%%%%%%%%%%%%%%%%%%%%%%%%%%%%%%%%%%%%%
\draw[shorten >=-0.3cm,shorten <=-0.3cm](3.5,2)--node[above right]{$Y$}(1.5,3);
\draw[line width=0.0001mm](3.5,2)--node[sloped,pos=0.5,allow upside down]{\arrowIIn}(1.5,3);
%%%%%%%%%%%%%%%%%%%%%%%%%%%%%%%%%%%%%%%%%%%

%%%%%%%%%%%%%%%%%%%%%%%%%%%%%%%%%%%%%%%%%%
\draw[shorten >=-0.2cm,shorten <=-0.2cm](4,1)--node[right]{$E_5$}(3.5,2);
\draw[line width=0.0001mm](4,1)--node[sloped,pos=0.5,allow upside down]{\arrowIIn}(3.5,2);
%%%%%%%%%%%%%%%%%%%%%%%%%%%%%%%%%%%%%%%%%%

%%%%%%%%%%%%%%%%%%%%%%%%%%%%%%%%%%%%%%%%%%
\draw[line width=0.65mm][postaction={decorate}][shorten >=-0.3cm,shorten <=-0.3cm](4,0)--node[right]{$E_3$}(4,1);
%%%%%%%%%%%%%%%%%%%%%%%%%%%%%%%%%%%%%%%%%%%%

%%%%%%%%%%%%%%%%%%%%%%%%%%%%%%%%%%%%%%%%
\draw[shorten >=-0.3cm,shorten <=-0.3cm](2.5,2)--node[below left]{$F$}(1.5,3);
\draw[line width=0.0001mm](2.5,2)--node[sloped,pos=0.5,allow upside down]{\arrowIIn}(1.5,3);
%%%%%%%%%%%%%%%%%%%%%%%%%%%%%%%%%%%%%%%%%%%

%%%%%%%%%%%%%%%%%%%%%%%%%%%%%%%%%%%%%%%%%%
\draw[shorten >=-0.2cm,shorten <=-0.2cm](3,1)--node[below left]{$E_6$}(2.5,2);
\draw[line width=0.0001mm](3,1)--node[sloped,pos=0.5,allow upside down]{\arrowIIn}(2.5,2);
%%%%%%%%%%%%%%%%%%%%%%%%%%%%%%%%%%%%%%%%%

%%%%%%%%%%%%%%%%%%%%%%%%%%%%%%%%%%%%%%%%%%
\draw[postaction={decorate}][line width=0.65mm][shorten >=-0.3cm,shorten <=-0.3cm](3,0)--node[right]{$E_2$}(3,1);
%%%%%%%%%%%%%%%%%%%%%%%%%%%%%%%%%%%%%%%%%%%%

%%%%%%%%%%%%%%%%%%%%%%%%%%%%%%%%%%%%%%%%
\draw[line width=0.65mm][shorten >=-0.3cm,shorten <=-0.3cm](0,0)--(4,0);
\draw[line width=0.001mm](0,0)--(4,0)node[
    sloped,
    pos=0.35,
    allow upside down]{\arrowBox};
%%%%%%%%%%%%%%%%%%%%%%%%%%%%%%%%%%%%%%%%%%

%%%%%%%%%%%%%%%%%%%%%%%%%%%%%%%%%%%%%%%%%
\draw[shorten >=-0.2cm,shorten <=-0.2cm](0,1)--node[above left]{$E_4$}(0.5,2);
\draw[line width=0.01mm](0,1)--node[sloped,pos=0.5,allow upside down]{\arrowIIn}(0.5,2);
%%%%%%%%%%%%%%%%%%%%%%%%%%%%%%%%%%%%%%%%%%

%%%%%%%%%%%%%%%%%%%%%%%%%%%%%%%%%%%%
\draw[shorten >=-0.3cm,shorten <=-0.3cm](0.5,2)--node[above left]{$X$}(1.5,3);
\draw[line width=0.01mm](0.5,2)--node[sloped,pos=0.5,allow upside down]{\arrowIIn}(1.5,3);
%%%%%%%%%%%%%%%%%%%%%%%%%%%%%%%%%%%%

\node at (-0.5,0) {$Z$};

\node[left=0.2cm] at (0,1){$[-1,2 ]$};
\node[below left=0.2cm] at (0,0){$[1,0]$};
\node[below right=0.2cm] at (4,0){$[0,1]$};
\node[above left=0.35cm] at (0.5,1.5){$[-2 ,1]$};
\node[above left=0.35cm] at (2,3){$[-1,-1]$};

\end{tikzpicture}
$$
\end{minipage}

\section{Case \ref{case3}}
\label{sec:case3}

This section focuses on case \ref{case3}. 
Recall fixed points and flows of the action on $\mathbb{P}^2$ are the following with $\alpha>\beta$.
$$
\begin{tikzpicture}
[decoration={markings, 
    mark= at position 0.5 with {\arrow{stealth }}}
]
\draw[postaction={decorate}](1.2,1.732+1.732*0.2)--node[above left]{$X$}(-0.2,-0.2*1.732);
\draw[postaction={decorate}](0.8,1.732+1.732*0.2)--node[above right]{$Y$}(2.2,-1.732*0.2);
\draw[postaction={decorate}](-0.2*1.732,0)--node[below]{$Z$}(2+0.2*1.732,0);

\node[above=0.4cm] at (1,1.732){$[\beta,\alpha]$};
\node[right=0.2cm] at (1,1.732){$c_-$};
\node[below left=0.3cm] at (0,0){$[-\beta,\alpha-\beta]$};
\node[below right=0.3cm] at (2,0){$[\beta-\alpha,-\alpha]$};
\node[above right=0.1cm] at (2,0){$c_+$};
\end{tikzpicture}.
$$

\subsection{Cyclic $\Gamma$}\label{cycliccase3}

%In the cyclic case, we can assume the $\mathbb{C}^*$ action on the log del Pezzo surface $\widehat{M}$ has weights $[1,1]$ at the orbifold point for simplicity. 

%Recall weights and direction of flows on $\mathbb{P}^2$ with the action given by $t\curvearrowright[x:y:z]=[t^\alpha x:t^\beta y:z]$ with $\alpha>\beta>0$ is
%$$
%\begin{tikzpicture}
%[decoration={markings, 
%    mark= at position 0.5 with {\arrow{stealth }}}
%]
%\draw[shorten >=-0.4cm,shorten <=-0.4cm][postaction={decorate}](1,1.732)--node[above left]{$X$}(0,0);
%\draw[line width=0.01mm][shorten >=-0.4cm,shorten <=-0.4cm](1,1.732)--node[sloped,pos=0.6,allow upside down]{\arrowIIn}(0,0);
%\draw[postaction={decorate}](0.8,1.732+1.732*0.2)--node[above right]{$Y$}(2.2,-1.732*0.2);
%\draw[postaction={decorate}](-0.2*1.732,0)--node[below]{$Z$}(2+0.2*1.732,0);

%\node[above=0.4cm] at (1,1.732){$[\beta,\alpha]$};
%\node[below left=0.3cm] at (0,0){$[-\beta,\alpha-\beta]$};
%\node[below right=0.3cm] at (2,0){$[\beta-\alpha,-\alpha]$};
%\end{tikzpicture}
%$$
%The attractive set is $c_+=[1:0:0]=Y\cap Z$, and the repulsive set is $c_-=[0:0:1]=X\cap Y$. 

The first observation is that there must be a blow-up at $X\cap Y$.
\begin{lemma}\label{lem:m1}
There must be a blow-up at $X\cap Y$. Therefore, $\widetilde{M}$ is a blow-up of $M_1=Bl_{X\cap Y}\mathbb{P}^2$.
\end{lemma}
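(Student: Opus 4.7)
The plan is to argue by contradiction: suppose no blow-up in the resolution $\pi:\widetilde{M}\to\mathbb{P}^2$ is performed at $X\cap Y=[0:0:1]$, and derive a contradiction with the fact that the orbifold point $q$ plays the role of the repulsive set $c_-$ on $\widehat{M}$.

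The key observation is that in case \ref{case3}, the point $[0:0:1]$ is the unique fixed point of $\mathfrak{F}$ on $\mathbb{P}^2$ whose two weights $[\beta,\alpha]$ are both strictly positive, while $[0:1:0]$ and $[1:0:0]$ each have at least one non-positive weight. By Proposition \ref{prop:blowupweights}, a blow-up at a fixed point with weights $[\theta,\tau]$ produces two new fixed points with weights $[\theta,\tau-\theta]$ and $[\theta-\tau,\tau]$; one checks directly that if $(\theta,\tau)$ is not in the open positive quadrant then neither new pair is. Inducting through the sequence of blow-ups $\pi_i$ (all assumed to avoid $[0:0:1]$), it follows that no fixed point of $\mathfrak{F}_i$ other than $[0:0:1]$ itself ever has both weights strictly positive. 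Hence $[0:0:1]$ remains the unique \emph{source-type} fixed point in $\widetilde{M}$, and so the repulsive set $c_-$ of $\widetilde{M}$ equals $\{[0:0:1]\}$.

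Next I would transfer this conclusion across the contraction. Since every $\pi_i$ and the contraction $r:\widetilde{M}\to\widehat{M}$ occur at loci disjoint from $[0:0:1]$, the image of $[0:0:1]$ in $\widehat{M}$ is still a smooth isolated fixed point of $\mathfrak{E}$ with unchanged weights $[\beta,\alpha]$, and $r$ is a biholomorphism on a neighborhood of this image; hence $c_-$ of $\widehat{M}$ coincides with the image of $[0:0:1]$. On the other hand, the setup of the classification requires $c_-$ of $\widehat{M}$ to be $q$: by Theorem \ref{main:correspondence} and Corollary \ref{behaviorofs}, for any corresponding special Bach-flat K\"ahler metric the orbifold point $q$ is the unique global minimum of $s_{\widehat{g}}$, and the holomorphic extremal vector field $\mathfrak{E}=\nabla_{\widehat{g}}^{1,0}s_{\widehat{g}}$ flows generically out of this minimum. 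Since $q$ is the image of the $(-2)$-cycle $E$ while $[0:0:1]\notin E$, the two points are distinct in $\widehat{M}$, contradicting $c_-$ of $\widehat{M}$ being $q$.

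The main obstacle I anticipate is making precise the identification of $c_-$ of $\widehat{M}$ with $q$. This rests on inheritance of $c_-$ inductively through the chain $(\mathbb{P}^2,\mathfrak{F})\to(M_1,\mathfrak{F}_1)\to\cdots\to(\widetilde{M},\mathfrak{E})\to(\widehat{M},\mathfrak{E})$, so that uniqueness of the repulsive set is preserved at each step, together with the key observation above that a new source-type fixed point can be produced only by blowing up an existing source-type fixed point. Once these are in place, the contradiction is immediate.
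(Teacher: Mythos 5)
Your first step---tracking which fixed points can have two strictly positive weights through the sequence of blow-ups, and concluding that if $X\cap Y$ is never blown up then $c_-(\widetilde{M})=\{X\cap Y\}$---is correct, and is essentially how the paper also starts (it simply asserts $c_-=X\cap Y$ in $\widetilde{M}$). The gap is in the second step. Your contradiction needs the image of $X\cap Y$ in $\widehat{M}$ to be a point \emph{different} from $q$, and for that you assert without justification that $X\cap Y\notin E$, i.e.\ that the contraction locus avoids $X\cap Y$. This is precisely the nontrivial content of the lemma, not something that can be assumed: a priori the proper transform of $X$ (or of $Y$) could be one of the $(-2)$-curves of $E$, in which case $X\cap Y\in E$, $r(X\cap Y)=q$, your two ``sources'' coincide, and no contradiction arises. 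Indeed, since generic orbits flow out of $q$, one knows $c_-(\widetilde{M})\subseteq E$, so the case you leave unaddressed is in fact the \emph{only} case that ever occurs under the hypothesis of no blow-up at $X\cap Y$; your argument reduces the lemma to its hardest part and then skips it.

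To close the gap you need the paper's argument (or an equivalent one): if, say, the proper transform of $X$ lies in $E$, it must be a $(-2)$-curve, so three blow-ups at fixed points of $X$ are required; avoiding $X\cap Y$, the only fixed point available on $X$ is $X\cap Z$, and after blowing it up the unique new fixed point $X\cap E_1$ has mixed-sign weights $[-\beta,\alpha]$ and is the transverse intersection of $X$ (whose transform is to lie in $E$) with the $(-1)$-curve $E_1$, so Corollary \ref{cor:blowup} forbids blowing it up. Hence $X^2$ can only be lowered to $0$, never to $-2$, and the same applies to $Y$. Only after this exclusion does your uniqueness-of-the-source contradiction become available.
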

\begin{proof}
If there were no blow-up at $X\cap Y$, then $c_-=X\cap Y$ in $\widetilde{M}$ and $E$ would either contain the proper transform of $X$ or the proper transform of $Y$.
If the proper transform of $X$ were contained in $E$, then it would be a $(-2)$-curve. Blow-up must be performed at $p_1=X\cap Z$ in $\mathbb{P}^2$. But Corollary \ref{cor:blowup} prevents further blow-up at $X\cap E_1$. Therefore this case cannot happen.
The situation is exactly the same if we assume the proper transform of $Y$ is contained in $E$. 
\end{proof}

Let $M_1$ be $Bl_{X\cap Y}\mathbb{P}^2$. 
Weights and direction of flows on $M_1$ are 
$$
\begin{tikzpicture}
[decoration={markings, 
    mark= at position 0.5 with {\arrow{stealth }}}
]
\draw[postaction={decorate}](0,2.3)--node[left]{$X$}(0,-0.3);
\draw[postaction={decorate}](2,2.3)--node[right]{$Y$}(2,-0.3);
\draw[postaction={decorate}](-0.3,0)--node[below]{$Z$}(2+0.3,0);
%%%%%%%%%%%%%%%%%%%%%%%%%%%%%%%%%%%%
\draw(-0.3,2)--node[above]{$E_1$}(2+0.3,2)node[sloped,pos=0.5,allow upside down]{\arrowIIn};
%%%%%%%%%%%%%%%%%%%%%%%%%%%%%%%%%%%%

\node[above left=0.2cm] at (0,2){$[\beta,\alpha-\beta]$};
\node[below left=0.2cm] at (0,0){$[-\beta,\alpha-\beta]$};
\node[below right=0.2cm] at (2,0){$[\beta-\alpha,-\alpha]$};
\node[above right=0.2cm] at (2,2){$[\beta-\alpha,\alpha]$};
\end{tikzpicture}.
$$
Further blow-ups must be performed. 
Because of Corollary \ref{cor:weightsdeterminepq}, weights at either of the two ending fixed points in $E$ determine $p$ and $q$. 
Because $E$ must contain $c_-$ in $\widetilde{M}$, if there is no blow-up at $X\cap E_1$, then either the proper transform of $X$, or the proper transform of $E_1$, is contained in $E$. Therefore, if there is no blow-up at $X\cap E_1$, then either $X\cap Z$, or $E_1\cap Y$, must be blown up. It follows that
the next blow-up $\pi_2$ can be chosen to be one of the following
\begin{itemize}
\item $\pi_2$ is the blow-up at $p_2=E_1\cap Y$.
\item $\pi_2$ is the blow-up at $p_2=X\cap E_1$.
\item $\pi_2$ is the blow-up at $p_2=X\cap Z$, and there is no blow-up at $X\cap E_1,E_1\cap Y$ in further blow-ups.
\end{itemize}
These three cases are studied in the following Section \ref{subsubsec:1}, Section \ref{subsubsec:2}, and Section \ref{subsubsec:3}.

\subsubsection{$p_2=E_1\cap Y$}
\label{subsubsec:1}

Then $M_2$ is
$$
\begin{tikzpicture}
[decoration={markings, 
    mark= at position 0.5 with {\arrow{stealth }}}
]
\draw[postaction={decorate}](0,2.3)--node[left]{$X$}(0,-0.3);
%%%%%%%%%%%%%%%%%%%%%%%%%%%%%%%%%%%
\draw(2,1.3)--node[right]{$Y$}(2,-0.3)node[sloped,pos=0.5,allow upside down]{\arrowIIn};
%%%%%%%%%%%%%%%%%%%%%%%%%%%%%%%%%%%%
\draw[postaction={decorate}](-0.3,0)--node[below]{$Z$}(2+0.3,0);
\draw[line width=0.65mm][postaction={decorate}](-0.3,2)--node[above]{$E_1$}(1+0.3,2);
%%%%%%%%%%%%%%%%%%%%%%%%%%%%%%%%%%%%%
\draw(1-0.2,2.2)--node[above right]{$E_2$}(2+0.2,1-0.2)node[sloped,pos=0.5,allow upside down]{\arrowIIn};
%%%%%%%%%%%%%%%%%%%%%%%%%%%%%%%%%%%

\node[above left=0.2cm] at (0,2){$[\beta,\alpha-\beta]$};
\node[below left=0.2cm] at (0,0){$[-\beta,\alpha-\beta]$};
\node[below right=0.2cm] at (2,0){$[\beta-\alpha,-\alpha]$};
\node[above right=0.2cm] at (1,2){$[\beta-\alpha,2\alpha-\beta]$};
\node[above right=0.2cm] at (2,1){$[\beta-2\alpha,\alpha]$};
\end{tikzpicture}
$$

Now:
\begin{itemize}
\item $X\cap E_1$ and $E_1\cap E_2$ cannot be blown up because of Lemma \ref{lem:blowup}.
\item $E_2\cap Y$ cannot be blown up because of Corollary \ref{cor:blowup}.
\item $Z\cap Y$ cannot be blown up because this will make $E$ in $\widetilde{M}$ disconnected.
\end{itemize}
Additional blow-ups are necessary since the weights on $E_1$ does not satisfy $\beta=2\alpha-\beta$. If we contract $E_1$ now, the action by $\mathfrak{E}$ at the orbifold point $q$ does not have same weights.
Indeed, contracting $E_1$ gives an $A_1$ singularity, and the  weights are $[\alpha-\frac\beta2,\frac\beta2]$ at the orbifold point.
Hence there must be a blow-up at $X\cap Z$, and $p_3$ can be taken as $X\cap Z$.  So $M_3$ is
$$
\begin{tikzpicture}
[decoration={markings, 
    mark= at position 0.5 with {\arrow{stealth }}}
]
%\draw[postaction={decorate}](0,2.3)--node[above left]{$x=0$}(0,-0.3);
%\draw[postaction={decorate}](2,1.3)--node[right]{$y=0$}(2,-0.3);
%\draw[postaction={decorate}](-0.3,0)--node[below]{$z=0$}(2+0.3,0);
%\draw[postaction={decorate}](-0.3,2)--node[above]{$E_1$}(1+0.3,2);
%\draw[postaction={decorate}](1-0.2,2.2)--node[above right]{$F$}(2+0.2,1-0.2);

%%%%%%%%%%%%%%%%%%%%%%%%%%%%%%%%%%%%
\draw(-0.2,0.2)--node[below left]{$E_3$}(1.2,-1.2)node[sloped,pos=0.5,allow upside down]{\arrowIIn};
%%%%%%%%%%%%%%%%%%%%%%%%%%%%%%%%%%%

\draw[postaction={decorate}](0.7,-1)--node[below]{$Z$}(2.6,-1);

\draw(2.3,0.6)--node[right]{$Y$}(2.3,-1.3)node[sloped,pos=0.5,allow upside down]{\arrowIIn};

%%%%%%%%%%%%%%%%%%%%%%%%%%%%%%%%%%
\draw(1.1,1.5)--node[above right]{$E_2$}(2.5,0.1)node[sloped,pos=0.5,allow upside down]{\arrowIIn};
%%%%%%%%%%%%%%%%%%%%%%%%%%%%%%%%%%

\draw[line width=0.65mm][postaction={decorate}](-0.3,1.3)--node[above]{$E_1$}(1.6,1.3);

%%%%%%%%%%%%%%%%%%%%%%%%%%%%%%%%%%%%%
\draw(0,1.6)--node[left]{$X$}(0,-0.3)node[sloped,pos=0.5,allow upside down]{\arrowIIn};
%%%%%%%%%%%%%%%%%%%%%%%%%%%%%%%%%%%%%%

\node[below left=0.1cm] at (0,0){$[-\beta,\alpha]$};
\node[below left=0.1cm] at (1,-1){$[-\alpha,\alpha-\beta]$};
\node[below right=0.1cm] at (2.3,-1){$[\beta-\alpha,-\alpha]$};
\node[above right=0.1cm] at (2.3,0.3){$[\beta-2\alpha,\alpha]$};
\node[above right=0.1cm] at (1.3,1.3){$[\beta-\alpha,2\alpha-\beta]$};
\node[above left=0.1cm] at (0,1.3){$[\beta,\alpha-\beta]$};

\end{tikzpicture}.
$$

Now:
\begin{itemize}
\item $E_3\cap X, X\cap E_1, E_1\cap E_2,E_2\cap Y$ all cannot be blown up because of Corollary \ref{cor:blowup}.
\item $E_3\cap Z,Y\cap Z$ cannot be blown up because such blow-ups will make $E$ in $\widetilde{M}$ disconnected.
\end{itemize}
No further blow-ups can be taken in $M_3$. This leads to a contradiction, as after contracting $E_1$ the weights at the orbifold point are still $[\alpha-\frac{\beta}{2},\frac{\beta}{2}]$.
In conclusion, $E_1\cap Y$ cannot be blown up in $M_1$.

\subsubsection{$p_2=X\cap E_1$}
\label{subsubsec:2}

The weights and direction of flows on $M_2$ are as follows:
$$
\begin{tikzpicture}
[decoration={markings, 
    mark= at position 0.5 with {\arrow{stealth }}}
]

%%%%%%%%%%%%%%%%%%%%%%%%%%%%%%%
\draw(0,1.3)--node[left]{$X$}(0,-0.3)node[sloped,pos=0.5,allow upside down]{\arrowIIn};
%%%%%%%%%%%%%%%%%%%%%%%%%%%%%%%%%%%

\draw[postaction={decorate}](2,2.3)--node[right]{$Y$}(2,-0.3);
\draw[postaction={decorate}](-0.3,0)--node[below]{$Z$}(2+0.3,0);
\draw[line width=0.65mm][postaction={decorate}](0.7,2)--node[above]{$E_1$}(2+0.3,2);
\draw(0-0.2,0.8)--node[above left]{$E_2$}node[below right=0.02cm]{$?$}(1+0.2,2.2);

\node[left=0.2cm] at (0,1){$[\beta,\alpha-2\beta]$};
\node[below left=0.2cm] at (0,0){$[-\beta,\alpha-\beta]$};
\node[above left=0.2cm] at (1,2){$[2\beta-\alpha,\alpha-\beta]$};
\node[right=0.2cm] at (2,2){$[\beta-\alpha,\alpha]$};
\node[below right=0.2cm] at (2,0){$[\beta-\alpha,-\alpha]$};
\end{tikzpicture}.
$$
The direction of the flow on $E_2$ is determined by the sign of $\alpha-2\beta$.
\begin{lemma}\label{lem:propery}
The proper transform of $Y$ in $\widetilde{M}$ cannot be contained in $E$.
\end{lemma}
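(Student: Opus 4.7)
The plan is to argue by contradiction and track the self-intersection of $Y$ through the allowed blow-ups, showing that making $Y$ into a $(-2)$-curve forces $E$ to be disconnected, in contradiction with the fact that $\widehat{M}$ has exactly one orbifold point.

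In $M_2$ the curve $Y$ has self-intersection $0$: it starts at $+1$ in $\mathbb{P}^2$, drops by one under $\pi_1$ at $X\cap Y$, and is unaffected by $\pi_2$ at $X\cap E_1$ (which does not lie on $Y$). So for its proper transform in $\widetilde{M}$ to be a $(-2)$-curve, exactly two additional blow-ups at points of $Y$ are needed in the intermediate surfaces. The only fixed points of $\mathfrak{F}_2$ on $Y$ in $M_2$ are $E_1\cap Y$ and $Z\cap Y$, and since $E_1$ is already a $(-2)$-curve in $M_2$, Lemma \ref{lem:blowup} forbids the blow-up at $E_1\cap Y$. Hence the first additional blow-up on $Y$ must be at $Z\cap Y$, which produces a new exceptional curve $E_3$ and makes $Y$ a $(-1)$-curve.

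After this blow-up, the fixed points on $Y$ are $E_1\cap Y$ (still forbidden) and $E_3\cap Y$, so the second additional blow-up on $Y$ is forced to be at $E_3\cap Y$. The result is that $Y$ and $E_3$ become $(-2)$-curves connected only through a new $(-1)$-exceptional curve $E_4$ (they no longer intersect directly). Since both endpoints of $E_4$ lie on $(-2)$-curves, Lemma \ref{lem:blowup} prevents $E_4$ from ever being blown up, so $E_4$ remains a $(-1)$-curve in $\widetilde{M}$.

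Thus $E_3$'s only neighbors in $\widetilde{M}$ are $E_4$ (a $(-1)$-curve) and $Z$, which in the relevant intermediate surface has self-intersection $0$ and is not a $(-2)$-curve. Consequently, $E_3$ lies in a connected component of $E$ separated from the component containing $E_1$ and $Y$. Since $\widehat{M}$ has a unique orbifold point, $E$ must be connected, yielding the desired contradiction. The main subtlety to verify is that no further blow-ups elsewhere (for instance on $Z$) can eventually promote $Z$ to a $(-2)$-curve that bridges $E_3$ back to the $\{E_1, Y\}$ component via a chain of $(-2)$-curves; this is ruled out by combining Lemma \ref{lem:blowup} (the point $E_3 \cap Z$ cannot be blown up once $E_3$ is a $(-2)$-curve) with the constraint that the resulting $E$ be a single $ADE$ configuration.
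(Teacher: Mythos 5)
Your first half is exactly the paper's argument: $Y$ has self-intersection $0$ in $M_2$, the point $E_1\cap Y$ is forbidden by Lemma \ref{lem:blowup}, so the only way to start lowering $Y$ is the blow-up at $Y\cap Z$, after which $Y$ is a $(-1)$-curve whose fixed points are $E_1\cap Y$ and $E_3\cap Y$. At this moment the paper simply stops: the weights at $E_3\cap Y$ are $[\beta,-\alpha]$, which are of mixed sign, $Y$ is assumed to lie in $E$ and $E_3$ is a $(-1)$-curve, so Corollary \ref{cor:blowup} forbids the blow-up at $E_3\cap Y$; hence $Y$ can never reach self-intersection $-2$ and we are done. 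You instead perform that blow-up and try to contradict connectedness of $E$. That is logically admissible (it amounts to re-proving Corollary \ref{cor:blowup} by hand in this instance), but it is an unnecessary detour, and it is precisely where your write-up develops a gap.

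The gap is in the step you yourself flag. After blowing up $E_3\cap Y$, the curves $Y$ and $E_3$ are both $(-2)$-curves, so both must lie in $E$, and they are separated by the $(-1)$-curve $E_4$; to conclude that $E$ is disconnected you must rule out a chain of $(-2)$-curves $E_3 - Z - X - E_2 - E_1 - Y$. The two facts you cite do not do this: Lemma \ref{lem:blowup} at $E_3\cap Z$ only blocks one of the two fixed points of $Z$, and the resulting bridged configuration would be a perfectly legitimate $A_6$ chain, so ``$E$ must be a single $ADE$ configuration'' is not an obstruction either. What actually kills the bridge is that $Z$ can never be driven to self-intersection $-2$: its only other fixed point is $X\cap Z$, with weights $[-\beta,\alpha-\beta]$; after that single blow-up $X$ becomes a $(-2)$-curve (blocking one new fixed point by Lemma \ref{lem:blowup}) and the other new fixed point on $Z$ has mixed weights $[-\alpha,\alpha-\beta]$ and lies on two curves of negative self-intersection, so Corollary \ref{cor:blowup} applies and $Z$ bottoms out at $-1$. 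With that verification supplied your argument closes, but as written the decisive claim is asserted rather than proved, and the direct application of Corollary \ref{cor:blowup} at $E_3\cap Y$ renders the whole second half unnecessary.
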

\begin{proof}
If the proper transform of $Y$ in $\widetilde{M}$ were contained in $E$, then it would have self-intersection $-2$. Blow-up must be performed at $Y\cap Z$ in $M_2$. Take $p_3=Y\cap Z$. Then $M_3$ is
$$
\begin{tikzpicture}
[decoration={markings, 
    mark= at position 0.5 with {\arrow{stealth }}}
]

%%%%%%%%%%%%%%%%%%%%%%%%%%%%%%%
\draw(0,1.3)--node[left]{$X$}(0,-0.3-0.3)node[sloped,pos=0.5,allow upside down]{\arrowIIn};
%%%%%%%%%%%%%%%%%%%%%%%%%%%%%%%%%%%

%%%%%%%%%%%%%%%%%%%%%%%%%%%%%%%%%%%
\draw(2.3,2.3)--node[right]{$Y$}(2.3,0.4)node[sloped,pos=0.5,allow upside down]{\arrowIIn};
%%%%%%%%%%%%%%%%%%%%%%%%%%%%%%%%%%%%%

\draw[postaction={decorate}](-0.3,-0.3)--node[below]{$Z$}(1+0.6,-0.3);
\draw[line width=0.65mm][postaction={decorate}](0.7,2)--node[above]{$E_1$}(2+0.3+0.3,2);
\draw(0-0.2,0.8)--node[above left]{$E_2$}node[below right=0.02cm]{$?$}(1+0.2,2.2);

%%%%%%%%%%%%%%%%%%%%%%%%%%%%%%%%%%%%%
\draw(2.5,0.9)--node[below right=0.1cm]{$E_3$}node[sloped,pos=0.5,allow upside down]{\arrowIIn}(1+0.1,-0.5);
%%%%%%%%%%%%%%%%%%%%%%%%%%%%%%%%%%%%

\node[left=0.2cm] at (0,1){$[\beta,\alpha-2\beta]$};
\node[below left=0.2cm] at (0,-0.3){$[-\beta,\alpha-\beta]$};
\node[above left=0.2cm] at (1,2){$[2\beta-\alpha,\alpha-\beta]$};
\node[right=0.2cm] at (2.3,2){$[\beta-\alpha,\alpha]$};
\node[below right=0.2cm] at (1.1,-0.3){$[\beta-\alpha,-\beta]$};
\node[right=0.2cm] at (2.3,0.7){$[\beta,-\alpha]$};

\end{tikzpicture}.
$$
No further blow-ups can be taken to make $Y$ a $(-2)$-curve. Contradiction.
\end{proof}

It follows that $E_1\cap Y$ must be one of the ending fixed points of $E$ in $\widetilde{M}$. Since the holomorphic $\mathbb{C}^*$-action must have same weights at the orbifold point, we must have  
$$w_0=\theta p=\alpha,\ w_1=\theta(p-1)-\theta=\alpha-\beta,$$
or
$$w_{k}=\theta-\theta(p-1)=\beta-\alpha,\ w_{k+1}=-\theta p=-\alpha.$$
It follows that $\alpha=\theta p$ and $\beta=2\theta$.

\begin{lemma}\label{lem:a=2b}
$E_2$ in $M_2$ is a fixed curve and it is the repulsive set $c_-$ in $M_2$.
\end{lemma}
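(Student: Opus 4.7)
The plan is to prove $\alpha = 2\beta$ (equivalently $p = 4$, given the already established identities $\alpha = \theta p$ and $\beta = 2\theta$), from which $E_2$ being a fixed curve and $c_- = E_2$ will follow immediately. The weight along $E_2$ at its endpoint $X \cap E_2$ equals $\alpha - 2\beta = \theta(p - 4)$, so I would rule out the two alternatives $\alpha < 2\beta$ and $\alpha > 2\beta$ separately, using the blow-up obstructions from Section \ref{subsec:flows} together with the weight prescription from Corollary \ref{cor:su2weights}.

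First I would handle the case $\alpha < 2\beta$. Here the weights $[2\beta - \alpha, \alpha - \beta]$ at $E_1 \cap E_2$ are both positive while those at $X \cap E_2$ are mixed, so $c_- = E_1 \cap E_2$ in $M_2$. To realize any orbifold group larger than $A_1$ one would need to extend $E$ by another $(-2)$-curve, but every candidate blow-up is forbidden: a blow-up at $E_1 \cap E_2$ would create a $(-3)$-curve on $E_1$; one at $X \cap E_2$ is excluded by Corollary \ref{cor:blowup} since both curves have negative self-intersection and the weights are mixed; and no smooth point of $E_2$ is a fixed point of the flow. Hence $E = \{E_1\}$, forcing $p = 2$ and contradicting $\alpha > \beta$.

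Next I would handle the case $\alpha > 2\beta$, where $c_- = X \cap E_2$ with both weights $[\beta, \alpha - 2\beta]$ positive. Blowing up at $c_-$ produces $M_3$ in which both $X$ and $E_2$ become $(-2)$-curves and a new $(-1)$-curve $E_3$ appears. Connectedness of $E$ then forces $E_3$ to be turned into a $(-2)$-curve, but blowing up at either endpoint $X \cap E_3$ or $E_2 \cap E_3$ creates a $(-3)$-curve on $X$ or $E_2$; the only remaining possibility is an interior blow-up of $E_3$, which requires $E_3$ itself to be a fixed curve and hence $\alpha = 3\beta$, i.e., $p = 6$. Iterating this pattern forces ever-larger chain lengths, but the weights $[\beta - \alpha, \alpha]$ at the untouched endpoint $E_1 \cap Y$ remain fixed throughout and must match $[w_0, -w_1] = [\theta p, -\theta(p - 2)]$ prescribed by Corollary \ref{cor:su2weights}. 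Combining this matching with the chain-length produced at each iteration excludes all $p > 4$.

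The hardest part will be the iterative bookkeeping in the $\alpha > 2\beta$ branch, where each attempted extension must be ruled out by either the $(-3)$-curve prohibition or an inconsistency of endpoint weights. Once $\alpha = 2\beta$ is established, the weights along $E_2$ vanish at both endpoints, so $E_2$ is a fixed curve; the arrows in the $M_2$ diagram then show immediately that every generic orbit flows out of $E_2$, confirming $c_- = E_2$.
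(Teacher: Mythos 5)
Your overall strategy -- reduce to showing $\alpha=2\beta$ and rule out the two alternatives via the blow-up obstructions of Section \ref{subsec:flows} -- is the same as the paper's, and your treatment of the case $\alpha<2\beta$ is correct and essentially identical to the paper's (in $M_2$ the repulsive point $E_1\cap E_2$ forces $E$ to contain $E_1$ or $E_2$, every blow-up that could enlarge $E$ is forbidden, so $E=\{E_1\}$ and $p=2$, contradicting $\alpha=\theta p>2\theta=\beta$). The first half of your $\alpha>2\beta$ branch is also right: the blow-up at $X\cap E_2$ is forced, $E_3$ must become a $(-2)$-curve by connectedness of $E$, its endpoints cannot be blown up, so $E_3$ must be a fixed curve and $\alpha=3\beta$, i.e.\ $p=6$.

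The gap is in how you close the $\alpha>2\beta$ case. Your proposed contradiction is that the weights $[\beta-\alpha,\alpha]$ at the untouched endpoint $E_1\cap Y$ must match $[w_0,-w_1]=[\theta p,-\theta(p-2)]$; but that matching is exactly the computation that produced the identities $\alpha=\theta p$, $\beta=2\theta$ in the first place, and it is satisfied for \emph{every} $p$ -- it cannot exclude $p=6$. Likewise there is no genuine ``iteration forcing ever-larger chain lengths'': once $\alpha=3\beta$ the ratio is pinned down, the interior blow-up of $E_3$ produces a transverse $(-1)$-curve that need not lie in $E$, and no further regress occurs. The contradiction has to come from the \emph{other} end of the chain. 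After the forced blow-ups, $E$ contains the length-$4$ chain $E_1$--$E_2$--$E_3$--$X$, while $p=6$ demands a chain of length $5$; the only way to extend is through $Z$. The paper rules this out in two steps: the weights $[-\beta,\alpha-\beta]$ at $X\cap Z$ are incompatible with Corollary \ref{cor:su2weights} for an ending fixed point of $E$ (one would need $\alpha-\beta=\alpha$), so $Z$ itself would have to lie in $E$; but $Z$ has self-intersection $1$ in $M_3$, the point $X\cap Z$ cannot be blown up, and the three blow-ups over $Y\cap Z$ needed to bring $Z$ down to a $(-2)$-curve create $(-2)$-curves in their exceptional towers that disconnect $E$. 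This $Z$-side analysis is the missing ingredient in your write-up; without it the case $\alpha=3\beta$ is not excluded.
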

\begin{proof}
The curve $E_2$ in $M_2$ is a fixed curve if and only if $\alpha=2\beta$. Hence, if the lemma were not true, we would have two cases: either $\alpha>2\beta$, or $\alpha<2\beta$. In the following we will show that neither of these cases is possible.

\paragraph{\underline{\underline{If $\boldsymbol{\alpha>2\beta}$}}}
Flows on $M_2$ are
$$
\begin{tikzpicture}
[decoration={markings, 
    mark= at position 0.5 with {\arrow{stealth }}}
]

%%%%%%%%%%%%%%%%%%%%%%%%%%%%%%%
\draw(0,1.3)--node[left]{$X$}(0,-0.3)node[sloped,pos=0.5,allow upside down]{\arrowIIn};
%%%%%%%%%%%%%%%%%%%%%%%%%%%%%%%%%%%

\draw[postaction={decorate}](2,2.3)--node[right]{$Y$}(2,-0.3);
\draw[postaction={decorate}](-0.3,0)--node[below]{$Z$}(2+0.3,0);
\draw[line width=0.65mm][postaction={decorate}](0.7,2)--node[above]{$E_1$}(2+0.3,2);

%%%%%%%%%%%%%%%%%%%%%%%%%%%%%%%%%%%%%%%%%%%
\draw(0-0.2,0.8)--node[above left]{$E_2$}(1+0.2,2.2)node[sloped,pos=0.5,allow upside down]{\arrowIIn};
%%%%%%%%%%%%%%%%%%%%%%%%%%%%%%%%%%%%%%%%%%%

\node[left=0.2cm] at (0,1){$[\beta,\alpha-2\beta]$};
\node[below left=0.2cm] at (0,0){$[-\beta,\alpha-\beta]$};
\node[above left=0.2cm] at (1,2){$[2\beta-\alpha,\alpha-\beta]$};
\node[right=0.2cm] at (2,2){$[\beta-\alpha,\alpha]$};
\node[below right=0.2cm] at (2,0){$[\beta-\alpha,-\alpha]$};
\end{tikzpicture}.
$$
\begin{comment}
%%%%%%%%%%%%%%%%%%%%%%%%%%%%%%%%%%%%%%%%
$$
\begin{tikzpicture}
[decoration={markings, 
    mark= at position 0.5 with {\arrow{stealth }}}
]
\draw[postaction={decorate}](0,1.3)--node[left]{$x=0$}(0,-0.6);
\draw[postaction={decorate}](2.3,2.3)--node[right]{$y=0$}(2.3,0.4);
\draw[postaction={decorate}](-0.3,-0.3)--node[below]{$z=0$}(1+0.6,-0.3);
\draw[postaction={decorate}](0.7,2)--node[above]{$E_1$}(2.3+0.3,2);
\draw[postaction={decorate}](0-0.2,0.8)--node[above left]{$F$}(1+0.2,2.2);
\draw[postaction={decorate}](2.5,0.9)--node[below right=0.1cm]{$E_2$}(1+0.1,-0.5);

\node at (0,1){$\bullet$};
\node at (1,2){$\bullet$};
\node at (2.3,2){$\bullet$};
\node at (0,-0.3){$\bullet$};
\node at (1.3,-0.3){$\bullet$};
\node at (2.3,0.7){$\bullet$};

\node[left=0.2cm] at (0,1){$[\beta,\alpha-2\beta]$};
\node[below left=0.2cm] at (0,-0.3){$[-\beta,\alpha-\beta]$};
\node[above left=0.2cm] at (1,2){$[2\beta-\alpha,\alpha-\beta]$};
\node[right=0.2cm] at (2.3,2){$[\beta-\alpha,\alpha]$};
\node[below right=0.2cm] at (1.1,-0.3){$[\beta-\alpha,-\beta]$};
\node[right=0.2cm] at (2.3,0.7){$[\beta,-\alpha]$};

\end{tikzpicture}
$$
%%%%%%%%%%%%%%%%%%%%%%%%%%%%%%%%%%%%%%%%%%
\end{comment}

Now:
\begin{itemize}
\item $E_2\cap E_1$ and $E_1\cap Y$ can not be blown up because of Lemma \ref{lem:blowup}.
\item There must be a blow-up at $X\cap E_2$ because $E$ in $\widetilde{M}$ must contain $c_-$.
\end{itemize}
Take $p_3=X\cap E_2$. Weights and direction of flows on $M_3$ are
$$
\begin{tikzpicture}
[decoration={markings, 
    mark= at position 0.5 with {\arrow{stealth }}}
]
\draw[line width=0.65mm][postaction={decorate}](0,1.3)--node[left]{$X$}(0,-0.6);
\draw[postaction={decorate}](2.3+1,2.3+1)--node[right]{$Y$}(2.3+1,-0.6);
\draw[postaction={decorate}](-0.3,-0.3)--node[below]{$Z$}(3.6,-0.3);
\draw[line width=0.65mm][postaction={decorate}](0.7+1,2+1)--node[above]{$E_1$}(2.3+0.3+1,2+1);
\draw[line width=0.65mm][postaction={decorate}](0.5-0.3,2.5-0.1)--node[above left]{$E_2$}(1+1+0.3,2+1+0.1);
\draw(0-0.1,1-0.3)--node[left]{$E_3$}node[right=0.02cm]{$?$}(0.5+0.1,2.5+0.3);

\node[left=0.2cm] at (0,1){$[\beta,\alpha-3\beta]$};
\node[below left=0.2cm] at (0,-0.3){$[-\beta,\alpha-\beta]$};
\node[above=0.3cm] at (1+1,2+1){$[2\beta-\alpha,\alpha-\beta]$};
\node[right=0.2cm] at (2.3+1,-0.3){$[\beta-\alpha,-\alpha]$};
\node[right=0.2cm] at (2.3+1,3){$[\beta-\alpha,\alpha]$};
\node[left=0.2cm] at (0.5,2.5){$[3\beta-\alpha,\alpha-2\beta]$};

\end{tikzpicture}.
$$
\begin{comment}
%%%%%%%%%%%%%%%%%%%%%%%%%%%%%%%%%%%%%%%
$$
\begin{tikzpicture}
[decoration={markings, 
    mark= at position 0.5 with {\arrow{stealth }}}
]
\draw[postaction={decorate}](0,1.3)--node[left]{$x=0$}(0,-0.6);
\draw[postaction={decorate}](2.3+1,2.3+1)--node[right]{$y=0$}(2.3+1,0.4+1);
\draw[postaction={decorate}](-0.3,-0.3)--node[below]{$z=0$}(1+0.6,-0.3);
\draw[postaction={decorate}](0.7+1,2+1)--node[above]{$E_1$}(2.3+0.3+1,2+1);
\draw[postaction={decorate}](0.5-0.3,2.5-0.1)--node[above left]{$F$}(1+1+0.3,2+1+0.1);
\draw(0-0.1,1-0.3)--node[left]{$E_3$}(0.5+0.1,2.5+0.3);
\draw[postaction={decorate}](2.5+1,0.9+1)--node[below right=0.1cm]{$E_2$}(1+0.1,-0.5);

\node at (0,1){$\bullet$};
\node at (1+1,2+1){$\bullet$};
\node at (2.3+1,2+1){$\bullet$};
\node at (0,-0.3){$\bullet$};
\node at (1.3,-0.3){$\bullet$};
\node at (2.3+1,0.7+1){$\bullet$};
\node at (0.5,2.5){$\bullet$};

\node[left=0.2cm] at (0,1){$[\beta,\alpha-3\beta]$};
\node[below left=0.2cm] at (0,-0.3){$[-\beta,\alpha-\beta]$};
\node[above=0.3cm] at (1+1,2+1){$[2\beta-\alpha,\alpha-\beta]$};
\node[right=0.2cm] at (2.3+1,2+1){$[\beta-\alpha,\alpha]$};
\node[below right=0.2cm] at (1.1,-0.3){$[\beta-\alpha,-\beta]$};
\node[right=0.2cm] at (2.3+1,0.7+1){$[\beta,-\alpha]$};
\node[left=0.2cm] at (0.5,2.5){$[3\beta-\alpha,\alpha-2\beta]$};

\end{tikzpicture}
$$
%%%%%%%%%%%%%%%%%%%%%%%%%%%%%%%%%%%%%%%%%%
\end{comment}
The direction of the flow on $E_3$ depends on the sign of $\alpha-3\beta$.
Now to ensure $E$ is connected, the only possibility is that $E_3$ is a fixed curve, which implies $\alpha=3\beta$.
To make $E_3$ a $(-2)$-curve, another blow-up at a point in $E_3$ other than $E_3\cap X,E_3\cap E_2$ is necessary.

But $X\cap Z$ can not serve as the other ending fixed point in $E$: If $X\cap Z$ were the other ending fixed point, by Corollary \ref{cor:su2weights}, we would have $\alpha-\beta=\alpha$, which is not possible. 
Thus, the proper transform of $Z$ in $\widetilde{M}$ should also be contained in $E$.
Three further blow-ups in $Z$ must be taken at $Y\cap Z$ to make $Z$ a $(-2)$-curve since $Z$ has self-intersection 1 in $M_3$. This is not possible, because the exceptional curves of these blow-ups contain $(-2)$-curves, which will disconnect $E$.

\paragraph{\underline{\underline{If $\boldsymbol{\alpha<2\beta}$}}}
Then flows on $M_2$ are
$$
\begin{tikzpicture}
[decoration={markings, 
    mark= at position 0.5 with {\arrow{stealth }}}
]

%%%%%%%%%%%%%%%%%%%%%%%%%%%%%%%
\draw(0,1.3)--node[left]{$X$}(0,-0.3)node[sloped,pos=0.5,allow upside down]{\arrowIIn};
%%%%%%%%%%%%%%%%%%%%%%%%%%%%%%%%%%%

\draw[postaction={decorate}](2,2.3)--node[right]{$Y$}(2,-0.3);
\draw[postaction={decorate}](-0.3,0)--node[below]{$Z$}(2+0.3,0);
\draw[line width=0.65mm][postaction={decorate}](0.7,2)--node[above]{$E_1$}(2+0.3,2);

%%%%%%%%%%%%%%%%%%%%%%%%%%%%%%%%%%%%%%%%%%%
\draw(1+0.2,2.2)--node[above left]{$E_2$}(0-0.2,0.8)node[sloped,pos=0.5,allow upside down]{\arrowIIn};
%%%%%%%%%%%%%%%%%%%%%%%%%%%%%%%%%%%%%%%%%%%

\node[left=0.2cm] at (0,1){$[\beta,\alpha-2\beta]$};
\node[below left=0.2cm] at (0,0){$[-\beta,\alpha-\beta]$};
\node[above left=0.2cm] at (1,2){$[2\beta-\alpha,\alpha-\beta]$};
\node[right=0.2cm] at (2,2){$[\beta-\alpha,\alpha]$};
\node[below right=0.2cm] at (2,0){$[\beta-\alpha,-\alpha]$};
\end{tikzpicture}.
$$

Now $X\cap E_2$ cannot be blown up because of Corollary \ref{cor:blowup}. The exceptional set $E$ in $\widetilde{M}$ must entirely consist of $E_1$, which leads to a contradiction, as $\mathfrak{E}$ would not have same weights at the orbifold point.
\end{proof}

With Lemma \ref{lem:a=2b} being proved, $E_2$ in $M_2$ is a fixed curve. We have $\alpha=2\beta$, and $p=4$. The orbifold group must be $L(3,4)$. As the $\mathbb{C}^*$-action is assumed to be primitive, we should take $\alpha=2$ and $\beta=1$. So $M_2$ is
$$
\begin{tikzpicture}
[decoration={markings, 
    mark= at position 0.5 with {\arrow{stealth }}}
]

%%%%%%%%%%%%%%%%%%%%%%%%%%%%%%%
\draw(0,1.3)--node[left]{$X$}(0,-0.3)node[sloped,pos=0.5,allow upside down]{\arrowIIn};
%%%%%%%%%%%%%%%%%%%%%%%%%%%%%%%%%%%

\draw[postaction={decorate}](2,2.3)--node[right]{$Y$}(2,-0.3);
\draw[postaction={decorate}](-0.3,0)--node[below]{$Z$}(2+0.3,0);
\draw[line width=0.65mm][postaction={decorate}](0.7,2)--node[above]{$E_1$}(2+0.3,2);

%%%%%%%%%%%%%%%%%%%%%%%%%%%%%%%%%%%%%%%%
\draw(0-0.2,0.8)--node[above left]{$E_2$}(1+0.2,2.2)node[
    sloped,
    pos=0.5,
    allow upside down]{\arrowBox};
%%%%%%%%%%%%%%%%%%%%%%%%%%%%%%%%%%%%%%%%

\node[left=0.2cm] at (0,1){$[1,0]$};
\node[below left=0.2cm] at (0,0){$[-1,1]$};
\node[above left=0.2cm] at (1,2){$[0,1]$};
\node[right=0.2cm] at (2,2){$[-1,2]$};
\node[below right=0.2cm] at (2,0){$[-1,-2]$};
\end{tikzpicture}.
$$
Now $X\cap E_2$ cannot be blown up because of Corollary \ref{cor:blowup}. Because $E_2$ is the repulsive set, it has to be contained in $E$. Consequently $E_2$ has to be a $(-2)$-curve.
We need to perform one more blow-up in $E_2$ at a point $p_3$ other than $X\cap E_2$ and $E_2\cap E_1$.  Weights and flows in $M_3$ now are
$$
\begin{tikzpicture}
[decoration={markings, 
    mark= at position 0.5 with {\arrow{stealth }}}
]

%%%%%%%%%%%%%%%%%%%%%%%%%%%%%%%
\draw(0,1.3)--node[left]{$X$}(0,-0.3)node[sloped,pos=0.5,allow upside down]{\arrowIIn};
%%%%%%%%%%%%%%%%%%%%%%%%%%%%%%%%%%%

\draw[postaction={decorate}](2,2.3)--node[right]{$Y$}(2,-0.3);
\draw[postaction={decorate}](-0.3,0)--node[below]{$Z$}(2+0.3,0);
\draw[line width=0.65mm][postaction={decorate}](0.7,2)--node[above]{$E_1$}(2+0.3,2);

%%%%%%%%%%%%%%%%%%%%%%%%%%%%%%%%%%%%%%%%
\draw[shorten >=-0.3cm,shorten <=-0.3cm](0.3,1.3)--node[below]{$E_3$}(1.1,0.5)node[sloped,pos=0.5,allow upside down]{\arrowIIn};
%%%%%%%%%%%%%%%%%%%%%%%%%%%%%%%%%%%%%%%%

%%%%%%%%%%%%%%%%%%%%%%%%%%%%%%%%%%%%%%%%
\draw[line width=0.65mm](0-0.2,0.8)--node[above left]{$E_2$}(1+0.2,2.2)node[
    sloped,
    pos=0.5,
    allow upside down]{\arrowBox};
%%%%%%%%%%%%%%%%%%%%%%%%%%%%%%%%%%%%%%%%

\node[left=0.2cm] at (0,1){$[1,0]$};
\node[below left=0.2cm] at (0,0){$[-1,1]$};
\node[above left=0.2cm] at (1,2){$[0,1]$};
\node[right=0.2cm] at (2,2){$[-1,2]$};
\node[below right=0.2cm] at (2,0){$[-1,-2]$};
\end{tikzpicture}.
$$
Similar as before, $X\cap E_2$ cannot serve as the other ending fixed point of $E$ because of the weight issue. Blow-up must be performed at $p_4=X\cap Z$ in $M_3$. Weights and flows on $M_4$ are
$$
\begin{tikzpicture}
[decoration={markings, 
    mark= at position 0.5 with {\arrow{stealth }}}
]
\draw[line width=0.65mm][postaction={decorate}](0,1.3)--node[left]{$X$}(0,-0.3);
\draw[postaction={decorate}](2+1,2.3)--node[right]{$Y$}(2+1,-0.3-1);
\draw[postaction={decorate}](-0.3+1,0-1)--node[below]{$Z$}(2+0.3+1,0-1);
\draw[line width=0.65mm][postaction={decorate}](0.7,2)--node[above]{$E_1$}(2+0.3+1,2);

%%%%%%%%%%%%%%%%%%%%%%%%%%%%%%%%%%%%%%%%%%%
\draw[shorten >=-0.5cm,shorten <=-0.5cm][line width=0.5mm](0-0.2,0.8)--node[above left]{$E_2$}(1+0.2,2.2)node[
    sloped,
    pos=0.3,
    allow upside down]{\arrowBox};
%%%%%%%%%%%%%%%%%%%%%%%%%%%%%%%%%%%%%%%%%%

%%%%%%%%%%%%%%%%%%%%%%%%%%%%%%%%%%%%%%%%%%%%
\draw(0-0.2,0+0.2)--node[below]{$E_4$}(1+0.2,-1-0.2)node[
    sloped,
    pos=0.5,
    allow upside down]{\arrowIIn};
%%%%%%%%%%%%%%%%%%%%%%%%%%%%%%%%%%%%%%%%%%%%

%%%%%%%%%%%%%%%%%%%%%%%%%%%%%%%%%%%%%%%%%%
\draw[shorten >=-0.3cm,shorten <=-0.3cm](0.3,1.3)--node[below]{$E_3$}(1.3,0.3)node[sloped,pos=0.5,allow upside down]{\arrowIIn};
%%%%%%%%%%%%%%%%%%%%%%%%%%%%%%%%%%%%%%%%%%%

\node[left=0.2cm] at (0,1){$[1,0]$};
\node[below left=0.2cm] at (0,0){$[-1,2]$};
\node[above left=0.2cm] at (1,2){$[0,1]$};
\node[right=0.2cm] at (2+1,2){$[-1,2]$};
\node[below right=0.2cm] at (2+1,-1){$[-1,-2]$};
\node[below=0.2cm] at (1,-1){$[-2,1]$};
\end{tikzpicture}.
$$
Due to the weight constraints (see Corollary \ref{cor:su2weights}), $E_4\cap X$ must be the other ending fixed point, and $E=X\cup E_2\cup E_1$. 
By analyzing all further blow-ups in $M_4$, the following proposition holds.
\begin{proposition}
If $p_2=X\cap E_1$, $\widetilde{M}$ can only be the $M_4$ above, or $M_5=Bl_{Z\cap Y}M_4$, whose configuration is
$$
\begin{tikzpicture}
[decoration={markings, 
    mark= at position 0.5 with {\arrow{stealth }}}
]
\draw[line width=0.65mm][postaction={decorate}](0,1.3)--node[left]{$X$}(0,-0.3);

%%%%%%%%%%%%%%%%%%%%%%%%%%%%%%%%%%%%%%%%%%
\draw(2+1,2.3)--node[right]{$Y$}(2+1,-0.3-1+1)node[
    sloped,
    pos=0.5,
    allow upside down]{\arrowIIn};
%%%%%%%%%%%%%%%%%%%%%%%%%%%%%%%%%%%%%%%%%%%

%%%%%%%%%%%%%%%%%%%%%%%%%%%%%%%%%%%%%%%%%%
\draw(-0.3+1,0-1)--node[below]{$Z$}(2+0.3,0-1)node[
    sloped,
    pos=0.5,
    allow upside down]{\arrowIIn};
%%%%%%%%%%%%%%%%%%%%%%%%%%%%%%%%%%%%%%%%

\draw[line width=0.65mm][postaction={decorate}](0.7,2)--node[above]{$E_1$}(2+0.3+1,2);

%%%%%%%%%%%%%%%%%%%%%%%%%%%%%%%%%%%%%%%%%%%
\draw(3+0.2,0+0.2)--node[below right]{$E_5$}(2-0.2,-1-0.2)node[
    sloped,
    pos=0.5,
    allow upside down]{\arrowIIn};
%%%%%%%%%%%%%%%%%%%%%%%%%%%%%%%%%%%%%%%%%%

%%%%%%%%%%%%%%%%%%%%%%%%%%%%%%%%%%%%%%%%%%%
\draw[shorten >=-0.5cm,shorten <=-0.5cm][line width=0.65mm](0-0.2,0.8)--node[above left]{$E_2$}(1+0.2,2.2)node[
    sloped,
    pos=0.5,
    allow upside down]{\arrowBox};
%%%%%%%%%%%%%%%%%%%%%%%%%%%%%%%%%%%%%%%%%%

%%%%%%%%%%%%%%%%%%%%%%%%%%%%%%%%%%%%%%%%%%%%
\draw(0-0.2,0+0.2)--node[below]{$E_4$}(1+0.2,-1-0.2)node[
    sloped,
    pos=0.5,
    allow upside down]{\arrowIIn};
%%%%%%%%%%%%%%%%%%%%%%%%%%%%%%%%%%%%%%%%%%%%

%%%%%%%%%%%%%%%%%%%%%%%%%%%%%%%%%%%%%%%%%%
\draw[shorten >=-0.3cm,shorten <=-0.3cm](0.3,1.3)--node[below]{$E_3$}(1.3,0.3)node[sloped,pos=0.3,allow upside down]{\arrowIIn};
%%%%%%%%%%%%%%%%%%%%%%%%%%%%%%%%%%%%%%%%%%%

\node[left=0.2cm] at (0,1){$[1,0]$};
\node[below left=0.2cm] at (0,0){$[-1,2]$};
\node[above left=0.2cm] at (1,2){$[0,1]$};
\node[right=0.2cm] at (2+1,2){$[-1,2]$};
\node[below right=0.2cm] at (2+1-1,-1){$[-1,-1]$};
\node[right=0.2cm] at (2+1,-1+1){$[1,-2]$};
\node[below left=0.2cm] at (1,-1){$[-2,1]$};
\end{tikzpicture}
$$
\end{proposition}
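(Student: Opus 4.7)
The plan is to enumerate every fixed point of the $\mathbb{C}^*$-action on $M_4$ and rule out all potential blow-ups except the one at $Y\cap Z$, and then to iterate the same analysis on $M_5=Bl_{Y\cap Z}M_4$ to show that no further blow-up is admissible. Throughout, I will appeal to three exclusion principles: Lemma \ref{lem:blowup} (no blow-ups on $(-2)$-curves already in $E$), Corollary \ref{cor:blowup} (no blow-ups at mixed-sign weight points sitting on curves that are in $E$ or of negative self-intersection), and Corollary \ref{cor:su2weights} (the weights along the $A_n$-chain $E$ must form an arithmetic progression dictated by the equal orbifold weights).

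First I would list the fixed points of $M_4$ and handle them case by case. The vertices $X\cap E_2$, $E_2\cap E_1$, $X\cap E_4$, $E_3\cap E_2$ all lie on curves of the $A_3$-chain $E=X\cup E_2\cup E_1$ and are excluded by Lemma \ref{lem:blowup}. At $E_1\cap Y$ a blow-up would force $E_1$ to become a $(-3)$-curve, impossible in the minimal resolution of an ADE quotient singularity. At the free endpoint of $E_3$ (weights $[1,-1]$) a blow-up would turn $E_3$ into a $(-2)$-curve attached to $E_2$ as a third branch, converting $E$ into a $D_4$-configuration and contradicting the cyclic assumption on $\Gamma$ throughout Section \ref{cycliccase3}. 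At $Z\cap E_4$ (weights $[-2,1]$) a blow-up would adjoin $E_4$ to $E$ as an $A_4$-chain, but the weights $[1,0]$ at $X\cap E_2$ are locked once and for all by the $(-2)$-status of both curves, and in an $A_4$-chain with equal orbifold weights Corollary \ref{cor:su2weights} forces the central vertex to have weights of the form $[\theta,\theta]$, which is impossible. The only fixed point escaping every obstruction is $Y\cap Z$ with weights $[-1,-2]$: here Corollary \ref{cor:blowup} does not apply because the weights are both strictly negative, and no chain-structure issue arises since $Y$, $Z$ remain outside $E$. Hence either $\widetilde{M}=M_4$, or the next blow-up $\pi_5$ occurs at $Y\cap Z$, producing $M_5$.

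Next I would redo the analysis in $M_5$. Proposition \ref{prop:blowupweights} applied to the input $[-1,-2]$ yields endpoint weights $[-1,-1]$ at $Z\cap E_5$ and $[1,-2]$ at $Y\cap E_5$, and the curves $Y,Z$ both become $(-1)$-curves. The exclusions for fixed points already present in $M_4$ persist verbatim. Among the new fixed points, $Y\cap E_5$ is killed by Corollary \ref{cor:blowup}, since the weights are of mixed sign and both $Y$ and $E_5$ are now of negative self-intersection. The delicate case is $Z\cap E_5$ with weights $[-1,-1]$: neither Lemma \ref{lem:blowup} nor Corollary \ref{cor:blowup} applies, but blowing up there would turn $Z$ into a $(-2)$-curve that is not adjacent to any curve in the existing $A_3$-chain $X\cup E_2\cup E_1$, so it would form a disjoint second connected component of $E$ and therefore a second orbifold point on $\widehat{M}$, contradicting the single-orbifold-point hypothesis.

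The main obstacle is precisely this last exclusion at $Z\cap E_5$: it is the one fixed point for which local weight-sign considerations alone do not forbid a blow-up, and the exclusion must come from the global connectedness requirement on $E$ enforced by the assumption that $\widehat{M}$ has exactly one orbifold point. All other verifications reduce to direct applications of Lemma \ref{lem:blowup}, Corollary \ref{cor:blowup}, and Corollary \ref{cor:su2weights} combined with the weight transformation rule of Proposition \ref{prop:blowupweights}.
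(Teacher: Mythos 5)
Your argument is correct and follows essentially the same route as the paper, whose proof of this proposition is a one-line assertion that the only admissible blow-up in $M_4$ is at $Y\cap Z$ and that $M_5$ admits no further blow-ups; you have simply supplied the fixed-point-by-fixed-point verification, and each exclusion you give (via Lemma \ref{lem:blowup}, Corollary \ref{cor:blowup}, the weight constraint of Corollary \ref{cor:su2weights}, and the connectedness of $E$) is valid. The only remark worth adding is that several of your case-specific arguments (the free end of $E_3$, the point $Z\cap E_4$, and the point $Z\cap E_5$) can be handled uniformly by the observation, already recorded in the text immediately preceding the proposition, that $E=X\cup E_2\cup E_1$ is completely determined, so any blow-up that would force an additional curve to become a $(-2)$-curve is automatically forbidden.
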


The first case in the above proposition will be labeled as Case 3A, and the second case will be Case 3B.

\begin{proof}
The only possible blow-up in $M_4$ is at $p_5=Z\cap Y$. If such a blow-up occurs, then no more fixed points can be blown up in $M_5$. Therefore, the proposition is proved.
\end{proof}

\subsubsection{$p_2=X\cap Z$, and there is no blow-up at $X\cap E_1,E_1\cap Y$ in further blow-ups}
\label{subsubsec:3}

Flows on $M_2$ are
$$
\begin{tikzpicture}
[decoration={markings, 
    mark= at position 0.5 with {\arrow{stealth }}}
]
%\draw[postaction={decorate}](0,2.3)--node[above left]{$x=0$}(0,-0.3);
%\draw[postaction={decorate}](2,1.3)--node[right]{$y=0$}(2,-0.3);
%\draw[postaction={decorate}](-0.3,0)--node[below]{$z=0$}(2+0.3,0);
%\draw[postaction={decorate}](-0.3,2)--node[above]{$E_1$}(1+0.3,2);
%\draw[postaction={decorate}](1-0.2,2.2)--node[above right]{$F$}(2+0.2,1-0.2);

%%%%%%%%%%%%%%%%%%%%%%%%%%%%%%%%%%%%%%%%%%
\draw(-0.2,0.2)--node[below left]{$E_2$}(1.2,-1.2)node[sloped,pos=0.5,allow upside down]{\arrowIIn};
%%%%%%%%%%%%%%%%%%%%%%%%%%%%%%%%%%%%%%%%%%%

\draw[postaction={decorate}](0.7,-1)--node[below]{$Z$}(2.6,-1);
\draw[postaction={decorate}](2.3,0.6+1)--node[right]{$Y$}(2.3,-1.3);

%%%%%%%%%%%%%%%%%%%%%%%%%%%%%%%%%%%%%%%%%%%%%%%%%%
\draw(-0.3,1.3)--node[above]{$E_1$}(1.6+1,1.3)node[sloped,pos=0.5,allow upside down]{\arrowIIn};
%%%%%%%%%%%%%%%%%%%%%%%%%%%%%%%%%%%%%%%%%%%

%%%%%%%%%%%%%%%%%%%%%%%%%%%%%%%%%%%%%%%%%
\draw(0,1.6)--node[left]{$X$}(0,-0.3)node[sloped,pos=0.5,allow upside down]{\arrowIIn};
%%%%%%%%%%%%%%%%%%%%%%%%%%%%%%%%%%%%%%%%%%%%

\node[below left=0.1cm] at (0,0){$[-\beta,\alpha]$};
\node[below left=0.1cm] at (1,-1){$[-\alpha,\alpha-\beta]$};
\node[below right=0.1cm] at (2.3,-1){$[\beta-\alpha,-\alpha]$};
\node[above right=0.1cm] at (1.3+1,1.3){$[\beta-\alpha,\alpha]$};
\node[above left=0.1cm] at (0,1.3){$[\beta,\alpha-\beta]$};

\end{tikzpicture}.
$$
The repulsive set $c_-$ must be contained in $E$. However, blow-ups cannot be taken at $X\cap E_2,X\cap E_1,E_1\cap Y$. It follows this case cannot happen.

Section \ref{subsubsec:1}-\ref{subsubsec:3} together conclude the cyclic case \ref{case3}.
\begin{proposition}\label{prop:case31}
In case \ref{case3}, if the orbifold group $\Gamma$ is cyclic, then there are only Case 3A and Case 3B in the following.
\end{proposition}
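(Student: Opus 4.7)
The plan is to observe that Proposition \ref{prop:case31} is really a summary of the case analysis carried out in Sections \ref{subsubsec:1}, \ref{subsubsec:2}, and \ref{subsubsec:3}, so the proof is essentially a matter of assembling these three analyses and showing they exhaust all possibilities. By Lemma \ref{lem:m1}, any $\widetilde{M}$ in case \ref{case3} must factor through $M_1 = Bl_{X\cap Y}\mathbb{P}^2$. The first step is therefore to enumerate the possible choices for the second blow-up center $p_2$ subject to the constraint that (a) the exceptional set $E$ of $r:\widetilde{M}\to\widehat{M}$ is connected and of $ADE$ type, (b) $E$ contains the repulsive set $c_-$ of the lifted action, and (c) Corollary \ref{cor:blowup} rules out blowing up intersections whose weights are not strictly positive or strictly negative.

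Next, I would argue that since $c_-\subset E$ in $\widetilde{M}$ and $E$ must either contain the proper transform of $X$, of $Y$, or of $E_1$ (the only curves meeting $c_-=X\cap E_1\cap Y$ in $M_1$), the options for $p_2$ reduce to exactly the three listed before Section \ref{subsubsec:1}: either $p_2 = E_1\cap Y$, or $p_2 = X\cap E_1$, or $p_2 = X\cap Z$ with no later blow-up at $X\cap E_1$ or $E_1\cap Y$. I would then invoke Sections \ref{subsubsec:1} and \ref{subsubsec:3}, which exclude the first and third options respectively: in Section \ref{subsubsec:1} the weights on $E_1$ force the resulting orbifold weights at $q$ to be $[\alpha-\tfrac{\beta}{2},\tfrac{\beta}{2}]$, which cannot be equalized by any further allowed blow-up, and in Section \ref{subsubsec:3} the repulsive set has no available blow-up point compatible with Corollary \ref{cor:blowup} and the no-further-blow-up hypothesis.

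This leaves only the option $p_2 = X\cap E_1$ of Section \ref{subsubsec:2}. The key reductions there are: Lemma \ref{lem:propery} (the proper transform of $Y$ cannot lie in $E$), which forces $E_1\cap Y$ to be an end of $E$, and together with Corollary \ref{cor:weightsdeterminepq} and Corollary \ref{cor:su2weights} this pins down $\alpha = \theta p$ and $\beta = 2\theta$. Then Lemma \ref{lem:a=2b} forces $E_2$ to be the repulsive fixed curve, giving $\alpha = 2\beta$ and hence $p=4$, $q=3$, with primitive action $(\alpha,\beta) = (2,1)$. From $M_2$ the remaining forced blow-ups are at $p_3 \in E_2\setminus\{X\cap E_2, E_2\cap E_1\}$ and at $p_4 = X\cap Z$, producing $M_4$ in which $X\cup E_2\cup E_1$ is the complete $A_3$ exceptional cycle. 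The only optional further blow-up still available in $M_4$ without violating Lemma \ref{lem:blowup} or Corollary \ref{cor:blowup} is $p_5 = Z\cap Y$, producing $M_5$, after which no admissible fixed point remains. These are precisely Case 3A and Case 3B.

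The main obstacle in carrying out this plan is not any single computation but the bookkeeping of weights after each blow-up (using Proposition \ref{prop:blowupweights} and Corollary \ref{cor:su2weights}) together with the global constraint that the action on $\widehat{M}$ at $q$ has equal weights, which translates into explicit relations on $w_0,\ldots,w_{k+1}$. I expect the subtle step to be verifying that in the excluded sub-case of Section \ref{subsubsec:1}, no chain of admissible further blow-ups can repair the inequality $\beta \neq 2\alpha - \beta$ on $E_1$, since one must rule out every attempt to ``patch'' the weights by blowing up points in $Z$ while preserving both the $ADE$ shape of $E$ and the equal-weights condition at $q$.
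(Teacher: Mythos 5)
Your proposal is correct and follows essentially the same route as the paper: Lemma \ref{lem:m1} to reduce to $M_1$, the same trichotomy for $p_2$, the exclusions of Sections \ref{subsubsec:1} and \ref{subsubsec:3}, and Lemmas \ref{lem:propery} and \ref{lem:a=2b} to force $\alpha=2\beta$ and hence $M_4$ or $M_5$. One small imprecision: in $M_1$ the repulsive set is the single point $X\cap E_1$ (not a triple intersection with $Y$, which no longer meets $X$ after the blow-up), so the reduction to the three choices of $p_2$ comes from requiring $c_-=X\cap E_1\subset E$, forcing either a blow-up there or one of $X$, $E_1$ to become a $(-2)$-curve via a blow-up at $X\cap Z$ or $E_1\cap Y$.
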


Note that the $\mathfrak{E}$ action is lifted from $t\curvearrowright[x:y:z]=[t^{2}x:ty:z]$ on $\mathbb{P}^2$.

\begin{minipage}{0.5\textwidth}

\paragraph{\underline{\underline{Case 3A}}}
\label{3Aproblem}

\begin{itemize}
\item The orbifold group is $A_3$.
\item The degree of 
$$\widetilde{M}=Bl_{X\cap Y,X\cap E_1,p_3,X\cap Z}\mathbb{P}^2$$ is 5 and the picard number of $\widehat{M}$ is 2. Here $p_3$ is a point in $E_2$ different from $X\cap E_2,E_1\cap E_2$. 
\item $K_{\widetilde{M}}=-3Z+E_1+2E_2+3E_3-2E_4$. 
\end{itemize}

\end{minipage}%
\begin{minipage}{0.4\textwidth}
$$
\begin{tikzpicture}
[decoration={markings, 
    mark= at position 0.5 with {\arrow{stealth }}}
]
\draw[line width=0.65mm][postaction={decorate}](0,1.3)--node[left]{$X$}(0,-0.3);
\draw[postaction={decorate}](2+1,2.3)--node[right]{$Y$}(2+1,-0.3-1);
\draw[postaction={decorate}](-0.3+1,0-1)--node[below]{$Z$}(2+0.3+1,0-1);
\draw[line width=0.65mm][postaction={decorate}](0.7,2)--node[above]{$E_1$}(2+0.3+1,2);

%%%%%%%%%%%%%%%%%%%%%%%%%%%%%%%%%%%%%%%%%%%
\draw[shorten >=-0.5cm,shorten <=-0.5cm][line width=0.65mm](0-0.2,0.8)--node[above left]{$E_2$}(1+0.2,2.2)node[
    sloped,
    pos=0.5,
    allow upside down]{\arrowBox};
%%%%%%%%%%%%%%%%%%%%%%%%%%%%%%%%%%%%%%%%%%

%%%%%%%%%%%%%%%%%%%%%%%%%%%%%%%%%%%%%%%%%%%%
\draw(0-0.2,0+0.2)--node[below]{$E_4$}(1+0.2,-1-0.2)node[
    sloped,
    pos=0.5,
    allow upside down]{\arrowIIn};
%%%%%%%%%%%%%%%%%%%%%%%%%%%%%%%%%%%%%%%%%%%%

%%%%%%%%%%%%%%%%%%%%%%%%%%%%%%%%%%%%%%%%%%
\draw[shorten >=-0.3cm,shorten <=-0.3cm](0.3,1.3)--node[below]{$E_3$}(1.3,0.3)node[sloped,pos=0.5,allow upside down]{\arrowIIn};
%%%%%%%%%%%%%%%%%%%%%%%%%%%%%%%%%%%%%%%%%%%

\node[left=0.2cm] at (0,1){$[1,0]$};
\node[below left=0.2cm] at (0,0){$[-1,2]$};
\node[above left=0.2cm] at (1,2){$[0,1]$};
\node[right=0.2cm] at (2+1,2){$[-1,2]$};
\node[below right=0.2cm] at (2+1,-1){$[-1,-2]$};
\node[below=0.2cm] at (1,-1){$[-2,1]$};

\end{tikzpicture}
$$
\end{minipage}

\begin{minipage}{0.5\textwidth}

\paragraph{\underline{\underline{Case 3B}}}
\label{3B}
\begin{itemize}
    \item The pair $(\widetilde{M},\mathfrak{E})$ is biholomorphic to the pair 
    of Case \hyperref[2D]{2D}.
\end{itemize}

\end{minipage}%
\begin{minipage}{0.4\textwidth}
$$
\begin{tikzpicture}
[decoration={markings, 
    mark= at position 0.5 with {\arrow{stealth }}}
]
\draw[line width=0.65mm][postaction={decorate}](0,1.3)--node[left]{$X$}(0,-0.3);

%%%%%%%%%%%%%%%%%%%%%%%%%%%%%%%%%%%%%%%%%%
\draw(2+1,2.3)--node[right]{$Y$}(2+1,-0.3-1+1)node[
    sloped,
    pos=0.5,
    allow upside down]{\arrowIIn};
%%%%%%%%%%%%%%%%%%%%%%%%%%%%%%%%%%%%%%%%%%%

%%%%%%%%%%%%%%%%%%%%%%%%%%%%%%%%%%%%%%%%%%
\draw(-0.3+1,0-1)--node[below]{$Z$}(2+0.3,0-1)node[
    sloped,
    pos=0.5,
    allow upside down]{\arrowIIn};
%%%%%%%%%%%%%%%%%%%%%%%%%%%%%%%%%%%%%%%%

\draw[line width=0.65mm][postaction={decorate}](0.7,2)--node[above]{$E_1$}(2+0.3+1,2);

%%%%%%%%%%%%%%%%%%%%%%%%%%%%%%%%%%%%%%%%%%%
\draw(3+0.2,0+0.2)--node[below right]{$E_5$}(2-0.2,-1-0.2)node[
    sloped,
    pos=0.5,
    allow upside down]{\arrowIIn};
%%%%%%%%%%%%%%%%%%%%%%%%%%%%%%%%%%%%%%%%%%

%%%%%%%%%%%%%%%%%%%%%%%%%%%%%%%%%%%%%%%%%%%
\draw[shorten >=-0.5cm,shorten <=-0.5cm][line width=0.65mm](0-0.2,0.8)--node[above left]{$E_2$}(1+0.2,2.2)node[
    sloped,
    pos=0.5,
    allow upside down]{\arrowBox};
%%%%%%%%%%%%%%%%%%%%%%%%%%%%%%%%%%%%%%%%%%

%%%%%%%%%%%%%%%%%%%%%%%%%%%%%%%%%%%%%%%%%%%%
\draw(0-0.2,0+0.2)--node[below]{$E_4$}(1+0.2,-1-0.2)node[
    sloped,
    pos=0.5,
    allow upside down]{\arrowIIn};
%%%%%%%%%%%%%%%%%%%%%%%%%%%%%%%%%%%%%%%%%%%%

%%%%%%%%%%%%%%%%%%%%%%%%%%%%%%%%%%%%%%%%%%
\draw[shorten >=-0.3cm,shorten <=-0.3cm](0.3,1.3)--node[below]{$E_3$}(1.3,0.3)node[sloped,pos=0.3,allow upside down]{\arrowIIn};
%%%%%%%%%%%%%%%%%%%%%%%%%%%%%%%%%%%%%%%%%%%

%%%%%%%%%%%%%%%%%%%%%%%%%%%%%%%%%%%%%%%%%%
\draw[shorten >=-0.3cm,shorten <=-0.3cm](1.3,0.3)--node[right]{$F$}(2,-1)node[sloped,pos=0.3,allow upside down]{\arrowIIn};
%%%%%%%%%%%%%%%%%%%%%%%%%%%%%%%%%%%%%%%%%%%

\end{tikzpicture}
$$
\end{minipage}

\subsection{Non-cyclic $\Gamma$}\label{noncycliccase3}
In the non-cyclic case, the orbifold group can only be of $D_n$ or $E_n$ type. An important feature is the central curve in $E$ is a fixed curve, which is also the repulsive set $c_-$.
Lemma \ref{lem:m1} also holds by the same proof in this case and $M_1=Bl_{X\cap Y}\mathbb{P}^2$.
$$
\begin{tikzpicture}
[decoration={markings, 
    mark= at position 0.5 with {\arrow{stealth }}}
]
\draw[postaction={decorate}](0,2.3)--node[left]{$X$}(0,-0.3);
\draw[postaction={decorate}](2,2.3)--node[right]{$Y$}(2,-0.3);
\draw[postaction={decorate}](-0.3,0)--node[below]{$Z$}(2+0.3,0);
%%%%%%%%%%%%%%%%%%%%%%%%%%%%%%%%%%%%
\draw(-0.3,2)--node[above]{$E_1$}(2+0.3,2)node[sloped,pos=0.5,allow upside down]{\arrowIIn};
%%%%%%%%%%%%%%%%%%%%%%%%%%%%%%%%%%%%

\node[above left=0.2cm] at (0,2){$[\beta,\alpha-\beta]$};
\node[below left=0.2cm] at (0,0){$[-\beta,\alpha-\beta]$};
\node[below right=0.2cm] at (2,0){$[\beta-\alpha,-\alpha]$};
\node[above right=0.2cm] at (2,2){$[\beta-\alpha,\alpha]$};
\end{tikzpicture}.
$$
To create the fixed central curve, blow-up must be taken at $p_2=X\cap E_1$. 
Now $M_2$ is
$$
\begin{tikzpicture}
[decoration={markings, 
    mark= at position 0.5 with {\arrow{stealth }}}
]

%%%%%%%%%%%%%%%%%%%%%%%%%%%%%%%
\draw(0,1.3)--node[left]{$X$}(0,-0.3)node[sloped,pos=0.65,allow upside down]{\arrowIIn};
%%%%%%%%%%%%%%%%%%%%%%%%%%%%%%%%%%%

\draw[postaction={decorate}](2,2.3)--node[right]{$Y$}(2,-0.3);
\draw[postaction={decorate}](-0.3,0)--node[below]{$Z$}(2+0.3,0);
\draw[line width=0.65mm][postaction={decorate}](0.7,2)--node[above]{$E_1$}(2+0.3,2);
\draw(0-0.2,0.8)--node[above left]{$E_2$}node[below right]{?}(1+0.2,2.2);

\node[left=0.2cm] at (0,1){$[\beta,\alpha-2\beta]$};
\node[below left=0.2cm] at (0,0){$[-\beta,\alpha-\beta]$};
\node[above left=0.2cm] at (1,2){$[2\beta-\alpha,\alpha-\beta]$};
\node[right=0.2cm] at (2,2){$[\beta-\alpha,\alpha]$};
\node[below right=0.2cm] at (2,0){$[\beta-\alpha,-\alpha]$};
\end{tikzpicture}.
$$
The direction of the flow on $E_2$ is determined by the sign of $\alpha-2\beta$, resulting in three possible cases.
\begin{itemize}
\item $\alpha=2\beta$. In this case, $E_2$ is already a fixed curve.
\item $\alpha<2\beta$. To get the fixed curve, a further blow-up should be taken at $p_3=E_2\cap E_1$ in $M_2$. However, this is not possible as $E_1$ is already a $(-2)$-curve.
\item $\alpha>2\beta$. In this case, to get the fixed curve,  a further blow-up should be taken at $p_3=X\cap E_2$ in $M_2$.
\end{itemize}
In the following Section \ref{subsubsec:a=2b} and Section  \ref{subsubsec:a>2b}, we analyze the remaining cases in detail.

\subsubsection{$\boldsymbol{\alpha=2\beta}$}
\label{subsubsec:a=2b}

Weights and flows on $M_2$ are
$$
\begin{tikzpicture}
[decoration={markings, 
    mark= at position 0.5 with {\arrow{stealth }}}
]

%%%%%%%%%%%%%%%%%%%%%%%%%%%%%%%
\draw(0,1.3)--node[left]{$X$}(0,-0.3)node[sloped,pos=0.5,allow upside down]{\arrowIIn};
%%%%%%%%%%%%%%%%%%%%%%%%%%%%%%%%%%%

\draw[postaction={decorate}](2,2.3)--node[right]{$Y$}(2,-0.3);
\draw[postaction={decorate}](-0.3,0)--node[below]{$Z$}(2+0.3,0);
\draw[line width=0.65mm][postaction={decorate}](0.7,2)--node[above]{$E_1$}(2+0.3,2);

%%%%%%%%%%%%%%%%%%%%%%%%%%%%%%%%%%%%%%%%
\draw(0-0.2,0.8)--node[above left]{$E_2$}(1+0.2,2.2)node[
    sloped,
    pos=0.5,
    allow upside down]{\arrowBox};
%%%%%%%%%%%%%%%%%%%%%%%%%%%%%%%%%%%%%%%%

\node[left=0.2cm] at (0,1){$[\beta,0]$};
\node[below left=0.2cm] at (0,0){$[-\beta,\alpha-\beta]$};
\node[above left=0.2cm] at (1,2){$[0,\alpha-\beta]$};
\node[right=0.2cm] at (2,2){$[\beta-\alpha,\alpha]$};
\node[below right=0.2cm] at (2,0){$[\beta-\alpha,-\alpha]$};
\end{tikzpicture}.
$$
It is clear that the proper transform of $E_2$ in $\widetilde{M}$ is the central curve in $E$, since $E_2$ is already the fixed curve.  
Now
\begin{itemize}
\item Similar to Lemma \ref{lem:propery},  $Y$ cannot be contained in $E$.
\item $X\cap E_2,E_2\cap E_1$ can not be blown up because of Corollary \ref{cor:blowup}.
\end{itemize}
Therefore, $E_1$ must be one of the three chains of $(-2)$-curves intersecting the central curve in $E$, which is a chain of type $A_1=L(1,2)$. According to Theorem \ref{thm:noncyclicweights}, the weights at the ending fixed point of this chain must be $[p,-q]=[2,-1]$. Hence, 
$$\alpha=p=2,\beta-\alpha=-q=-1,$$ 
which shows
$\alpha=2,\beta=1$.
Another blow-up is necessary in $E_2$ to make it a $(-2)$-curve. As there are three chains of $(-2)$-curves intersecting $E_2$, there also should be a blow-up at $X\cap Z$. Hence the next lemma is clear.

\begin{lemma}
There must be a blow-up at $p_3=X\cap Z$ in $M_2$ and a blow-up at a  point $p_4\in E_2$ other than $X\cap E_2,E_2\cap E_1$.
\end{lemma}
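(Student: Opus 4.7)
The plan is to spell out the two structural constraints that were already sketched in the paragraph just above the lemma. In the non-cyclic case the exceptional set $E$ of the minimal resolution consists of a central rational $(-2)$-curve together with three chains of $(-2)$-curves meeting it at three distinct points. The central curve must itself be a fixed curve of the $\mathbb{C}^*$-action generated by $\mathfrak{E}$: if it were a generic orbit closure it would carry only two fixed points, whereas it meets three chains transversally at three fixed points. In $M_2$ the only fixed curve is $E_2$ (which is the repulsive set $c_-$), so its proper transform in $\widetilde M$ has to play the role of the central curve and, in particular, must be a $(-2)$-curve.

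Since $E_2^2=-1$ in $M_2$, exactly one further blow-up of $\mathfrak{E}$-fixed points lying on $E_2$ is needed to lower its self-intersection to $-2$. By Corollary~\ref{cor:blowup} the two corner intersections $X\cap E_2$ and $E_2\cap E_1$ cannot be blown up, since in both cases the weights (computed in the picture of $M_2$) contain a zero coordinate. Hence the required blow-up must occur at some interior fixed point $p_4\in E_2\setminus\{X\cap E_2,\,E_2\cap E_1\}$, giving the second half of the lemma.

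For the first half, observe that among the three chains of $(-2)$-curves attached to $E_2$ in $\widetilde M$, one is contributed by $E_1$ (meeting $E_2$ at $E_2\cap E_1$) and a second is initiated by the exceptional curve $E_4$ coming from the blow-up at $p_4$. The third chain has to be attached at the only remaining curve that meets $E_2$, namely at $X\cap E_2$. Since $X\cap E_2$ itself is inadmissible for blow-up, this chain must begin with the proper transform of $X$, which therefore has to become a $(-2)$-curve in $\widetilde M$. In $M_2$ the curve $X$ has self-intersection $-1$, so exactly one more blow-up on $X$ is required; but $X\cap Y$ and $X\cap E_1$ have already been used (as $p_1,p_2$) and $X\cap E_2$ is inadmissible, leaving $p_3=X\cap Z$ as the only possibility.

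The main (minor) obstacle is simply verifying that the central $(-2)$-curve of a non-cyclic $ADE$ configuration must indeed be a fixed curve and so must coincide with (the proper transform of) $E_2$; once this is in place, the remainder of the argument is forced bookkeeping using Corollary~\ref{cor:blowup} and the tally of self-intersections.
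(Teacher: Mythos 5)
Your argument is correct and follows essentially the same route as the paper: the fixed curve $E_2$ must be the central $(-2)$-curve of the $D/E$-configuration, Corollary~\ref{cor:blowup} rules out blowing up its two corners, forcing an interior blow-up $p_4$, and the need for a third chain attached at $X\cap E_2$ forces $X$ to become a $(-2)$-curve, whose only admissible remaining fixed point is $X\cap Z$. Your self-contained observation that the central curve must be pointwise fixed (since a non-fixed invariant rational curve has only two fixed points) is a slightly more elementary substitute for the paper's appeal to the explicit weight computation in Theorem~\ref{thm:noncyclicweights}, but the substance is the same.
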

The resulting surface is $M_4$.
$$
\begin{tikzpicture}
[decoration={markings, 
    mark= at position 0.5 with {\arrow{stealth }}}
]

%%%%%%%%%%%%%%%%%%%%%%%%%%%%%%%%%%%%%%%%%%%%
\draw[line width=0.5mm][postaction={decorate}](0,1.3)--node[left]{$X$}(0,-0.3);
%%%%%%%%%%%%%%%%%%%%%%%%%%%%%%%%%%%%%%%%%%%%

\draw[postaction={decorate}](3,2.3)--node[right]{$Y$}(2+1,-0.3-1);
\draw[postaction={decorate}](-0.3+1,0-1)--node[below]{$Z$}(2+0.3+1,0-1);
\draw[postaction={decorate}][line width=0.65mm](0.7,2)--node[above]{$E_1$}(2+0.3+1,2);

%%%%%%%%%%%%%%%%%%%%%%%%%%%%%%%%%%%%%%%%%%%
\draw[line width=0.65mm][shorten >=-0.4cm,shorten <=-0.4cm](0-0.2,0.8)--node[above left]{$E_2$}(1+0.2,2.2)node[
    sloped,
    pos=0.5,
    allow upside down]{\arrowBox};
%%%%%%%%%%%%%%%%%%%%%%%%%%%%%%%%%%%%%%%%%%%%

%%%%%%%%%%%%%%%%%%%%%%%%%%%%%%%%%%%%%%%%%%%%
\draw(0-0.2,0+0.2)--node[below left]{$E_3$}(1+0.2,-1-0.2)node[sloped,pos=0.5,allow upside down]{\arrowIIn};
%%%%%%%%%%%%%%%%%%%%%%%%%%%%%%%%%%%%%

%%%%%%%%%%%%%%%%%%%%%%%%%%%%%%%%%%%%%%%%%%
\draw(0.5-0.1,1.5+0.1)--node[below left]{$E_4$}(1.5+0.2,0.5-0.2)node[sloped,pos=0.5,allow upside down]{\arrowIIn};
%%%%%%%%%%%%%%%%%%%%%%%%%%%%%%%%%%%%%%%%%%%

\draw[postaction={decorate}] (1.5,0.5) to[out=-10,in=180] node[below]{$F$}(3,-1);

\node[above right] at (1.5,0.5){$[-1,1]$};
\node[above left=0.2cm] at (1,2){$[0,1]$};
\node[above right=0.2cm] at (3,2){$[-1,2]$};
\node[below right=0.2cm] at (3,-1){$[-1,-2]$};
\node[below left=0.2cm] at (1,-1){$[-2,1]$};
\node[left=0.2cm] at (0,0){$[-1,2]$};
\node[left=0.2cm] at (0,1){$[1,0]$};

\end{tikzpicture}.
$$
Here, $F$ is the proper transform of the curve from $p_4$ to $Y\cap Z$. To guarantee the presence of three chains of $(-2)$-curves intersecting $E_2$, an extra blow-up at the intersection point $p_5=E_4\cap F$ is required. We have the following proposition.

\begin{proposition}
The surface $\widetilde{M}$ must be a blow-up of $M_5=Bl_{E_4\cap F}M_4$.
\end{proposition}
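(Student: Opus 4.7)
The plan is to show that the $(-1)$-curve $E_4$ in $M_4$ must become a $(-2)$-curve in $\widetilde{M}$, and that the only admissible blow-up achieving this lies at $E_4 \cap F$. First, I would exploit the non-cyclicity of $\Gamma$: the exceptional set $E \subset \widetilde{M}$ is of type $D_n$ or $E_n$, and hence consists of three chains of $(-2)$-curves attached to a central fixed $(-2)$-curve. Since $E_2$ is the unique fixed $(-2)$-curve already present in $M_4$, its proper transform in $\widetilde{M}$ must be the central curve. A quick inventory of intersections shows that exactly three curves in $M_4$ meet $E_2$ transversely at fixed points, namely $X$, $E_1$, and $E_4$. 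These must correspond to the three attachment points of the three chains of $E$.

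Next, I would note that $X$ and $E_1$ are already $(-2)$-curves in $M_4$, so by Lemma \ref{lem:blowup} no further blow-ups can be performed on them. The weights at their outer endpoints $X \cap E_3$ and $E_1 \cap Y$ are both $[2,-1]$ as unordered pairs, matching the weights predicted by Theorem \ref{thm:noncyclicweights} for the $L(1,2)$ chain. Hence $X$ and $E_1$ each constitute one of the two $L(1,2)$ chains of $E$, and the third chain must begin with $E_4$. In particular $E_4$ must become a $(-2)$-curve in $\widetilde{M}$. To lower its self-intersection from $-1$ to $-2$ requires at least one further blow-up at a fixed point on $E_4$; the two candidates in $M_4$ are $E_4 \cap E_2$ (excluded by Lemma \ref{lem:blowup}, since $E_2$ is already a $(-2)$-curve) and $E_4 \cap F$. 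Hence the blow-up must occur at $E_4 \cap F$, and after reordering blow-ups if necessary we may take $M_5 = Bl_{E_4 \cap F} M_4$ as the immediate successor of $M_4$.

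The main subtlety I anticipate is confirming compatibility with Corollary \ref{cor:blowup} at $E_4 \cap F$, whose weights $[-1,1]$ are neither strictly positive nor strictly negative. Since $E_4$ has negative self-intersection, the corollary would obstruct the blow-up if the proper transform of $F$ in $\widetilde{M}$ were also in $E$. The forcing argument above shows that the blow-up must nevertheless occur, so one concludes a posteriori that $F$'s proper transform lies outside of $E$ in $\widetilde{M}$. This is a consistency check rather than a genuine obstruction, but it is the delicate point where the global topology of $E$ feeds back into the local blow-up analysis.
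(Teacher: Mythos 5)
Your argument is correct and follows the same route as the paper, which justifies this proposition in a single sentence: since $E_2$ is already a fixed $(-2)$-curve, it must be the central curve of the $D_n$/$E_n$ configuration $E$, the three chains of $E$ can only attach to it at its three existing intersection points (with $X$, $E_1$, $E_4$), and the only admissible way to turn the $(-1)$-curve $E_4$ into a $(-2)$-curve is the blow-up at $E_4\cap F$, the unique fixed point of $E_4$ not lying on a $(-2)$-curve. Your closing remark about Corollary \ref{cor:blowup} is also right: $F^2>0$ in $M_4$ and its proper transform never lands in $E$, so the mixed-sign weights $[-1,1]$ at $E_4\cap F$ cause no obstruction.

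One intermediate claim is overstated, though harmlessly so: it is not true that $X$ and $E_1$ must "each constitute one of the two $L(1,2)$ chains of $E$." For $E_1$ this holds (its only other neighbour $Y$ is excluded from $E$), but the chain through $X$ can be extended past $X$ by a later blow-up at $E_3\cap Z$, which turns $E_3$ into a $(-2)$-curve; this is exactly what happens in Case \hyperref[3G]{3G}, where that chain is $\{X,E_3\}$ of type $A_2$ (the legs of $E_6$ have lengths $1,2,2$). Matching the weights at $X\cap E_3$ with the $L(1,2)$ prediction does not force the chain to terminate there. Fortunately your conclusion does not depend on this: all that is needed is that the third chain attaches to $E_2$ at $E_2\cap E_4$ and hence contains $E_4$, which follows solely from the fact that no new attachment points can be created on the $(-2)$-curve $E_2$.
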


Flows on $M_5$ are given by
$$
\begin{tikzpicture}
[decoration={markings, 
    mark= at position 0.5 with {\arrow{stealth }}}
]
\draw[line width=0.65mm][postaction={decorate}](0,1.3)--node[left]{$X$}(0,-0.3-1);
\draw[postaction={decorate}](2+1+1,2.3)--node[right]{$Y$}(2+1+1,-0.3-1-1);
\draw[postaction={decorate}](-0.3+1,0-1-1)--node[below]{$Z$}(2+0.3+1+1,0-1-1);
\draw[line width=0.65mm][postaction={decorate}](0.7,2)--node[above]{$E_1$}(2+0.3+1+1,2);

%%%%%%%%%%%%%%%%%%%%%%%%%%%%%%%%%%%%%%%%%%
\draw[line width=0.65mm][shorten >=-0.6cm,shorten <=-0.6cm](0-0.2,0.8)--node[above left]{$E_2$}(1+0.2,2.2)node[
    sloped,
    pos=0.4,
    allow upside down]{\arrowBox};
%%%%%%%%%%%%%%%%%%%%%%%%%%%%%%%%%%%%%%%%%%

%%%%%%%%%%%%%%%%%%%%%%%%%%%%%%%%%%%%%%%%%%%
\draw(0-0.2,0+0.2-1)--node[below left]{$E_3$}(1+0.2,-1-0.2-1)node[sloped,pos=0.5,allow upside down]{\arrowIIn};
%%%%%%%%%%%%%%%%%%%%%%%%%%%%%%%%%%%%%%%%%%%

\draw[line width=0.65mm][postaction={decorate}](0.5-0.1,1.5+0.1)--node[below left]{$E_4$}(1.5+0.2,0.5-0.2);

%%%%%%%%%%%%%%%%%%%%%%%%%%%%%%%%%%%%%%%%%%%
\draw(1.5-0.2,0.5)--node[below]{$E_5$}(1.5+1+0.2,0.5)node[sloped,pos=0.5,allow upside down]{\arrowIIn};
%%%%%%%%%%%%%%%%%%%%%%%%%%%%%%%%%%%%%%%%%%%%

\draw[postaction={decorate}] (2.5,0.5) to[out=-70,in=180] node[left]{$F$}(3+1,-1-1);

\node[left=0.2cm] at (0,1){$[1,0]$};
\node[below left=0.2cm] at (0,0-1){$[-1,2]$};
\node[above left=0.2cm] at (1,2){$[0,1]$};
\node[right=0.2cm] at (2+1+1,2){$[-1,2]$};
\node[below right=0.2cm] at (2+1+1,-1-1){$[-1,-2]$};
\node[below=0.2cm] at (1,-1-1){$[-2,1]$};
\node[below left=0.06cm] at (1.5,0.5){$[-1,2]$};
\node[above right=0.06cm] at (1.5+1,0.5) {$[-2,1]$};
\end{tikzpicture}.
$$
Note that here $F$ intersects $Z$ non-transversely, and a simple calculation shows $F^2=Z^2=0$. Blow up $Y\cap Z$ lowers the self intersection of $F$ by 1.
By analyzing all further possible blow-ups case-by-case, we can classify all $(\widetilde{M},\mathfrak{E})$ in the case that $\alpha=2\beta$.
\begin{proposition}\label{prop:case32}
In case \ref{case3}, when $\alpha=2\beta$ and the orbifold group is non-cyclic, there are five possible $(\widetilde{M},\mathfrak{E})$.
\end{proposition}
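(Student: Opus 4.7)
The plan is to continue the inductive blow-up analysis initiated just before the proposition, starting from $M_5$, and classify all further admissible blow-ups that yield a pair $(\widetilde{M},\mathfrak{E})$ whose exceptional set $E$ of the contraction is of type $D_n$ or $E_n$ with $E_2$ as the central curve. Since the central curve and three attached legs of length one (namely $E_1$, $X$, and $E_4$) are already in place in $M_5$, the possible $(\widetilde{M},\mathfrak{E})$ will correspond to the ways one can extend either the three legs (to produce longer $D_n$-chains or the special $E_6$-, $E_7$-, or $E_8$-branches) while also handling the curves $Y$, $Z$, and $F$ so that no curve of self-intersection $\leq -3$ appears and so that, at termination, the remaining negative curves have self-intersection exactly $-1$.

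The first step is to enumerate, in $M_5$, the fixed points that are legal candidates for the next blow-up. By Lemma \ref{lem:blowup} and Corollary \ref{cor:blowup}, points lying on the current $(-2)$-curves $E_1$, $X$, $E_2$, $E_4$ (other than their intersections with curves of strictly positive/negative weights) are forbidden. Together with the rule that blowing up cannot disconnect $E$, this leaves only a handful of candidates, essentially: $Y\cap Z$, $E_5\cap F$, and intersections on $F$ or $Z$ appearing after such blow-ups. For each such candidate one writes down weights using Proposition \ref{prop:blowupweights}, then checks whether the updated exceptional configuration remains an extendable portion of an $ADE$ cycle.

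The second step is to actually walk through the tree of legal blow-up sequences. The key observation is that once a blow-up produces a new $(-1)$-curve attached to the chains of $E$, we must decide whether to further blow up on that curve (extending the chain, thus enlarging $n$ in a $D_n$ or moving toward $E_n$), or to stop (leaving that curve as a $(-1)$-curve contracted away in the blow-down $\widetilde{M}\to\widehat{M}$). In parallel one has to ensure that blow-ups needed to control the curves $F$ and $Z$ (recall $F^2=Z^2=0$ in $M_5$ and $F\cdot Z$ is nontransverse) terminate, which comes from the fact that each blow-up strictly decreases $K^2$ and $K^2$ is bounded below by the fixed quantity $k_\Gamma$ of Section~\ref{sec:actiononp2}. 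At each leaf of the tree one then invokes Theorem \ref{thm:noncyclicweights}, together with the primitivity and $\alpha=2\beta$ normalization, to verify that the $\mathbb{C}^*$-action on the resulting $\widehat{M}$ has equal weights at the unique orbifold point, which in the non-cyclic setting is automatic once the three legs of $E$ have the weight pattern forced by Corollary~\ref{cor:alpha1} with $\theta=1$.

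The main obstacle is purely combinatorial and bookkeeping: keeping the case analysis clean enough to be sure no sequence is overlooked and no sequence is double-counted. In practice one expects the five surviving cases to correspond to: the $D_4$ configuration obtained by simply blowing up $Y\cap Z$ (and possibly a further point to convert $F$ into an admissible $(-1)$-curve), one or two $D_n$ configurations with $n=5,6$ produced by extending the $E_4$-$E_5$ leg through blow-ups along $F$, and the two exceptional extensions that yield $E_6$ or $E_7$ type cycles. Once these five branches are listed, the remaining verification is mechanical: check in each case that the final surface $\widetilde{M}$ contains no curves of self-intersection $<-2$, that all the $(-1)$-curves not in $E$ sit transversally to $E$, and that the lifted $\mathfrak{E}$-weights match those prescribed by Theorem \ref{thm:noncyclicweights}. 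This then yields exactly the claimed count of five pairs $(\widetilde{M},\mathfrak{E})$.
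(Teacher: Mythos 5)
Your strategy is the same as the paper's: starting from $M_5$ (whose exceptional configuration is already a $D_4$ with central curve $E_2$ and legs $E_1$, $X$, $E_4$), enumerate all admissible further blow-ups subject to Lemma \ref{lem:blowup}, Corollary \ref{cor:blowup}, connectedness of $E$, and the weight pattern of Corollary \ref{cor:alpha1}/Theorem \ref{thm:noncyclicweights}, and count the leaves of the resulting tree. The paper does exactly this (its stated proof is likewise "by analyzing all further possible blow-ups case-by-case"), so there is no methodological disagreement.

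The gap is that the enumeration is never actually performed, and for a proposition whose entire content is the number "five," the enumeration \emph{is} the proof. Where you do speculate about the outcome, the guess is wrong in detail: the five pairs are of types $D_4$, $D_4$, $D_5$, $D_5$, $E_6$ (Cases 3C--3G, with 3D biholomorphic to Case 2E and 3F biholomorphic to 3E), not "$D_4$, one or two $D_n$ with $n=5,6$, and $E_6$ or $E_7$." Neither $D_6$ nor $E_7$ occurs in the $\alpha=2\beta$ branch ($E_7$ and $E_8$ only arise in the $\alpha>2\beta$ branch of Section \ref{subsubsec:a>2b}), and you miss that two distinct blow-up sequences land on isomorphic pairs, which matters for not over- or under-counting. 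A second, smaller issue: your termination argument via $K_{\widetilde{M}}^2\geq k_\Gamma$ is circular as stated, since $\Gamma$ is only determined by the final configuration and the family $\{D_n\}$ is infinite; the paper instead gets termination from the concrete observation that $F^2=Z^2=0$ in $M_5$ and each blow-up at $Y\cap Z$ (or along $F$) strictly decreases these self-intersections, so only finitely many such blow-ups are possible before a curve of self-intersection $\leq-3$ or a disconnection of $E$ is forced. To complete the proof you would need to walk the tree explicitly, recording weights at each step, and exhibit the five terminal configurations together with the identifications among them.
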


The structures of these $(\widetilde{M},\mathfrak{E})$ are given below. Note that the $\mathfrak{E}$ action is lifted from  $t\curvearrowright[x:y:z]=[t^{2}x:ty:z]$ on $\mathbb{P}^2$.

\begin{minipage}{0.47\textwidth}
\paragraph{\underline{\underline{Case 3C}}}
\label{3C}

\begin{itemize} 
\item The orbifold group is $D_4$. 
\item The degree of 
$$\widetilde{M}=Bl_{X\cap Y,X\cap E_1,X\cap Z,p_4,E_4\cap F}\mathbb{P}^2$$ is 4 and the picard number of $\widehat{M}$ is 2. Here $p_4$ is a point in $E_2$ different from $E_1\cap E_2,X\cap E_2$.
\item $K_{\widetilde{M}}=-3Z+E_1+2E_2-2E_3+3E_4+4E_5$.
\end{itemize}
\end{minipage}%
\begin{minipage}{0.5\textwidth}
$$
\scalebox{0.9}{
\begin{tikzpicture}
[decoration={markings, 
    mark= at position 0.5 with {\arrow{stealth }}}
]
\draw[line width=0.5mm][postaction={decorate}](0,1.3)--node[left]{$X$}(0,-0.3-1);
\draw[postaction={decorate}](2+1+1,2.3)--node[right]{$Y$}(2+1+1,-0.3-1-1);
\draw[postaction={decorate}](-0.3+1,0-1-1)--node[below]{$Z$}(2+0.3+1+1,0-1-1);
\draw[line width=0.5mm][postaction={decorate}](0.7,2)--node[above]{$E_1$}(2+0.3+1+1,2);

%%%%%%%%%%%%%%%%%%%%%%%%%%%%%%%%%%%%%%%%%%
\draw[line width=0.5mm][shorten >=-0.6cm,shorten <=-0.6cm](0-0.2,0.8)--node[above left]{$E_2$}(1+0.2,2.2)node[
    sloped,
    pos=0.3,
    allow upside down]{\arrowBox};
%%%%%%%%%%%%%%%%%%%%%%%%%%%%%%%%%%%%%%%%%%

%%%%%%%%%%%%%%%%%%%%%%%%%%%%%%%%%%%%%%%%%%%
\draw(0-0.2,0+0.2-1)--node[below left]{$E_3$}(1+0.2,-1-0.2-1)node[sloped,pos=0.5,allow upside down]{\arrowIIn};
%%%%%%%%%%%%%%%%%%%%%%%%%%%%%%%%%%%%%%%%%%%

\draw[line width=0.5mm][postaction={decorate}](0.5-0.1,1.5+0.1)--node[below left]{$E_4$}(1.5+0.2,0.5-0.2);

%%%%%%%%%%%%%%%%%%%%%%%%%%%%%%%%%%%%%%%%%%%
\draw(1.5-0.2,0.5)--node[below]{$E_5$}(1.5+1+0.2,0.5)node[sloped,pos=0.5,allow upside down]{\arrowIIn};
%%%%%%%%%%%%%%%%%%%%%%%%%%%%%%%%%%%%%%%%%%%%

\draw[postaction={decorate}](2.5,0.5) to[out=-50,in=180] node[left]{$F$}(3+1,-1-1);

\node[left=0.2cm] at (0,1){$[1,0]$};
\node[below left=0.2cm] at (0,0-1){$[-1,2]$};
\node[above left=0.2cm] at (1,2){$[0,1]$};
\node[right=0.2cm] at (2+1+1,2){$[-1,2]$};
\node[below right=0.2cm] at (2+1+1,-1-1){$[-1,-2]$};
\node[below=0.2cm] at (1,-1-1){$[-2,1]$};
\node[below left=0.06cm] at (1.5,0.5){$[-1,2]$};
\node[above right=0.06cm] at (1.5+1,0.5) {$[-2,1]$};

%\node[left=0.2cm] at (0,1){$[1,0]$};
%\node[below left=0.2cm] at (0,0-1){$[-1,2]$};
%\node[above left=0.2cm] at (1,2){$[0,1]$};
%\node[right=0.2cm] at (2+1+1,2){$[-1,2]$};
%\node[below right=0.2cm] at (2+1+1,-1-1){$[-1,-2]$};
%\node[below=0.2cm] at (1,-1-1){$[-2,1]$};
%\node[below=0.2cm] at (1.5,0.5){$[-2,1]$};
\end{tikzpicture}
}
$$
\end{minipage}

\begin{minipage}{0.47\textwidth}
\paragraph{\underline{\underline{Case 3D}}}
\label{3D}

\begin{itemize}
    \item The pair $(\widetilde{M},\mathfrak{E})$ is biholomorphic to the pair of Case \hyperref[2E]{2E}.
\end{itemize}

\end{minipage}%
\begin{minipage}{0.5\textwidth}
$$
\scalebox{0.9}{
\begin{tikzpicture}
[decoration={markings, 
    mark= at position 0.5 with {\arrow{stealth }}}
]
\draw[line width=0.5mm][postaction={decorate}](0,1.3)--node[left]{$X$}(0,-0.3-1-1);

%%%%%%%%%%%%%%%%%%%%%%%%%%%%%%%%%%%%%%%%%%%%
\draw(2+1+1+1,2.3)--node[right]{$Y$}(2+1+1+1,-0.3-1-1)node[sloped,pos=0.5,allow upside down]{\arrowIIn};
%%%%%%%%%%%%%%%%%%%%%%%%%%%%%%%%%%%%%%%%%%%%

%%%%%%%%%%%%%%%%%%%%%%%%%%%%%%%%%%%%%%%%%%%%%
\draw(-0.3+1,0-1-1-1)--node[below]{$Z$}(2+0.3+1+1,0-1-1-1)node[sloped,pos=0.5,allow upside down]{\arrowIIn};
%%%%%%%%%%%%%%%%%%%%%%%%%%%%%%%%%%%%%%%%%%%%

\draw[line width=0.5mm][postaction={decorate}](0.7,2)--node[above]{$E_1$}(2+0.3+1+1+1,2);
\draw[line width=0.5mm][shorten >=-0.6cm,shorten <=-0.6cm](0-0.2,0.8)--node[above left]{$E_2$}(1+0.2,2.2)node[
    sloped,
    pos=0.3,
    allow upside down]{\arrowBox};

%%%%%%%%%%%%%%%%%%%%%%%%%%%%%%%%%%%%%%%%%%%
\draw(0-0.2,0+0.2-1-1)--node[below]{$E_3$}(1+0.2,-1-0.2-1-1)node[sloped,pos=0.5,allow upside down]{\arrowIIn};
%%%%%%%%%%%%%%%%%%%%%%%%%%%%%%%%%%%%%%%%%%

\draw[line width=0.5mm][postaction={decorate}](0.5-0.1,1.5+0.1)--node[below left]{$E_4$}(1.5+0.2,0.5-0.2);

%%%%%%%%%%%%%%%%%%%%%%%%%%%%%%%%%%%%%%%%%%%
\draw(1.5-0.2,0.5)--node[below]{$E_5$}(1.5+1+0.2,0.5)node[sloped,pos=0.5,allow upside down]{\arrowIIn};
%%%%%%%%%%%%%%%%%%%%%%%%%%%%%%%%%%%%%%%%%%%

%%%%%%%%%%%%%%%%%%%%%%%%%%%%%%%%%%%%%%%%%%
\draw[shorten >=-0.3cm,shorten <=-0.3cm](2+1+1+1,-1-1)--node[below right]{$E_6$}(2+1+1,0-1-1-1)node[sloped,pos=0.3,allow upside down]{\arrowIIn};
%%%%%%%%%%%%%%%%%%%%%%%%%%%%%%%%%%%%%%%%%%

%%%%%%%%%%%%%%%%%%%%%%%%%%%%%%%%%%%%%%%%%%%
\draw(2.5,0.5) to[out=-30,in=100] node[left]{$F$}node[sloped,pos=0.5,allow upside down]{\arrowIIn}(3+1,-1-1-1);
%\draw[line width=0.01mm](2.5,0.5)  to[out=-30,in=100]  (3+1,-1-1-1);
%%%%%%%%%%%%%%%%%%%%%%%%%%%%%%%%%%%%%%%%%%%

\node[left=0.2cm] at (0,1){$[1,0]$};
\node[left=0.2cm] at (0,0-1-1){$[-1,2]$};
\node[above left=0.2cm] at (1,2){$[0,1]$};
\node[right=0.2cm] at (2+1+1+1,2){$[-1,2]$};
\node[right=0.2cm] at (2+1+1+1,-1-1){$[1,-2]$};
\node[below=0.2cm] at (1,-1-1-1){$[-2,1]$};
\node[below left=0.06cm] at (1.5,0.5){$[-1,2]$};
\node[above right=0.06cm] at (1.5+1,0.5) {$[-2,1]$};
\node[below=0.2cm] at (2+1+1,-1-1-1){$[-1,-1]$};

\end{tikzpicture}
}
$$
\end{minipage}

\begin{minipage}{0.47\textwidth}
\paragraph{\underline{\underline{Case 3E}}}
\label{3E}

\begin{itemize} 
\item The orbifold group is $D_5$.
\item The degree of $$\widetilde{M}=Bl_{X\cap Y,X\cap E_1,X\cap Z,p_4,E_4\cap F,E_5\cap F}\mathbb{P}^2$$ is 3 and the picard number of $\widehat{M}$ is 2. Here $p_4$ is a point in $E_2$ different from $X\cap E_2,E_1\cap E_2$.
\item $K_{\widetilde{M}}=-3Z+E_1+2E_2-2E_3+3E_4+4E_5+5E_6$.
\end{itemize} 
\end{minipage}%
\begin{minipage}{0.5\textwidth}
$$
\scalebox{0.9}{
\begin{tikzpicture}
[decoration={markings, 
    mark= at position 0.5 with {\arrow{stealth }}}
]
\draw[line width=0.65mm][postaction={decorate}](0,1.3)--node[left]{$X$}(0,-0.3-1-1);
\draw[postaction={decorate}](2+1+1+1,2.3)--node[right]{$Y$}(2+1+1+1,-0.3-1-1-1);
\draw[postaction={decorate}](-0.3+1,0-1-1-1)--node[below]{$Z$}(2+0.3+1+1+1,0-1-1-1);
\draw[line width=0.65mm][postaction={decorate}](0.7,2)--node[above]{$E_1$}(2+0.3+1+1+1,2);
\draw[line width=0.65mm][shorten >=-0.6cm,shorten <=-0.6cm](0-0.2,0.8)--node[above left]{$E_2$}(1+0.2,2.2)node[
    sloped,
    pos=0.3,
    allow upside down]{\arrowBox};

%%%%%%%%%%%%%%%%%%%%%%%%%%%%%%%%%%%%%%%%%%%%
\draw(0-0.2,0+0.2-1-1)--node[below]{$E_3$}(1+0.2,-1-0.2-1-1)node[sloped,pos=0.5,allow upside down]{\arrowIIn};
%%%%%%%%%%%%%%%%%%%%%%%%%%%%%%%%%%%%%%%%%%%%

\draw[line width=0.65mm][postaction={decorate}](0.5-0.1,1.5+0.1)--node[below left]{$E_4$}(1.5+0.2,0.5-0.2);
\draw[line width=0.65mm][postaction={decorate}](1.5-0.2,0.5)--node[below]{$E_5$}(1.5+1+0.2,0.5);

%%%%%%%%%%%%%%%%%%%%%%%%%%%%%%%%%%%%%%%%%%%
\draw(1.5+1,0.5+0.2)--node[right]{$E_6$}(1.5+1,0.5-1-0.2)node[sloped,pos=0.5,allow upside down]{\arrowIIn};
%%%%%%%%%%%%%%%%%%%%%%%%%%%%%%%%%%%%%%%%%%

%\draw[shorten >=-0.3cm,shorten <=-0.3cm][postaction={decorate}](2+1+1+1,-1-1)--node[below right]{$E_6$}(2+1+1,0-1-1-1);

%%%%%%%%%%%%%%%%%%%%%%%%%%%%%%%%%%%%%%%%%
\draw(2.5,0.5-1) to[out=-10,in=180] node[below left]{$F$}node[sloped,pos=0.5,allow upside down]{\arrowIIn}(3+1+1,-1-1-1);
%%%%%%%%%%%%%%%%%%%%%%%%%%%%%%%%%%%%%%

\node[left=0.2cm] at (0,1){$[1,0]$};
\node[left=0.2cm] at (0,0-1-1){$[-1,2]$};
\node[above left=0.2cm] at (1,2){$[0,1]$};
\node[right=0.2cm] at (2+1+1+1,2){$[-1,2]$};
\node[below=0.2cm] at (1,-1-1-1){$[-2,1]$};
\node[below left=0.06cm] at (1.5,0.5){$[-1,2]$};
\node[above right=0.06cm] at (1.5+1,0.5) {$[-2,3]$};
\node[below left=0.06cm] at (1.5+1,0.5-1) {$[-3,1]$};
\node[below=0.2cm] at (2+1+1+1,-1-1-1){$[-1,-2]$};

\end{tikzpicture}
}
$$
\end{minipage}

\begin{minipage}{0.47\textwidth}

\paragraph{\underline{\underline{Case 3F}}}
\label{3F}

\begin{itemize}
    \item The pair $(\widetilde{M},\mathfrak{E})$ is biholomorphic to the pair of Case \hyperref[3E]{3E}.
\end{itemize}

\end{minipage}%
\begin{minipage}{0.5\textwidth}
$$
\scalebox{0.9}{
\begin{tikzpicture}
[decoration={markings, 
    mark= at position 0.5 with {\arrow{stealth }}}
]
\draw[line width=0.65mm][postaction={decorate}](0,1.3)--node[left]{$X$}(0,-0.3-1);
\draw[postaction={decorate}](2+1+1+1,2.3)--node[right]{$Y$}(2+1+1+1,-0.3-1-1-1);

%%%%%%%%%%%%%%%%%%%%%%%%%%%%%%%%%%%%%%%%%%%%
\draw(-0.3+1+1,0-1-1-1)--node[below]{$Z$}(2+0.3+1+1+1,0-1-1-1)node[sloped,pos=0.5,allow upside down]{\arrowIIn};
%%%%%%%%%%%%%%%%%%%%%%%%%%%%%%%%%%%%%%%%%%%%

\draw[line width=0.65mm][postaction={decorate}](0.7,2)--node[above]{$E_1$}(2+0.3+1+1+1,2);
\draw[line width=0.65mm][shorten >=-0.6cm,shorten <=-0.6cm](0-0.2,0.8)--node[above left]{$E_2$}(1+0.2,2.2)node[
    sloped,
    pos=0.3,
    allow upside down]{\arrowBox};

\draw[line width=0.65mm][shorten >=-0.2cm,shorten <=-0.2cm][postaction={decorate}](0,0-1)--node[left]{$E_3$}(1-0.5,-2-0.5);

%%%%%%%%%%%%%%%%%%%%%%%%%%%%%%%%%%%%%%%%%
\draw[shorten >=-0.2cm,shorten <=-0.2cm](0.5,-2.5)--node[below]{$E_6$}(2,-3)node[sloped,pos=0.3,allow upside down]{\arrowIIn};
%%%%%%%%%%%%%%%%%%%%%%%%%%%%%%%%%%%%%%%%%%

\draw[line width=0.65mm][postaction={decorate}](0.5-0.1,1.5+0.1)--node[below left]{$E_4$}(1.5+0.2,0.5-0.2);

%%%%%%%%%%%%%%%%%%%%%%%%%%%%%%%%%%%%%%%%%%%%
\draw(1.5-0.2,0.5)--node[below]{$E_5$}(1.5+1+0.2,0.5)node[sloped,pos=0.5,allow upside down]{\arrowIIn};
%%%%%%%%%%%%%%%%%%%%%%%%%%%%%%%%%%%%%%%%%%%%

%\draw[postaction={decorate}](1.5+1,0.5+0.2)--node[right]{$E_6$}(1.5+1,0.5-1-0.2);
%\draw[shorten >=-0.3cm,shorten <=-0.3cm][postaction={decorate}](2+1+1+1,-1-1)--node[below right]{$E_6$}(2+1+1,0-1-1-1);

%%%%%%%%%%%%%%%%%%%%%%%%%%%%%%%%%%%%%%%%%%
\draw(2.5,0.5) to[out=-40,in=180] node[below left]{$F$}node[sloped,pos=0.5,allow upside down]{\arrowIn}(3+1+1,-1-1-1);
%%%%%%%%%%%%%%%%%%%%%%%%%%%%%%%%%%%%%%%%%%

\end{tikzpicture}
}
$$
\end{minipage}

\begin{minipage}{0.47\textwidth}

\paragraph{\underline{\underline{Case 3G}}}
\label{3G}

\begin{itemize} 
\item The orbifold group is $E_6$.
\item The degree of $$\widetilde{M}=Bl_{X\cap Y,X\cap E_1,X\cap Z,p_4,E_4\cap F,Z\cap E_3,E_5\cap F}\mathbb{P}^2$$ is 2 and the picard number of $\widehat{M}$ is 2. Here $p_4$ is a point in $E_2$ different from $X\cap E_2,E_1\cap E_2$. 
\item $K_{\widetilde{M}}=-3Z+E_1+2E_2-2E_3+3E_4+4E_5+5E_7-4E_6$.
\end{itemize}

\end{minipage}%
\begin{minipage}{0.5\textwidth}
$$
\scalebox{0.9}{
\begin{tikzpicture}
[decoration={markings, 
    mark= at position 0.5 with {\arrow{stealth }}}
]
\draw[line width=0.65mm][postaction={decorate}](0,1.3)--node[left]{$X$}(0,-0.3-1);
\draw[postaction={decorate}](2+1+1+1,2.3)--node[right]{$Y$}(2+1+1+1,-0.3-1-1-1);

%%%%%%%%%%%%%%%%%%%%%%%%%%%%%%%%%%%%%%%%%%%%
\draw(-0.3+1+1,0-1-1-1)--node[below]{$Z$}(2+0.3+1+1+1,0-1-1-1)node[sloped,pos=0.5,allow upside down]{\arrowIIn};
%%%%%%%%%%%%%%%%%%%%%%%%%%%%%%%%%%%%%%%%%%%%

\draw[line width=0.65mm][postaction={decorate}](0.7,2)--node[above]{$E_1$}(2+0.3+1+1+1,2);
\draw[line width=0.65mm][shorten >=-0.6cm,shorten <=-0.6cm](0-0.2,0.8)--node[above left]{$E_2$}(1+0.2,2.2)node[
    sloped,
    pos=0.3,
    allow upside down]{\arrowBox};

\draw[line width=0.5mm][shorten >=-0.2cm,shorten <=-0.2cm][postaction={decorate}](0,0-1)--node[left]{$E_3$}(1-0.5,-2-0.5);

%%%%%%%%%%%%%%%%%%%%%%%%%%%%%%%%%%%%%%%%%
\draw[shorten >=-0.2cm,shorten <=-0.2cm](0.5,-2.5)--node[below]{$E_6$}(2,-3)node[sloped,pos=0.3,allow upside down]{\arrowIIn};
%%%%%%%%%%%%%%%%%%%%%%%%%%%%%%%%%%%%%%%%%%

\draw[line width=0.5mm][postaction={decorate}](0.5-0.1,1.5+0.1)--node[below left]{$E_4$}(1.5+0.2,0.5-0.2);

%%%%%%%%%%%%%%%%%%%%%%%%%%%%%%%%%%%%%%%%%%%%
\draw[line width=0.5mm][postaction={decorate}](1.5-0.2,0.5)--node[below]{$E_5$}(1.5+1+0.2,0.5);
%%%%%%%%%%%%%%%%%%%%%%%%%%%%%%%%%%%%%%%%%%%%

%\draw[postaction={decorate}](1.5+1,0.5+0.2)--node[right]{$E_6$}(1.5+1,0.5-1-0.2);
%\draw[shorten >=-0.3cm,shorten <=-0.3cm][postaction={decorate}](2+1+1+1,-1-1)--node[below right]{$E_6$}(2+1+1,0-1-1-1);

%%%%%%%%%%%%%%%%%%%%%%%%%%%%%%%%%%%%%%%%%%5
\draw[shorten >=-0.2cm,shorten <=-0.2cm](2.5,0.5)--node[right]{$E_7$}(2.5,0.5-1)node[sloped,pos=0.3,allow upside down]{\arrowIIn};
%%%%%%%%%%%%%%%%%%%%%%%%%%%%%%%%%%%%%%%%%%%

%%%%%%%%%%%%%%%%%%%%%%%%%%%%%%%%%%%%%%%%%%
\draw(2.5,0.5-1) to[out=10,in=180] node[below left]{$F$}node[sloped,pos=0.5,allow upside down]{\arrowIIn}(3+1+1,-1-1-1);
%%%%%%%%%%%%%%%%%%%%%%%%%%%%%%%%%%%%%%%%%%

\node[left=0.2cm] at (0,1){$[1,0]$};
\node[left=0.2cm] at (0,0-1){$[-1,2]$};
\node[above left=0.2cm] at (1,2){$[0,1]$};
\node[right=0.2cm] at (2+1+1+1,2){$[-1,2]$};
\node[below=0.2cm] at (1-1,-1-1-0.5){$[-2,3]$};
\node[below left=0.06cm] at (1.5,0.5){$[-1,2]$};
\node[above right=0.06cm] at (1.5+1,0.5) {$[-2,3]$};
\node[below left=0.06cm] at (1.5+1,0.5-1) {$[-3,1]$};
\node[below=0.2cm] at (2+1+1+1,-1-1-1){$[-1,-2]$};
\node[below=0.2cm] at (2,-1-1-1){$[-3,1]$};

\end{tikzpicture}
}
$$
\end{minipage}

\subsubsection{$\boldsymbol{\alpha>2\beta}$}
\label{subsubsec:a>2b}

Weights and flows on $M_2$ are
$$
\begin{tikzpicture}
[decoration={markings, 
    mark= at position 0.5 with {\arrow{stealth }}}
]

%%%%%%%%%%%%%%%%%%%%%%%%%%%%%%%
\draw(0,1.3)--node[left]{$X$}(0,-0.3)node[sloped,pos=0.5,allow upside down]{\arrowIIn};
%%%%%%%%%%%%%%%%%%%%%%%%%%%%%%%%%%%

\draw[postaction={decorate}](2,2.3)--node[right]{$Y$}(2,-0.3);
\draw[postaction={decorate}](-0.3,0)--node[below]{$Z$}(2+0.3,0);
\draw[line width=0.65mm][postaction={decorate}](0.7,2)--node[above]{$E_1$}(2+0.3,2);

%%%%%%%%%%%%%%%%%%%%%%%%%%%%%%%%%%%%%%%%%%%
\draw(0-0.2,0.8)--node[above left]{$E_2$}(1+0.2,2.2)node[sloped,pos=0.5,allow upside down]{\arrowIIn};
%%%%%%%%%%%%%%%%%%%%%%%%%%%%%%%%%%%%%%%%%%%

\node[left=0.2cm] at (0,1){$[\beta,\alpha-2\beta]$};
\node[below left=0.2cm] at (0,0){$[-\beta,\alpha-\beta]$};
\node[above left=0.2cm] at (1,2){$[2\beta-\alpha,\alpha-\beta]$};
\node[right=0.2cm] at (2,2){$[\beta-\alpha,\alpha]$};
\node[below right=0.2cm] at (2,0){$[\beta-\alpha,-\alpha]$};
\end{tikzpicture}.
$$
To get the fixed curve, blow-up should be taken at $p_3=X\cap E_2$. Weights and flows on  $M_3$ are
$$
\begin{tikzpicture}
[decoration={markings, 
    mark= at position 0.5 with {\arrow{stealth }}}
]
\draw[line width=0.65mm][postaction={decorate}](0,1.3)--node[left]{$X$}(0,-0.6);
\draw[postaction={decorate}](2.3+1,2.3+1)--node[right]{$Y$}(2.3+1,-0.6);
\draw[postaction={decorate}](-0.3,-0.3)--node[below]{$Z$}(3.6,-0.3);
\draw[line width=0.65mm][postaction={decorate}](0.7+1,2+1)--node[above]{$E_1$}(2.3+0.3+1,2+1);
\draw[line width=0.65mm][postaction={decorate}](0.5-0.3,2.5-0.1)--node[above left]{$E_2$}(1+1+0.3,2+1+0.1);
\draw(0-0.1,1-0.3)--node[left]{$E_3$}node[right]{?}(0.5+0.1,2.5+0.3);

\node[left=0.2cm] at (0,1){$[\beta,\alpha-3\beta]$};
\node[below left=0.2cm] at (0,-0.3){$[-\beta,\alpha-\beta]$};
\node[above=0.3cm] at (1+1,2+1){$[2\beta-\alpha,\alpha-\beta]$};
\node[right=0.2cm] at (2.3+1,-0.3){$[\beta-\alpha,-\alpha]$};
\node[right=0.2cm] at (2.3+1,3){$[\beta-\alpha,\alpha]$};
\node[left=0.2cm] at (0.5,2.5){$[3\beta-\alpha,\alpha-2\beta]$};

\end{tikzpicture}.
$$
Now $E_3$ has to be a fixed curve to ensure $E$ is connected.
This means $\alpha=3\beta$ and $E_3$ is the central curve in $E$. Similar proof of Lemma \ref{lem:propery} shows $Y$ is not contained in $E$.
Thus $E_1\cup E_2$ is one of the three chains of $(-2)$-curves in $E$, and this chain is of type $A_2$. The ending fixed point is $E_1\cap Y$, therefore it has weights $[p,q]=[2,3]$, which gives
$\alpha=3,\alpha-\beta=2$.
This shows
$\alpha=3,\beta=1$.
There must be one more blow-up at a point $p_4$ different from $X\cap E_3, E_3\cap E_2$ in $E_3$ to make it a $(-2)$-curve. Weights and flows on $M_4$ are
$$
\begin{tikzpicture}
[decoration={markings, 
    mark= at position 0.5 with {\arrow{stealth }}}
]
\draw[line width=0.65mm][postaction={decorate}](0,1.3-1)--node[left]{$X$}(0,-0.3-1);
\draw[postaction={decorate}](2+1,2.3)--node[right]{$Y$}(2+1,-0.3-1);
\draw[postaction={decorate}](-0.3,0-1)--node[below]{$Z$}(2+0.3+1,0-1);
\draw[line width=0.65mm][postaction={decorate}](0.7+1,2)--node[above]{$E_1$}(2+0.3+1,2);
\draw[line width=0.65mm][shorten >=-0.2cm,shorten <=-0.2cm][postaction={decorate}](1-0.5,1+0.5)--node[above left]{$E_2$}(2,2);
\draw[line width=0.65mm][shorten >=-0.2cm,shorten <=-0.2cm](0,0)--node[above left]{$E_3$}(0.5,1.5)node[
    sloped,
    pos=0.3,
    allow upside down]{\arrowBox};

%%%%%%%%%%%%%%%%%%%%%%%%%%%%%%%%%%%%%%%%%%%
\draw[shorten >=-0.2cm,shorten <=-0.2cm](0.25,0.75)--node[below]{$E_4$}(1,0.5)node[sloped,pos=0.3,allow upside down]{\arrowIIn};
%%%%%%%%%%%%%%%%%%%%%%%%%%%%%%%%%%%%%%%%%%%

\draw[postaction={decorate}] (1,0.5)  to[out=-70,in=180] node[above]{$F$}(3,-1);

\node[left=0.2cm] at (0,1-1){$[1,0]$};
\node[below left=0.2cm] at (0,-1){$[-1,2]$};
\node[above=0.3cm] at (1+1,2){$[-1,2]$};
\node[right=0.2cm] at (2+1,2){$[-2,3]$};
\node[below right=0.2cm] at (3,-1){$[-2,-3]$};
\node[above left=0.2cm] at (0.5,1.5){$[0,1]$};
\node at (1.8,0.5) {$[-1,1]$};
\end{tikzpicture}.
$$
In $M_4$, there is only one chain of $(-2)$-curves. Therefore, we need to perform one more blow-up at the intersection $p_5=E_4\cap F$ to make $E_4$ a $(-2)$-curve. Weights and flows on $M_5$ are
$$
\begin{tikzpicture}
[decoration={markings, 
    mark= at position 0.5 with {\arrow{stealth }}}
]
\draw[line width=0.65mm][postaction={decorate}](0,1.3-1)--node[left]{$X$}(0,-0.3-2);
\draw[postaction={decorate}](2+2,2.3)--node[right]{$Y$}(2+2,-0.3-2);
\draw[postaction={decorate}](-0.3,0-2)--node[below]{$Z$}(2+0.3+2,0-2);
\draw[line width=0.65mm][postaction={decorate}](0.7+1,2)--node[above]{$E_1$}(2+0.3+2,2);
\draw[line width=0.65mm][shorten >=-0.2cm,shorten <=-0.2cm][postaction={decorate}](1-0.5,1+0.5)--node[above left]{$E_2$}(2,2);
\draw[line width=0.65mm][shorten >=-0.2cm,shorten <=-0.2cm](0,0)--node[above left]{$E_3$}(0.5,1.5)node[
    sloped,
    pos=0.3,
    allow upside down]{\arrowBox};

%%%%%%%%%%%%%%%%%%%%%%%%%%%%%%%%%%%%%%%%%%%
\draw[line width=0.65mm][shorten >=-0.2cm,shorten <=-0.2cm](0.25,0.75)--node[below]{$E_4$}(1.4,0.4);
%%%%%%%%%%%%%%%%%%%%%%%%%%%%%%%%%%%%%%%%%%%

%%%%%%%%%%%%%%%%%%%%%%%%%%%%%%%%%%%%%%%%%
\draw[shorten >=-0.2cm,shorten <=-0.2cm](1.3,0.5)--node[right]{$E_5$}(1.3,0.5-1)node[sloped,pos=0.3,allow upside down]{\arrowIIn};
%%%%%%%%%%%%%%%%%%%%%%%%%%%%%%%%%%%%%%%%%%%

\draw[postaction={decorate}] (1,0.5-1)  to[out=10,in=180] node[above]{$F$}(4,-2);

\node[left=0.2cm] at (0,1-1){$[1,0]$};
\node[below left=0.2cm] at (0,-2){$[-1,2]$};
\node[above=0.3cm] at (1+1,1.8){$[-1,2]$};
\node[right=0.2cm] at (4,2.3){$[-2,3]$};
\node[below right=0.2cm] at (4,-1.8){$[-2,-3]$};
\node[above left=0.2cm] at (0.5,1.5){$[0,1]$};
\node at (2.3,0.4) {$[-1,2]$};
\node at (1,-1) {$[-2,1]$};
\end{tikzpicture}.
$$

It is clear that the following proposition holds.
\begin{proposition}
In case \ref{case3}, when $\alpha>2\beta$ and the orbifold group is non-cyclic, the pair $(\widetilde{M},\mathfrak{E})$ must be a blow-up of $M_5$ with the $\mathbb{C}^*$-action described above. 
\end{proposition}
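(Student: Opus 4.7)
The plan is to verify step by step that the sequence of blow-ups leading from $\mathbb{P}^2$ through $M_1, M_2, M_3, M_4$ to $M_5$ is forced under the assumption $\alpha > 2\beta$ with non-cyclic orbifold group, so that any $\widetilde{M}$ in this subcase necessarily factors through $M_5$ and is therefore a blow-up of it.

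First I would invoke Lemma \ref{lem:m1} to force $M_1 = Bl_{X \cap Y}\mathbb{P}^2$, and then the opening analysis of the non-cyclic case to force $p_2 = X \cap E_1$. Because $\alpha > 2\beta$, the curve $E_2$ in $M_2$ is not yet fixed, so to produce the required central fixed curve of the $D_n$ or $E_n$ configuration one must blow up $p_3 = X \cap E_2$. The condition that $E_3$ be fixed in $M_3$ reads $\alpha - 3\beta = 0$, and primitivity then yields $(\alpha,\beta) = (3,1)$. The chain $E_1 \cup E_2$ in $M_3$ thus realizes one of the three arms demanded by Theorem \ref{thm:noncyclicweights}, of type $A_2 = L(1,3)$, with the weight at the ending fixed point $E_1 \cap Y$ matching the weight predicted by Theorem \ref{thm:cyclicweights}.

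Next I would argue that $M_4$ is forced. To bring $E_3$ down to self-intersection $-2$ one more blow-up in $E_3$ is required, and the two candidate intersection points are excluded: $E_3 \cap E_2$ is ruled out by Corollary \ref{cor:blowup} since it would reduce $E_2$ to self-intersection below $-2$, while $X \cap E_3$ is ruled out because, as in the proof of Lemma \ref{lem:propery}, the proper transform of $X$ cannot lie in $E$ (the weight at $X \cap Z$ cannot be made to match an ending weight of a $(-2)$-chain, by Corollary \ref{cor:su2weights}). Hence $p_4$ is a generic interior point of $E_3$, producing the $(-1)$-curve $E_4$ meeting $E_3$ and the proper transform $F$ of the flowline through $p_4$.

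The passage to $M_5$ is obtained by noting that a second $(-2)$-arm must emanate from the central curve $E_3$, and that it can only run through $E_4$, since every other curve meeting $E_3$ has been excluded by the flow configuration and weight constraints. To make $E_4$ a $(-2)$-curve, one blows up a fixed point on it; the point $E_4 \cap E_3$ is forbidden by Corollary \ref{cor:blowup}, so the only option is $p_5 = E_4 \cap F$, which produces $M_5$. The main obstacle is certifying uniqueness at each branching; I would handle it by combining Corollary \ref{cor:blowup}, Lemma \ref{lem:blowup}, the disconnectedness criterion for $E$, and the weight identification furnished by Theorems \ref{thm:cyclicweights} and \ref{thm:noncyclicweights}, which together show that the second arm at the orbifold point is forced to be $L(1,2)$. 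Any further blow-ups can only occur at fixed points of $M_5$, so $\widetilde{M}$ is by construction a blow-up of $M_5$, completing the proof.
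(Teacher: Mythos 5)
Your overall route is the same as the paper's: force $M_1$, $M_2$, $M_3$ in turn, extract $\alpha=3,\beta=1$ from the requirement that $E_3$ be a fixed curve, then force $p_4$ to be a generic point of $E_3$ and $p_5=E_4\cap F$. Most of the steps are justified correctly, but one justification is genuinely wrong: you exclude the blow-up at $X\cap E_3$ on the grounds that ``the proper transform of $X$ cannot lie in $E$,'' citing the argument of Lemma \ref{lem:propery} and Corollary \ref{cor:su2weights}. This is false. In every terminal configuration of this subcase (Cases \hyperref[3H]{3H}--\hyperref[3M]{3M}) the proper transform of $X$ \emph{is} contained in $E$: it is precisely one of the three arms of the $D_n$/$E_n$ diagram attached to the central curve $E_3$. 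Moreover, Corollary \ref{cor:su2weights} concerns cyclic groups with all $e_i=2$ and says nothing about which curves lie in $E$ here, and Lemma \ref{lem:propery} is an argument specific to $Y$ (it shows $Y$ cannot be brought down to self-intersection $-2$), which does not transfer to $X$. If your claimed exclusion were correct, it would rule out the actual answers.

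The correct, and much simpler, reason $X\cap E_3$ cannot be blown up is that $X$ already has self-intersection $-2$ in $M_3$ (it started at $+1$ and the three blow-ups $p_1=X\cap Y$, $p_2=X\cap E_1$, $p_3=X\cap E_2$ each lie on it), so Lemma \ref{lem:blowup} forbids blowing up any fixed point on it; a further blow-up would create a $(-3)$-curve, which cannot exist in the minimal resolution. The same lemma disposes of $E_3\cap E_2$. With that correction the rest of your argument goes through: $p_4$ must be a generic point of the fixed curve $E_3$, the third arm of $E$ must run through $E_4$ since $X$, $E_2$, $E_4$ are the only curves meeting $E_3$, and $p_5=E_4\cap F$ is the only admissible fixed point on $E_4$. (A cosmetic slip: the $A_2$ chain is $L(2,3)$, not $L(1,3)$, in the paper's conventions.)
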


Note that the intersection of $F$ and $Z$ is non-transverse. It is direct to check that $F^2=4$ in $M_5$ and blowing up $Y\cap Z$ will decrease the intersection number of $F$ by $4$. By examining all further blow-ups, we have 
\begin{proposition}
In case \ref{case3}, when $\alpha>2\beta$ and the orbifold group is non-cyclic, there are six possible $(\widetilde{M},\mathfrak{E})$.
\end{proposition}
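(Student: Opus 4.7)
The plan is to enumerate all valid sequences of further blow-ups of $M_5$ that yield a pair $(\widetilde{M},\mathfrak{E})$ with non-cyclic $SU(2)$ orbifold singularity, following the same strategy used in Sections \ref{sec:case1}-\ref{noncycliccase3} for all the earlier subcases.

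First, I would identify the fixed points in $M_5$ available for blow-up. By Lemma \ref{lem:blowup}, no point on a $(-2)$-curve can be blown up, which rules out all points on $X$, $E_1$, $E_2$, $E_3$, $E_4$. The remaining candidate points are on $E_5$ (away from its intersection with $E_4$), on the curve $F$, at $Y\cap Z$, and on subsequent exceptional curves produced by blow-ups, subject to the additional restrictions of Corollary \ref{cor:blowup}.

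Next, I would determine the structure of the exceptional set $E$. Since $\Gamma$ is non-cyclic, $E$ must consist of three chains of $(-2)$-curves meeting the central fixed curve $E_3$. In $M_5$, we already see chain $E_2\cup E_1$ (type $A_2$, ending at $Y\cap E_1$) and chain $X$ (type $A_1$, ending at $X\cap Z$). A third chain must extend from $E_4$ through $E_5$. Comparing with the structures in Table \ref{noncyclicL} for $m=1$, the compatible non-cyclic $SU(2)$ orbifold groups are those whose three chains are of types $(A_1,A_2,A_k)$: this forces $\Gamma\in\{D_5,E_6,E_7,E_8\}$, obtained by taking the third chain to be of type $A_1,A_2,A_3,A_4$ respectively. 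For each type, the third chain is constructed by iterated blow-ups beginning at a point of $E_5$ away from $E_4$ and $F$, and continuing along successive exceptional curves; at each stage I would verify via Theorem \ref{thm:cyclicweights} and Corollary \ref{cor:su2weights} that the weights at both ending fixed points of $E$ match, and that no curve with self-intersection $\leq -3$ is introduced.

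Finally, for each of the four orbifold types, I would examine whether an additional blow-up at $Y\cap Z$ is permitted without disrupting $E$. Since the intersection of $F$ and $Z$ is non-transverse with $F^2=4$ in $M_5$ and each blow-up at $Y\cap Z$ reduces $F^2$ by $4$, only a limited number of such blow-ups are compatible with the self-intersection constraint. Tracking exactly which of the four base configurations admit such an additional blow-up, and ruling out all other blow-ups on $Y,Z,F$ via Corollary \ref{cor:blowup}, would yield the claimed total of six distinct pairs $(\widetilde{M},\mathfrak{E})$.

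The main obstacle is the bookkeeping: verifying at every intermediate surface that the weights at all fixed points match the values predicted by Theorem \ref{thm:cyclicweights} and Corollary \ref{cor:su2weights}, eliminating all blow-ups that either force a forbidden $(-3)$-curve or disconnect $E$, and handling the non-transverse intersection of $F$ with $Z$ correctly when enumerating supplementary blow-ups. A secondary obstacle is identifying any coincidences of the resulting pairs with cases already enumerated in Section \ref{sec:case12} and Section \ref{noncycliccase3} (cf.\ Cases 3B and 3F earlier), to confirm the count of six genuinely distinct pairs.
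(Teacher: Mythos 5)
Your overall strategy is the same as the paper's: start from $M_5$, use Lemma \ref{lem:blowup} and Corollary \ref{cor:blowup} to restrict the admissible blow-up points, observe that the exceptional set must consist of three chains meeting the central fixed curve $E_3$ with two chains ($X$ of type $A_1$ and $E_2\cup E_1$ of type $A_2$) already frozen, deduce $\Gamma\in\{D_5,E_6,E_7,E_8\}$ from the length of the third chain, and then account for the two extra configurations via the blow-up at $Y\cap Z$ using the fact that $F^2=4$ drops by $4$ under that blow-up. This is exactly the paper's argument, and your count $4+2=6$ matches Cases 3H--3M (with 3I and 3K being the $Y\cap Z$ blow-ups of the $D_5$ and $E_6$ configurations).

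There is, however, one concrete error in the mechanism you describe for building the third chain: you propose "iterated blow-ups beginning at a point of $E_5$ away from $E_4$ and $F$." Such a point is not a fixed point of the $\mathbb{C}^*$-action. Unlike the central curve $E_3$ (which is a fixed curve, so a generic point of it may be blown up, as is done for $p_4$), the curve $E_5$ carries a nontrivial flow --- its endpoints have weights $[-1,2]$ and $[-2,1]$ --- so its only fixed points are $E_5\cap E_4$ and $E_5\cap F$. Since every blow-up in this classification must be taken at a fixed point for the action to lift, and $E_5\cap E_4$ is excluded by Corollary \ref{cor:blowup}, the only way to lengthen the third chain is to blow up $E_5\cap F$, then $E_6\cap F$, and so on. This correction is not cosmetic: it is precisely what keeps $F$ attached to the free end of the third chain at every stage, which is what makes your $F^2$ bookkeeping (each chain-lengthening blow-up lowers $F^2$ by $1$, the blow-up at $Y\cap Z$ lowers it by $4$) valid and what rules out the $Y\cap Z$ blow-up for the $E_7$ and $E_8$ configurations, where it would force $F^2\leq-2$. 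With that repair, and noting that in the non-cyclic case the equal-weights condition at the orbifold point is automatic (so Corollary \ref{cor:su2weights} need not be invoked chain by chain), your plan reproduces the paper's proof.
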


In the following we describe the six $(\widetilde{M},\mathfrak{E})$ in details. Note that the $\mathfrak{E}$ action is lifted from the action $t\curvearrowright[x:y:z]=[t^3x:ty:z]$ on $\mathbb{P}^2$.

\begin{minipage}{0.47\textwidth}

\paragraph{\underline{\underline{Case 3H}}}
\label{3H}

\begin{itemize} 
\item The orbifold group is $D_5$.
\item The degree of 
         $$\widetilde{M}=Bl_{X\cap Y,X\cap E_1, X\cap E_2,p_4,E_4\cap F}\mathbb{P}^2$$
      is 4 and the picard number of $\widehat{M}$ is 1. Here $p_4$ is a point in $E_2$ different from $X\cap E_3,E_2\cap E_3$. 
\item $K_{\widetilde{M}}=-3Z+E_1+2E_2+3E_3+4E_4+5E_5$.
\end{itemize}
\end{minipage}%
\begin{minipage}{0.5\textwidth}
$$
\scalebox{0.9}{
\begin{tikzpicture}
[decoration={markings, 
    mark= at position 0.5 with {\arrow{stealth }}}
]
\draw[line width=0.65mm][postaction={decorate}](0,1.3-1)--node[left]{$X$}(0,-0.3-1-2);
\draw[postaction={decorate}](2+1+2,2.3)--node[right]{$Y$}(2+1+2,-0.3-1-2);
\draw[postaction={decorate}](-0.3,0-1-2)--node[below]{$Z$}(2+0.3+1+2,0-1-2);
\draw[line width=0.65mm][postaction={decorate}](0.7+1,2)--node[above]{$E_1$}(2+0.3+1+2,2);
\draw[line width=0.65mm][shorten >=-0.2cm,shorten <=-0.2cm][postaction={decorate}](1-0.5,1+0.5)--node[above left]{$E_2$}(2,2);
\draw[line width=0.65mm][shorten >=-0.6cm,shorten <=-0.6cm](0,0)--node[above left]{$E_3$}(0.5,1.5)node[
    sloped,
    pos=0.3,
    allow upside down]{\arrowBox};

\draw[line width=0.65mm][postaction={decorate}][shorten >=-0.2cm,shorten <=-0.2cm](0.25,0.75)--node[below]{$E_4$}(1,0.5);

%%%%%%%%%%%%%%%%%%%%%%%%%%%%%%%%%%%%%%%%%
\draw[shorten >=-0.2cm,shorten <=-0.2cm](1,0.5)--node[right]{$E_5$}(1,0.5-1)node[sloped,pos=0.3,allow upside down]{\arrowIIn};
%%%%%%%%%%%%%%%%%%%%%%%%%%%%%%%%%%%%%%%%%%

\draw[postaction={decorate}](1,0.5-1) to[out=10,in=180] node[below]{$F$}(3+2,-1-2);

\node[left=0.2cm] at (0,1-1){$[1,0]$};
\node[below left=0.2cm] at (0,-3){$[-1,2]$};
\node[above=0.3cm] at (1+1,2){$[-1,2]$};
\node[right=0.2cm] at (5,2.2){$[-2,3]$};
\node[below right=0.2cm] at (5,-3){$[-2,-3]$};
\node[above left=0.2cm] at (0.5,1.5){$[0,1]$};
\node at (1.7,1) {$[-1,2]$};
\node at (1.2,-1) {$[-2,1]$};

\end{tikzpicture}
}
$$
\end{minipage}

\begin{minipage}{0.47\textwidth}

\paragraph{\underline{\underline{Case 3I}}}

\begin{itemize}
    \item The pair $(\widetilde{M},\mathfrak{E})$ is biholomorphic to the pair of
    Case \hyperref[3E]{3E}.
\end{itemize}

\end{minipage}%
\begin{minipage}{0.5\textwidth}
$$
\scalebox{0.9}{
\begin{tikzpicture}
[decoration={markings, 
    mark= at position 0.5 with {\arrow{stealth }}}
]
\draw[line width=0.65mm][postaction={decorate}](0,1.3-1)--node[left]{$X$}(0,-0.3-1-2);

%%%%%%%%%%%%%%%%%%%%%%%%%%%%%%%%%%%%%%%%%%%%%%%
\draw(2+1+2,2.3)--node[right]{$Y$}(2+1+2,-0.3-1-1)node[sloped,pos=0.5,allow upside down]{\arrowIIn};
%%%%%%%%%%%%%%%%%%%%%%%%%%%%%%%%%%%%%%%%%%%%%%

\draw[postaction={decorate}](-0.3,0-1-2)--node[below]{$Z$}(2+0.3+1+1,0-1-2);
\draw[line width=0.65mm][postaction={decorate}](0.7+1,2)--node[above]{$E_1$}(2+0.3+1+2,2);
\draw[line width=0.65mm][shorten >=-0.2cm,shorten <=-0.2cm][postaction={decorate}](1-0.5,1+0.5)--node[above left]{$E_2$}(2,2);
\draw[line width=0.65mm][shorten >=-0.6cm,shorten <=-0.6cm](0,0)--node[above left]{$E_3$}(0.5,1.5)node[
    sloped,
    pos=0.3,
    allow upside down]{\arrowBox};

\draw[line width=0.65mm][postaction={decorate}][shorten >=-0.2cm,shorten <=-0.2cm](0.25,0.75)--node[below]{$E_4$}(1,0.5);

%%%%%%%%%%%%%%%%%%%%%%%%%%%%%%%%%%%%%%%%%%%%
\draw[shorten >=-0.2cm,shorten <=-0.2cm](1,0.5)--node[right]{$E_5$}(1,0.5-1)node[sloped,pos=0.3,allow upside down]{\arrowIIn};
%%%%%%%%%%%%%%%%%%%%%%%%%%%%%%%%%%%%%%%%%%%%

%%%%%%%%%%%%%%%%%%%%%%%%%%%%%%%%%%%%%%%%%%%
\draw[shorten >=-0.2cm,shorten <=-0.2cm](5,-2)--node[below right]{$E_6$}(4,-3)node[sloped,pos=0.3,allow upside down]{\arrowIIn};
%%%%%%%%%%%%%%%%%%%%%%%%%%%%%%%%%%%%%%%%%%%%%

\draw[postaction={decorate}](1,0.5-1) to[out=10,in=45] node[below]{$F$}(3+1,-1-2);

\node[left=0.2cm] at (0,1-1){$[1,0]$};
\node[below left=0.2cm] at (0,-3){$[-1,2]$};
\node[above=0.3cm] at (1+1,2){$[-1,2]$};
\node[right=0.2cm] at (5,2.2){$[-2,3]$};
\node[above left=0.2cm] at (0.5,1.5){$[0,1]$};
\node at (1.7,1) {$[-1,2]$};
\node at (1.2,-1) {$[-2,1]$};
\node at (5.8,-2.2){$[1,-3]$};
\node at (4,-3.4){$[-2,-1]$};

\end{tikzpicture}
}
$$
\end{minipage}

\begin{minipage}{0.47\textwidth}

\paragraph{\underline{\underline{Case 3J}}}
\label{3J}

\begin{itemize} 
\item The orbifold group is $E_6$.
\item The degree of 
    $$\widetilde{M}=Bl_{X\cap Y,X\cap E_1, X\cap E_2,p_4,E_4\cap F,E_5\cap F}\mathbb{P}^2$$
    is 3 and the picard number of $\widehat{M}$ is 1. Here $p_4$ is a point in $E_2$ different from $X\cap E_3,E_2\cap E_3$.  
\item $K_{\widetilde{M}}=-3Z+E_1+2E_2+3E_3+4E_4+5E_5+6E_6$.
\end{itemize}

\end{minipage}%
\begin{minipage}{0.5\textwidth}
$$
\scalebox{0.9}{
\begin{tikzpicture}
[decoration={markings, 
    mark= at position 0.5 with {\arrow{stealth }}}
]
\draw[line width=0.65mm][postaction={decorate}](0,1.3-1)--node[left]{$X$}(0,-0.3-1-2);
\draw[postaction={decorate}](2+1+2,2.3)--node[right]{$Y$}(2+1+2,-0.3-1-2);
\draw[postaction={decorate}](-0.3,0-1-2)--node[below]{$Z$}(2+0.3+1+2,0-1-2);
\draw[line width=0.65mm][postaction={decorate}](0.7+1,2)--node[above]{$E_1$}(2+0.3+1+2,2);
\draw[line width=0.65mm][shorten >=-0.2cm,shorten <=-0.2cm][postaction={decorate}](1-0.5,1+0.5)--node[above left]{$E_2$}(2,2);
\draw[line width=0.65mm][shorten >=-0.6cm,shorten <=-0.6cm](0,0)--node[above left]{$E_3$}(0.5,1.5)node[
    sloped,
    pos=0.3,
    allow upside down]{\arrowBox};

\draw[line width=0.65mm][postaction={decorate}][shorten >=-0.2cm,shorten <=-0.2cm](0.25,0.75)--node[below]{$E_4$}(1,0.5);
\draw[line width=0.65mm][postaction={decorate}][shorten >=-0.2cm,shorten <=-0.2cm](1,0.5)--node[right]{$E_5$}(1,0.5-1);

%%%%%%%%%%%%%%%%%%%%%%%%%%%%%%%%%%%%%%%%%%
\draw[shorten >=-0.2cm,shorten <=-0.2cm](1,0.5-1)--node[below]{$E_6$}(1+1,0.5-1)node[sloped,pos=0.3,allow upside down]{\arrowIIn};
%%%%%%%%%%%%%%%%%%%%%%%%%%%%%%%%%%%%%%%%%%

\draw[postaction={decorate}](1+1,0.5-1) to[out=-90,in=180] node[below]{$F$}(3+2,-1-2);

\node[left=0.2cm] at (0,1-1){$[1,0]$};
\node[below left=0.2cm] at (0,-3){$[-1,2]$};
\node[above=0.3cm] at (1+1,2){$[-1,2]$};
\node[right=0.2cm] at (5,2.2){$[-2,3]$};
\node[below right=0.2cm] at (5,-3){$[-2,-3]$};
\node[above left=0.2cm] at (0.5,1.5){$[0,1]$};
\node at (1.7,1) {$[-1,2]$};
\node at (1.1,-1.2) {$[-2,3]$};
\node at (2.7,-0.4) {$[-3,1]$};

\end{tikzpicture}
}
$$
\end{minipage}

\begin{minipage}{0.47\textwidth}
\paragraph{\underline{\underline{Case 3K}}}
\begin{itemize}
    \item The pair $(\widetilde{M},\mathfrak{E})$ is biholomorphic to the pair of Case \hyperref[3G]{3G}.
\end{itemize}

\end{minipage}%
\begin{minipage}{0.5\textwidth}
$$
\scalebox{0.9}{
\begin{tikzpicture}
[decoration={markings, 
    mark= at position 0.5 with {\arrow{stealth }}}
]
\draw[line width=0.65mm][postaction={decorate}](0,1.3-1)--node[left]{$X$}(0,-0.3-1-2);
\draw[postaction={decorate}](2+1+2,2.3)--node[right]{$Y$}(2+1+2,-0.3-1-1);
\draw[postaction={decorate}](-0.3,0-1-2)--node[below]{$Z$}(2+0.3+1+1,0-1-2);
\draw[line width=0.65mm][postaction={decorate}](0.7+1,2)--node[above]{$E_1$}(2+0.3+1+2,2);
\draw[line width=0.65mm][shorten >=-0.2cm,shorten <=-0.2cm][postaction={decorate}](1-0.5,1+0.5)--node[above left]{$E_2$}(2,2);
\draw[line width=0.65mm][shorten >=-0.6cm,shorten <=-0.6cm](0,0)--node[above left]{$E_3$}(0.5,1.5)node[
    sloped,
    pos=0.3,
    allow upside down]{\arrowBox};

\draw[line width=0.65mm][postaction={decorate}][shorten >=-0.2cm,shorten <=-0.2cm](0.25,0.75)--node[below]{$E_4$}(1,0.5);
\draw[line width=0.65mm][shorten >=-0.2cm,shorten <=-0.2cm][postaction={decorate}](1,0.5)--node[right]{$E_5$}(1,0.5-1);

%%%%%%%%%%%%%%%%%%%%%%%%%%%%%%%%%%%%%%%%%%%%
\draw[shorten >=-0.2cm,shorten <=-0.2cm](1,0.5-1)--node[below]{$E_6$}(1+1,0.5-1)node[sloped,pos=0.3,allow upside down]{\arrowIIn};
%%%%%%%%%%%%%%%%%%%%%%%%%%%%%%%%%%%%%%%%%%%%

%%%%%%%%%%%%%%%%%%%%%%%%%%%%%%%%%%%%%%%%%%%
\draw[shorten >=-0.2cm,shorten <=-0.2cm](5,-2)--node[below right]{$E_7$}(4,-3)node[sloped,pos=0.3,allow upside down]{\arrowIIn};
%%%%%%%%%%%%%%%%%%%%%%%%%%%%%%%%%%%%%%%%%%%%

%%%%%%%%%%%%%%%%%%%%%%%%%%%%%%%%%%%%%%%%%%%%%
\draw(1+1,0.5-1) to[out=-90,in=45] node[below]{$F$}node[sloped,pos=0.5,allow upside down]{\arrowIIn}(3+1,-1-2);
%%%%%%%%%%%%%%%%%%%%%%%%%%%%%%%%%%%%%%%%%%%%%

\end{tikzpicture}
}
$$
\end{minipage}

\begin{minipage}{0.47\textwidth}
\paragraph{\underline{\underline{Case 3L}}}
\label{3L}

\begin{itemize} 
\item The orbifold group is $E_7$.
\item The degree of 
        $$\widetilde{M}=Bl_{X\cap Y,X\cap E_1, X\cap E_2,p_4,E_4\cap F,E_5\cap F,E_6\cap F}\mathbb{P}^2$$
    is 2 and the picard number of $\widehat{M}$ is 1. Here $p_4$ is a point in $E_2$ different from $X\cap E_3,E_2\cap E_3$.  
\item $K_{\widetilde{M}}=-3Z+E_1+2E_2+3E_3+4E_4+5E_5+6E_6+7E_7$.
\end{itemize}

\end{minipage}%
\begin{minipage}{0.5\textwidth}
$$
\scalebox{0.9}{
\begin{tikzpicture}
[decoration={markings, 
    mark= at position 0.5 with {\arrow{stealth }}}
]
\draw[line width=0.65mm][postaction={decorate}](0,1.3-1)--node[left]{$X$}(0,-0.3-1-2);
\draw[postaction={decorate}](2+1+2,2.3)--node[right]{$Y$}(2+1+2,-0.3-1-2);
\draw[postaction={decorate}](-0.3,0-1-2)--node[below]{$Z$}(2+0.3+1+2,0-1-2);
\draw[line width=0.65mm][postaction={decorate}](0.7+1,2)--node[above]{$E_1$}(2+0.3+1+2,2);
\draw[line width=0.65mm][shorten >=-0.2cm,shorten <=-0.2cm][postaction={decorate}](1-0.5,1+0.5)--node[above left]{$E_2$}(2,2);
\draw[line width=0.65mm][shorten >=-0.6cm,shorten <=-0.6cm](0,0)--node[above left]{$E_3$}(0.5,1.5)node[
    sloped,
    pos=0.3,
    allow upside down]{\arrowBox};

\draw[line width=0.65mm][postaction={decorate}][shorten >=-0.2cm,shorten <=-0.2cm](0.25,0.75)--node[below]{$E_4$}(1,0.5);
\draw[line width=0.65mm][shorten >=-0.2cm,shorten <=-0.2cm][postaction={decorate}](1,0.5)--node[right]{$E_5$}(1,0.5-1);
\draw[line width=0.65mm][shorten >=-0.2cm,shorten <=-0.2cm][postaction={decorate}](1,0.5-1)--node[below]{$E_6$}(1+1,0.5-1);

%%%%%%%%%%%%%%%%%%%%%%%%%%%%%%%%%%%%%%%%%%
\draw[shorten >=-0.2cm,shorten <=-0.2cm](1+1,0.5-1)--node[right]{$E_7$}(1+1,0.5-1-1)node[sloped,pos=0.3,allow upside down]{\arrowIIn};
%%%%%%%%%%%%%%%%%%%%%%%%%%%%%%%%%%%%%%%%%%

\draw[postaction={decorate}] (1+1,0.5-1-1) to[out=10,in=180] node[below]{$F$}(3+2,-1-2);

\node[left=0.2cm] at (0,1-1){$[1,0]$};
\node[below left=0.2cm] at (0,-3){$[-1,2]$};
\node[above=0.3cm] at (1+1,2){$[-1,2]$};
\node[right=0.2cm] at (5,2.2){$[-2,3]$};
\node[below right=0.2cm] at (5,-3){$[-2,-3]$};
\node[above left=0.2cm] at (0.5,1.5){$[0,1]$};
\node at (1.7,1) {$[-1,2]$};
\node at (1.1,-1.2) {$[-2,3]$};
\node at (2.7,-0.4) {$[-3,4]$};
\node at (2.1,-2) {$[-4,1]$};

\end{tikzpicture}
}
$$
\end{minipage}

\begin{minipage}{0.47\textwidth}

\paragraph{\underline{\underline{Case 3M}}}
\label{3M}

\begin{itemize}
\item The orbifold group is $E_8$.
\item The degree of 
        $$\widetilde{M}=Bl_{X\cap Y,X\cap E_1, X\cap E_2,p_4,E_4\cap F,E_5\cap F,E_6\cap F,E_7\cap F}\mathbb{P}^2$$
    is 1 and the picard number of $\widehat{M}$ is 1. Here $p_4$ is a point in $E_2$ different from $X\cap E_3,E_2\cap E_3$.  
\item $K_{\widetilde{M}}=-3Z+E_1+2E_2+3E_3+4E_4+5E_5+6E_6+7E_7+8E_8$.
\end{itemize}

\end{minipage}%
\begin{minipage}{0.5\textwidth}
$$
\scalebox{0.9}{
\begin{tikzpicture}
[decoration={markings, 
    mark= at position 0.5 with {\arrow{stealth }}}
]
\draw[line width=0.65mm][postaction={decorate}](0,1.3-1)--node[left]{$X$}(0,-0.3-1-2);
\draw[postaction={decorate}](2+1+2,2.3)--node[right]{$Y$}(2+1+2,-0.3-1-2);
\draw[postaction={decorate}](-0.3,0-1-2)--node[below]{$Z$}(2+0.3+1+2,0-1-2);
\draw[line width=0.65mm][postaction={decorate}](0.7+1,2)--node[above]{$E_1$}(2+0.3+1+2,2);
\draw[line width=0.65mm][shorten >=-0.2cm,shorten <=-0.2cm][postaction={decorate}](1-0.5,1+0.5)--node[above left]{$E_2$}(2,2);
\draw[line width=0.65mm][shorten >=-0.6cm,shorten <=-0.6cm](0,0)--node[above left]{$E_3$}(0.5,1.5)node[
    sloped,
    pos=0.3,
    allow upside down]{\arrowBox};

\draw[line width=0.65mm][postaction={decorate}][shorten >=-0.2cm,shorten <=-0.2cm](0.25,0.75)--node[below]{$E_4$}(1,0.5);
\draw[line width=0.65mm][shorten >=-0.2cm,shorten <=-0.2cm][postaction={decorate}](1,0.5)--node[right]{$E_5$}(1,0.5-1);
\draw[line width=0.65mm][shorten >=-0.2cm,shorten <=-0.2cm][postaction={decorate}](1,0.5-1)--node[below]{$E_6$}(1+1,0.5-1);
\draw[line width=0.65mm][shorten >=-0.2cm,shorten <=-0.2cm][postaction={decorate}](1+1,0.5-1)--node[right]{$E_7$}(1+1,0.5-1-1);

%%%%%%%%%%%%%%%%%%%%%%%%%%%%%%%%%%%%%%%%%%%%
\draw[shorten >=-0.2cm,shorten <=-0.2cm](1+1,0.5-1-1)--node[below]{$E_8$}(1+1+1,0.5-1-1)node[sloped,pos=0.5,allow upside down]{\arrowIIn};
%%%%%%%%%%%%%%%%%%%%%%%%%%%%%%%%%%%%%%%%%%%%

\draw[postaction={decorate}] (1+1+1,0.5-1-1) to[out=-90,in=180] node[below]{$F$}(3+2,-1-2);

\end{tikzpicture}
}
$$
\end{minipage}

\subsection{Conclusion}

To summarize, concluding Propositions \ref{prop:case1}, \ref{prop:case2}, \ref{prop:case31}, \ref{prop:case32}, we obtain a list of candidates of pairs $(\widehat{M},\mathfrak{E})$, except the reversed Eguchi-Hanson, where $M = \widehat{M}\setminus E$ could potentially support Hermitian non-K\"ahler ALE gravitational instantons with structure group in $SU(2)$, and $\mathfrak{E}$ being the holomorphic extremal vector field up to constant multiples. These surfaces are listed in Table \ref{cases}.

\begin{table}[h]
\centering
\caption{Possible $(\widehat{M},\mathfrak{E})$.}
\label{cases}
\begin{tabular}{ c c c c }
\hline
\specialrule{0em}{1pt}{1pt}
 Case &  $\Gamma\subset SU(2)$ & Picard rank of $\widehat{M}$ & The $\mathfrak{E}$ action\\ 
\specialrule{0em}{1pt}{1pt}
\hline\hline
\specialrule{0em}{2pt}{2pt}
\hyperref[1A]{1A} & $A_1$ & 2& $t\curvearrowright[x:y:z]=[tx:ty:z]$ on $\mathbb{P}^2$\\  
\specialrule{0em}{2pt}{2pt}
\hyperref[1B]{1B}  &$A_1$ & 3& $t\curvearrowright[x:y:z]=[tx:ty:z]$ on $\mathbb{P}^2$\\ 
\specialrule{0em}{2pt}{2pt}
\hyperref[1C]{1C}  & $A_1$ & 4& $t\curvearrowright[x:y:z]=[tx:ty:z]$ on $\mathbb{P}^2$\\
\specialrule{0em}{2pt}{2pt}
\hyperref[2A]{2A} &  $A_1$ &3& $t\curvearrowright[x:y:z]=[t^{-1}x:t^{-1}y:z]$ on $\mathbb{P}^2$\\
\specialrule{0em}{1pt}{1pt}
\hyperref[2D]{2D} &  $A_3$ &3& $t\curvearrowright[x:y:z]=[t^{-1}x:t^{-1}y:z]$ on $\mathbb{P}^2$\\
\specialrule{0em}{1pt}{1pt}
\hyperref[2E]{2E} &  $D_4$ &3& $t\curvearrowright[x:y:z]=[t^{-1}x:t^{-1}y:z]$ on $\mathbb{P}^2$\\
\specialrule{0em}{1pt}{1pt}
\hyperref[3Aproblem]{3A} &  $A_3$ &2& $t\curvearrowright[x:y:z]=[t^{2}x:ty:z]$ on $\mathbb{P}^2$\\
\specialrule{0em}{1pt}{1pt}
\hyperref[3C]{3C} &  $D_4$ &2& $t\curvearrowright[x:y:z]=[t^{2}x:ty:z]$ on $\mathbb{P}^2$\\
\specialrule{0em}{1pt}{1pt}
\hyperref[3E]{3E} &  $D_5$ &2& $t\curvearrowright[x:y:z]=[t^{2}x:ty:z]$ on $\mathbb{P}^2$\\
\specialrule{0em}{1pt}{1pt}
\hyperref[3G]{3G} &  $E_6$ &2& $t\curvearrowright[x:y:z]=[t^{2}x:ty:z]$ on $\mathbb{P}^2$\\
\specialrule{0em}{1pt}{1pt}
\hyperref[3H]{3H} &  $D_5$ &1& $t\curvearrowright[x:y:z]=[t^{3}x:ty:z]$ on $\mathbb{P}^2$\\
\specialrule{0em}{1pt}{1pt}
\hyperref[3J]{3J} &  $E_6$ &1& $t\curvearrowright[x:y:z]=[t^{3}x:ty:z]$ on $\mathbb{P}^2$\\
\specialrule{0em}{1pt}{1pt}
\hyperref[3L]{3L} &  $E_7$ &1& $t\curvearrowright[x:y:z]=[t^{3}x:ty:z]$ on $\mathbb{P}^2$\\
\specialrule{0em}{1pt}{1pt}
\hyperref[3M]{3M} &  $E_8$ &1& $t\curvearrowright[x:y:z]=[t^{3}x:ty:z]$ on $\mathbb{P}^2$\\
\specialrule{0em}{1pt}{1pt}
\hline
\end{tabular}
\end{table}

\begin{proposition}\label{prop:complextype}
If a Hermitian non-K\"ahler ALE gravitational instanton $(M,h)$ with structure group in $SU(2)$ other than the reversed Eguchi-Hanson exists, then $M$ would be one of the surfaces listed in Table \ref{cases}. The corresponding holomorphic extremal vector field would be $\mathfrak{E}$ up to scaling. 
\end{proposition}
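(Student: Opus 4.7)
The plan is to assemble the classification work already carried out in Sections \ref{sec:case12} and \ref{sec:case3} rather than to do fresh analysis. First I would invoke Theorem \ref{main:correspondence} to replace $(M,h)$ by the associated special Bach-flat K\"ahler orbifold $(\widehat{M},\widehat{g})$, where $\widehat{M}=M\cup\{q\}$ and the single orbifold point $q$ carries structure group $\Gamma\subset SU(2)$. Corollary \ref{behaviorofs} then produces the extremal Killing field $\widehat{J}\nabla_{\widehat{g}}s_{\widehat{g}}$, which vanishes at $q$ with $\Hess(s_{\widehat{g}})(q)=a\widehat{g}$, and whose time $2\pi/a$ flow is the identity; after real rescaling, the induced holomorphic $\mathbb{C}^*$-action is primitive, flows out of $q$, and has equal weights on $T_q\widehat{M}$. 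This rescaling is unique up to a positive real constant, which accounts for the ``up to scaling'' clause. Finally, Proposition \ref{logdel} identifies $\widehat{M}$ as log del Pezzo, hence rational.

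Next I would pass to the minimal resolution $r\colon\widetilde{M}\to\widehat{M}$ and lift $\mathfrak{E}$. Since the only curves of negative self-intersection in $\widetilde{M}$ contracted by $r$ are the $(-2)$-curves constituting $E$, the surface $\widetilde{M}$ contains no curves of self-intersection less than $-2$, so its minimal model must be either $\mathbb{P}^2$ or the Hirzebruch surface $H_2$. The $H_2$ case is handled by Proposition \ref{prop:added}: it forces $M=\mathcal{O}(2)$, and the Hitchin--Thorpe inequality then forces any corresponding ALE Ricci-flat metric to be anti-self-dual (in fact hyperk\"ahler) under the suitable orientation, giving exactly the Eguchi-Hanson metric with reversed orientation, which is excluded by hypothesis.

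In the remaining case, $\widetilde{M}$ is an iterative blow-up of $\mathbb{P}^2$, and the lifted primitive $\mathbb{C}^*$-action descends to a primitive $\mathbb{C}^*$-action on $\mathbb{P}^2$. Section \ref{sec:actiononp2} exhibits exactly three possibilities \ref{case1}, \ref{case2}, \ref{case3} for this base action, and the inductive blow-up analysis of Sections \ref{sec:case1}, \ref{sec:case2}, \ref{cycliccase3}, \ref{noncycliccase3}, constrained by Lemma \ref{lem:blowup}, Corollary \ref{cor:blowup}, and the weight computations of Theorems \ref{thm:cyclicweights} and \ref{thm:noncyclicweights}, produces the finite list of pairs $(\widetilde{M},\mathfrak{E})$ for which the contraction of $E$ yields a single $SU(2)$ orbifold point with equal weights. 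These enumerations are Propositions \ref{prop:case1}, \ref{prop:case2}, \ref{prop:case31}, and \ref{prop:case32}. Removing the redundant biholomorphism types (Cases 2B, 2C, 3B, 3D, 3F, 3I, 3K) leaves precisely Table \ref{cases}, completing the proof. There is no real obstacle here beyond bookkeeping: the heavy lifting is the case-by-case blow-up classification that has already been executed, and this proposition is simply the catalog of its output.
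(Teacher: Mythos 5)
Your proposal is correct and follows essentially the same route as the paper: Theorem \ref{main:correspondence} and Corollary \ref{behaviorofs} reduce the problem to classifying pairs $(\widehat{M},\mathfrak{E})$, Proposition \ref{prop:added} disposes of the $H_2$ minimal model (the reversed Eguchi--Hanson), and the case-by-case blow-up analysis of Propositions \ref{prop:case1}, \ref{prop:case2}, \ref{prop:case31}, and \ref{prop:case32} yields Table \ref{cases} after discarding the biholomorphic duplicates. The paper's own proof of this proposition is precisely this assembly of already-established results, so no gap remains.
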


\section{The \texorpdfstring{$\mathcal{A}$}--functional and  Bach-flat K\"ahler metrics}
\label{sec:nonexistence}

We need to use finer techniques to detect whether $(\widehat{M},\mathfrak{E})$ in Table \ref{cases} admits special Bach-flat K\"ahler metrics. For extremal K\"ahler metrics whose holomorphic extremal vector field induces a $\mathbb{C}^*$-action, explicitly knowing what the action is, we can calculate the minimum of its scalar curvature by combining symplectic and complex geometry techniques, following LeBrun-Simanca \cite{ls}. This will be done in this section.

\subsection{Computing the minimum of scalar curvature}\label{computefutaki}

We calculate the minimum of $s_g$ for an extremal K\"ahler $g$ following the idea of \cite{ls}. It turns out that $\min s_g$ only depends on the K\"ahler class $[\omega]$ and the holomorphic extremal vector field. Recall $\mathfrak{K}=\nabla^{1,0}s_{{g}}=\frac12(\nabla s_{{g}}-\sqrt{-1}J\nabla s_{{g}})$ is the holomorphic extremal vector field. We only consider the situation that the complex surface (orbifold) $X$ is rational since this is enough for our application.

On a complex surface (or orbifold) $M$, given a K\"ahler metric $g$ in a K\"ahler class $[\omega]$, for a holomorphic field $\xi=\nabla^{1,0} f$ that admits a potential function $f:M\to\mathbb{C}$, where $\nabla^{1,0}f$ refers to the vector field dual to $\overline{\partial}f$, the Futaki invariant is defined as 
\begin{equation}
    \mathcal{F}(\xi,[\omega])=-\int f(s_g-s_0)d\mu.
\end{equation}
Here, $s_g$ is the scalar curvature of the metric $g$ and $s_0=8\pi{c_1[\omega]}/{[\omega]^2}$ is the average of the scalar curvature. The volume form is $d\mu=\omega^2/2$. It turns out that the Futaki invariant does not depend on the choice of the K\"ahler metric $g$ in the K\"ahler class $[\omega]$ and the potential function $f$, so it makes sense to write it as $\mathcal{F}(\xi,[\omega])$.  Note that the Futaki invariant can also be defined even if the holomorphic field $\xi$ does not admit a potential function.
The Calabi functional is defined as 
$$\mathcal{C}(g)=\int s_g^2d\mu$$
on the space of K\"ahler metrics $g$. Restricted to a fixed K\"ahler class $[\omega]$, a K\"ahler metric $g\in[\omega]$ is a critical point of $\mathcal{C}$ over $[\omega]$ if and only if $\nabla^{1,0}s_g$ is a holomorphic vector field. Such metrics are the \textit{extremal K\"ahler metrics} in the sense of Calabi. 
If a K\"ahler metric $g$ is an extremal metric, the following equality holds
$$\mathcal{C}(g)=s_0^2\int d\mu+\int(s_g-s_0)^2d\mu=32\pi^2\frac{(c_1[\omega])^2}{[\omega]^2}-\mathcal{F}(\mathfrak{E},[\omega]),$$
with $\mathfrak{E}=\nabla^{1,0}(s_g-s_0)$, which is a holomorphic vector field.

\begin{assumption}\label{assumption}
The K\"ahler metric $g$ in the K\"ahler class $[\omega]$ is an extremal K\"ahler metric on a rational complex 2-dimensional orbifold $X$, whose scalar curvature is nonconstant. The holomorphic extremal vector field $\mathfrak{K}$ induces a holomorphic $\mathbb{C}^*$-action. Orbifold points only appear in $c_\pm$. 
\end{assumption}

To compute $\mathcal{F}(\mathfrak{K},[\omega])$, assuming the existence of extremal metrics, it suffices to consider
$\mathcal{F}(\mathfrak{K},[\omega])=-\int_X (s_g-s_0)^2d\mu$. The function
$s_g-s_0$ is a Hamiltonian function of the real holomorphic vector field $-2\Im\mathfrak{K}=J\nabla s_g$, in the sense that 
$$d(s_g-s_0)=-\omega\left(-2\Im\mathfrak{K},\cdot\right).$$
It is more convenient to work with the generator $\mathfrak{E}$ of the $\mathbb{C}^*$-action instead of $\mathfrak{K}$. Hence, let $\xi$ be the infinitesimal generator of the $S^1$-action associated to the primitive holomorphic $\mathbb{C}^*$-action, and $t$ be the moment map associated to this $S^1$-action with $\max t=-\min t=a$. That is,
$$dt=-\omega(\xi,\cdot).$$
The average of $t$ over $X$ is denoted by $t_0$. There exists a nonzero positive constant $h$ such that 
\begin{equation}\label{h}
t-t_0=h(s_g-s_0).
\end{equation}

Initially in $X$, $c_+,c_-$ might be fixed points. But as in \cite{ls}, we can consider $\sigma:X'\to X$, where $\sigma$ comes from blowing up $c_\pm$ suitably, so that the attractive set and repulsive set in $X'$ are both fixed curves, and if there are orbifold points in $c_\pm$, $\sigma$ resolves these orbifold points at the same time.
The fact that repulsive and attractive sets in $X'$ are fixed curves implies generic $\mathbb{C}^*$-orbits are rational curves with trivial self-intersection. This indicates that $X'$ is fibered over a rational curve $\Sigma$, in the sense that after contracting some curves  in some  fibers, it becomes a ruled surface over $\Sigma\simeq\mathbb{P}^1$ .
Because blow-ups are only taken in $c_\pm$, pulling back the K\"ahler form, $\sigma^*\omega$ is still a K\"ahler form defined on $X'\setminus\sigma^{-1}(c_\pm)$. Generic flows of the $\mathbb{C}^*$-action are flowing out of $\sigma^{-1}(c_-)$ and flowing into $\sigma^{-1}(c_+)$. In $X'$, $\sigma^{-1}(c_\pm)$ may be union of curves.

\begin{example}
Consider the quotient $X=\mathbb{P}^2/\mathbb{\mathbb{Z}}_3$, where the $\mathbb{Z}_3$-action is defined by $\xi_3\cdot[x:y:z]=[\xi_3 ^2x:\xi_3 y:z]$. Here, $\xi_3$ is the cubic unit root. The orbifold $X$ has three $A_2$ orbifold points located at $[1:0:0]$, $[0:1:0]$, and $[0:0:1]$.
We consider the holomorphic $\mathbb{C}^*$-action on $\mathbb{P}^2$ given by $t\cdot[x:y:z]=[tx:ty:z]$. This action has weights $[1,1]$ at $[0:0:1]$ and weights $[-1,0]$ at $[1:0:0]$ and $[0:0:1]$. Note that this action commutes with the $\mathbb{Z}_3$-action, therefore descends to a $\mathbb{C}^*$-action on $X$.

In $X$, the attractive set $c_+$ is the quotient $\{[x:y:0]\}/\mathbb{Z}_3$, which is a $\mathbb{P}^1$. The repulsive set $c_-$ is the point $[0:0:1]$. Denote $\{[x:y:0]\}/\mathbb{Z}_3$ by $L$.
Now we take the blow-up $\sigma$:
\begin{itemize}
\item $c_-=[0:0:1]$ is an orbifold point, so a blow-up needs to be taken at $[0:0:1]$. The resulting exceptional set is a union of two $(-2)$-curves, $E_1^z$ and $E_2^z$.
\item $c_+=\{[x:y:0]\}/\mathbb{Z}_3$ is a curve with two orbifold points, so blow-ups need to be taken at $[1:0:0],[0:1:0]$. They are $A_2$ singularities so the exceptional set of these two blow-ups are both unions of two $(-2)$-curves, $E^x_1,E^x_2,E^y_1,E^y_2$.
\end{itemize}
This way we get the minimal resolution of $X$. 
The following picture illustrates the situation.
$$
\begin{tikzpicture}
[decoration={markings, 
    mark= at position 0.5 with {\arrow{stealth }}}
]
\draw[shorten >=-0.4cm,shorten <=-0.4cm][postaction={decorate}](1,1.732)--(0,0);
%\draw[line width=0.01mm][shorten >=-0.4cm,shorten <=-0.4cm](1,1.732)--node[sloped,pos=0.6,allow upside down]{\arrowIIn}(0,0);
\draw[postaction={decorate}](0.8,1.732+1.732*0.2)--(2.2,-1.732*0.2);
\draw(-0.2*1.732,0)--node[below]{$c_+=L$}(2+0.2*1.732,0);

\node[above=0.4cm] at (1,1.732){$c_-=[0:0:1]$};
\node[below left=0.3cm] at (0,0){$[0:1:0]$};
\node[below right=0.3cm] at (2,0){$[1:0:0]$};

\draw[shorten >=-0.4cm,shorten <=-0.4cm][postaction={decorate}](7,1.3+1)--(6,1);
%\draw[line width=0.01mm][shorten >=-0.4cm,shorten <=-0.4cm](1,1.732)--node[sloped,pos=0.6,allow upside down]{\arrowIIn}(0,0);
\draw[shorten >=-0.4cm,shorten <=-0.4cm][postaction={decorate}](9,1.3+1)--(10,1);
\draw[shorten >=-0.4cm,shorten <=-0.4cm](7,-1)--node[below]{$L$}(9,-1);

\draw[shorten >=-0.2cm,shorten <=-0.2cm][postaction={decorate}](6,1)--node[left]{$E^y_2$}(6,0);
\draw[shorten >=-0.2cm,shorten <=-0.2cm][postaction={decorate}](6,0)--node[below left]{$E^y_1$}(7,-1);

\draw[shorten >=-0.2cm,shorten <=-0.2cm][postaction={decorate}](10,1)--node[right]{$E^x_2$}(10,0);
\draw[shorten >=-0.2cm,shorten <=-0.2cm][postaction={decorate}](10,0)--node[below right]{$E^x_1$}(9,-1);

\draw[shorten >=-0.2cm,shorten <=-0.2cm][postaction={decorate}](8,1.3+1+0.4)--node[above]{$E^z_1$}(7,1.3+1);
\draw[shorten >=-0.2cm,shorten <=-0.2cm][postaction={decorate}](8,1.3+1+0.4)--node[above]{$E^z_2$}(9,1.3+1);

\draw[-stealth][line width=0.3mm](5,1)--(3,1);

\end{tikzpicture}
$$
However, the repulsive set is still a fixed point $E_1^z\cap E_2^z$. To resolve this, we need to perform one more blow-up at $E_1^z\cap E_2^z$. The resulting exceptional curve of this blow-up is denoted by $E_3^z$, and the resulting surface is the desired $X'$. Specifically, $E_1^z\cup E_3^z\cup E_2^z$ is $\sigma^{-1}(c_-)$, while $E_2^y\cup E_1^y\cup L\cup E_1^x\cup E_2^x$ is $\sigma^{-1}(c_+)$.
$$
\begin{tikzpicture}
[decoration={markings, 
    mark= at position 0.5 with {\arrow{stealth }}}
]

\draw[shorten >=-0.2cm,shorten <=-0.2cm][postaction={decorate}](7-0.4,1.3+1)--(6-0.2,1);
%\draw[line width=0.01mm][shorten >=-0.4cm,shorten <=-0.4cm](1,1.732)--node[sloped,pos=0.6,allow upside down]{\arrowIIn}(0,0);
\draw[shorten >=-0.2cm,shorten <=-0.2cm][postaction={decorate}](9+0.4,1.3+1)--(10+0.2,1);
\draw[line width=0.5mm][shorten >=-0.4cm,shorten <=-0.4cm](7-0.2,-1)--node[below]{$L$}(9+0.2,-1);

\draw[line width=0.5mm][shorten >=-0.2cm,shorten <=-0.2cm][postaction={decorate}](6-0.2,1)--node[left]{$E^y_2$}(6-0.2,0);
\draw[line width=0.5mm][shorten >=-0.2cm,shorten <=-0.2cm][postaction={decorate}](6-0.2,0)--node[below left]{$E^y_1$}(7-0.2,-1);

\draw[line width=0.5mm][shorten >=-0.2cm,shorten <=-0.2cm][postaction={decorate}](10.2,1)--node[right]{$E^x_2$}(10.2,0);
\draw[line width=0.5mm][shorten >=-0.2cm,shorten <=-0.2cm][postaction={decorate}](10.2,0)--node[below right]{$E^x_1$}(9.2,-1);

\draw[line width=0.5mm][shorten >=-0.2cm,shorten <=-0.2cm][postaction={decorate}](8-0.6,1.3+1+0.4)--node[above]{$E^z_1$}(7-0.4,1.3+1);
\draw[line width=0.5mm][shorten >=-0.2cm,shorten <=-0.2cm][postaction={decorate}](8+0.6,1.3+1+0.4)--node[above]{$E^z_2$}(9+0.4,1.3+1);

\draw[postaction={decorate}](8-0.6+0.4,1.3+1+0.4+0.15) to[out=-70,in=60] node[left]{generic $\mathbb{C}^*$}(7-0.2+0.8,-1-0.15);
\node at (7-0.2+0.5,0.53) {orbit};

\draw[line width=0.5mm][shorten >=-0.2cm,shorten <=-0.2cm](8-0.6,1.3+1+0.4)--node[above]{$E^z_3$}(8+0.6,1.3+1+0.4);

\end{tikzpicture}
$$
Generic $\mathbb{C}^*$-orbits are flowing from $E_3^z$ to $L$.
If we consider the K\"ahler form $\omega$ on $X$ descended from the Fubini-Study metric $\omega_{FS}={\sqrt{-1}}\partial\overline{\partial}\log(|x|^2+|y|^2+|z|^2)$ on $\mathbb{P}^2$, after passing to $X'$, the pulled back K\"ahler form $\sigma^*(\omega)$ lives on $X'\setminus\sigma^{-1}(c_\pm)$.
The moment map for the $S^1$-action on $X=\mathbb{P}^2/\mathbb{Z}_3$ is given by 
$$[x:y:z]\mapsto\frac{|x|^2+|y|^2}{|x|^2+|y|^2+|z|^2},$$
and this is also the moment map for the $S^1$-action on $X'\setminus\sigma^{-1}(c_\pm)$. The symplectic quotient at level $t$ gives a symplectic form with area $\frac132\pi t$ on $\mathbb{P}^1$ (because of the $\mathbb{Z}_3$ quotient). 
\end{example}

As in \cite{ls}, there is a projection map
$$p: X'\setminus\sigma^{-1}(c_\pm)\to\Sigma\times(-a,a).$$
The first factor is given by the fibration to $\Sigma$ while the second factor is the moment map $t$.
Fibers of the map $p$  are the $S^1$-orbits. Fixed points of this $S^1$-action inside $X'$ are denoted by $\alpha_1,\ldots\alpha_m$. 
For $t\notin\{t(\alpha_1),\ldots,t(\alpha_m)\}$,
the restriction of $p$ over $\Sigma\times t$ has the structure of orbifold $S^1$-bundle over the orbifold $\Sigma\times t$. 
Globally the map $p$ provides a structure of orbifold $S^1$-bundle
$$p:Y\to \Sigma\times(-a,a)\setminus\{p(\alpha_1),\ldots,p(\alpha_m)\},$$ 
where $Y$ denotes the preimage of $\Sigma\times(-a,a)\setminus\{p(\alpha_1),\ldots,p(\alpha_m)\}$ under the projection map $p$, excluding the fixed points of the $S^1$-action.

On each $\Sigma\times t$ with $t\notin\{t(\alpha_1),\ldots,t(\alpha_m)\}$ 
there is the metric $g(t)$ induced by the symplectic reduction. The orthogonal complement of the $S^1$-orbits in $Y$ with respect to the K\"ahler metric on $Y$ defines a connection 1-form for the orbifold $S^1$-bundle. Denote the curvature two-form of this $U(1)$-connection  by $\Omega$.
Let  $t_j=t(\alpha_j)$ be critical values of the moment map. 
If we set $A(t)$ as the area of $(\Sigma,g(t))$ and $t$ is not a critical value, then same calculation in page 314 of \cite{ls} gives
$$\frac{dA}{dt}(t)=\int_{\Sigma\times\{t\}}-\sqrt{-1}\Omega=-2\pi c_1(Y)[\Sigma_t].$$
Here $[\Sigma_t]$ is the homology class of $\Sigma\times{t}$ and $c_1(Y)$ is the Chern class of the orbifold $S^1$-bundle. 
For any $b,c\in(-a,a)$, if we let $S_j\subset\Sigma\times(-a,a)\setminus\{p(\alpha_1),\ldots,p(\alpha_m)\}$ be small 2-spheres around points $p(\alpha_j)$, we then have
$$[\Sigma_c]-[\Sigma_b]=\sum_{t_j\in[b,c]}[S_j].$$
Here $S_j$ is assigned with the outward pointing orientation. Then,
\begin{equation}\label{eq:cross}
-2\pi c_1(Y)[\Sigma_c]+2\pi c_1(Y)[\Sigma_b]=-2\pi\sum_{t_j\in[b,c]}c_1(Y)[S_j].
\end{equation}
Generally, if the weights of a fixed point $\alpha$ are $[r,s]$, then the Chern class of the orbifold $S^1$-bundle, evaluated on the class $[S]$ of 2-sphere around $p(\alpha)$,  can be calculated as 
\begin{equation}\label{eq:Chern number}
    c_1(Y)[S]=-\frac{1}{rs}.
\end{equation}
In particular, if weights of the fixed point $\alpha_j$ is $[r_j,s_j]$ (as $\alpha_j$ is not in $c_\pm$, we can always assume $r_j>0,s_j<0$), we then have $c_1(Y)[S_j]=-\frac{1}{r_js_j}$. In conclusion, with equation (\ref{eq:cross}), we have the formula
\begin{equation}\label{eq:xxxx}
-2\pi c_1(Y)[\Sigma_c]+2\pi c_1(Y)[\Sigma_b]=2\pi\sum_{t_j\in[b,c]}\frac{1}{r_js_j},
\end{equation}
from which it follows that
\begin{equation}\label{secondorderderivative}
\frac{d^2A}{dt^2}(t)=2\pi\sum\frac{1}{r_js_j}\delta_{t_j}.
\end{equation}
Here $\delta_{t_j}$ is the Dirac measure at $t=t_j$. 
We adopt the notations $-2\pi c_1(Y)[\Sigma_a]$ and $-2\pi c_1(Y)[\Sigma_{-a}]$ to denote the limits of $-2\pi c_1(Y)[\Sigma_t]$ as $t$ approaches $a$ and $-a$, respectively. Equation
(\ref{eq:xxxx}) gives
\begin{equation}\label{eq:crossformula}
c_1(Y)[\Sigma_a]-c_1(Y)[\Sigma_{-a}]=\sum_{t_j\in(-a,a)}-\frac{1}{r_js_j}.
\end{equation} 
%$$C_-^2=c_1(Y)[\Sigma_{-a}]$$
%$$C_+^2=-c_1(Y)[\Sigma_a]$$

Consider the chains of rational curves $E_j'$ and $E_j$, obtained by tracing the $\mathbb{C}^*$-action forward from $\alpha_j$ to $c_+$ and backward from $\alpha_j$ to $c_{-}$, respectively. For a generic $\mathbb{C}^*$-orbit $F$ in $M$, its area is $\omega(F)=4\pi a$. As approaching to singular orbits, generic orbits break into several orbits with multiplicities.
If $\alpha_j$ is a fixed point with weights $[r_j,s_j]$, then generic orbits $F$ break into $r_jE_j'-s_jE_j$. In particular,
$\omega(r_jE_j'-s_jE_j)=\omega(F)=4\pi a$.
Computation in Theorem 3 in \cite{ls} now completely carries over:
{ 
\begin{equation}\label{ts}
\int_Mts_gd\mu=\omega(F)\left(\omega(c_+)-\omega(c_-)\right),
\end{equation}
\begin{align}\label{t0}
&\int_Md\mu=2\pi\int_{-a}^aA(t)dt\\
=\quad&\frac{\omega(F)^2}{8}\left[4\frac{\omega(c_+)+\omega(c_-)}{\omega(F)}+\sum_j\frac{1}{r_js_j}\left(\frac{\omega(r_jE_j')-\omega(-s_jE_j)}{\omega(F)}\right)^2+c_1(Y)[\Sigma_a]-c_1(Y)[\Sigma_{-a}]\right],\notag
\end{align}
\begin{align}\label{t1}
&\int_Mtd\mu=2\pi\int_{-a}^atA(t)dt\\
=\quad &\frac{\omega(F)^3}{96\pi}\left[6\frac{\omega(c_+)-\omega(c_-)}{\omega(F)}-\sum_j\frac{1}{r_js_j}\left(\frac{\omega(r_jE_j')-\omega(-s_jE_j)}{\omega(F)}\right)^3+c_1(Y)[\Sigma_{-a}]+c_1(Y)[\Sigma_a]\right],\notag
\end{align}
}
{ 
\begin{align}\label{t2}
&\int_Mt^2d\mu=2\pi\int_{-a}^at^2A(t)dt\\
=\quad &\frac{\omega(F)^4}{768\pi^2}\left[8\frac{\omega(c_+)+\omega(c_-)}{\omega(F)}+\sum_j\frac{1}{r_js_j}\left(\frac{\omega(r_jE_j')-\omega(-s_jE_j)}{\omega(F)}\right)^4+c_1(Y)[\Sigma_a]-c_1(Y)[\Sigma_{-a}]\right].\notag
\end{align}
}
For simplicity, let 
\begin{equation}
T=(T_s,T_0,T_1,T_2):=\left(\int_Mts_gd\mu,\int_Md\mu,\int_Mtd\mu,\int_Mt^2d\mu\right).
\end{equation}
Then
\begin{equation}\label{s0}
s_0=\frac{8\pi c_1[\omega]}{[\omega]^2}=\frac{4\pi c_1[\omega]}{T_0}.
\end{equation}

\begin{example}\label{exam:8.2}
Again we take the above example $\mathbb{P}^2/\mathbb{Z}_3$. The region $X'\setminus\sigma^{-1}\{c_\pm\}$ is biholomorphic to $X\setminus\{[0:0:1],[x:y:0]/\mathbb{Z}_3\}$.

To compute $c_1(Y)[\Sigma_{-a}]$, note that the orbifold point $[0:0:1]$ has weights $[1,1]$ and is an $A_2$ orbifold point. By \eqref{eq:Chern number}, it is direct to see
$$c_1(Y)[\Sigma_{-a}]=-\frac13.$$

To compute $c_1(Y)[\Sigma_{a}]$, the $S^1$-bundle over $\Sigma_{t}$ with $t$ close to $a$, after passing to the orbifold cover, is the dual of the normal bundle of $\Sigma_t\simeq c_+$, as described by
equations (3.12), (3.13) 
in \cite{ls}. 
The curve $c_+$ in $X$ lifted to the orbifold cover has self-intersection $1$.
Since the order of the orbifold structure group is $3$, we have
$$c_1(Y)[\Sigma_{a}]=-\frac13.$$

There is no fixed point when $t\in(-a,a)$ and equation (\ref{eq:crossformula}) 
holds, as expected. Moreover, for the metric $\omega$ on $\mathbb{P}^2/\mathbb{Z}_3$ descended from the standard Fubini-Study metric, $\omega(c_+)=\frac{\pi}{3},\omega(c_-)=0,\omega(F)=2\pi$. As there is no fixed point when $t\in (-a,a)$, equations (\ref{ts})-(\ref{t2}) give
$$T=(4\pi^2/3,2\pi^2/3,\pi^2/9,\pi^2/18).$$
The area of $\omega$ is exactly $2\pi^2/3$ because of the $\mathbb{Z}_3$ quotient. And
the scalar curvature of $\omega_{FS}={\sqrt{-1}}\partial\overline{\partial}\log(|x|^2+|y|^2+|z|^2)$ is 12 because of our choice of $\omega_{FS}$ on $\mathbb{P}^2$, which is compatible with $\int_{X}ts_gd\mu=12\int_Xtd\mu=4\pi^2/3$ by our computation.
\end{example}

\begin{example}\label{exam:8.3}
Take Case \hyperref[2D]{2D} as another example. 

To compute $c_1(Y)[\Sigma_a]$, note $c_+$ as a fixed point has weights $[-1,-1]$. The orbifold $S^1$-bundle over small spheres around the point $c_+$ has Chern number $-1$. But we should use the opposite orientation here (the orientation given by the symplectic form on $\Sigma_t$) when we compute $c_1(Y)[\Sigma_a]$.
Thus,
$$c_1(Y)[\Sigma_a]=1.$$

To compute $c_1(Y)[\Sigma_{-a}]$, observe that $c_-$ in $\widehat{M}$ is the orbifold point resulting from contracting $E\subset\widetilde{M}$. Since this orbifold point is an $A_3$ singularity, Theorem \ref{thm:cyclicweights} tells us that the  weights of the point $c_-$ are $[\frac12,\frac12]$. So we have
$$c_1(Y)[\Sigma_{-a}]=\frac14(-\frac{1}{\frac12\times\frac12})=-1.$$

There are three fixed points when $t\in(-a,a)$, with weights $[-2,1],[-1,1],[-2,1]$, and equation (\ref{eq:crossformula}) holds as expected
$$c_1(Y)[\Sigma_a]-c_1(Y)[\Sigma_{-a}]=2=\frac12+1+\frac12.$$
\end{example}

Now the extremal Futaki invariant can be computed as 
\begin{align}\label{futaki}
\mathcal{F}(\mathfrak{E},[\omega])=&-\int_M (s_g-s_0)^2d\mu\notag\\
=&-\frac1h\int_Mts_gd\mu+s_0\frac1h\int_Mtd\mu\notag\\
=&-\frac1hT_s+s_0\frac1hT_1,
\end{align}
with $h$ defined by (\ref{h}).
Multiplying (\ref{h}) by $t$ and take the integration one obtains 
\begin{align}\label{hh}
h=&\left.\left(-T_2+\frac{1}{T_0}T_1^2\right)\right/\left(-T_s+s_0T_1\right).
\end{align}
The minimum of $s_g$ is
\begin{align}\label{mins}
\min s_g&=\min\left\{\frac{1}{h}(t-t_0)\right\}+s_0\notag\\
&=\frac1h\min t-\frac{2}{h[\omega]^2}\int_Mtd\mu+\frac{8\pi c_1[\omega]}{[\omega]^2}\notag\\
&=-\frac{\omega(F)}{4\pi h}-\frac{1}{hT_0}T_1+\frac{4\pi c_1[\omega]}{T_0}.
\end{align}

\subsection{Bach-flat K\"ahler  metrics and its scalar curvature}
\label{sec:computation}
The computation we just performed gives a function 
\begin{equation}
{\min_{\mathfrak{E}}}\ s_g([\omega]):=-\frac{\omega(F)}{4\pi h}-\frac{1}{hT_0}T_1+\frac{4\pi c_1[\omega]}{T_0}
\end{equation} 
defined on the K\"ahler cone $\mathcal{K}$. When there is an extremal K\"ahler metric in the K\"ahler class $[\omega]$ with extremal vector field proportional to the vector field $\mathfrak{E}$, the function exactly coincides with the minimum of its scalar curvature. 
We added the lower index $\mathfrak{E}$ to emphasize the dependence of the function on the $\mathbb{C}^*$-action induced by $\mathfrak{E}$. The function $\min_{\mathfrak{E}} s_g$ defined on $\mathcal{K}$ is clearly homogeneous. Note that it is possible that $-T_s+s_0T_1=0$ in \eqref{h}, but in this case, as a consequence, $\int t(t-t_0)=\int(t-t_0)^2=0$, which is absurd. Hence when $-T_s+s_0T_1=0$, the corresponding K\"ahler class can never contain an extremal K\"ahler metric with $\mathfrak{E}$ as the extremal vector field up to scaling.

\begin{example}[Eguchi-Hanson]
Recall that the reversed Eguchi-Hanson metric is a Hermitian non-K\"ahler ALE gravitational instanton as in Example \ref{example:eguchihanson1}. The compactified surface $\widehat{M}$ is $H_2=\mathbb{P}(\mathcal{O}\oplus\mathcal{O}(2))$ contracting the curve $C_\infty$. The Picard number of $\widehat{M}$ is 1. Choosing $\mathfrak{K}$ as the holomorphic vector field which  preserves the curve $C_0$ and the curve $C_\infty$, while flows points from $C_\infty$ to $C_0$, our calculation in Section \ref{computefutaki} can be applied. If we assume that $[\omega]$ is the K\"ahler class such that $[\omega](C_0)=1$, it is direct to check by our above formulas that
$${\min_{\mathfrak{E}}}\ s_g([\omega])=0.$$
This shows that, if there is an extremal metric that does not have constant scalar curvature with $\mathfrak{E}$ as the holomorphic extremal vector field,  then the minimum of the scalar curvature is 0 and is achieved at the orbifold point. And indeed, by our correspondence Theorem \ref{main:correspondence}, there is the extremal metric on $\widehat{M}$ whose scalar curvature is nonnegative, and vanishes exactly at the orbifold point.
\end{example}

We will see that the function $\min_\mathfrak{E} s_g$ turns out to be always nonzero over the region of the K\"ahler cone $\mathcal{K}$ where special Bach-flat K\"ahler metrics could exist, for pairs $(\widehat{M},\mathfrak{E})$ in Table \ref{cases}.   It follows that there is no Hermitian non-K\"ahler ALE gravitational instanton with structure group in $SU(2)$, except the reversed Eguchi-Hanson.
To compute $\min_\mathfrak{E} s_g$, we only need to 
\begin{enumerate}
\item use (\ref{ts})-(\ref{t2}) to compute $T$;
\item use (\ref{hh}) to compute $h$;
\item compute $\min_\mathfrak{E} s_g$ using (\ref{mins}).
\end{enumerate}
It suffices to know $\omega(c_{\pm})$, $\omega(E_i)$, $\omega(E_i')$, $\omega(F)$, $c_1[\omega]$, and $c_1(Y)[\Sigma_{\pm a}]$ in each case.
The number of variables one needs to parametrize $\mathcal{K}(\widehat{M})/\mathbb{R}_+$ is the picard number minus one. 
Mathematical software Mathematica will be used to simplify the expressions since the computation is complicated. 
The notations in each figure are adapted to be compatible with our discussion in Section \ref{computefutaki}.
The notation $\sum$ in the following denotes cyclic sum. For example, if there are variables $a,b,c$ and $S$ is the set of permutations $\sigma$ of $a,b,c$, then $\sum a^rb^sc^t:=\sum_{\sigma\in S}\sigma(a)^r\sigma(b)^s\sigma(c)^t$. For example, $\sum a=2(a+b+c)$. The Chern numbers $c_1(Y)[\Sigma_{\pm a}]$ can be computed as in Example \ref{exam:8.2}-\ref{exam:8.3} and we shall list them for each case without details.

\begin{minipage}{0.55\textwidth}

\paragraph{\underline{\underline{Case \hyperref[1A]{1A}}}}

\begin{itemize} 
\item There is only one fixed point $E_1\cap E_1'$ with weights $[-1,1]$.
\item $c_1(Y)[\Sigma_{a}]=-1,c_1(Y)[\Sigma_{-a}]=-2$.
\item Set $\omega(c_+)=1,\omega(E_1')=a$. Then $$\omega(E_1)=(1-a)/2,\ \omega(F)=(1+a)/2,$$  and $\omega(c_-)=0$.
\item $c_1[\omega]=2+a$.
\end{itemize}
%This shows that there is no special  Bach-flat K\"ahler metrics on the corresponding $\widehat{M}$. 

\end{minipage}%
\begin{minipage}{0.4\textwidth}
$$
\begin{tikzpicture}
[decoration={markings, 
    mark= at position 0.5 with {\arrow{stealth }}}
]

%%%%%%%%%%%%%%%%%%%%%%%%%%%%%%%
\draw(0,1.3)--node[left]{$E_1'$}(0,-0.3)node[sloped,pos=0.5,allow upside down]{\arrowIIn};
%%%%%%%%%%%%%%%%%%%%%%%%%%%%%%%%%%%

\draw[postaction={decorate}](2,2.3)--node[right]{$F$}(2,-0.3);

%%%%%%%%%%%%%%%%%%%%%%%%%%%%%%%%%%%%%%%%%%
\draw(-0.3,0)--node[below]{$c_+$}(2+0.3,0)node[
    sloped,
    pos=0.5,
    allow upside down]{\arrowBox};
%%%%%%%%%%%%%%%%%%%%%%%%%%%%%%%%%%%%%

%%%%%%%%%%%%%%%%%%%%%%%%%%%%%%%%%%%%%%%%%%%%
\draw[line width=0.65mm]
(0.7,2)--node[above]{$c_-$}(2+0.3,2)node[
    sloped,
    pos=0.5,
    allow upside down]{\arrowBox};
%%%%%%%%%%%%%%%%%%%%%%%%%%%%%%%%%%%%%%%%%%%

%%%%%%%%%%%%%%%%%%%%%%%%%%%%%%%%%%%%%%%%%%
\draw(1+0.2,2.2)--node[above left]{$E_1$}(0-0.2,0.8)node[sloped,pos=0.5,allow upside down]{\arrowIIn};
%%%%%%%%%%%%%%%%%%%%%%%%%%%%%%%%%%%%%%%%%%%%

\node[left=0.2cm] at (0,1){$[-1,1]$};
\node[below left=0.2cm] at (0,0){$[-1,0]$};
\node[above left=0.2cm] at (1,2){$[1,0]$};
\node[right=0.2cm] at (2,2){$[0,1]$};
\node[below right=0.2cm] at (2,0){$[0,-1]$};

\end{tikzpicture}.
$$
\end{minipage}

Here $a$ must satisfy the bound $0<a<1$.
The function $\min_{\mathfrak{E}} s_g$ is given by
$$\min_{\mathfrak{E}} s_g=-\frac{48 \pi  a \left(a^4-2 a^3-8 a^2+2
   a-1\right)}{3 a^6-18 a^5+3 a^4+12 a^3+9
   a^2+6 a+1},$$
which is positive when $0<a<1$.

\begin{minipage}{0.55\textwidth}

\paragraph{\underline{\underline{Case \hyperref[1B]{1B}}}}

\begin{itemize} 
\item There are two fixed points $E_1\cap E_1',E_2\cap E_2'$, with weights $[-1,1],[-1,1]$.
\item $c_1(Y)[\Sigma_{a}]=0,c_1(Y)[\Sigma_{-a}]=-2$.
\item Set $\omega(c_+)=1,\omega(E_1')=a,\omega(E_2')=b$. Then 
\begin{align*}
    \omega(E_1)=(1+b-a)/2,\ \omega(E_2)=(1+a-b)/2,
\end{align*}
$$\omega(F)=(1+a+b)/2.$$ Moreover $\omega(c_-)=0$.
\item $c_1[\omega]=2+a+b$.
\end{itemize}

\end{minipage}%
\begin{minipage}{0.4\textwidth}
$$
\begin{tikzpicture}
[decoration={markings, 
    mark= at position 0.5 with {\arrow{stealth }}}
]

%%%%%%%%%%%%%%%%%%%%%%%%%%%%%%%
\draw(0,1.3)--node[left]{$E_1'$}(0,-0.3-0.3)node[sloped,pos=0.5,allow upside down]{\arrowIIn};
%%%%%%%%%%%%%%%%%%%%%%%%%%%%%%%%%%%

%%%%%%%%%%%%%%%%%%%%%%%%%%%%%%%%%%%
\draw(2.3,2.3)--node[right]{$E_2$}(2.3,0.4)node[sloped,pos=0.5,allow upside down]{\arrowIIn};
%%%%%%%%%%%%%%%%%%%%%%%%%%%%%%%%%%%%%

%%%%%%%%%%%%%%%%%%%%%%%%%%%%%%%%%%%%%%%%%%%
\draw(-0.3,-0.3)--node[below]{$c_+$}(1+0.6,-0.3)node[
    sloped,
    pos=0.5,
    allow upside down]{\arrowBox};
%%%%%%%%%%%%%%%%%%%%%%%%%%%%%%%%%%%%%%%%%%%%%

%%%%%%%%%%%%%%%%%%%%%%%%%%%%%%%%%%%%%%%%%%%
\draw[line width=0.65mm](0.7,2)--node[above]{$c_-$}(2+0.3+0.3,2)node[
    sloped,
    pos=0.5,
    allow upside down]{\arrowBox};
%%%%%%%%%%%%%%%%%%%%%%%%%%%%%%%%%%%%%%%%%%%%

%%%%%%%%%%%%%%%%%%%%%%%%%%%%%%%%%%%%%%%%%%
\draw(1+0.2,2.2)--node[above left]{$E_1$}(0-0.2,0.8)node[sloped,pos=0.5,allow upside down]{\arrowIIn};
%%%%%%%%%%%%%%%%%%%%%%%%%%%%%%%%%%%%%%%%%%%

%%%%%%%%%%%%%%%%%%%%%%%%%%%%%%%%%%%%%
\draw(2.5,0.9)--node[below right=0.1cm]{$E_2'$}(1+0.1,-0.5)node[sloped,pos=0.5,allow upside down]{\arrowIIn};
%%%%%%%%%%%%%%%%%%%%%%%%%%%%%%%%%%%%

\node[left=0.2cm] at (0,1){$[1,-1]$};
\node[below left=0.2cm] at (0,-0.3){$[-1,0]$};
\node[above left=0.2cm] at (1,2){$[1,0]$};
\node[right=0.2cm] at (2.3,2){$[0,1]$};
\node[below right=0.2cm] at (1.1,-0.3){$[0,-1]$};
\node[right=0.2cm] at (2.3,0.7){$[1,-1]$};

\end{tikzpicture}.
$$
\end{minipage}

Here we have $|a-b|<1$ and $a,b$ are positive. Hence,
\begin{align*}
\quad\min_{\mathfrak{E}} s_g
=&48\pi\left(\sum a-2\sum a^2+8\sum a^3+2\sum a^4-\sum a^5+4\sum ab\right.+12\sum a^3b+3\sum a^4b\\[0.1cm]
&\left.-6\sum a^2b^2-2\sum a^3b^2\right)\Big/\\[0.1cm]
&\left(1+6\sum a+9\sum a^2+12\sum a^3+3\sum a^4-18\sum a^5+3\sum a^6+15\sum ab+36\sum a^2b\right.\\[0.1cm]
&+36 \sum a^3b+6\sum a^4b-18\sum a^5b\left.+9\sum a^2b^2+45\sum a^4b^2+12\sum a^3b^2-30\sum a^3b^3\right)
\end{align*}
Using some elementary inequalities, we have
\begin{itemize}
\item $6\sum a^3b-6\sum a^2b^2\geq 0$.
\item $2\sum a^4b-2\sum a^3b^2\geq0$.
\item $\sum a^4b-\sum a^5=-(a-b)^2(\sum a^3+\sum a^2b)>-\sum a^3-\sum a^2b\geq-2\sum a^3$.
\item $\sum a+\sum a^3\geq2\sum a^2$.
\end{itemize}
Add these together and note the extra term $8\sum a^3$ in the numerator, we get that the numerator is positive. Therefore, $\min_{\mathfrak{E}}s_g\neq0$ on the entire K\"ahler cone $\mathcal{K}$.

\begin{minipage}{0.55\textwidth}

\paragraph{\underline{\underline{Case \hyperref[1C]{1C}}}}

\begin{itemize} 
\item There are three fixed points $E_1\cap E_1',E_2\cap E_2',E_3\cap E_3'$, with weights $[-1,1],[-1,1],[-1,1]$.
\item $c_1(Y)[\Sigma_{a}]=1,c_1(Y)[\Sigma_{-a}]=-2$.
\item Set $\omega(c_+)=1,\omega(E_1')=a,\omega(E_2')=b,\omega(E_3')=c$. Then 
$$\omega(E_1)=(1-a+b+c)/2,\ \omega(E_2)=(1+a-b+c)/2,$$
$$\omega(E_3)=(1+a+b-c)/2,\ \omega(F)=(1+a+b+c)/2.$$ Moreover $\omega(c_-)=0$.
\item $c_1[\omega]=2+a+b+c$.
\end{itemize}

\end{minipage}%
\begin{minipage}{0.4\textwidth}
$$
\begin{tikzpicture}
[decoration={markings, 
    mark= at position 0.5 with {\arrow{stealth }}}
]

%%%%%%%%%%%%%%%%%%%%%%%%%%%%%%%
\draw(0,1.3)--node[left]{$E_1'$}(0,-0.3-0.3)node[sloped,pos=0.5,allow upside down]{\arrowIIn};
%%%%%%%%%%%%%%%%%%%%%%%%%%%%%%%%%%%

%%%%%%%%%%%%%%%%%%%%%%%%%%%%%%%
\draw(0+1.3,1.3)--node[left]{$E_2'$}(0+1.3,-0.3-0.3)node[sloped,pos=0.5,allow upside down]{\arrowIIn};
%%%%%%%%%%%%%%%%%%%%%%%%%%%%%%%%%%%

%%%%%%%%%%%%%%%%%%%%%%%%%%%%%%%%%%%
\draw(2.3+1,2.3)--node[right]{$E_3$}(2.3+1,0.4)node[sloped,pos=0.5,allow upside down]{\arrowIIn};
%%%%%%%%%%%%%%%%%%%%%%%%%%%%%%%%%%%%%

%%%%%%%%%%%%%%%%%%%%%%%%%%%%%%%%%%%%%%%%%%%
\draw(-0.3,-0.3)--node[below]{$c_+$}(1+0.6+1,-0.3)node[
    sloped,
    pos=0.7,
    allow upside down]{\arrowBox};
%%%%%%%%%%%%%%%%%%%%%%%%%%%%%%%%%%%%%%%%%%%%%

%%%%%%%%%%%%%%%%%%%%%%%%%%%%%%%%%%%%%%%%%%%
\draw[line width=0.65mm](0.7,2)--node[above]{$c_-$}(2+0.3+0.3+1,2)node[
    sloped,
    pos=0.7,
    allow upside down]{\arrowBox};
%%%%%%%%%%%%%%%%%%%%%%%%%%%%%%%%%%%%%%%%%%%%

%%%%%%%%%%%%%%%%%%%%%%%%%%%%%%%%%%%%%%%%%%
\draw(1+0.2,2.2)--node[left]{$E_1$}(0-0.2,0.8)node[sloped,pos=0.5,allow upside down]{\arrowIIn};
%%%%%%%%%%%%%%%%%%%%%%%%%%%%%%%%%%%%%%%%%%%

%%%%%%%%%%%%%%%%%%%%%%%%%%%%%%%%%%%%%%%%%%
\draw(1+0.2+1.3,2.2)--node[left]{$E_2$}(0-0.2+1.3,0.8)node[sloped,pos=0.5,allow upside down]{\arrowIIn};
%%%%%%%%%%%%%%%%%%%%%%%%%%%%%%%%%%%%%%%%%%%

%%%%%%%%%%%%%%%%%%%%%%%%%%%%%%%%%%%%%
\draw(2.5+1,0.9)--node[below right=0.1cm]{$E_3'$}(1+0.1+1,-0.5)node[sloped,pos=0.5,allow upside down]{\arrowIIn};
%%%%%%%%%%%%%%%%%%%%%%%%%%%%%%%%%%%%

\node[left=0.2cm] at (0,1){$[1,-1]$};
\node[below left=0.2cm] at (0,-0.3){$[-1,0]$};
\node[above left=0.2cm] at (1,2){$[1,0]$};
\node[right=0.2cm] at (2.3+1,2){$[0,1]$};
\node[below right=0.2cm] at (1.1+1,-0.3){$[0,-1]$};
\node[right=0.2cm] at (2.3+1,0.7){$[1,-1]$};

\end{tikzpicture}
$$
\end{minipage}

Here we have $a-b-c,-a+b-c,-a-b+c<1$ and $a,b,c$ are positive. Hence,
{ 
\begin{align*}
\min_{\mathfrak{E}} s_g
=&48\pi\Big(\frac12\sum a-\sum a^2+4\sum a^3+\sum a^4-\frac12\sum a^5+4\sum ab+12\sum a^3b+3\sum a^4b\\[0.1cm]
&-6\sum a^2b^2-2\sum a^3b^2+6\sum abc
+6\sum a^2bc-3\sum a^2b^2c+4\sum a^3bc\Big)\Big/\\[0.1cm]
&\Big(1+3\sum a+\frac92\sum a^2+6\sum a^3+\frac32\sum a^4-9\sum a^5+\frac32\sum a^6+15\sum ab+36\sum a^2b\\[0.1cm]
&+36\sum a^3b
+6\sum a^4b-18\sum a^5b+9\sum a^2b^2
+45\sum a^4b^2+12\sum a^3b^2-30\sum a^3b^3\\[0.1cm]
&+18\sum a^2b^2c+20\sum abc
+54\sum a^2bc+36\sum a^3bc+3\sum a^4bc+12\sum a^3b^2c-3\sum a^2b^2c^2\Big).
\end{align*}
}
Elementary inequalities give
\begin{itemize}
\item $3\sum a^3bc-3\sum a^2b^2c\geq0$.
\item $2\sum a^4b-2\sum a^3b^2\geq0$.
\item $\sum a^4b-\frac12\sum a^5=a^4(b+c-a)+b^4(a+c-b)+c^4(a+b-c)>-\frac12\sum a^4$.
\item $6\sum a^3b-6\sum a^2b^2\geq0$.
\item $\frac12\sum a+2\sum a^3-\sum a^2\geq0$.
\end{itemize}
Therefore the numerator is positive. The function $\min_{\mathfrak{E}}s_g\neq0$ on the K\"ahler cone $\mathcal{K}$.

\begin{minipage}{0.55\textwidth}

\paragraph{\underline{\underline{Case \hyperref[2A]{2A}}}}

\begin{itemize} 
\item There are three fixed points, $E_1\cap E_1',E_2\cap E_2',E_3\cap E_3'$, with weights $[-1,1],[-1,1],[-1,1]$.
\item $c_1(Y)[\Sigma_{a}]=1,c_1(Y)[\Sigma_{-a}]=-2$.
\item Set $\omega(E_1')=a,\omega(E_2')=b,\omega(E_3')=1$. Then 
$$\omega(E_1)=(-a+b+1)/2,\ \omega(E_2)=(a-b+1)/2,$$
$$\omega(E_3)=(a+b-1)/2,\ \omega(F)=(a+b+1)/2.$$ 
Moreover $\omega(c_{\pm})=0$.
\item $c_1[\omega]=a+b+1$.
\end{itemize}

\end{minipage}%
\begin{minipage}{0.4\textwidth}
$$
\begin{tikzpicture}
[decoration={markings, 
    mark= at position 0.5 with {\arrow{stealth }}}
]

%%%%%%%%%%%%%%%%%%%%%%%%%%%%%%%%%%%%%%%%%%
\draw[shorten >=-0.3cm,shorten <=-0.3cm](0,0)--node[left]{$E_1$}(0,1)node[sloped,pos=0.5,allow upside down]{\arrowIIn};
%%%%%%%%%%%%%%%%%%%%%%%%%%%%%%%%%%%%%%%%%%%%

\draw[shorten >=-0.3cm,shorten <=-0.3cm][postaction={decorate}](3,1)--node[above right]{$E_3'$}(1.5,2.5);

%%%%%%%%%%%%%%%%%%%%%%%%%%%%%%%%%%%%%%%%%%
\draw[shorten >=-0.3cm,shorten <=-0.3cm](3,0)--node[right]{$E_3$}(3,1)node[sloped,pos=0.5,allow upside down]{\arrowIIn};
%%%%%%%%%%%%%%%%%%%%%%%%%%%%%%%%%%%%%%%%%%%%

\draw[shorten >=-0.3cm,shorten <=-0.3cm][postaction={decorate}](1,1)--node[right]{$E_2'$}(1.5,2.5);

%%%%%%%%%%%%%%%%%%%%%%%%%%%%%%%%%%%%%%%%%%
\draw[shorten >=-0.3cm,shorten <=-0.3cm](1,0)--node[right]{$E_2$}(1,1)node[sloped,pos=0.3,allow upside down]{\arrowIIn};
%%%%%%%%%%%%%%%%%%%%%%%%%%%%%%%%%%%%%%%%%%%%

%%%%%%%%%%%%%%%%%%%%%%%%%%%%%%%%%%%%%%%%
\draw[line width=0.65mm][shorten >=-0.3cm,shorten <=-0.3cm](0,0)--node[below]{$c_-$}(3,0)node[
    sloped,
    pos=0.5,
    allow upside down]{\arrowBox};
%%%%%%%%%%%%%%%%%%%%%%%%%%%%%%%%%%%%%%%%%%

%%%%%%%%%%%%%%%%%%%%%%%%%%%%%%%%%%%%
\draw[shorten >=-0.3cm,shorten <=-0.3cm][postaction={decorate}](0,1)--node[above left]{$E_1'$}(1.5,2.5);
%%%%%%%%%%%%%%%%%%%%%%%%%%%%%%%%%%%%

\node[left=0.2cm] at (0,1){$[-1,1]$};
\node[below left=0.2cm] at (0,0){$[1,0]$};
\node[above right=0.2cm] at (1.5,2.5){$[-1,-1]$};

\node[above left=0.2cm] at (1.5,2.5){$c_+$};

\end{tikzpicture}
$$
\end{minipage}

Here we have $|a-b|<1$, $a+b>1$, and $a,b>0$.
Hence,
\begin{align*}
\min_{\mathfrak{E}} s_g
=& 16\pi\left(1+\sum a\right)\left(-1+4\sum a-6\sum a^2+4\sum a^3-\sum a^4+4\sum a^3b-3\sum a^2b^2\right)\Big/\\[0.1cm]
&\left(1-6\sum a+15\sum a^2+\sum ab-20\sum a^3+4\sum a^2b+15\sum a^4+4\sum a^3b-3\sum a^2b^2\right.\\[0.1cm]
&\left.-6\sum a^5+2\sum a^4b+4\sum a^3b^2+\sum a^6-6\sum a^5b+15\sum a^4b^2-10\sum a^3b^3\right).
\end{align*}
The numerator can be rewritten as
$$16\pi(1+\sum a)\left(-1+4\sum a-6\sum a^2+4\sum a^3-(a-b)^4\right).$$ 
If either $a$ or $b$ is greater than or equal to one, then we have the following inequality
$$-1+4\sum a-6\sum a^2+4\sum a^3-(a-b)^4\geq -1+2\sum a^2-(a-b)^4>2\sum a^2-2>0.$$
If $a$ and $b$ are both less than one, then
\begin{align*}
    &\quad-1+4\sum a-6\sum a^2+4\sum a^3-\sum a^4+4\sum a^3b-3\sum a^2b^2\\
    &>3\sum a-6\sum a^2+3\sum a^3+4\sum a^3b-3\sum a^2b^2>0.
\end{align*}
Thus, we conclude that $\min_{\mathfrak{E}} s_g\neq0$ on the entire K\"ahler cone $\mathcal{K}$.

\begin{minipage}{0.55\textwidth}

\paragraph{\underline{\underline{Case \hyperref[2D]{2D}}}}

\begin{itemize} 
\item There are three fixed points, $E_1\cap E_1',E_2\cap E_2',E_3\cap E_3'$, with weights $[-2,1],[-1,1],[-2,1]$.
\item $c_1(Y)[\Sigma_{a}]=1,c_1(Y)[\Sigma_{-a}]=-1$.
\item Set $\omega(E_2')=1,\omega(E_1')=a,\omega(E_3')=b$. Then 
$$\omega(E_1)=(2+b-a)/4,\ \omega(E_2)=(a+b)/2,$$
$$\omega(E_3)=(2+a-b)/4,\ \omega(F)=(2+a+b)/2.$$ 
Moreover $\omega(c_{\pm})=0$.
\item $c_1[\omega]=1+a+b$.
\end{itemize}

\end{minipage}%
\begin{minipage}{0.4\textwidth}
$$
\begin{tikzpicture}
[decoration={markings, 
    mark= at position 0.5 with {\arrow{stealth }}}
]

%%%%%%%%%%%%%%%%%%%%%%%%%%%%%%%%%%%%%%%%%%
\draw[postaction={decorate}][line width=0.65mm][shorten >=-0.3cm,shorten <=-0.3cm](0,0)--node[left]{$ $}(0,1);
%%%%%%%%%%%%%%%%%%%%%%%%%%%%%%%%%%%%%%%%%%%%

%%%%%%%%%%%%%%%%%%%%%%%%%%%%%%%%%%%%%%%%%%
\draw[shorten >=-0.3cm,shorten <=-0.3cm](3.5,2)--node[above right]{$E_3'$}(1.5,3)node[sloped,pos=0.5,allow upside down]{\arrowIIn};
%%%%%%%%%%%%%%%%%%%%%%%%%%%%%%%%%%%%%%%%%%%

%%%%%%%%%%%%%%%%%%%%%%%%%%%%%%%%%%%%%%%%%%
\draw[shorten >=-0.2cm,shorten <=-0.2cm](4,1)--node[right]{$E_3$}(3.5,2)node[sloped,pos=0.3,allow upside down]{\arrowIIn};
%%%%%%%%%%%%%%%%%%%%%%%%%%%%%%%%%%%%%%%%%%

%%%%%%%%%%%%%%%%%%%%%%%%%%%%%%%%%%%%%%%%%%
\draw[line width=0.65mm][postaction={decorate}][shorten >=-0.3cm,shorten <=-0.3cm](4,0)--node[right]{$ $}(4,1);
%%%%%%%%%%%%%%%%%%%%%%%%%%%%%%%%%%%%%%%%%%%%

\draw[shorten >=-0.3cm,shorten <=-0.3cm][postaction={decorate}](2.5,1)--node[right]{$E_2'$}(1.5,3);

%%%%%%%%%%%%%%%%%%%%%%%%%%%%%%%%%%%%%%%%%%
\draw[shorten >=-0.3cm,shorten <=-0.3cm](2.5,0)--node[right]{$E_2$}(2.5,1)node[sloped,pos=0.3,allow upside down]{\arrowIIn};
%%%%%%%%%%%%%%%%%%%%%%%%%%%%%%%%%%%%%%%%%%%%

%%%%%%%%%%%%%%%%%%%%%%%%%%%%%%%%%%%%%%%%
\draw[line width=0.65mm][shorten >=-0.3cm,shorten <=-0.3cm](0,0)--(4,0)node[
    sloped,
    pos=0.35,
    allow upside down]{\arrowBox};
%%%%%%%%%%%%%%%%%%%%%%%%%%%%%%%%%%%%%%%%%%

%%%%%%%%%%%%%%%%%%%%%%%%%%%%%%%%%%%%%%%%%
\draw[shorten >=-0.2cm,shorten <=-0.2cm](0,1)--node[above left]{$E_1$}(0.5,2)node[sloped,pos=0.3,allow upside down]{\arrowIIn};
%%%%%%%%%%%%%%%%%%%%%%%%%%%%%%%%%%%%%%%%%%

%%%%%%%%%%%%%%%%%%%%%%%%%%%%%%%%%%%%
\draw[shorten >=-0.3cm,shorten <=-0.3cm](0.5,2)--node[above left]{$E_1'$}(1.5,3)node[sloped,pos=0.3,allow upside down]{\arrowIIn};
%%%%%%%%%%%%%%%%%%%%%%%%%%%%%%%%%%%%

\node at (-0.5,0) {$c_-$};

\node[left=0.2cm] at (0,1){$[-1,2]$};
\node[below left=0.2cm] at (0,0){$[1,0]$};
\node[below right=0.2cm] at (4,0){$[0,1]$};
\node[above left=0.35cm] at (0.5,1.5){$[-2,1]$};
\node[above left=0.35cm] at (2,3){$[-1,-1]$};
\node[left=0.1cm] at (2.5,1){$[1,-1]$};
\node[above right=0.3cm] at (1.3,3){$c_+$};

\end{tikzpicture}.
$$
\end{minipage}

Here we have the bound $|a-b|<2$ and $a,b>0$. Hence,
\begin{align*}
\min_{\mathfrak{E}} s_g
=&32\pi\left(1+\sum a\right)\left(8\sum a-12\sum a^2+8\sum a^3-\sum a^4+12\sum ab+4\sum a^3b-3\sum a^2b^2\right)\Big/\\[0.1cm]
&\left(32\sum a^2-32\sum a^3+32\sum a^4-12\sum a^5\right.
+\sum a^6+32\sum ab+32\sum a^2b+32\sum a^3b+4\sum a^4b\\[0.1cm]
&\left.-6\sum a^5b+8\sum a^3b^2+15\sum a^4b^2-10\sum a^3b^3
\right).
\end{align*}
Elementary inequalities argument gives
\begin{itemize}
\item $3\sum a^3b-3\sum a^2b^2\geq0$.
\item $\sum a^3b-\sum a^4=-(a-b)^2(\sum a^2+\frac12\sum ab)>-4\sum a^2-2\sum ab$.
\item $8\sum a+8\sum a^3\geq 16\sum a^2$.
\end{itemize}
Add these together we get that the numerator is positive. 
%\begin{itemize}
%\item $10\sum a^4b^2-10\sum a^3b^3\geq0$.
%\item $6\sum a^6-6\sum a^5b\geq0$.
%\item $5\sum a^4b^2-5\sum a^5b=-5ab(a-b)^2(a^2+ab+b^2)\geq-20\sum a^3b-10\sum a^2b^2$.
%\item $4\sum a^4b-4\sum a^5=-4(a-b)^2(a+b)(a^2+b^2)\geq-16\sum a^3-16\sum ab^2$.
%\item $32\sum a^4+32\sum a^2\geq 64\sum a^3$.
%\end{itemize}
%Add these together we get that the denominator is positive.
Hence we have $\min_{\mathfrak{E}} s_g\neq0$ on the entire K\"ahler cone $\mathcal{K}$.

\begin{minipage}{0.55\textwidth}

\paragraph{\underline{\underline{Case \hyperref[2E]{2E}}}}

\begin{itemize} 
\item There are three fixed points, $E_1\cap E_1',E_2\cap E_2',E_3\cap E_3'$, with weights $[-2,1],[-2,1],[-2,1]$.
\item $c_1(Y)[\Sigma_{a}]=1,c_1(Y)[\Sigma_{-a}]=-\frac12$.
\item Set $\omega(E_1')=1,\omega(E_2')=a,\omega(E_3')=b$. Then 
$$\omega(E_1)=(a+b)/2,\ \omega(E_2)=(1+b)/2,$$
$$\omega(E_3)=(1+a)/2,\ \omega(F)=1+a+b.$$ Moreover $\omega(c_{\pm})=0$.
\item $c_1[\omega]=1+a+b$.
\end{itemize}

\end{minipage}%
\begin{minipage}{0.4\textwidth}
$$
\begin{tikzpicture}
[decoration={markings, 
    mark= at position 0.5 with {\arrow{stealth }}}
]

%%%%%%%%%%%%%%%%%%%%%%%%%%%%%%%%%%%%%%%%%%
\draw[postaction={decorate}][line width=0.65mm][shorten >=-0.3cm,shorten <=-0.3cm](0,0)--node[left]{$ $}(0,1);
%%%%%%%%%%%%%%%%%%%%%%%%%%%%%%%%%%%%%%%%%%%%

%%%%%%%%%%%%%%%%%%%%%%%%%%%%%%%%%%%%%%%%%%
\draw[shorten >=-0.3cm,shorten <=-0.3cm](3.5,2)--node[above right]{$E_3'$}(1.5,3)node[sloped,pos=0.3,allow upside down]{\arrowIIn};
%%%%%%%%%%%%%%%%%%%%%%%%%%%%%%%%%%%%%%%%%%%

%%%%%%%%%%%%%%%%%%%%%%%%%%%%%%%%%%%%%%%%%%
\draw[shorten >=-0.2cm,shorten <=-0.2cm](4,1)--node[right]{$E_3$}(3.5,2)node[sloped,pos=0.3,allow upside down]{\arrowIIn};
%%%%%%%%%%%%%%%%%%%%%%%%%%%%%%%%%%%%%%%%%%

%%%%%%%%%%%%%%%%%%%%%%%%%%%%%%%%%%%%%%%%%%
\draw[line width=0.65mm][postaction={decorate}][shorten >=-0.3cm,shorten <=-0.3cm](4,0)--node[right]{$ $}(4,1);
%%%%%%%%%%%%%%%%%%%%%%%%%%%%%%%%%%%%%%%%%%%%

%%%%%%%%%%%%%%%%%%%%%%%%%%%%%%%%%%%%%%%%
\draw[shorten >=-0.3cm,shorten <=-0.3cm](2.5,2)--node[below left]{$E_2'$}(1.5,3)node[sloped,pos=0.3,allow upside down]{\arrowIIn};
%%%%%%%%%%%%%%%%%%%%%%%%%%%%%%%%%%%%%%%%%%%

%%%%%%%%%%%%%%%%%%%%%%%%%%%%%%%%%%%%%%%%%%
\draw[shorten >=-0.2cm,shorten <=-0.2cm](3,1)--node[below left]{$E_2$}(2.5,2)node[sloped,pos=0.3,allow upside down]{\arrowIIn};
%%%%%%%%%%%%%%%%%%%%%%%%%%%%%%%%%%%%%%%%%

%%%%%%%%%%%%%%%%%%%%%%%%%%%%%%%%%%%%%%%%%%
\draw[postaction={decorate}][line width=0.65mm][shorten >=-0.3cm,shorten <=-0.3cm](3,0)--node[right]{$ $}(3,1);
%%%%%%%%%%%%%%%%%%%%%%%%%%%%%%%%%%%%%%%%%%%%

%%%%%%%%%%%%%%%%%%%%%%%%%%%%%%%%%%%%%%%%
\draw[line width=0.65mm][shorten >=-0.3cm,shorten <=-0.3cm](0,0)--(4,0)node[
    sloped,
    pos=0.35,
    allow upside down]{\arrowBox};
%%%%%%%%%%%%%%%%%%%%%%%%%%%%%%%%%%%%%%%%%%

%%%%%%%%%%%%%%%%%%%%%%%%%%%%%%%%%%%%%%%%%
\draw[shorten >=-0.2cm,shorten <=-0.2cm](0,1)--node[above left]{$E_1$}(0.5,2)node[sloped,pos=0.3,allow upside down]{\arrowIIn};
%%%%%%%%%%%%%%%%%%%%%%%%%%%%%%%%%%%%%%%%%%

%%%%%%%%%%%%%%%%%%%%%%%%%%%%%%%%%%%%
\draw[shorten >=-0.3cm,shorten <=-0.3cm](0.5,2)--node[above left]{$E_1'$}(1.5,3)node[sloped,pos=0.3,allow upside down]{\arrowIIn};
%%%%%%%%%%%%%%%%%%%%%%%%%%%%%%%%%%%%

\node at (-0.5,0) {$c_-$};

\node[left=0.2cm] at (0,1){$[-1,2 ]$};
\node[below left=0.2cm] at (0,0){$[1,0]$};
\node[below right=0.2cm] at (4,0){$[0,1]$};
\node[above left=0.35cm] at (0.5,1.5){$[-2 ,1]$};
\node[above left=0.35cm] at (2,3){$[-1,-1]$};
\node[above right=0.3cm] at (1.3,3){$c_+$};

\end{tikzpicture}.
$$
\end{minipage}

Here $a,b>0$. Hence,
\begin{align*}
\min_{\mathfrak{E}} s_g
=& 8\pi\left(1+\sum a\right)\left(\sum a
+\sum a^3+3\sum ab+6\sum a^2b+\sum a^3b\right)\Big/\\[0.1cm]
&\left(\sum a^2+\sum a^4+\sum ab+4\sum a^2b+4\sum a^3b+2\sum a^4b+3\sum a^2b^2+4\sum a^3b^2+\sum a^4b^2
\right).
\end{align*}
Clearly this is positive. Therefore, $\min_{\mathfrak{E}} s_g>0$ on the entire K\"ahler cone.

\begin{minipage}{0.55\textwidth}

\paragraph{\underline{\underline{Case \hyperref[3Aproblem]{3A}}}}

\begin{itemize} 
\item There are two fixed points $E_1\cap E_1',E_2\cap E_2'$, with weights $[-2,1],[-1,1]$.
\item $c_1(Y)[\Sigma_{a}]=\frac12,c_1(Y)[\Sigma_{-a}]=-1$.
\item Set $\omega(E_3)=1,\omega(E_2)=a$. Then 
$$\omega(E_1)=1-a,\ \omega(E_1')=2a,$$
$$\omega(E_2')=2-a,\ \omega(F)=2.$$ Moreover $\omega(c_{\pm})=0$.
\item $c_1[\omega]=2+a$.
\end{itemize}

\end{minipage}%
\begin{minipage}{0.4\textwidth}
$$
\begin{tikzpicture}
[decoration={markings, 
    mark= at position 0.5 with {\arrow{stealth }}}
]
\draw[line width=0.65mm][postaction={decorate}](0,1.3)--node[left]{}(0,-0.3);
\draw[postaction={decorate}](2+1,2.3)--node[right]{$E_3$}(2+1,-0.3-1);
\draw[postaction={decorate}](-0.3+1,0-1)--node[below]{$E_1'$}(2+0.3+1,0-1);
\draw[line width=0.65mm][postaction={decorate}](0.7,2)--node[above]{}(2+0.3+1,2);

\draw[postaction={decorate}](1.3,0.3) to[out=10,in=165] node[below]{$E_2'$}(3,-1);

%%%%%%%%%%%%%%%%%%%%%%%%%%%%%%%%%%%%%%%%%%%
\draw[shorten >=-0.5cm,shorten <=-0.5cm][line width=0.65mm](0-0.2,0.8)--node[above left]{$c_-$}(1+0.2,2.2)node[
    sloped,
    pos=0.5,
    allow upside down]{\arrowBox};
%%%%%%%%%%%%%%%%%%%%%%%%%%%%%%%%%%%%%%%%%%

%%%%%%%%%%%%%%%%%%%%%%%%%%%%%%%%%%%%%%%%%%%%
\draw(0-0.2,0+0.2)--node[below]{$E_1$}(1+0.2,-1-0.2)node[
    sloped,
    pos=0.5,
    allow upside down]{\arrowIIn};
%%%%%%%%%%%%%%%%%%%%%%%%%%%%%%%%%%%%%%%%%%%%

%%%%%%%%%%%%%%%%%%%%%%%%%%%%%%%%%%%%%%%%%%
\draw[shorten >=-0.3cm,shorten <=-0.3cm](0.3,1.3)--node[below]{$E_2$}(1.3,0.3)node[sloped,pos=0.3,allow upside down]{\arrowIIn};
%%%%%%%%%%%%%%%%%%%%%%%%%%%%%%%%%%%%%%%%%%%

\node[below right] at (3,-1){$c_+$};

\node[left=0.2cm] at (0,1){$[1,0]$};
\node[below left=0.2cm] at (0,0){$[-1,2]$};
\node[above left=0.2cm] at (1,2){$[0,1]$};
\node[right=0.2cm] at (2+1,2){$[-1,2]$};
\node[below right=0.3cm] at (2+1,-1){$[-1,-2]$};
\node[below=0.2cm] at (1,-1){$[-2,1]$};
\node[above right] at (1.3,0.3){$[-1,1]$};
\end{tikzpicture}
$$
\end{minipage}

Here $0<a<1$. Hence,
$$\min_{\mathfrak{E}} s_g=-\frac{8 \pi  (a+2) \left(9 a^3-26 a^2+24
   a-8\right)}{a \left(21 a^4-84 a^3+128
   a^2-96 a+32\right)}.$$
It is easy to verify that this is positive when $0<a<1$. Therefore, $\min_{\mathfrak{E}} s_g>0$ on the entire K\"ahler cone $\mathcal{K}$.

\begin{minipage}{0.55\textwidth}
\paragraph{\underline{\underline{Case \hyperref[3C]{3C}}}}

\begin{itemize} 
\item There are two fixed points $E_1\cap E_1',E_2\cap E_2'$, with weights $[-2,1],[-2,1]$.
\item $c_1(Y)[\Sigma_{a}]=\frac12,c_1(Y)[\Sigma_{-a}]=-\frac12$. 
\item Set $\omega(E_3)=1,\omega(E_2)=a$. Then
$$\omega(E_1)=1-a,\ \omega(E_1')=2a,$$
$$\omega(E_2')=2-2a,\ \omega(F)=2.$$ 
Moreover $\omega(c_{\pm})=0$.
\item $c_1[\omega]=2$.
\end{itemize}
\end{minipage}%
\begin{minipage}{0.4\textwidth}
$$
\begin{tikzpicture}
[decoration={markings, 
    mark= at position 0.5 with {\arrow{stealth }}}
]
\draw[line width=0.65mm][postaction={decorate}](0,1.3)--node[left]{}(0,-0.3-1);
\draw[postaction={decorate}](2+1+1,2.3)--node[right]{$E_3$}(2+1+1,-0.3-1-1);
\draw[postaction={decorate}](-0.3+1,0-1-1)--node[below]{$E_1'$}(2+0.3+1+1,0-1-1);
\draw[line width=0.65mm][postaction={decorate}](0.7,2)--node[above]{}(2+0.3+1+1,2);

%%%%%%%%%%%%%%%%%%%%%%%%%%%%%%%%%%%%%%%%%%
\draw[line width=0.65mm][shorten >=-0.6cm,shorten <=-0.6cm](0-0.2,0.8)--node[above left]{$c_-$}(1+0.2,2.2)node[
    sloped,
    pos=0.3,
    allow upside down]{\arrowBox};
%%%%%%%%%%%%%%%%%%%%%%%%%%%%%%%%%%%%%%%%%%

%%%%%%%%%%%%%%%%%%%%%%%%%%%%%%%%%%%%%%%%%%%
\draw(0-0.2,0+0.2-1)--node[below left]{$E_1$}(1+0.2,-1-0.2-1)node[sloped,pos=0.5,allow upside down]{\arrowIIn};
%%%%%%%%%%%%%%%%%%%%%%%%%%%%%%%%%%%%%%%%%%%

\draw[line width=0.65mm][postaction={decorate}](0.5-0.1,1.5+0.1)--node[below left]{}(1.5+0.2,0.5-0.2);

%%%%%%%%%%%%%%%%%%%%%%%%%%%%%%%%%%%%%%%%%%%
\draw(1.5-0.2,0.5)--node[below]{$E_2$}(1.5+1+0.2,0.5)node[sloped,pos=0.5,allow upside down]{\arrowIIn};
%%%%%%%%%%%%%%%%%%%%%%%%%%%%%%%%%%%%%%%%%%%%

\draw[postaction={decorate}](2.5,0.5) to[out=-50,in=180] node[left]{$E_2'$}(3+1,-1-1);

\node[left=0.2cm] at (0,1){$[1,0]$};
\node[below left=0.2cm] at (0,0-1){$[-1,2]$};
\node[above left=0.2cm] at (1,2){$[0,1]$};
\node[right=0.2cm] at (2+1+1,2){$[-1,2]$};
\node[below right=0.3cm] at (2+1+1,-1-1)
{$[-1,-2]$};
\node[below=0.2cm] at (1,-1-1){$[-2,1]$};
\node[below left=0.06cm] at (1.5,0.5){$[-1,2]$};
\node[above right=0.06cm] at (1.5+1,0.5) {$[-2,1]$};
\node[below right=0.06cm] at (2+1+1,-1-1)
{$c_+$};

%\node[left=0.2cm] at (0,1){$[1,0]$};
%\node[below left=0.2cm] at (0,0-1){$[-1,2]$};
%\node[above left=0.2cm] at (1,2){$[0,1]$};
%\node[right=0.2cm] at (2+1+1,2){$[-1,2]$};
%\node[below right=0.2cm] at (2+1+1,-1-1){$[-1,-2]$};
%\node[below=0.2cm] at (1,-1-1){$[-2,1]$};
%\node[below=0.2cm] at (1.5,0.5){$[-2,1]$};
\end{tikzpicture}
$$
\end{minipage}

Here $0<a<1$. Hence,
$$\min_{\mathfrak{E}} s_g=\frac{4\pi}{a-a^2}.$$
Clearly this is positive. Therefore $\min_{\mathfrak{E}} s_g>0$ on the entire K\"ahler cone $\mathcal{K}$.

\begin{minipage}{0.5\textwidth}
\paragraph{\underline{\underline{Case \hyperref[3E]{3E}}}}

\begin{itemize} 
\item There are two fixed points $E_1\cap E_1',E_2\cap E_2'$, with weights $[-2,1],[-3,1]$.
\item $c_1(Y)[\Sigma_{a}]=\frac12,c_1(Y)[\Sigma_{-a}]=-\frac13$.
\item Set $\omega(E_3)=1,\omega(E_2)=a$. Then 
$$\omega(E_1)=1-a,\ \omega(E_1')=2a,$$
$$\omega(E_2')=2-3a,\ \omega(F)=2.$$ 
Moreover $\omega(c_{\pm})=0$.
\item $c_1[\omega]=2-a$.
\end{itemize}
\end{minipage}%
\begin{minipage}{0.45\textwidth}
$$
\scalebox{0.9}{
\begin{tikzpicture}
[decoration={markings, 
    mark= at position 0.5 with {\arrow{stealth }}}
]
\draw[line width=0.65mm][postaction={decorate}](0,1.3)--node[left]{}(0,-0.3-1-1);
\draw[postaction={decorate}](2+1+1+1,2.3)--node[right]{$E_3$}(2+1+1+1,-0.3-1-1-1);
\draw[postaction={decorate}](-0.3+1,0-1-1-1)--node[below]{$E_1'$}(2+0.3+1+1+1,0-1-1-1);
\draw[line width=0.65mm][postaction={decorate}](0.7,2)--node[above]{}(2+0.3+1+1+1,2);
\draw[line width=0.65mm][shorten >=-0.6cm,shorten <=-0.6cm](0-0.2,0.8)--node[above left]{$c_-$}(1+0.2,2.2)node[
    sloped,
    pos=0.3,
    allow upside down]{\arrowBox};

%%%%%%%%%%%%%%%%%%%%%%%%%%%%%%%%%%%%%%%%%%%%
\draw(0-0.2,0+0.2-1-1)--node[below]{$E_1$}(1+0.2,-1-0.2-1-1)node[sloped,pos=0.5,allow upside down]{\arrowIIn};
%%%%%%%%%%%%%%%%%%%%%%%%%%%%%%%%%%%%%%%%%%%%

\draw[line width=0.65mm][postaction={decorate}](0.5-0.1,1.5+0.1)--node[below left]{}(1.5+0.2,0.5-0.2);
\draw[line width=0.65mm][postaction={decorate}](1.5-0.2,0.5)--node[below]{}(1.5+1+0.2,0.5);

%%%%%%%%%%%%%%%%%%%%%%%%%%%%%%%%%%%%%%%%%%%
\draw(1.5+1,0.5+0.2)--node[right]{$E_2$}(1.5+1,0.5-1-0.2)node[sloped,pos=0.5,allow upside down]{\arrowIIn};
%%%%%%%%%%%%%%%%%%%%%%%%%%%%%%%%%%%%%%%%%%

%\draw[shorten >=-0.3cm,shorten <=-0.3cm][postaction={decorate}](2+1+1+1,-1-1)--node[below right]{$E_6$}(2+1+1,0-1-1-1);

%%%%%%%%%%%%%%%%%%%%%%%%%%%%%%%%%%%%%%%%%
\draw(2.5,0.5-1) to[out=-10,in=180] node[below left]{$E_2'$}(3+1+1,-1-1-1);
\draw[line width=0.00001mm](2.5,0.5-1)  to[out=-10,in=180]  node[sloped,pos=0.5,allow upside down]{\arrowIIn}(3+1+1,-1-1-1);
%%%%%%%%%%%%%%%%%%%%%%%%%%%%%%%%%%%%%%

\node[left=0.2cm] at (0,1){$[1,0]$};
\node[left=0.2cm] at (0,-2){$[-1,2]$};
\node[above left=0.2cm] at (1,2){$[0,1]$};
\node[right=0.2cm] at (2+1+1+1,2){$[-1,2]$};
\node[below right=0.3cm] at (2+1+1+1,-1-1-1)
{$[-1,-2]$};
\node[below=0.2cm] at (1,-1-1-1){$[-2,1]$};
\node[below left=0.06cm] at (1.5,0.5){$[-1,2]$};
\node[above right=0.06cm] at (1.5+1,0.5) {$[-2,3]$};
\node[left=0.06cm] at (1.5+1,0.5-1) {$[-3,1]$};
\node[below right=0.06cm] at (2+1+1+1,-1-1-1)
{$c_+$};

\end{tikzpicture}
}
$$
\end{minipage}

Here $0<a<\frac23$. Hence,
$$\min_{\mathfrak{E}} s_g=\frac{24 \pi  (a-2) \left(35 a^3-42 a^2+24
   a-8\right)}{a \left(475 a^4-1140 a^3+1056
   a^2-480 a+96\right)}.$$
It is easy to verify that this is positive when $0<a<\frac23$. Hence $\min_{\mathfrak{E}} s_g>0$ on the entire K\"ahler cone $\mathcal{K}$.

\begin{minipage}{0.5\textwidth}

\paragraph{\underline{\underline{Case \hyperref[3G]{3G}}}}

\begin{itemize} 
\item There are two fixed points $E_1\cap E_1',E_2\cap E_2'$, with weights $[-3,1],[-3,1]$.
\item $c_1(Y)[\Sigma_{a}]=\frac12,c_1(Y)[\Sigma_{-a}]=-\frac16$.
\item Set $\omega(E_3)=1,\omega(E_2)=a$. Then 
$$\omega(E_1)=1-a,\ \omega(E_1')=3a-1,$$
$$\omega(E_2')=2-3a,\ \omega(F)=2.$$ Moreover $\omega(c_{\pm})=0$.
\item $c_1[\omega]=1$.
\end{itemize}

\end{minipage}%
\begin{minipage}{0.45\textwidth}
$$
\scalebox{0.9}{
\begin{tikzpicture}
[decoration={markings, 
    mark= at position 0.5 with {\arrow{stealth }}}
]
\draw[line width=0.65mm][postaction={decorate}](0,1.3)--node[left]{}(0,-0.3-1);
\draw[postaction={decorate}](2+1+1+1,2.3)--node[right]{$E_3$}(2+1+1+1,-0.3-1-1-1);

%%%%%%%%%%%%%%%%%%%%%%%%%%%%%%%%%%%%%%%%%%%%
\draw(-0.3+1+1,0-1-1-1)--node[below]{$E_1'$}(2+0.3+1+1+1,0-1-1-1)node[sloped,pos=0.3,allow upside down]{\arrowIIn};
%%%%%%%%%%%%%%%%%%%%%%%%%%%%%%%%%%%%%%%%%%%%

\draw[line width=0.65mm][postaction={decorate}](0.7,2)--node[above]{}(2+0.3+1+1+1,2);
\draw[line width=0.65mm][shorten >=-0.6cm,shorten <=-0.6cm](0-0.2,0.8)--node[above left]{$c_-$}(1+0.2,2.2)node[
    sloped,
    pos=0.3,
    allow upside down]{\arrowBox};

\draw[line width=0.65mm][shorten >=-0.2cm,shorten <=-0.2cm][postaction={decorate}](0,0-1)--node[left]{}(1-0.5,-2-0.5);

%%%%%%%%%%%%%%%%%%%%%%%%%%%%%%%%%%%%%%%%%
\draw[shorten >=-0.2cm,shorten <=-0.2cm](0.5,-2.5)--node[below]{$E_1$}(2,-3)node[sloped,pos=0.3,allow upside down]{\arrowIIn};
%%%%%%%%%%%%%%%%%%%%%%%%%%%%%%%%%%%%%%%%%%

\draw[line width=0.65mm][postaction={decorate}](0.5-0.1,1.5+0.1)--node[below left]{}(1.5+0.2,0.5-0.2);

%%%%%%%%%%%%%%%%%%%%%%%%%%%%%%%%%%%%%%%%%%%%
\draw[line width=0.65mm][postaction={decorate}](1.5-0.2,0.5)--node[below]{}(1.5+1+0.2,0.5);
%%%%%%%%%%%%%%%%%%%%%%%%%%%%%%%%%%%%%%%%%%%%

%\draw[postaction={decorate}](1.5+1,0.5+0.2)--node[right]{$E_6$}(1.5+1,0.5-1-0.2);
%\draw[shorten >=-0.3cm,shorten <=-0.3cm][postaction={decorate}](2+1+1+1,-1-1)--node[below right]{$E_6$}(2+1+1,0-1-1-1);

%%%%%%%%%%%%%%%%%%%%%%%%%%%%%%%%%%%%%%%%%%5
\draw[shorten >=-0.2cm,shorten <=-0.2cm](2.5,0.5)--node[right]{$E_2$}(2.5,0.5-1)node[sloped,pos=0.3,allow upside down]{\arrowIIn};
%%%%%%%%%%%%%%%%%%%%%%%%%%%%%%%%%%%%%%%%%%%

%%%%%%%%%%%%%%%%%%%%%%%%%%%%%%%%%%%%%%%%%%
\draw(2.5,0.5-1) to[out=10,in=180] node[below left]{$E_2'$}node[sloped,pos=0.5,allow upside down]{\arrowIIn}(3+1+1,-1-1-1);
%%%%%%%%%%%%%%%%%%%%%%%%%%%%%%%%%%%%%%%%%%

\node[left=0.2cm] at (0,1){$[1,0]$};
\node[left=0.2cm] at (0,-1){$[-1,2]$};
\node[above left=0.2cm] at (1,2){$[0,1]$};
\node[right=0.2cm] at (2+1+1+1,2){$[-1,2]$};
\node[below right=0.3cm] at (2+1+1+1,-1-1-1)
{$[-1,-2]$};
\node[below=0.2cm] at (2,-1-1-1){$[-3,1]$};
\node[below left=0.06cm] at (1.5,0.5){$[-1,2]$};
\node[above right=0.06cm] at (1.5+1,0.5) {$[-2,3]$};
\node[left=0.06cm] at (1.5+1,0.5-1) {$[-3,1]$};
\node[left=0.06cm] at (0.5,0.5-3) {$[-2,3]$};
\node[below right=0.06cm] at (2+1+1+1,-1-1-1)
{$c_+$};

\end{tikzpicture}
}
$$
\end{minipage}

Here $\frac{1}{3}<a<\frac23$. Hence,
$$\min_{\mathfrak{E}} s_g=-\frac{216 \pi  \left(6 a^4-12 a^3+12 a^2-6
a+1\right)}{972 a^6-2916 a^5+3564 a^4-2268
a^3+774 a^2-126 a+7}.$$
It is easy to verify that this is positive when $\frac13<a<\frac23$. Hence $\min_{\mathfrak{E}} s_g>0$ on the entire K\"ahler cone.

For the following cases the picard number of $\widehat{M}$ is 1, hence only one K\"ahler class exists on the orbifold.

\begin{minipage}{0.5\textwidth}

    \paragraph{\underline{\underline{Case \hyperref[3H]{3H}}}}
        \begin{itemize} 
            \item There is one fixed point $E_2\cap E_2'$ with weights $[-2,1]$.
            \item $c_1(Y)[\Sigma_a]=\frac{1}{6},c_1(Y)[\Sigma_{-a}]=-\frac{1}{3}$.
            \item Set $\omega(E_2)=1$. Then
                     $$\omega(E_1)=3,\ \omega(E_2')=4,\ \omega(E_3)=2,\ \omega(F)=6.$$
                Moreover, $\omega(c_{\pm})=0$.
            \item $c_1[\omega]=4$.
        \end{itemize}
        \end{minipage}%
        \begin{minipage}{0.45\textwidth}
        $$
        \scalebox{0.9}{
        \begin{tikzpicture}
        [decoration={markings, 
            mark= at position 0.5 with {\arrow{stealth }}}
        ]
        \draw[line width=0.65mm][postaction={decorate}](0,1.3-1)--(0,-0.3-1-2);
        \draw[postaction={decorate}](2+1+2,2.3)--node[right]{$E_3$}(2+1+2,-0.3-1-2);
        \draw[postaction={decorate}](-0.3,0-1-2)--node[below]{$E_1$}(2+0.3+1+2,0-1-2);
        \draw[line width=0.65mm][postaction={decorate}](0.7+1,2)--(2+0.3+1+2,2);
        \draw[line width=0.65mm][shorten >=-0.2cm,shorten <=-0.2cm][postaction={decorate}](1-0.5,1+0.5)--(2,2);
        \draw[line width=0.65mm][shorten >=-0.6cm,shorten <=-0.6cm](0,0)--(0.5,1.5)node[
            sloped,
            pos=0.3,
            allow upside down]{\arrowBox};

        \draw[line width=0.65mm][postaction={decorate}][shorten >=-0.2cm,shorten <=-0.2cm](0.25,0.75)--(1,0.5);

        %%%%%%%%%%%%%%%%%%%%%%%%%%%%%%%%%%%%%%%%%
        \draw[shorten >=-0.2cm,shorten <=-0.2cm](1,0.5)--node[right]{$E_2$}(1,0.5-1)node[sloped,pos=0.3,allow upside down]{\arrowIIn};
        %%%%%%%%%%%%%%%%%%%%%%%%%%%%%%%%%%%%%%%%%%

        \draw[postaction={decorate}](1,0.5-1) to[out=10,in=180] node[below]{$E_2'$}(3+2,-1-2);

        \node[left=0.2cm] at (0,1-1){$[1,0]$};
        \node[below left=0.2cm] at (0,-3){$[-1,2]$};
        \node[above=0.3cm] at (1+1,2){$[-1,2]$};
        \node[right=0.2cm] at (5,2.2){$[-2,3]$};
        \node[below right=0.4cm] at (5,-3){$[-2,-3]$};
        \node[below right=0.1cm] at (5,-3){$c_+$};
        \node[above left=0.2cm] at (0.5,1.5){$[0,1]$};
        \node at (1.7,1) {$[-1,2]$};
        \node at (1.2,-1) {$[-2,1]$};
        \node at (0,1) {$c_-$};
        
        \end{tikzpicture}
        }
        $$
\end{minipage}

We have $\min_{\mathfrak{E}}\ s_g=\frac{24\pi}{7}>0$.

\begin{minipage}{0.5\textwidth}

    \paragraph{\underline{\underline{Case \hyperref[3J]{3J}}}}

    \begin{itemize}
        \item There is one fixed point $E_2\cap E_2'$ with weights $[-3,1]$. 
        \item $c_1(Y)[\Sigma_a]=\frac{1}{6},c_1(Y)[\Sigma_{-a}]=-\frac{1}{6}$.
        \item Set $\omega(E_2)=1$. Then 
            $$\omega(E_1)=3,\ \omega(E_2')=3,\ \omega(E_3)=2,\ \omega(F)=6.$$
            Moreover, $\omega(c_{\pm})=0$.
        \item $c_1[\omega]=3$.
    \end{itemize}

    \end{minipage}%
    \begin{minipage}{0.45\textwidth}
    $$
    \scalebox{0.9}{
    \begin{tikzpicture}
    [decoration={markings, 
        mark= at position 0.5 with {\arrow{stealth }}}
    ]
    \draw[line width=0.65mm][postaction={decorate}](0,1.3-1)--(0,-0.3-1-2);
    \draw[postaction={decorate}](2+1+2,2.3)--node[right]{$E_3$}(2+1+2,-0.3-1-2);
    \draw[postaction={decorate}](-0.3,0-1-2)--node[below]{$E_1$}(2+0.3+1+2,0-1-2);
    \draw[line width=0.65mm][postaction={decorate}](0.7+1,2)--(2+0.3+1+2,2);
    \draw[line width=0.65mm][shorten >=-0.2cm,shorten <=-0.2cm][postaction={decorate}](1-0.5,1+0.5)--(2,2);
    \draw[line width=0.65mm][shorten >=-0.6cm,shorten <=-0.6cm](0,0)--(0.5,1.5)node[
        sloped,
        pos=0.3,
        allow upside down]{\arrowBox};

    \draw[line width=0.65mm][postaction={decorate}][shorten >=-0.2cm,shorten <=-0.2cm](0.25,0.75)--(1,0.5);
    \draw[line width=0.65mm][postaction={decorate}][shorten >=-0.2cm,shorten <=-0.2cm](1,0.5)--(1,0.5-1);

    %%%%%%%%%%%%%%%%%%%%%%%%%%%%%%%%%%%%%%%%%%
    \draw[shorten >=-0.2cm,shorten <=-0.2cm](1,0.5-1)--node[below]{$E_2$}(1+1,0.5-1)node[sloped,pos=0.3,allow upside down]{\arrowIIn};
    %%%%%%%%%%%%%%%%%%%%%%%%%%%%%%%%%%%%%%%%%%

    \draw[postaction={decorate}](1+1,0.5-1) to[out=-90,in=180] node[below]{$E_2'$}(3+2,-1-2);

    \node[left=0.2cm] at (0,1-1){$[1,0]$};
    \node[below left=0.2cm] at (0,-3){$[-1,2]$};
    \node[above=0.3cm] at (1+1,2){$[-1,2]$};
    \node[right=0.2cm] at (5,2.2){$[-2,3]$};
    \node[below right=0.4cm] at (5,-3){$[-2,-3]$};
    \node[below right=0.1cm] at (5,-3){$c_+$};
    \node[above left=0.2cm] at (0.5,1.5){$[0,1]$};
    \node at (1.7,1) {$[-1,2]$};
    \node at (1,-1.3) {$[-2,3]$};
    \node at (2.7,-0.4) {$[-3,1]$};
    \node at (0,1) {$c_-$};
    
    \end{tikzpicture}
    }
    $$
\end{minipage}
    
In this case we hace $-T_s+s_0T_1=0$. Therefore it can never support an extremal K\"ahler metric with extremal vector field proportional to $\mathfrak{E}$, by our discussion at the beginning of Section \ref{sec:computation}.

\begin{minipage}{0.5\textwidth}
    \paragraph{\underline{\underline{Case \hyperref[3L]{3L}}}}

    \begin{itemize}
        \item There is one fixed point $E_2\cap E_2'$ with weights $[-4,1]$.
        \item $c_1(Y)[\Sigma_a]=\frac{1}{6},c_1(Y)[\Sigma_{-a}]=-\frac{1}{12}$. 
        \item Set $\omega(E_2)=1$. Then 
            $$\omega(E_1)=3,\ \omega(E_2')=2,\ \omega(E_3)=2,\ \omega(F)=6.$$
        Moreover, $\omega(c_{\pm})=0$.
        \item $c_1[\omega]=2$.
    \end{itemize}

    \end{minipage}%
    \begin{minipage}{0.45\textwidth}
    $$
    \scalebox{0.9}{
    \begin{tikzpicture}
    [decoration={markings, 
        mark= at position 0.5 with {\arrow{stealth }}}
    ]
    \draw[line width=0.65mm][postaction={decorate}](0,1.3-1)--(0,-0.3-1-2);
    \draw[postaction={decorate}](2+1+2,2.3)--node[right]{$E_3$}(2+1+2,-0.3-1-2);
    \draw[postaction={decorate}](-0.3,0-1-2)--node[below]{$E_1$}(2+0.3+1+2,0-1-2);
    \draw[line width=0.65mm][postaction={decorate}](0.7+1,2)--(2+0.3+1+2,2);
    \draw[line width=0.65mm][shorten >=-0.2cm,shorten <=-0.2cm][postaction={decorate}](1-0.5,1+0.5)--(2,2);
    \draw[line width=0.65mm][shorten >=-0.6cm,shorten <=-0.6cm](0,0)--(0.5,1.5)node[
        sloped,
        pos=0.3,
        allow upside down]{\arrowBox};

    \draw[line width=0.65mm][postaction={decorate}][shorten >=-0.2cm,shorten <=-0.2cm](0.25,0.75)--(1,0.5);
    \draw[line width=0.65mm][shorten >=-0.2cm,shorten <=-0.2cm][postaction={decorate}](1,0.5)--(1,0.5-1);
    \draw[line width=0.65mm][shorten >=-0.2cm,shorten <=-0.2cm][postaction={decorate}](1,0.5-1)--(1+1,0.5-1);

    %%%%%%%%%%%%%%%%%%%%%%%%%%%%%%%%%%%%%%%%%%
    \draw[shorten >=-0.2cm,shorten <=-0.2cm](1+1,0.5-1)--node[right]{$E_2$}(1+1,0.5-1-1)node[sloped,pos=0.3,allow upside down]{\arrowIIn};
    %%%%%%%%%%%%%%%%%%%%%%%%%%%%%%%%%%%%%%%%%%

    \draw[postaction={decorate}] (1+1,0.5-1-1) to[out=10,in=180] node[below]{$E_2'$}(3+2,-1-2);

    \node[left=0.2cm] at (0,1-1){$[1,0]$};
    \node[below left=0.2cm] at (0,-3){$[-1,2]$};
    \node[above=0.3cm] at (1+1,2){$[-1,2]$};
    \node[right=0.2cm] at (5,2.2){$[-2,3]$};
    \node[below right=0.4cm] at (5,-3){$[-2,-3]$};
    \node[below right=0.1cm] at (5,-3){$c_+$};
    \node[above left=0.2cm] at (0.5,1.5){$[0,1]$};
    \node at (1.7,1) {$[-1,2]$};
    \node at (1.1,-1.2) {$[-2,3]$};
    \node at (2.7,-0.4) {$[-3,4]$};
    \node at (2.1,-2) {$[-4,1]$};
    \node at (0,1) {$c_-$};

    \end{tikzpicture}
    }
    $$
    \end{minipage}

    We have $\min_{\mathfrak{E}}\ s_g=\frac{96\pi}{7}>0$.

    \begin{minipage}{0.5\textwidth}
        \paragraph{\underline{\underline{Case \hyperref[3M]{3M}}}}

        \begin{itemize}
            \item There is one fixed point $E_2\cap E_2'$ with weights $[-5,1]$.
            \item $c_1(Y)[\Sigma_a]=\frac{1}{6},c_1(Y)[\Sigma_{-a}]=-\frac{1}{30}$. 
            \item Set $\omega(E_2)=1$. Then 
                $$\omega(E_1)=3,\ \omega(E_2')=1,\ \omega(E_3)=2,\ \omega(F)=6.$$
            Moreover, $\omega(c_{\pm})=0$.
            \item $c_1[\omega]=1$.
        \end{itemize}    
        \end{minipage}%
        \begin{minipage}{0.45\textwidth}
        $$
        \scalebox{0.9}{
        \begin{tikzpicture}
        [decoration={markings, 
            mark= at position 0.5 with {\arrow{stealth }}}
        ]
        \draw[line width=0.65mm][postaction={decorate}](0,1.3-1)--(0,-0.3-1-2);
        \draw[postaction={decorate}](2+1+2,2.3)--node[right]{$E_3$}(2+1+2,-0.3-1-2);
        \draw[postaction={decorate}](-0.3,0-1-2)--node[below]{$E_1$}(2+0.3+1+2,0-1-2);
        \draw[line width=0.65mm][postaction={decorate}](0.7+1,2)--(2+0.3+1+2,2);
        \draw[line width=0.65mm][shorten >=-0.2cm,shorten <=-0.2cm][postaction={decorate}](1-0.5,1+0.5)--(2,2);
        \draw[line width=0.65mm][shorten >=-0.6cm,shorten <=-0.6cm](0,0)--(0.5,1.5)node[
            sloped,
            pos=0.3,
            allow upside down]{\arrowBox};

        \draw[line width=0.65mm][postaction={decorate}][shorten >=-0.2cm,shorten <=-0.2cm](0.25,0.75)--(1,0.5);
        \draw[line width=0.65mm][shorten >=-0.2cm,shorten <=-0.2cm][postaction={decorate}](1,0.5)--(1,0.5-1);
        \draw[line width=0.65mm][shorten >=-0.2cm,shorten <=-0.2cm][postaction={decorate}](1,0.5-1)--(1+1,0.5-1);
        \draw[line width=0.65mm][shorten >=-0.2cm,shorten <=-0.2cm][postaction={decorate}](1+1,0.5-1)--(1+1,0.5-1-1);

        %%%%%%%%%%%%%%%%%%%%%%%%%%%%%%%%%%%%%%%%%%%%
        \draw[shorten >=-0.2cm,shorten <=-0.2cm](1+1,0.5-1-1)--node[below]{$E_2$}(1+1+1,0.5-1-1)node[sloped,pos=0.5,allow upside down]{\arrowIIn};
        %%%%%%%%%%%%%%%%%%%%%%%%%%%%%%%%%%%%%%%%%%%%

        \draw[postaction={decorate}] (1+1+1,0.5-1-1) to[out=-90,in=180] node[below left]{$E_2'$}(3+2,-1-2);

        \node[left=0.2cm] at (0,1-1){$[1,0]$};
        \node[below left=0.2cm] at (0,-3){$[-1,2]$};
        \node[above=0.3cm] at (1+1,2){$[-1,2]$};
        \node[right=0.2cm] at (5,2.2){$[-2,3]$};
        \node[below right=0.4cm] at (5,-3){$[-2,-3]$};
        \node[below right=0.1cm] at (5,-3){$c_+$};
        \node[above left=0.2cm] at (0.5,1.5){$[0,1]$};
        \node at (1.7,1) {$[-1,2]$};
        \node at (1.1,-1.2) {$[-2,3]$};
        \node at (2.7,-0.4) {$[-3,4]$};
        \node at (2.1,-2.2) {$[-4,5]$};
        \node at (4,-1.6 ) {$[-5,1]$};
        \node at (0,1) {$c_-$};
        
        \end{tikzpicture}
        }
        $$
        \end{minipage}
        
    We have $\min_{\mathfrak{E}}s_g=\frac{600\pi}{31}>0$.

Till now, we have already proved that, 
\begin{theorem}
For the pairs $(\widehat{M},\mathfrak{E})$ listed in Table \ref{cases}, they can never support special Bach-flat K\"ahler metrics. 
\end{theorem}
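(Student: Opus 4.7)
The strategy is exactly the one set up in Section \ref{sec:nonexistence}: for each pair $(\widehat{M},\mathfrak{E})$ listed in Table \ref{cases}, compute the function $\min_{\mathfrak{E}} s_g([\omega])$ on the K\"ahler cone $\mathcal{K}(\widehat{M})$ using the LeBrun--Simanca framework, and show it is strictly positive (or that the denominator $-T_s+s_0T_1$ vanishes, ruling out existence of an extremal K\"ahler metric with extremal vector field proportional to $\mathfrak{E}$ in the first place). Since a special Bach-flat K\"ahler metric would have $s_{\widehat g}\geq 0$ with $s_{\widehat g}(q)=0$ attained precisely at the orbifold point, the minimum of the scalar curvature must equal zero; so excluding this value completes the argument.

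The concrete procedure I will run case-by-case is: (i) read off the interior fixed points of the $\mathbb{C}^*$-action together with their weights $[r_j,s_j]$ from the explicit description of each $(\widetilde{M},\mathfrak{E})$; (ii) compute $c_1(Y)[\Sigma_{\pm a}]$ at $c_+$ and $c_-$ using the recipe of Example \ref{exam:8.2}--\ref{exam:8.3}, which reduces to the weights at $c_\pm$ (after orbifold normalization), the self-intersection in the orbifold cover, and the order of the orbifold group; (iii) parametrize the relative K\"ahler cone $\mathcal{K}/\mathbb{R}_+$ by the periods of a few boundary curves (the number of parameters equals the Picard rank minus one), and normalize one period to $1$; (iv) solve the linear constraints $\omega(r_j E_j'-s_j E_j) = \omega(F)$ and the relations imposed by the contracted divisors to express $\omega(E_j),\omega(E_j'),\omega(F),\omega(c_\pm),c_1[\omega]$ in those parameters; (v) substitute into formulas \eqref{ts}--\eqref{t2} to obtain $T$, then into \eqref{hh} and \eqref{mins} to obtain $\min_{\mathfrak{E}}s_g$ as an explicit rational function on the open polytope describing $\mathcal{K}/\mathbb{R}_+$; (vi) show the numerator has fixed sign on this polytope by elementary cyclic-sum inequalities (as done for Cases 1A--1C, 2A, 2D, 2E, 3A, 3C, 3E, 3G above) or by direct positivity when the Picard rank equals $1$ and the answer is a single positive constant (Cases 3H, 3L, 3M). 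The single exceptional case 3J is handled separately by verifying $-T_s+s_0T_1=0$: then equation \eqref{h} forces $\int(t-t_0)^2\,d\mu = 0$, contradicting the nonconstancy of $s_g$ demanded by Assumption \ref{assumption}, so this pair supports no extremal K\"ahler metric with extremal vector field a multiple of $\mathfrak{E}$ at all.

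The main obstacle is step (vi): the rational expressions coming out of \eqref{mins} are multivariable polynomials of high degree in the K\"ahler parameters, and one must find a uniform sign argument valid over the entire K\"ahler polytope. In the Picard rank $2$ cases (3A, 3C, 3E, 3G) the polytope is a single interval in one variable, so one-variable calculus (combined with Mathematica) settles the sign; the serious cases are the rank $\geq 3$ ones (1B, 1C, 2A, 2D, 2E), where I will rewrite the numerator as a sum of manifestly nonnegative cyclic-symmetric pieces plus lower-order cross terms bounded below by AM--GM-type inequalities (e.g.\ $\sum a^3b-\sum a^2b^2\geq 0$, $\sum a+\sum a^3\geq 2\sum a^2$, and the $(a-b)^2$-type remainders). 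Because all Chern numbers $c_1(Y)[\Sigma_{\pm a}]$ and all fixed-point weights have been tabulated in the figures of Sections \ref{sec:case12}--\ref{sec:case3}, every needed quantity is already available, and the proof reduces to bookkeeping plus elementary positivity.

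Finally, combining this theorem with the correspondence Theorem \ref{main:correspondence} and Proposition \ref{prop:complextype} yields Theorem \ref{main:classificationsu2}: any Hermitian non-K\"ahler ALE gravitational instanton with structure group in $SU(2)$ must either be the reversed Eguchi--Hanson (the $\mathcal O(2)$ case excluded a priori in Proposition \ref{prop:added}) or have underlying pair $(\widehat M,\mathfrak E)$ from Table \ref{cases}; the latter is impossible because the associated special Bach-flat K\"ahler orbifold metric $\widehat g$ would have $\min s_{\widehat g}=0$, contradicting $\min_{\mathfrak E}s_g([\omega])\neq 0$ just established.
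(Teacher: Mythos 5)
Your proposal follows the paper's own proof essentially verbatim: the same case-by-case evaluation of $\min_{\mathfrak{E}}s_g$ via the LeBrun--Simanca moment-map formulas (\ref{ts})--(\ref{t2}), (\ref{hh}), (\ref{mins}), the same elementary cyclic-sum positivity arguments for the higher Picard rank cases, and the same separate treatment of Case \hyperref[3J]{3J} through the vanishing of $-T_s+s_0T_1$. No gaps; this is the argument of Section \ref{sec:nonexistence}.
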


Combining Theorem \ref{main:correspondence}, Proposition \ref{prop:added}, and Proposition \ref{prop:complextype} with the results obtained in this section, we have completed the proof of Theorem \ref{main:classificationsu2}.

It is worth noting that in higher dimensions, there also exist Hermitian non-K\"ahler Ricci-flat ALE manifolds. For instance, consider a Kronheimer's hyperk\"ahler ALE 4-manifold $X$, where the space of hyperk\"ahler complex structures is parameterized by $S^2$. Take $\mathbb{C}\subset S^2$ and consider $\mathbb{C}\times X$. Define the complex structure $J$ on $\mathbb{C}\times X$ by 
$$J|_{X_t}=J_t,$$
where here $X_t$ refers to $t\times X$, and $J_t$ refers to the hyperk\"ahler complex structure on $X$ given by $t\in\mathbb{C}\subset S^2$. For any $p\in X$, we require that $J|_{\mathbb{C}\times p}$ is the standard complex structure on $\mathbb{C}$. This defines an integrable complex structure on $\mathbb{C}\times X$, and the product metric on $\mathbb{C}\times X$ is clearly Hermitian non-K\"ahler Ricci-flat ALE. However, this example is trivial in the sense that there are complex structures on $\mathbb{C}\times X$, namely the product complex structures, such that the metric is K\"ahler. This example was pointed out to the author by Junsheng Zhang. It will be interesting to find more nontrivial Hermtian non-K\"ahler Ricci-flat ALE manifolds in the higher dimensional case. 

Finally we would like to remark that given any $\Gamma\subset U(2)$, we could repeat the classification for the pairs $(\widetilde{M},\mathfrak{E})$ and the calculation for $\min_{\mathfrak{E}}s_g$, to derive some non-existence results for Hermitian non-K\"ahler ALE gravitational instantons with structure group $\Gamma$. The difficulty for the general $U(2)$ case is, if we do not fix $\Gamma\subset U(2)$, there could be infinite number of candidates $(\widetilde{M},\mathfrak{E})$.

\appendix

\section{Proof of Theorem \ref{thm:cyclicweights}}

In the appendix we give a proof of Theorem \ref{thm:cyclicweights}. We adopt some notations from toric geometry. For a cyclic group $\Gamma=L(q,p)$ and the fan $F_{L(q,p)}$ of its minimal resolution as shown in Figure \ref{fanminimal}, we set each $v_i=(v_{i,U},v_{i,V})$ and define $v_i^\perp=(-v_{i,V},v_{i,U})$. Denote $\sigma_i$ by the cone $\sigma(v_{i+1},v_i)$ spanned by $v_{i+1},v_i$, so that the fan $F_{L(q,p)}$ is the union of the cones $\sigma_i$. For each cone $\sigma_i=\sigma(v_{i+1},v_i)$ in the fan, there is the dual cone $\sigma_i^\vee=\sigma(-v_{i+1}^\perp,v_i^\perp)$. Denote $\sigma_{\Gamma}$ by the cone $\sigma((0,1),(p,-q))$, hence the fan of $\mathbb{C}^2/\Gamma$ consists of a single cone $\sigma_{\Gamma}$. For a cone $\sigma$, by $S_{\sigma}$ we mean the $\mathbb{C}$-algebra generated by $U^aV^b$ with all integral $(a,b)\in\sigma$. From standard results of toric geometry we have:
\begin{itemize}
\item As a variety $\mathbb{C}^2/\Gamma=\Spec\mathbb{C}[X,Y]^\Gamma=\Spec S_{\sigma_\Gamma^\vee}$. Here, $\mathbb{C}[X,Y]^\Gamma$ refers to the polynomials that are $\Gamma$-invariant.  The $\mathbb{C}$-algebra $S_{\sigma_\Gamma^\vee}$ is related to $\mathbb{C}[X,Y]^\Gamma$ via the isomorphism $U=X^p$ and $V=Y/X^q$.
\item The minimal resolution $\widetilde{\mathbb{C}^2/\Gamma}$ is the affine varieties $\Spec S_{\sigma_i^\vee}$ glued together along $\Spec S_{\sigma_i^\vee\cup\sigma_{i-1}^\vee}$. Notice that $S_{\sigma_i^\vee}\subset S_{\sigma_i^\vee\cup\sigma_{i-1}^\vee}$ is a sub-$\mathbb{C}$-algebra.
\end{itemize}

\begin{theorem}
Consider the $\mathbb{C}^*$-action on $\mathbb{C}^2/L(q,p)$ defined by $t\curvearrowright(x,y)=(t^\theta x,t^\tau y)$ with $\theta\geq\tau\geq0$ and its minimal resolution $\widetilde{\mathbb{C}^2/\Gamma}$ given by Theorem \ref{toricresolution}.
Given vertices $v_0,\ldots,v_k$ as in Theorem \ref{toricresolution}, write $v_i=(v_{i,U},v_{i,V})$ and for $i=0,\ldots,k+1$ define
$$w_i=\theta (pv_{i,V}+qv_{i,U})-\tau v_{i,U}.$$
Then the weights of the lifted action are:
$$
\begin{tikzpicture}
[decoration={markings, 
    mark= at position 0.5 with {\arrow{stealth}}}
]
\draw(-1.6,1.6)--node[below left]{$E_1$}(0.2,-0.2);
\draw(-0.2,-0.2)--node[above left]{$E_2$}(1.6,1.6);
\draw(1.2,1.6)--node[above right]{$E_3$}(3,-0.2);

\node at (4.5,0.7){$\cdots\cdots$};

\draw(6,-0.2)--node[below right]{$E_{k-1}$}(7.8,1.6);
\draw(7.4,1.6)--node[above right]{$E_{k}$}(9.2,-0.2);

\draw[postaction={decorate}](-1.2,1.6)--node[above left]{$y=0$}(-3,-0.2);
\draw[postaction={decorate}](9-0.2,-0.2)--node[right]{$x=0$}(10.6,1.6);

\node at (-1.4,1.4){$\bullet$};
\node at (0,0){$\bullet$};
\node at (1.4,1.4){$\bullet$};
\node at (2.8,0){$\bullet$};
\node at (6.2,0){$\bullet$};
\node at (7.6,1.4){$\bullet$};
\node at (9,0){$\bullet$};

\node[above=0.3cm] at (-1.4,1.4){$[w_0,-w_1]$};
\node[below=0.3cm] at (0,0){$[w_1,-w_2]$};
\node[above=0.3cm] at (1.4,1.4){$[w_2,-w_3]$};
\node[below=0.3cm] at (2.8,0){$[w_3,-w_4]$};
\node[below=0.3cm] at (6.2,0){$[w_{k-2},-w_{k-1}]$};
\node[above=0.3cm] at (7.6,1.4){$[w_{k-1},-w_{k}]$};
\node[below=0.3cm] at (9,0){$[w_k,-w_{k+1}]$};
\end{tikzpicture}
$$
Notice that there is the inductive relation $w_{i+1}=e_iw_i-w_{i-1}$, and we have $w_0=\theta p$, $w_1=\theta q-\tau$, $w_{k}=\theta-\tau v_{k,U}$, and $w_{k+1}=-\tau p$. 
\end{theorem}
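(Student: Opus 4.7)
The plan is to translate the problem into toric geometry: the singular quotient and its minimal resolution are toric, the $\mathbb{C}^*$-action is a one-parameter subgroup, and the weights at each fixed point come out of the natural pairing between the cocharacter lattice and monomial exponents.

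First, I would set up the lattice picture. The quotient $\mathbb{C}^2/L(q,p) = \mathrm{Spec}\,\mathbb{C}[x,y]^{\Gamma}$ is a toric variety whose cocharacter lattice is the overlattice $N' = \mathbb{Z}\langle f_1, f_2\rangle$ of $N = \mathbb{Z}^2$, with $f_1 = \tfrac{1}{p}(1,q)$ and $f_2 = (0,1)$; correspondingly, the character lattice $M' \subset M$ has dual basis $f_1^* = (p,0)$, $f_2^* = (-q,1)$, and the monomials $\chi^{f_1^*} = x^p$, $\chi^{f_2^*} = x^{-q}y$ generate the $\Gamma$-invariant ring. In these $N'$-coordinates, the vectors $v_0,\dots,v_{k+1}$ of the statement are exactly the primitive generators of the rays of the Hirzebruch-Jung fan in $N'$, and the inclusion $N \hookrightarrow N'$ sends $e_1 \mapsto pf_1 - qf_2$ and $e_2 \mapsto f_2$.

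Second, I would identify the one-parameter subgroup associated to the given $\mathbb{C}^*$-action. On the cover, $(x,y) \mapsto (t^\theta x, t^\tau y)$ corresponds to $n = \theta e_1 + \tau e_2 \in N \subset N'$; expressed in the $(f_1,f_2)$-basis,
\[
n \;=\; p\theta\, f_1 \;+\; (\tau - q\theta)\, f_2.
\]
Any monomial $\chi^u$ with $u \in M'$ then transforms under $t \in \mathbb{C}^*$ with weight $\langle n, u\rangle$.

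The main computation is local. In the chart $U_{\sigma_i}$ of the smooth cone $\sigma_i = \sigma(v_{i+1}, v_i)$, the two local coordinates at the torus-fixed point are $\chi^{u_1^i}, \chi^{u_2^i}$, where $\{u_1^i, u_2^i\} \subset M'$ is the dual basis of $\{v_{i+1}, v_i\}$. Since $\det(v_{i+1}, v_i) = 1$ throughout (this is what smoothness means), Cramer's rule gives
\[
u_1^i \;=\; (v_{i,V},\, -v_{i,U}), \qquad u_2^i \;=\; (-v_{i+1,V},\, v_{i+1,U}),
\]
and a direct pairing computation yields
\[
\langle n, u_1^i\rangle \;=\; p\theta v_{i,V} - (\tau - q\theta)v_{i,U} \;=\; \theta(pv_{i,V} + qv_{i,U}) - \tau v_{i,U} \;=\; w_i,
\]
and symmetrically $\langle n, u_2^i\rangle = -w_{i+1}$. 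Since $\chi^{u_1^i}$ vanishes on the toric divisor associated to $v_{i+1}$ and $\chi^{u_2^i}$ vanishes on the divisor associated to $v_i$, these are the weights along the two tangent directions at the fixed point, matching the picture.

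The remaining identities are bookkeeping: $w_0 = \theta p$, $w_1 = \theta q - \tau$, and $w_{k+1} = -\tau p$ follow by direct substitution from $v_0 = (0,1)$, $v_1 = (1,0)$, $v_{k+1} = (p,-q)$; the formula $w_k = \theta - \tau v_{k,U}$ uses the determinant identity $pv_{k,V} + qv_{k,U} = \det(v_{k+1}, v_k) = 1$; and $w_{i+1} = e_i w_i - w_{i-1}$ is immediate from the linearity of $w_\bullet$ in $v_\bullet$ together with $v_{i+1} = e_i v_i - v_{i-1}$. There is no serious obstacle; the only care required is keeping the orientation convention for the $\perp$ operation consistent, so that the correct weight is assigned to each of the two tangent directions at each fixed point.
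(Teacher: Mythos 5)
Your proof is correct, and it computes the same quantities as the paper's appendix, but it is organized through the standard toric formalism rather than the paper's explicit coordinate-ring manipulations. Concretely: you encode the lift of $t\curvearrowright(x,y)=(t^\theta x,t^\tau y)$ as the cocharacter $n=\theta e_1+\tau e_2=p\theta f_1+(\tau-q\theta)f_2$ of the big torus and read off the tangent weights at the fixed point of each smooth cone $\sigma(v_{i+1},v_i)$ as $\langle n,u\rangle$ for $u$ in the dual basis, which Cramer's rule (using $\det(v_{i+1},v_i)=1$) makes explicit as $u_1^i=(v_{i,V},-v_{i,U})$ and $u_2^i=(-v_{i+1,V},v_{i+1,U})$; the pairing then gives $w_i$ and $-w_{i+1}$ directly, and the identification of which weight goes with which divisor direction follows from $\chi^{u_1^i}$ vanishing on $D_{v_{i+1}}$. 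The paper instead proves a separate lemma locating the exceptional curves $E_i$, $E_{i+1}$ inside each affine chart $\mathrm{Spec}\,S_{\sigma_{i+1}^\vee}$ as the zero loci of the explicit monomials $P_i=U^{v_{i,V}}V^{-v_{i,U}}$ and $Q_{i+1}=U^{-v_{i+1,V}}V^{v_{i+1,U}}$, and only then substitutes $U=X^p$, $V=Y/X^q$ to compute the action on these parameters. These monomials are exactly your $\chi^{u_1^i}$, $\chi^{u_2^i}$, so the two arguments coincide at the level of the final computation; what your version buys is that the chart-by-chart analysis of the preimage of the orbifold point (the bulk of the paper's Lemma A.1) is absorbed into the standard facts that dual-basis monomials of a smooth cone are coordinates at the torus-fixed point and that $\chi^{u}$ cuts out the divisor of the ray on which $u$ pairs positively. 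The consistency checks at the end ($w_0=\theta p$, $w_{k+1}=-\tau p$, $pv_{k,V}+qv_{k,U}=\det(v_{k+1},v_k)=1$, and the three-term recursion from linearity) are all correct.
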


\begin{proof} 
As $\mathbb{C}^2/L(q,p)=\Spec\mathbb{C}[X,Y]^{L(q,p)}=\Spec S_{\sigma_\Gamma^\vee}$,
the orbifold point in $\mathbb{C}^2/L(q,p)$ is defined by
$U^aV^b=0$ in $S_{\sigma_\Gamma^\vee}$, for all $U^aV^b\in S_{\sigma_\Gamma^\vee}$.
To find the exceptional set in the minimal resolution, it suffices to consider the preimage of the orbifold point.

\begin{lemma}\label{exceptionallemma}
In the affine piece $\Spec S_{\sigma_{i+1}^\vee}$, 
%the exceptional set consists of \%$E_i$ except one point and $E_{i+1}$ except one point, and 
$E_i$ is parametrized by $U^{v_{i,V}}V^{-v_{i,U}}\in S_{\sigma_{i+1}^\vee}$, and $E_{i+1}$ is parametrized by $U^{-v_{i+1,V}}V^{v_{i+1,U}}\in S_{\sigma_{i+1}^\vee}$.
\end{lemma}
\begin{proof}
\begin{comment}
Prove by induction. 

For $i=1$, the exceptional curve corresponds to $v_1=(1,0)$ is $E_1$, and $E_1$ in the affine piece $\Spec S_{\sigma_2^\vee}$ again should be defined by $U=V=0$, as it is the preimage of the orbifold point. 
$$
\begin{tikzpicture}
\path[draw](0,0)--(0,-1);
\path[draw](0,0)--(1,2) node[anchor=north west]{$v_2^\perp$};
\node[below] at (0,-1) {$-v_1^\perp$};

\node at (0,-1) {$\circ$};
\node at (1,-1) {$\circ$};
\node at (2,-1) {$\circ$};
\node at (3,-1) {$\circ$};
\node at (0,0) {$\circ$};
%\node at (1,0) {$\circ$};
\node at (2,0) {$\circ$};
\node at (3,0) {$\circ$};
\node at (0,1) {$\circ$};
\node at (1,1) {$\circ$};
\node at (2,1) {$\circ$};
\node at (3,1) {$\circ$};
\node at (0,2) {$\circ$};
\node at (1,2) {$\circ$};
\node at (2,2) {$\circ$};
\node at (3,2) {$\circ$};
\path[<->] (1/2.236,2/2.236) edge[bend left=45] node[pos=0.5,above right]{$\sigma_2^\vee$} (0,-1);
\end{tikzpicture}
$$
Recall $S_{\sigma_2^\vee}$ is the finitely generated $\mathbb{C}$-algebra generated by $U^aV^b$ with all $(a,b)\in\sigma_2^\vee$.
In the affine piece $\Spec S_{\sigma_2^\vee}$, the curve defined by $U=V=0$ is parametrized by $V^{-1}=U^{v_{1,V}}V^{-v_{1,U}}$. So the claim holds in this case.
\end{comment}
The dual cone $\sigma_{i+1}^\vee$ is spanned by $-v_{i}^{\perp}$ and $v_{i+1}^\perp$. Note that $\sigma(-v_0^\perp,v_{k+1}^\perp)$ is exactly $\sigma_{\Gamma}^\vee$.

The algebra $S_{\sigma_{i+1}^\vee}$ is the $\mathbb{C}$-algebra finitely generated by $U^aV^b$ with $(a,b)\in\sigma_{i+1}^\vee$. The dual cone $\sigma_{i+1}^\vee=\sigma(-v_i^{\perp},v_{i+1}^\perp)$ can be decomposed as the union of four cones: 
\begin{itemize}
\item $\sigma(-v_i^\perp,-v_{1}^\perp)$ spanned by $-v_i^\perp,-v_{1}^\perp$; 
\item $\sigma(-v_1^\perp,-v_0^\perp)$ spanned by $-v_1^\perp,-v_0^\perp$; 
\item $\sigma_\Gamma^\vee=\sigma(-v_0^\perp,v_{k+1}^\perp)$ spanned by $-v_0^\perp,v_{k+1}^\perp$;
\item $\sigma(v_{k+1}^\perp,v_{i+1}^\perp)$ spanned by $v_{k+1}^\perp,v_{i+1}^\perp$.
\end{itemize}

$$
\begin{tikzpicture}
\draw (0,0) to (-3,-4);
\node[below left] at (-3,-4) {$-v_{i}^\perp$};

\draw (0,0) to (-2,-3);
\node[right] at (-2,-3) {$-v_{i-1}^\perp$};

\draw (0,0) to (0,-4);
\node[below right] at (0,-4) {$-v_1^{\perp}$};

\node at (-0.65,-2) {$\cdots$};

\draw (0,0) to (4/2.2,5/2.2);
\node[left=0.2cm] at (4/2.2,5/2.2) {$v_{i+1}^\perp$}; 

\draw (0,0) to (5/2.2,5.1/2.2);
\node[below right=0.02cm] at (5/2.2,5.1/2.2) {$v_{k+1}^\perp=(q,p)$}; 

\draw (0,0) to (2.5,0);
\node[right] at (2.5,0) {$-v_0^\perp$};

\draw[fill=gray!30]    (4/2.2,5/2.2) -- (0,0) -- (5/2.2,5.1/2.2);
\node at (2.6,2.8) {$\sigma({v_{k+1}^\perp,v_{i+1}^\perp})$};

\path[<->] (-3/3,-4/3) edge[bend right=45] node[pos=0.5,below right]{$\sigma_{i+1}^\vee$} (4/4,5/4);

\path[<->] (-0,-2) edge[bend right=45] node[pos=0.5,right]{$\sigma(-v_1^\perp,-v_{0}^\perp)$} (2,0);

\path[<->] (2,0) edge[bend right=45] node[pos=0.5,right]{$\sigma(-v_0^\perp,v_{k+1}^\perp)$} (5/3.5,5.1/3.5);

\path[<->] (-3/1.2,-4/1.2) edge[bend right=30] node[pos=0.5,below]{$\sigma(-v_i^\perp,-v_{1}^\perp)$} (0,-3.5);

\node at (-4,-2) {$\circ$};
\node at (-3,-2) {$\circ$};
\node at (-2,-2) {$\circ$};
%\node at (-1,-2) {$\circ$};
\node at (0,-2) {$\circ$};
\node at (1,-2) {$\circ$};
\node at (2,-2) {$\circ$};
\node at (3,-2) {$\circ$};
\node at (4,-2) {$\circ$};
\node at (-4,-1) {$\circ$};
\node at (-3,-1) {$\circ$};
\node at (-2,-1) {$\circ$};
\node at (-1,-1) {$\circ$};
\node at (0,-1) {$\circ$};
%\node at (1,-1) {$\circ$};
%\node at (2,-1) {$\circ$};
\node at (3,-1) {$\circ$};
\node at (4,-1) {$\circ$};
\node at (-4,0) {$\circ$};
\node at (-3,0) {$\circ$};
\node at (-2,0) {$\circ$};
\node at (-1,0) {$\circ$};
\node at (0,0) {$\circ$};
\node[left] at (0,0) {$(0,0)$};
\node at (1,0) {$\circ$};
\node at (2,0) {$\circ$};
%\node at (3,0) {$\circ$};
\node at (4,0) {$\circ$};
\node at (-4,1) {$\circ$};
\node at (-3,1) {$\circ$};
\node at (-2,1) {$\circ$};
\node at (-1,1) {$\circ$};
\node at (0,1) {$\circ$};
\node at (1,1) {$\circ$};
%\node at (2,1) {$\circ$};
%\node at (3,1) {$\circ$};
%\node at (4,1) {$\circ$};
\node at (-4,2) {$\circ$};
\node at (-3,2) {$\circ$};
\node at (-2,2) {$\circ$};
\node at (-1,2) {$\circ$};
\node at (0,2) {$\circ$};
%\node at (1,2) {$\circ$};
%\node at (2,2) {$\circ$};
%\node at (3,2) {$\circ$};
%\node at (4,2) {$\circ$};
\node at (-4,-3) {$\circ$};
\node at (-3,-3) {$\circ$};
\node at (-2,-3) {$\circ$};
\node at (-1,-3) {$\circ$};
\node at (0,-3) {$\circ$};
\node at (1,-3) {$\circ$};
\node at (2,-3) {$\circ$};
\node at (3,-3) {$\circ$};
\node at (4,-3) {$\circ$};
\node at (-4,-4) {$\circ$};
\node at (-3,-4) {$\circ$};
%\node at (-2,-4) {$\circ$};
%\node at (-1,-4) {$\circ$};
\node at (0,-4) {$\circ$};
\node at (1,-4) {$\circ$};
\node at (2,-4) {$\circ$};
\node at (3,-4) {$\circ$};
\node at (4,-4) {$\circ$};
\end{tikzpicture}.
$$
Write $S_{\sigma(-v_i^\perp,-v_{1}^\perp)\cup \sigma(-v_1^\perp,-v_0^\perp)\cup \sigma^\vee_\Gamma\cup \sigma(v_{k+1}^\perp,v_{i+1}^\perp)}$ as the $\mathbb{C}$-algebra generated by $S_{\sigma(-v_i^\perp,-v_{1}^\perp)}$, $S_{\sigma(-v_1^\perp,-v_0^\perp)}$, $S_{\sigma^\vee_\Gamma}$, and $S_{\sigma(v_{k+1}^\perp,v_{i+1}^\perp)}$,
then 
$$S_{\sigma_{i+1}^\vee}=S_{\sigma(-v_i^\perp,-v_{1}^\perp)\cup \sigma(-v_1^\perp,-v_0^\perp)\cup \sigma^\vee_\Gamma\cup \sigma(v_{k+1}^\perp,v_{i+1}^\perp)}.$$
%And $\Spec S_{\sigma_{i+1}^\vee}$ can be treated as the union of the following four affine varieties: 
%$$\Spec S_{\sigma^\vee(-v_i^\perp,-v_{1}^\perp)},\Spec S_{\sigma^\vee(-v_1^\perp,-v_0^\perp)},\Spec S_{\sigma_\Gamma^\vee},\text{ and }\Spec S_{\sigma^\vee(v_{k+1}^\perp,v_{i+1}^\perp)}$$
%The resolution map restricted to $\Spec S_{\sigma_{i+1}^\vee}$ gives $\Spec S_{\sigma_{i+1}^\vee}\to\Spec S_{\sigma^\vee_\Gamma}$. The exceptional set in the affine piece $\Spec S_{\sigma_{i+1}^\vee}$ is defined by the equations $U^aV^b=0$, for all $U^aV^b\in S_{\sigma_\Gamma^\vee}$.
Henceforth, to determine the exceptional set in $\Spec S_{\sigma_{i+1}^\vee}$, it suffices to write down a set of generators of the four $\mathbb{C}$-algebras 
and determine the equations that define the exceptional set. 
%To find a set of generators of $ S_{\sigma_{i+1}^\vee}$, one only needs to find generators of the four $\mathbb{C}$-algebras: $S_{\sigma^\vee(-v_i^\perp,-v_{1}^\perp)}$, $S_{\sigma^\vee(-v_1^\perp,-v_0^\perp)}$, $S_{\sigma^\vee_\Gamma}$, and $S_{\sigma^\vee(v_{k+1}^\perp,v_{i+1}^\perp)}$.
We have:
\begin{itemize}
\item For $S_{\sigma_\Gamma^\vee}$:
\begin{itemize}
\item $U^{-v_{j,V}}V^{v_{j,U}}$ is a set of generators of $S_{\sigma^\vee_\Gamma}$ with $j=0,\ldots,k+1$. 
\item All generators of $S_{\sigma^\vee_\Gamma}$ vanish in the preimage of the orbifold point.
\end{itemize}
\item For $S_{\sigma(-v_1^\perp,-v_0^\perp)}$:
\begin{itemize} 
\item $S_{\sigma(-v_1^\perp,-v_0^\perp)}$ is generated by $U,V^{-1}$.
\item In the preimage of the orbifold point, $U=0$. We will see that $V^{-1}=0$ also holds in the preimage of the orbifold point.
\end{itemize}
\item For $S_{\sigma(-v_i^\perp,-v_{1}^\perp)}$:
\begin{itemize}
\item Generators of
$S_{\sigma(-v_i^\perp,-v_{1}^\perp)}$
can be taken as 
$$P_j=U^{v_{j,V}}V^{-v_{j,U}},\ \text{$j=1,\ldots,i$}.$$
They form a set of generators for $S_{\sigma^\vee(-v_i^\perp,-v_{1}^\perp)}$ because $\Det(-v_j^\perp,-v_{j-1}^\perp)=1$.
\item 
In the preimage of the orbifold point, for any $1\leq j<i$, the above generators in $S_{\sigma_{i+1}^\vee}$ have the relations
$$P_j^{-v_{i,V}}=P_i^{-v_{j,V}}\left(V^{-1}\right)^{-v_{j,U}v_{i,V}+v_{i,U}v_{j,V}},$$
where $-v_{j,U}v_{i,V}+v_{i,U}v_{j,V}=\Det(v_i,v_j)>0$ as $i>j$. We also have the relation
$$\left(V^{-1}\right)^{v_{i,U}}=P_iU^{{-v_{i,V}}}$$
in $S_{\sigma_{i+1}^\vee}$ (note $-v_{i,V}>0$).
Thus, in the preimage of the orbifold point in $\Spec S_{\sigma_{i+1}^\vee}$, as $U=0$, we must have $V^{-1}=0$. 
So the equations $P_j=0$ hold for $1\leq j<i$. 
\end{itemize}

\item For $S_{\sigma(v_{k+1}^\perp,v_{i+1}^\perp)}$:
\begin{itemize}
\item Generators of $S_{\sigma(v_{k+1}^\perp,v_{i+1}^\perp)}$ can be taken as 
$$Q_j=U^{-v_{j,V}}V^{v_{j,U}},\ j=i+1,\ldots,k+1,$$
for the same reason as above. 

\item The relations
$$Q_j^{v_{i+1,U}}=Q_{i+1}^{v_{j,U}}U^{v_{i+1,V}v_{j,U}-v_{i+1,U}v_{j,V}}$$
hold for $i+1<j\leq k+1$ where $v_{i+1,V}v_{j,U}-v_{i+1,U}v_{j,V}=-\Det(v_{i+1},v_j)>0$. Thus, in the preimage of the orbifold point in $\Spec S_{\sigma^\vee_{i+1}}$, equations $Q_j=0$ hold for $i+1<j\leq k+1$.

\end{itemize}

\item Moreover, since $v_{i,V}-v_{i+1,V},v_{i+1,U}-v_{i,U}>0$ and $0<(v_{i+1,U}-v_{i,U})/(v_{i,V}-v_{i+1,V})<q/p$, 
$$P_iQ_{i+1}=U^{v_{i,V}-v_{i+1,V}}V^{v_{i+1,U}-v_{i,U}}\in S_{\sigma^\vee_{\Gamma}}.$$
This shows in the preimage of the orbifold point, the equation $P_iQ_{i+1}=0$ holds.
\end{itemize}
Therefore, the preimage of the orbifold point is parametrized by $P_i$ and $Q_{i+1}$ in the affine piece $\Spec S_{\sigma^\vee_{i+1}}$, which exactly correspond to the exceptional curves $E_i$ and $E_{i+1}$. $P_i$ parametrizes $E_i$  and $Q_{i+1}$ parametrizes $E_{i+1}$. 
$E_i$ and $E_{i+1}$ intersects in this affine piece at the point where $P_i=Q_{i+1}=0$. The equation $P_iQ_{i+1}=0$ holds in the preimage of the orbifold point in the affine piece $\Spec S_{\sigma^\vee_{i+1}}$, which means the preimage of the orbifold point in $\Spec S_{\sigma^\vee_{i+1}}$ is exactly $\Spec S_{\sigma^\vee_{i+1}}\cap(E_i\cup E_{i+1})$. 
\end{proof}
Recall that $U=X^p,V=Y/X^q$, and the $\mathbb{C}^*$-action on $X,Y$ is $t\curvearrowright X,Y=t^\theta X,t^\tau Y$. The prameters $P_i,Q_{i+1}$ are given by 
$$P_i=U^{v_{i,V}}V^{-v_{i,U}}=X^{pv_{i,V}}(Y/X^q)^{-v_{i,U}}=X^{pv_{i,V}+qv_{i,U}}Y^{-v_{i,U}}.$$
$$Q_{i+1}=U^{-v_{i+1,V}}V^{v_{i+1,U}}=X^{-pv_{i+1,V}}(Y/X^q)^{v_{i+1,U}}=X^{-pv_{i+1,V}-qv_{i+1,U}}Y^{v_{i+1,U}}.$$
This implies the $\mathbb{C}^*$-action on the exceptional curve $E_i$ parametrized by $P_i$ is given by 
\begin{equation}\label{actionpi}
t\curvearrowright P_i=t^{\theta (pv_{i,V}+qv_{i,U})-\tau v_{i,U}}P_i;
\end{equation}
and the $\mathbb{C}^*$-action on the exceptional curve $E_{i+1}$ parametrized by $Q_{i+1}$ is given by 
\begin{equation}\label{actionqi1}
t\curvearrowright Q_{i+1}=t^{-\theta (pv_{i+1,V}+qv_{i+1,U})+\tau v_{i+1,U}}Q_{i+1}.
\end{equation}
Note that here 
$$\theta (pv_{i,V}+qv_{i,U})-\tau v_{i,U}=\theta (q,p)\cdot (v_{i,U},v_{i,V})-\tau v_{i,U}=w_i.$$
The way that $\mathbb{C}^*$-acts on $E_i,E_{i+1}$ (\ref{actionpi}) (\ref{actionqi1})  show that the intersection point $E_{i}\cap E_{i+1}$ should have weights $[w_i,-w_{i+1}]$.

%In conclusion, the flow on $E_i$ is flowing into $E_{i}\cap E_{i+1}$, and the flow on $E_{i+1}$ is flowing out of $E_{i}\cap E_{i+1}$. At the same time, the action on $E_i$ is given by $t\curvearrowright P_i=t^{\alpha pv_{i,V}+(\alpha-\beta)v_{i,U}}P_i$, while the action on $E_{i+1}$ is given by $t\curvearrowright Q_{i+1}=t^{-\alpha pv_{i+1,V}-(\alpha-\beta)v_{i+1,U}}Q_{i+1}$, and $E_i\cap E_{i+1}$ is given by $P_i=Q_{i+1}=0$. This shows the weights at $E_i\cap E_{i+1}$ are $[\alpha pv_{i,V}+(\alpha-\beta)v_{i,U},-\alpha pv_{i+1,V}-(\alpha-\beta)v_{i+1,U}]$ and the theorem follows for the intersection points of the exceptional curves. 

The ray given by $v_0$ in the fan $F_M$ corresponds to the proper transform of $y=0$, which is parametrized by $U=X^p$. The action on $y=0$ is given by $t\curvearrowright U=t^{p\theta} U$, which implies
the weights at the intersection of $y=0$ and $E_1$ are $[w_0,-w_1]$. Similarly, the ray given by $v_{k+1}=(p,-q)$ corresponds to the proper transform of $x=0$, which is parametrized by $U^qV^p=Y^p$. The action on $x=0$ is given by $t\curvearrowright U^qV^p=t^{\tau p} U^qV^p$, which implies the weights at the intersection of $x=0$ and $E_k$ are $[w_k,-w_{k+1}]$.
\end{proof}

\bibliographystyle{plain}
\bibliography{reference}

\Addresses

\end{document}